\newtheorem{thm}{Theorem}[section]
\newtheorem{cor}[thm]{Corollary}
\newtheorem{lemma}[thm]{Lemma}
\newtheorem{prop}[thm]{Proposition}
\newtheorem{conj}[thm]{Conjecture}
\newtheorem{defn}[thm]{Definition}
\theoremstyle{remark}
\newtheorem{remark}[thm]{Remark}
\numberwithin{equation}{section}
\newtheorem{example}[thm]{Example}
\newcommand{\cK}{\mathcal K}
\newcommand{\cN}{\mathcal N}
\newcommand{\cH}{\mathcal H}
\newcommand{\cD}{\mathcal D}
\newcommand{\cO}{\mathcal O}
\newcommand{\cU}{\mathcal U}
\newcommand{\cS}{\mathcal S}
\newcommand{\bbP}{\mathbb P}
\newcommand{\bbR}{\mathbb R}
\newcommand{\bbT}{\mathbb T}
\newcommand{\bbC}{\mathbb C}
\newcommand{\bbN}{\mathbb N}
\newcommand{\bbS}{\mathbb S}
\newcommand{\bbZ}{\mathbb Z}
\newcommand{\rank}{{\rm rank\ }}
\newcommand{\QED}{\hfill $\square$\vspace{2mm}}
\begin{document}

\newpage
\title{Geometry of nondegenerate $\bbR^n$-actions on $n$-manifolds}

\author{Nguyen Tien Zung and Nguyen Van Minh}
\address{Institut de Mathématiques de Toulouse, UMR5219, Université Toulouse 3}
\email{tienzung.nguyen@math.univ-toulouse.fr, minh@math.univ-toulouse.fr}

\date{Version 2, February 2013, , to appear in J. Math. Soc. Japan}
\subjclass{53C15, 58K50, 37J35, 37C85, 58K45, 37J15}
\keywords{integrable system, toric manifold, group action, reflection principle, normal form, complete fan,
hyperbolic, elbolic, monodromy}%

\begin{abstract} 
This paper is devoted to a systematic study of the geometry of nondegenerate $\bbR^n$-actions on $n$-manifolds. 
The motivations for this study come from both dynamics, where these actions form a special class of integrable
dynamical systems and the understanding of their nature is important for the study of other Hamiltonian
and non-Hamiltonian integrable systems, and geometry, where these actions are related to a lot of other geometric 
objects, including reflection groups, singular affine structures, toric and quasi-toric manifolds, monodromy 
phenomena, topological invariants, etc. We construct a geometric theory of these actions, and obtain a series of results, including: 
local and semi-local normal forms, automorphism and twisting groups, the reflection principle, the toric degree, the monodromy, 
complete fans associated to hyperbolic domains, quotient spaces, elbolic actions and
 toric manifolds, existence and classification theorems. 
\end{abstract}

\maketitle

{\small \tableofcontents }


\section{Introduction}

This paper is devoted to a systematic study of the geometry of nondegenerate $\bbR^n$-actions on $n$-manifolds.
We develop a theory of these actions, and obtain a series of results, which will be presented
below, and also open questions. The motivations for this study come from both integrable dynamical systems and geometry.

Even though, according to many results, from the classical works of Poincaré and Kovalevskaya to the modern 
theory of differential Galois obstructions  (see, e.g., \cite{MRS-Galois2007}), integrable systems are 
``rare and far in between'' in the world of all dynamical systems, a lot of physical dynamical systems of vital importance to us, e.g. the movement of our solar
system, the internal dynamics of usual molecules like H2O, sinusoidal and soliton waves, and so on, can in fact be 
considered as integrable. This ``contradiction'' is explained by the fact that, integrable systems often possess very strong 
stability, and their ``longivity'' compensates for their ``rarity'', and that's also why they are an important subject of study
in physics and mathematics.

One of the most fundamental results in integrable Hamiltonian systems is  the classical Arnold--Liouville--Mineur theorem,
which gives a semi-local normal form for a regular invariant torus in terms of action-angle variables. This normal form result
is also the starting point for many works on the geometry of integrable systems, which is still a research
subject of great actual interest, see, e.g. 
\cite{Audin-Torus2004, Babelon-Book2003, BolsinovFomenko-IntegrableBook, BFO-Integrable2006, 
Duistermaat-globalaction-angle1980, Eliasson-Normal1990, Kappeler-KAM2003, PelayoSan_Acta2011, Vey-Sur1978}
 and references therein. The first author of this paper  also contributed to the geometric and topological study of 
 integrable Hamiltonian systems, with  results on normal forms, singularities, and global topological 
and geometric invariants of such systems, see, e.g.
\cite{Zung-Symplectic1996,Zung-Integrable2003, MirandaZung-NF2004, Zung-Convergence2005}.

There are also many natural dynamical systems which are non-Hamiltonian due to various reasons 
(nonholonomic constraints or loss of energy for example), but which are still integrable in a natural sense: 
they still possess complete sets of commuting vector fields on invariant submanifolds. Many authors have been
working non-Hamiltonian integrable dynamical systems over the last decades from different
points of view, see, e.g., 
\cite{Bogoyavlenskij-Extended1998, Bates-nonholonomic1998, FJ-Chaplygin2006, CD-NonHamiltonian2001, 
Stolovitch-Singular2000, Zung-Convergence2002,AyoulZung_Galois2010} for a small sample.
However, much less is known about the topology and geometry of non-Hamiltonian integrable systems 
than for Hamiltonian ones. Until very recently, even the notion of nondegenerate singularities didn't 
exist for integrable non-Hamiltonian systems. Our program is to remedy  this situation, to study
the geometry and topology of integrable non-Hamiltonian systems in a systematic way, and to try to 
obtain analogs for the non-Hamiltonian case of all known results in the Hamiltonian case. 

In this paper, which is part of our program, we will study the 
geometry of a subclass of integrable systems, namely the systems
of type $(n,0)$, formed by $n$ commuting vector fields (and 0 function) on a manifold of dimension $n$.
This class is of particular importance, because  if we restrict our attention to ``minimal'' invariant manifolds of 
other  integrable systems, then they will become systems of this subclass, i.e. the number of commuting 
vector fields is exactly equal to the dimension of the invariant (sub)manifold.

From the geometric point of view, an integrable system of type $(n,0)$ is simply an action
of $\bbR^n$ on a $n$-manifold. If the manifold is compact and the action is locally free, then according to
the classical Liouville's theorem, the manifold is a $n$-dimensional torus on which $\bbR^n$ acts by translations.
At first, we also thought that the general situation, when there are singular points of the action, is not much more
complicated than this regular situation. But we were wrong. It turns out that the geometry of $\bbR^n$-actions
on $n$-manifolds with nondegenerate singularities
is extremely rich, and this is the second motivation for our interest in this subject. In particular, we recover, 
for example, all the toric and quasi-toric manifolds in our study. Phenomena like monodromy and 
reflection group actions, which we didn't suspect at first, are also there.

For simplicity, throughout this paper, we will always assume that the 
actions are smooth, though most results are also valid for $C^1$-actions. 

The organization of this paper is as follows:

In section 2, we study the local and semi-local structure of singularities of nondegenerate $\bbR^n$-actions on 
$n$-manifolds. In particular, we obtain local and semi-local normal forms 
(Theorem \ref{thm:LinearNF}, Theorem \ref{thm:semi-localform} and Theorem \ref{thm:semi-localform2}) 
which show that these singularities 
can be linearized and decomposed into an almost direct
product of regular components, hyperbolic components, and the so called {\bf elbolic} components. The word 
\emph{elbolic} means a combination of elliptic and hyperbolic, and is a word that we didn't find in the literature 
but invented for our study. We also describe {\bf adapted bases} of 
the action (Theorem \ref{thm:CanonicalCoordinate}), local automorphism groups
(Theorem \ref{thm:LocalAutomorphism}), {\bf twisting groups} (Definition \ref{defn:TwistingGroup}), 
show the {\bf reflection principle} 
for hyperbolic singularities (Theorem \ref{thm:Reflection}), which is reminiscent
of Schwartz reflection principle in complex analysis, introduce the {\bf HERT-invariant} (Definition \ref{defn:HERT-invariant}), and 
study the variation of this invariant from orbit to orbit 
(Proposition \ref{prop:T-semicontinuity} and Proposition \ref{prop:adjacent-orbits}).

In section 3, we introduce the notion of {\bf toric degree} of an action $\rho: \bbR^n \times M^n \to M^n$,
and obtain a simple formula toric degree$(\rho)=e + t$ for calculating the toric degree from the 
HERT-invariant of an arbitrary point on the manifold (Theorem \ref{thm:HERT-toricdegree}). If $\rho$ has toric degree $t(\rho)$,
then it induces an effective $\bbT^{t(\rho)}$ action on $M^n$, called the {\bf associated toric action}
and denoted by $\rho_\bbT$, and the study of $\rho$ is reduced to the study
of the  torus action $\rho_\bbT$ and of the reduced hyperbolic $\bbR^{n-t(\rho)}$ action $\rho_\bbR$ 
on the quotient space $M^n/\rho_\bbT$. Using this strategy, we obtain a complete 
classification of actions of toric degree $n$ and $n-1$ in this section
(Theorem \ref{thm:class-marked}). As is known from the literature on topology, there are strong obstructions for a manifold to admit
an effective $\bbT^k$-action with $k\geq 1$. So the maximal possible toric degree for an action on a given manifold
is an interesting invariant, which we call the {\bf toric rank} (Definition \ref{defn:toricrank}), 
and which is somehow related to Milnor's rank of a manifold.

In section 4, we introduce a natural global invariant, called the {\bf monodromy} of $(M^n,\rho)$, which is a homomorphism
$$\mu : \pi_1(M^n) \to \bbR^n/Z_\rho$$  
from the fundamental group of $M^n$ to the quotient of $\bbR^n$ by the isotropy group $Z_\rho$ of $\rho$.
An important observation is that the monodromy group $\mu(\pi_1(M^n))$ contains all the twisting groups, i.e.
the twistings are part of the monodromy (Theorem \ref{thm:TwistingMonodromy}).
Theorem \ref{thm:lifting-monodromy} says that one can trivialize the monodromy by taking a normal covering 
$(\widetilde M, \widetilde \rho)$ of $(M, \rho)$. Theorem \ref{thm:trans-monodromy} allows one to modify 
the ``free part'' $\mu_{\text{free}}$ of the monodromy in an arbitrary way, in order to obtain new actions 
(on the same manifold) which are locally isomorphic but 
globally non-isomorphic to the old ones.

Section 5 is devoted to the study of {\bf totally hyperbolic actions}, i.e. action 
$\rho: \bbR^n \times M^n \to M^n$ of toric degree 0. Among other things, we give a classification
of {\bf hyperbolic domains} (i.e. regular orbits of type $\bbR^n$) with compact closure by their associated 
{\bf complete fans} (Theorem \ref{thm:classificationByfan}). A complete fan in our sense is similar to a complete fan associated
to a compact toric variety, except that our vectors are not required to lie in the integral lattice. 
We observe that compact closed hyperbolic domains are contractible manifolds with
boundary and corners (Theorem \ref{thm:closureHyperbolic} and Theorem \ref{thm:hyperbolic-contractible}) 
which look like convex simple polytopes, but which are not 
always diffeomorphic to convex simple polytopes (Theorem \ref{thm:diff-to-polytope}). We show that not every decomposition of a manifold
into simple contractible polyhedral pieces can be realized by a totally hyperbolic action, even in dimension 2
(Proposition \ref{prop:3-6domains}), but think that maybe any smooth manifold admits a totally hyperbolic action 
(Theorem \ref{thm:hyperbolic_dim2} and Conjecture \ref{conjecture:hyperbolic}). The global classification of totally 
hyperbolic actions (Theorem \ref{class-hyp-invariants}) involves the associated fan 
family (one fan for each hyperbolic domain) and the monodromy.

In section 6, we study the reduction $(M,\rho)$ by the action of its associated torus action. The result of the reduction is
$(Q,\rho_\bbR)$, where $Q = M^n/\rho_\bbT$ is the {\bf quotient space}, which is an {\bf orbifold}, and $\rho_\bbR$ is the
{\bf reduced action}, which is a totally hyperbolic action (Theorem \ref{thm:hypAction-quotientSpace})
and which can be classified by the results of Section 5. 
We show in this section that the singular torus fibration $M^n \to Q$ is in a sense topologically trivial, or more precisely,
it always admits a smooth cross section (if there are no twistings) or multi-section (when there are twistings) (Proposition \ref{prop:section}
and Proposition \ref{prop:multisection}). Then we show how to get back $(M,\rho)$ 
from $(Q,\rho_\bbR)$, and a {\bf complete set of invariants} for classifying 
nondegenerate actions $(M,\rho)$ (Theorem \ref{thm:GoingBack}).

In section 7, we restrict our attention to a special subclass of nondegenerate $\bbR^n$-actions, called 
{\bf elbolic actions}, i.e. actions whose singularities contain only elbolic components (no hyperbolic component).
After giving some preliminary results about these actions, we show that manifolds of dimension $n = 2m$ admitting
an elbolic action of toric degree $m$ are exactly the same as {\bf topological toric manifolds} in the sense of
Ishida, Fukukawa, Masuda \cite{Ishida-toric}, which are a very natural generalization of complex toric manifolds.
In particular, we recover Ishida--Fukukawa--Masuda's classification theorem for these topological toric manifolds
from our more general point of view of  nondegenerate $\bbR^n$-actions on $n$-manifolds.

In section 8, the last section of this paper, we study in some detail another subclass of nondegenerate 
$\bbR^n$-actions on $n$-manifolds, namely actions of toric degree $n-2$. We describe these actions via the 
2-dimensional quotient space $M^n/\bbT^{n-2}$, and give the full list of 10 types of possible singularities
in this case (Theorem \ref{thm:n-2a} and Theorem \ref{thm:n-2b}).

The following is a schematic presentation of the sections of the paper:
\begin{center}
\setlength{\unitlength}{1cm}
\begin{picture}(10,9)
\put(0.5,0.3){7: Elbolic actions}
\put(0,0){\line(0,1){1}}
\put(0,0){\line(1,0){4}}
\put(4,0){\line(0,1){1}}
\put(0,1){\line(1,0){4}}

\put(6.7,0.3){8: $\bbT^{n-2}$-case}
\put(6,0){\line(0,1){1}}
\put(6,0){\line(1,0){4}}
\put(10,0){\line(0,1){1}}
\put(6,1){\line(1,0){4}}

\put(0.5,2.3){6: Reduction}
\put(0,2){\line(0,1){1}}
\put(0,2){\line(1,0){4}}
\put(4,2){\line(0,1){1}}
\put(0,3){\line(1,0){4}}

\put(6.2,2.3){5: Hyperbolic actions}
\put(6,2){\line(0,1){1}}
\put(6,2){\line(1,0){4}}
\put(10,2){\line(0,1){1}}
\put(6,3){\line(1,0){4}}

\put(0.6,4.3){3: Toric degree}
\put(0,4){\line(0,1){1}}
\put(0,4){\line(1,0){4}}
\put(4,4){\line(0,1){1}}
\put(0,5){\line(1,0){4}}

\put(6.6,4.3){4: Monodromy}
\put(6,4){\line(0,1){1}}
\put(6,4){\line(1,0){4}}
\put(10,4){\line(0,1){1}}
\put(6,5){\line(1,0){4}}

\put(3.6,6.3){2: Singularities}
\put(3,6){\line(0,1){1}}
\put(3,6){\line(1,0){4}}
\put(7,6){\line(0,1){1}}
\put(3,7){\line(1,0){4}}

\put(3.6,8.3){1: Introduction}
\put(3,8){\line(0,1){1}}
\put(3,8){\line(1,0){4}}
\put(7,8){\line(0,1){1}}
\put(3,9){\line(1,0){4}}

\put(5,7.8){\vector(0,-1){0.7}}
\put(4,5.8){\vector(-1,-1){0.7}}
\put(6,5.8){\vector(1,-1){0.7}}
\put(2,3.8){\vector(0,-1){0.7}}
\put(8,3.8){\vector(0,-1){0.7}}
\put(2,1.8){\vector(0,-1){0.7}}
\put(8,1.8){\vector(0,-1){0.7}}

\put(8,1.8){\vector(-4,-1){3.8}}
\put(2,1.8){\vector(4,-1){3.8}}

\put(8,3.8){\vector(-4,-1){3.8}}
\put(5.8,2.5){\vector(-1,0){1.6}}
\end{picture}
\end{center}
 
\section{Nondegenerate singularities}
\label{section:singularities}

\subsection{Definition of nondegenerate singularities} 

Let $\rho: \bbR^n \times M^n \to M^n$ be a smooth action of $\bbR^n$ on a $n$-dimensional connected 
manifold $M^n$ (which is not necessarily compact). Then it is generated by $n$  commuting vector fields $X_1,\hdots, X_n$ on $M^n$,
which are given by the following formula at each point $z \in M$:
\begin{equation}
X_i (z) = \frac{d}{dt}\rho((0, \hdots,t, \hdots,0),z)|_{t=0}. 
\end{equation}
Conversely, a $n$-tuple of commuting vector fields $X_1, \hdots, X_n$ on $M$ will generate an infinitesimal action of 
$\bbR^n$, which integrates into an action of $\bbR^n$, provided that these vector fields are complete.
Most of the times, we will assume that the vector fields $X_1, \hdots, X_n$ are complete. 
However, the definitions and results of purely local
nature of the paper don't need this completeness condition and remain valid for infinitesimal $\bbR^n$-actions.

If $v = (v^1,\hdots,v^n)  \in \bbR^n$ is an element of $\bbR^n$ (viewed as an Abelian Lie algebra),
then we will put
\begin{equation}
 X_v = \sum_{i=1}^n v^i X_i
\end{equation}
and call it the {\bf generator of the action $\rho$ associated to $v$}. 
In particular, if $v_1,\hdots,v_n \in \bbR^n$ are linearly independent, then the vector fields $Y_1 = X_{v_1},\hdots, Y_n = X_{v_n}$ 
also generate the same $\bbR^n$ action as $\rho$, up to an automorphism of $\bbR^n$.

A point $z \in M^n$ is called a {\bf singular point} with respect to a given 
action $\rho: \bbR^n \times M^n \to M^n$ if the {\bf rank} of $z$ with respect to $\rho$, 
defined by the formula
\begin{equation}
 \rank z = \dim Span_\bbR(X_1 (z),\hdots, X_n(z))
\end{equation}
is smaller than $n$. The number $(n - \rank z)$ will then be called the {\bf corank} of $z$. 
Thus a point is singular
if and only if it has positive corank. If $\rank z = n$ then we say that $z$ is a 
{\bf regular point} of the action. Clearly, a regular point is a point at which the action 
is locally free. If $\rank z = 0$ then we say that $z$ is a {\bf fixed point} of the action.

The definition of nondegenerate singular points that we present in this section is a special case of the definition
of nondegenerate singularities of integrable non-Hamiltonian systems in \cite{Zung-Nondegenerate2012}. In fact,
from a geometric point of view, a complete  integrable non-Hamiltonian system of type $(n,0)$ on a $n$-dimensional
manifold $M^n$ is the same thing as a $\bbR^n$-action on $M^n$.

If $z$ is a fixed point of the action, then $X_1(z) = \hdots = X_n(z) = 0$, and we can talk about the linear part
$X^{(1)}_i$ of $X_i$ at $z$ for each $i=1,\hdots,n$: these are well-defined linear vector fields on the tangent space 
$T_{z}M^n \cong \bbR^n$. Since $[X_i, X_j] = 0$, we also have $[X_i^{(1)}, X_j^{(1)}] = 0$, 
i.e. $(X_1^{(1)}, \hdots, X_n^{(1)})$  generate a linear action of $\bbR^n$ 
on $T_{z}M^n \cong \bbR^n$, which will be denoted by $\rho^{(1)}$ and called {\bf the linear part of $\rho$ at $z$}. 
Recall that the set of linear vector fields
on $\bbR^n$ is naturally isomorphic to the general Lie algebra $gl(n,\bbR)$, which is a reductive Lie algebra of rank 
$n$. In particular, any Abelian subalgebra of $gl(n,\bbR)$ has dimension at most $n$, and $gl(n,\bbR)$ contains
Cartan subalgebras, i.e. Abelian subalgebras of dimension exactly $n$ whose elements are semisimple.

\begin{defn}
A linear action $\rho^{(1)}$ of $\bbR^n$ on a $n$-dimensional vector space $V$ is called a {\bf nondegenerate linear action}
if the Abelian Lie algebra $Span_\bbR (X^{(1)}_1,\hdots, X^{(1)}_n)$ spanned by the generators of $\rho^{(1)}$
is a Cartan subalgebra of $gl(V)$, i.e. it is of dimension $n$ and all of its elements are semisimple (i.e. diagonalizable over
$\bbC$). A fixed point $z$ of a smooth action $\rho: \bbR^n \times M^n \to M^n$ is called a {\bf nondegenerate fixed point}
if the linear part $\rho^{(1)}$ of $\rho$ at $z$ is nondegenerate.
\end{defn}

For non-fixed singular points, the definition of nondegeneracy is similar. Let $z$ be a singular point whose corank is equal to 
$k < n$. Up to an automorphism of $\bbR^n$ we may assume that 
$X_i = \frac{\partial}{\partial x_i}$ for all $i = k+1,\hdots, n$ and
$X_1(z) = \hdots = X_k(z) = 0$ in a  local coordinate system in a neighborhood of $z$. Forgetting about the coordinates 
$x_{k+1},\hdots x_{n}$, we get an infinitesimal action of $\bbR^{k}$, generated by  $X_1,\hdots, X_k$ 
(or more precisely, their projections) on the local $k$-dimensional manifold $\{x_{k+1} = \hdots = x_n = 0\}$, 
which admits $z$ as a fixed point. This infinitesimal action is called the reduced transversal action of $\bbR^k$
at $z$; it is unique up to local isomorphisms, and it can be defined intrinsically.

\begin{defn}
 A singular point $z$ of corank $k$ of an action $\rho: \bbR^n \times M^n \to M^n$ is called {\bf nondegenerate} 
if $z$ is a nondegenerate fixed point
of the reduced transversal $\bbR^k$-action at $z$.
\end{defn}

\begin{defn}
 An action $\rho: \bbR^n \times M^n \to M^n$ is called a {\bf nondegenerate action} if every singular point of $\rho$ on $M^n$
is nondegenerate.
\end{defn}

In this paper, we will consider only nondegenerate actions of $\bbR^n$ on $n$-dimensional manifolds.
Let us give here some explanations on why we impose the above nondegeneracy 
condition:

1) The above nondegeneracy condition is consistent with the general case of  
integrable (Hamiltonian or non-Hamiltonian) dynamical systems 
\cite{Vey-Sur1978,Eliasson-Normal1990, Zung-Symplectic1996,Zung-Nondegenerate2012}.
Nondegenerate singularities are geometrically linearizable, structurally stable, and most singularities of natural integrable
systems coming from  mechanics and physics that have been studied in the literature are nondegenerate.
(The problem of global structural stability of nondegenerate actions $\rho: \bbR^n \times M^n \to M^n$ will be
addressed in a separate paper).

2) Actions which are too degenerate are not very beautiful, do not appear in the real world,
and don't give much information
about the ambient manifolds. It is easy to construct on any $n$-manifold $M$ a degenerate $\bbR^n$ action which is free almost 
everywhere as follows: fill $M^n$, up to a  nowhere dense set of measure 0, 
by a countable disjoint union of open  balls $(B_j;\ j \in J)$. For each $j \in J$ construct a smooth diffeomorphism from $\bbR^n$
onto $B_j$, and denote by $X_1, \hdots, X_n$ the push-forward  of the standard vector fields 
$\frac{\partial}{\partial x_1}, \hdots, \frac{\partial}{\partial x_n}$ on $\bbR^n$ by this map. 
Extend these vector fields to the exterior of the balls
by putting them equal 0 outside the balls. If the maps are chosen well enough, we will get a commuting family of vector fields
$X_1,\hdots,X_n$  which generate an action of $\bbR^n$ on $M^n$, which is very degenerate but almost everywhere free.
The above construction of degenerate commuting vector fields is folkloric, and its variations have appeared, for example, 
in the papers by Camacho \cite{Camacho-MorseSmaleAction1973} and Weinstein \cite{Weinstein-RealSemisimple1987}.
 
3) In \cite{Camacho-MorseSmaleAction1973}, Camacho introduced a notion of Morse--Smale actions of $\bbR^2$ on 2-manifolds. 
It turns out that Camacho's Morse-Smale condition is essentially the same as our nondegeneracy condition. 
Another set of conditions for $\bbR^2$-actions on 2-manifolds was introduced by Sabatini in \cite{Sabatini-commuting1997}. 
Sabatini's conditions (in particular, his condition that every singular point is an isolated fixed point) 
is different from ours, but is a bit similar to the so called \emph{elbolic} case of our actions, 
and leads to some similar arguments and conclusions. In fact, if an action satisfies Sabatini's conditions and
is nondegenerate then it is an elbolic action in the sense of Definition \ref{defn:elbolic}.
As was observed by Camacho, nondegenerate actions are not dense in the space of all actions with respect to a natural
topology: due to the rigidity of the commutativity condition, there are very degenerate actions which cannot be purturbed
into nondegenerate actions. Still, we believe that the nondegeneracy condition is very reasonable, and very degenerate actions
are ``pathological'' and should not  be viewed as representative of integrable systems.

4) Starting from 2 degrees of freedom, in typical integrable Hamiltonian systems, besides nondegenerate singularities, one
finds also degenerate singularities of finite determinacy, see, e.g. \cite{BolsinovFomenko-IntegrableBook}. 
It would be nice to include degenerate (but not too degenerate)
singularities into the study of integrable non-Hamiltonian systems. Our work on the topological classification 
of integrable Hamiltonian systems \cite{Zung-Integrable2003} also includes degenerate singularities. We will treat
typical degenerate singularities of integrable non-Hamiltonian systems in general and of $\bbR^n$-actions on 
$n$-manifolds in particular in some future work. As will be shown in this paper, the class of nondegenerate 
$\bbR^n$-actions on $n$-manifolds is already very rich, with lots of things to say about them, and 
interesting open questions.

\subsection{Local normal form}

Nondegenerate linear actions of $\bbR^n$ on $\bbR^n$ can be classified, up to isomorphisms, by their corresponding Cartan
subalgebras of $gl(n,\bbR)$. In terms of linear vector fields, this classification can be written as follows:

\begin{thm}[Classification of nondegenerate linear actions] \label{thm:LinearNF}
Let $\rho^{(1)}: \bbR^n \times \bbR^n \to \bbR^n$ be a nondegenerate linear action of the Abelian group
$\bbR^n$ on the vector space $\bbR^n$.
Then there exist nonnegative integers  $h, e \geq 0$ such that $h + 2e = n$, a linear coordinate system $x_1, \hdots, x_n$
on $\bbR^n$, and a linear basis $(v_1,\hdots,v_n)$ of the Lie algebra $\bbR^n$ such that the generators $Y_i = X_{v_i}$
of the action $\rho^{(1)}$ associated to the basis $(v_1,\hdots,v_n)$ can be written as follows:
\begin{equation}
\begin{cases}
Y_i = x_i\frac{\partial }{\partial x_i} \quad \forall \quad i = 1,\hdots, h \\
Y_{h+2j-1} = x_{h+2j-1}\frac{\partial }{\partial x_{h+2j-1}} +  x_{h+2j}\frac{\partial }{\partial x_{h+2j}}  \\
Y_{h+2j} = x_{h+2j-1}\frac{\partial }{\partial x_{h+2j}} -  x_{h+2j}\frac{\partial }{\partial x_{h+2j-1}} \quad \forall \quad j = 1,\hdots, e. 
\end{cases} 
\end{equation}
\end{thm}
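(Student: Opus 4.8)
The plan is to read the statement as the classification, up to linear conjugacy, of the Cartan subalgebra $\mathfrak{t} = Span_{\bbR}(X_1^{(1)},\dots,X_n^{(1)}) \subset gl(V)$, $V = \bbR^n$, attached to $\rho^{(1)}$. Since all elements of $\mathfrak{t}$ are semisimple and pairwise commuting, I would first pass to the complexification $V_{\bbC} = V \otimes_{\bbR} \bbC$ and simultaneously diagonalize the (now complex) commuting family. This yields a weight decomposition $V_{\bbC} = \bigoplus_{\lambda} V_\lambda$, indexed by the weights $\lambda \in Hom_{\bbR}(\mathfrak{t},\bbC)$, on which each $A \in \mathfrak{t}$ acts by the scalar $\lambda(A)$.

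The first key step, and the place where nondegeneracy is really used, is to show that every weight space $V_\lambda$ is one-dimensional. This follows because $\mathfrak{t} \otimes \bbC$ is a Cartan subalgebra of $gl(n,\bbC)$ (it is toral of complex dimension equal to the rank $n$), hence self-centralizing; its centralizer is $\bigoplus_\lambda gl(V_\lambda)$, of dimension $\sum_\lambda (\dim V_\lambda)^2$, so self-centralizing forces $\sum_\lambda(\dim V_\lambda)^2 = n = \sum_\lambda \dim V_\lambda$, i.e. $\dim V_\lambda = 1$ and the weights are pairwise distinct. Equivalently, one argues that a nonscalar operator supported on a single weight space of dimension $\geq 2$ would commute with $\mathfrak{t}$ and enlarge it, contradicting maximality.

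Next I would descend back to the real structure. The complex conjugation $\sigma$ on $V_{\bbC}$ induced by $V$ commutes with the real family $\mathfrak{t}$, hence permutes the weight spaces by $\sigma(V_\lambda) = V_{\bar\lambda}$. A real weight ($\lambda = \bar\lambda$) gives a $\sigma$-invariant complex line containing a real eigenvector, on which $\mathfrak{t}$ acts by a real scalar: these are the $h$ hyperbolic directions. A genuinely complex weight pairs with its conjugate, and the real points of $V_\lambda \oplus V_{\bar\lambda}$ form a real $2$-plane carrying a natural complex structure on which $\mathfrak{t}$ acts $\bbC$-linearly by multiplication by $\lambda$: these are the $e$ elbolic blocks, and $h + 2e = n$. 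Choosing real eigenvectors for the real weights and the real and imaginary parts of a complex eigenvector for each conjugate pair produces the coordinate system $x_1,\dots,x_n$ in which $\mathfrak{t}$ consists of block-diagonal matrices, diagonal on $x_1,\dots,x_h$ and of the form $\left(\begin{smallmatrix} a & -b \\ b & a\end{smallmatrix}\right)$ on each elbolic plane.

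Finally, the block-diagonal algebra just described has real dimension $h + 2e = n = \dim \mathfrak{t}$ and contains $\mathfrak{t}$, so it equals $\mathfrak{t}$. It therefore suffices to let $v_1,\dots,v_n$ be the basis of $\mathfrak{t} = \bbR^n$ corresponding to the standard coordinates on this algebra: the elementary diagonal generator in slot $i \le h$ gives $Y_i = x_i \partial_{x_i}$, while $(a_j,b_j) = (1,0)$ and $(0,1)$ in the $j$-th block give exactly the stated $Y_{h+2j-1}$ and $Y_{h+2j}$, via the formula $X_A = \sum_{k,l} a_{kl}\, x_l\, \partial_{x_k}$. The only genuinely delicate points are the one-dimensionality of the weight spaces and the conjugation bookkeeping; once these are in place, the normal form and the basis $(v_i)$ are forced by the dimension count.
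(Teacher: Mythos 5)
Your proof is correct and takes essentially the same route as the paper, which dismisses the theorem in one sentence as a linear-algebra exercise: simultaneous diagonalization of the commuting semisimple family over $\bbC$, with joint real eigenlines giving the $h$ hyperbolic components and conjugate pairs of complex eigenlines giving the $e$ elbolic ones. Your write-up simply supplies the details the paper omits — notably the self-centralizing (Cartan) argument forcing the weight spaces to be one-dimensional, the conjugation bookkeeping, and the final dimension count identifying $\mathfrak{t}$ with the block-diagonal algebra so that the stated generators really are of the form $X_{v_i}$.
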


The numbers $e,h$ in the above theorem form a complete set of invariants of the nondegenerate linear action up to automorphisms
of (the group) $\bbR^n$ and automorphisms of (the vector space) $\bbR^n$ on which $\bbR^n$ acts,
and they are called the number of {\bf elbolic} components and the number of {\bf hyperbolic} components respectively.

The proof of the above theorem is a simple excercise of linear 
algebra: since the linear vector fields $X_i$ commute, they are simultaneously
diagonalizable over $\bbC$. Their joint 1-dimensional real eigenspaces correspond to hyperbolic components, while
joint complex eigenspaces correspond to elbolic components.

\begin{remark}
We didn't find the word \emph{elbolic} in the literature, and so we invented it for this paper: 
elbolic is a contraction of  \emph{elliptic + hyperbolic}, 
to describe a 2-dimensional situation with both an elliptic type 
sub-component and a hyperbolic type sub-component, see Figure \ref{fig:elbolic}.
\end{remark}
\begin{figure}[htb] 
\begin{center}
\includegraphics[width=115mm]{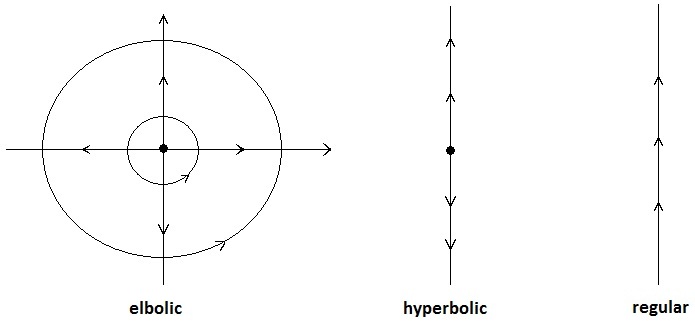}
\caption{Elbolic, hyperbolic, and regular components of $\bbR^n$-actions on $n$-manifolds.}
\label{fig:elbolic}
\end{center}
\end{figure}

Theorem \ref{thm:LinearNF} together with smooth linearization techniques lead
to the following smooth local normal form theorem, which was obtained in \cite{Zung-Nondegenerate2012}:

\begin{thm}[\cite{Zung-Nondegenerate2012}, Local normal form] \label{thm:NormalForm}
Let $p$ be a nondegenerate singular point of co-rank $m \leq n$ of a smooth nondegenerate action 
$\rho: \bbR^n \times M^n \to M^n$. Then there exists a smooth local coordinate 
system $(x_1,x_2,\hdots, x_n)$ in a neighborhood of $p$, non-negative integers $h, e \geq 0$
such that $h + 2e =m$, and a basis $(v_1,\hdots, v_n)$ of $\bbR^n$ such that the corresponding
generators $Y_i = X_{v_i} (i = 1, \hdots, n)$ of $\rho$ have the following form:
\begin{equation}
\begin{cases}
Y_i = x_i\frac{\partial }{\partial x_i} \quad \forall \quad i = 1,\hdots, h \\
Y_{h+2j-1} = x_{h+2j-1}\frac{\partial }{\partial x_{h+2j-1}} +  x_{h+2j}\frac{\partial }{\partial x_{h+2j}}  \\
Y_{h+2j} = x_{h+2j-1}\frac{\partial }{\partial x_{h+2j}} -  x_{h+2j}\frac{\partial }{\partial x_{h+2j-1}} \quad \forall \quad j = 1,\hdots, e \\
Y_k = \frac{\partial }{\partial x_k} \quad \forall \quad k = m+1,\hdots, n.
\end{cases}
\end{equation}
The numbers $(h, e)$ do not depend on the choice of local coordinates.
\end{thm}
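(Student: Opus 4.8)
The plan is to reduce everything to the fixed-point case and then invoke a simultaneous smooth linearization. First I would strip off the regular directions. Since $p$ has corank $m$, the rank of $\rho$ at $p$ equals $n-m$, so after an automorphism of $\bbR^n$ I may assume $X_{m+1}(p),\dots,X_n(p)$ are linearly independent while $X_1(p)=\dots=X_m(p)=0$. Applying the simultaneous flow-box (straightening) theorem to the $n-m$ commuting, pointwise-independent vector fields $X_{m+1},\dots,X_n$, I can choose coordinates in which $X_k=\partial/\partial x_k$ for $k=m+1,\dots,n$; this already produces the last block of the desired normal form. Because the remaining $X_1,\dots,X_m$ commute with these coordinate fields, each has coefficients independent of $x_{m+1},\dots,x_n$, so their restriction to the transversal slice $\{x_{m+1}=\dots=x_n=0\}$ defines the reduced transversal $\bbR^m$-action, for which $p$ is a fixed point. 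By the definition of nondegeneracy this transversal action is a nondegenerate fixed point, and it suffices to put it into normal form on the slice and extend trivially in the regular directions.

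So I am reduced to the case $m=n$, a nondegenerate fixed point. Here the linear parts $X_1^{(1)},\dots,X_n^{(1)}$ span a Cartan subalgebra of $gl(n,\bbR)$, so Theorem \ref{thm:LinearNF} supplies a basis $(v_1,\dots,v_n)$ and linear coordinates in which the linear parts are exactly the hyperbolic fields $x_i\,\partial/\partial x_i$ and the elbolic pairs, with invariants $(h,e)$. The issue is to upgrade this linear normal form for the $1$-jets to an honest smooth linearization of the full vector fields. My plan is to treat the compact (elliptic) directions and the hyperbolic directions separately. The linear parts of the elbolic generators $Y_{h+2j}$ are infinitesimal rotations and together generate a linear torus action $\bbT^e$; I would first show that the nonlinear action has relatively compact orbit closures in these directions, so that the $\bbT^e$-action itself can be linearized by Bochner's equivariant linearization theorem (averaging a local diffeomorphism over the torus). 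After this step the elliptic parts of the elbolic components are exactly the standard rotations.

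Working now $\bbT^e$-equivariantly, the remaining task is to linearize the hyperbolic fields $Y_1,\dots,Y_h$ together with the radial parts $Y_{h+2j-1}$ of the elbolic components. The crucial point is that I must linearize the \emph{whole} abelian family simultaneously, not one vector field at a time: because the generators span a Cartan subalgebra, the joint complexified eigenspace decomposition of their linear parts is a splitting into $1$-dimensional real lines (hyperbolic) and $2$-dimensional complex lines (elbolic), and this rigid joint structure is what prevents the resonance obstructions that would plague a single hyperbolic vector field. I would carry out this simultaneous linearization by a Sternberg--Chen type argument adapted to commuting semisimple fields, or equivalently by a homotopy (path) method in the spirit of Moser, solving at each stage the cohomological equations $Y_i^{(1)}\phi=\text{(correction)}$ using the semisimplicity of the $Y_i^{(1)}$ and $\bbT^e$-invariance to kill the non-resonant terms; the diagonal joint action makes every relevant operator invertible on the appropriate complement. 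The independence of $(h,e)$ from the coordinates is immediate, since $h$ and $e$ are read off from the isomorphism type of the linear part $\rho^{(1)}$, which is intrinsic.

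The hard part will be the smooth (as opposed to merely formal or analytic) simultaneous linearization in the hyperbolic and elbolic-radial directions while preserving the already-linearized torus action: controlling the smooth solutions of the homological equations and ensuring that all the normalization steps commute is exactly the technical core. This is precisely the content established in \cite{Zung-Nondegenerate2012}, so in the paper I would either cite that linearization theorem directly or reconstruct it through the torus-equivariant reduction sketched above; the genuinely new input here is only the organizational reduction to the fixed-point case and the identification of the resulting data with the stated normal form.
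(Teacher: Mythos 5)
The paper offers no proof of this theorem: it is imported verbatim from \cite{Zung-Nondegenerate2012}, with only the remark that it follows from Theorem \ref{thm:LinearNF} together with ``smooth linearization techniques.'' Your proposal is therefore consistent with the paper's treatment: you flesh out exactly that skeleton (flow-box reduction to the fixed-point case, the linear classification, Bochner linearization of the compact part, simultaneous Sternberg/Moser-type linearization of the hyperbolic part, with non-resonance coming from the Cartan structure) and you defer the analytic core to the same citation that the paper invokes wholesale.

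Two steps you gloss over are themselves nontrivial and are precisely part of what the cited result supplies, so you should not present them as routine. First, the existence of a genuine local $\bbT^e$-action in the smooth category: before any linearization, the nonlinear generators $X_{v_{h+2j}}$ need not have periodic flows, so ``relatively compact orbit closures'' must first be upgraded to an honest compact abelian group acting near $p$ before Bochner's theorem can be applied; establishing this local torus action is one of the main results of \cite{Zung-Nondegenerate2012}, not a preliminary. Second, in your reduction to the fixed-point case, normalizing the \emph{projected} transversal action on the slice $\{x_{m+1}=\dots=x_n=0\}$ does not finish the job: the fields $X_1,\dots,X_m$ still carry drift terms of the form $\sum_{k>m} f_{ik}(x_1,\dots,x_m)\,\partial/\partial x_k$, and the claimed normal form requires these to vanish. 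Removing them needs a further fibered coordinate change $y_k = x_k + \phi_k(x_1,\dots,x_m)$, whose defining equations (e.g.\ $x_1\phi_k' = -f_{1k}$ in the simplest hyperbolic case) are smoothly solvable only because the transversal part is hyperbolic/elbolic; this is the same kind of flow-based correction carried out in the proof of Theorem \ref{thm:CanonicalCoordinate}, not a ``trivial extension in the regular directions.'' Since the theorem in \cite{Zung-Nondegenerate2012} is stated for arbitrary corank anyway, citing it directly (as the paper does) subsumes both points.
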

\begin{defn} \label{def:HE-invariant}
The couple $(h,e)$ in the above theorem is called the {\bf HE-invariant} of the action $\rho$ at $p$.
The number $e$ is called the number of {\bf elbolic} components, and $h$ is called the number of
{\bf hyperbolic} components of $\rho$ at $p$. The coordinate system $(x_1, \hdots, x_n)$ in this theorem
is called a local {\bf canonical system of coordinates}, and the basis $(v_1,\hdots, v_n)$ of $\bbR^n$
is called an {\bf adapted basis} of the action $\rho$ at $p$.
\end{defn}

Local canonical coordinate systems at a point $p$ and associated adapted bases of $\bbR^n$
are not unique, but they are related to each other by the following theorem:

\begin{thm}[Adapted bases] \label{thm:CanonicalCoordinate}
Let $(x_1, \hdots, x_n)$ be a canonical system of coordinates at a point $p$ of a nondegenerate action $\rho$
together with an associated adapted basis $(v_1,\hdots, v_n)$ of $\bbR^n$ as in Definition \ref{def:HE-invariant}.
Let $(y_1,\hdots, y_n)$ be another canonical system of coordinates at $p$ together with an associated adapted 
basis  $(w_1,\hdots, w_n)$ of $\bbR^n$. Then we have:

i) The vectors $(v_1,\hdots, v_h)$ are the same as the vectors $(w_1,\hdots, w_h)$ up to permutations, where 
$h$ is number of hyperbolic components.

ii) The $e$-tuples of pairs of vectors $((v_{h+1},v_{h+2}),\hdots, (v_{h+2e-1},v_{h+2e}))$ is also the 
same as the $e$-tuples $((w_{h+1},w_{h+2}),\hdots, (w_{h+2e-1},w_{h+2e}))$ up to permutations
and changes of sign of the type 
\begin{equation}
(v_{h+2i-1},v_{h+2i}) \mapsto (v_{h+2i-1},-v_{h+2i})
\end{equation}
(only the second vector, the one whose corresponding generator of $\rho$ is a vector field
whose flow is $2\pi$-periodic, changes sign).

iii) Conversely, if $(x_1, \hdots, x_n)$ and  $(v_1,\hdots, v_n)$ are as in Theorem \ref{thm:NormalForm}, 
and $(w_1,\hdots, w_n)$ is another basis of $\bbR^n$ which satisfies the above conditions i) and ii), then
$(w_1,\hdots, w_n)$ is the adapted basis of $\bbR^n$ for another canonical system of coordinates 
$(y_1,\hdots, y_n)$ at $p$.
\end{thm}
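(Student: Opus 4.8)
The plan is to show that the singular part $(v_1,\dots,v_m)$ of an adapted basis, with $m=h+2e$, is determined by the intrinsic linear part of $\rho$ at $p$ up to exactly the operations in (i) and (ii), and then to realize each such operation by a change of canonical coordinates. First I would isolate the intrinsic object. The isotropy subspace $\mathfrak g_p=\{v\in\bbR^n:X_v(p)=0\}$ is defined without reference to coordinates, and in the normal form of Theorem \ref{thm:NormalForm} it is spanned by $v_1,\dots,v_m$; hence both $(v_1,\dots,v_m)$ and $(w_1,\dots,w_m)$ are bases of the \emph{same} space $\mathfrak g_p$. For $v\in\mathfrak g_p$ the generator $X_v$ vanishes at $p$, so its linear part $(X_v)^{(1)}$ is a well-defined element of $gl(T_pM)$, and $v\mapsto (X_v)^{(1)}$ is an intrinsic injective homomorphism whose image $\mathfrak h$ annihilates the tangent space to the orbit through $p$ and restricts to an $m$-dimensional Cartan subalgebra on the transversal quotient. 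The crucial point is that the joint eigenspace decomposition of the complexified transversal space $\cong\bbC^m$ under $\mathfrak h$, and hence the set of eigenvalue functionals (weights) on $\mathfrak g_p$, is an invariant of $(\rho,p)$ independent of the chosen canonical coordinates.

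Next I would read off the adapted basis from this weight data. A direct computation in the standard form shows that the hyperbolic components give $h$ real weights equal to the dual functionals $v_1^*,\dots,v_h^*$, while the $j$-th elbolic block contributes a conjugate pair of non-real weights $\gamma_j,\bar\gamma_j$ with $\operatorname{Re}\gamma_j=v_{h+2j-1}^*$ (dual to the radial, non-periodic generator $x\,\partial_x+y\,\partial_y$) and $\operatorname{Im}\gamma_j=v_{h+2j}^*$ (dual to the rotational, $2\pi$-periodic generator). Real and non-real weights are intrinsically distinguished, the conjugate pairing is intrinsic, and within a pair the real part is canonical while the imaginary part is pinned down only up to the swap $\gamma_j\leftrightarrow\bar\gamma_j$. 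Dualizing, $(v_1,\dots,v_m)$ is therefore determined by the intrinsic weights up to permuting the $v_i$ for $i\le h$, permuting the pairs, and replacing $v_{h+2j}\mapsto -v_{h+2j}$ (the second, periodic vector only, since flipping $v_{h+2j-1}$ would negate the eigenvalue $+1$ and leave the standard form). Applying the same analysis to $(w_1,\dots,w_m)$ yields (i) and (ii), the asymmetry in (ii) being exactly that $\operatorname{Re}\gamma_j$ carries no sign freedom whereas $\operatorname{Im}\gamma_j$ does.

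For the converse (iii), I would realize each allowed symmetry by a coordinate change preserving the normal form: a permutation of hyperbolic indices by permuting the corresponding $x_i$, a permutation of elbolic pairs by permuting coordinate pairs, and the sign change $v_{h+2j}\mapsto -v_{h+2j}$ by $x_{h+2j}\mapsto -x_{h+2j}$, which reverses the rotation while keeping both generators of the $j$-th block in standard form. After these discrete changes one may assume $w_i=v_i$ for $i\le m$; the remaining $w_{m+1},\dots,w_n$ are then straightened by a flow-box argument, using the joint flow of the commuting transverse generators $X_{w_{m+1}},\dots,X_{w_n}$ to define transverse coordinates $y_{m+1},\dots,y_n$, while their commutativity with the singular generators (which are $y$-independent in the transverse directions and standard on the central slice) keeps the latter standard. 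This produces the canonical system $(y_1,\dots,y_n)$ for which $(w_1,\dots,w_n)$ is adapted.

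The main obstacle is the forward step: extracting the adapted vectors from the intrinsic weight decomposition and, in particular, justifying the precise sign rule in (ii)---that only the $2\pi$-periodic generator of each elbolic pair is free to change sign. Once this coordinate-free description is in place, the constructions in (iii) are routine.
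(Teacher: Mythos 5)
Your proposal is correct and follows essentially the same route as the paper: the forward direction rests on the coordinate-invariance of the eigenvalues of the linear parts of the generators $X_w$ at $p$ (the paper expands $w$ in the basis $(v_1,\hdots,v_n)$ and matches eigenvalues of $X_w$ against the normal form directly, which is your weight argument in dual form), and the converse is proved exactly as in the paper, by realizing the discrete symmetries through coordinate changes and then rectifying the transverse generators by a flow-box argument along the central slice, using commutativity to keep the singular generators standard. The only cosmetic differences are that the paper realizes the sign change $(v_{h+2i-1},v_{h+2i})\mapsto(v_{h+2i-1},-v_{h+2i})$ by swapping the coordinates $x_{h+2i-1}$ and $x_{h+2i}$ rather than by your reflection $x_{h+2i}\mapsto -x_{h+2i}$ (both work), and that it first normalizes $w_{h+2e+1},\hdots,w_n$ by a linear substitution so that their transverse parts form the standard basis, undoing this by a final linear change of the transverse coordinates.
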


\begin{proof}
Since $(v_1,\hdots, v_n)$ is a basis of $\bbR^n$, for any $w \in \bbR^n$ we can write 
$w = \sum_{i=1}^n \alpha_iv_i$, and hence
\begin{eqnarray*}
X_w &=& \sum_{i=1}^n \alpha_iY_i\\
&=&  \sum_{i=1}^h \alpha_ix_i\frac{\partial}{\partial x_i}
+\sum_{i=1}^e \bigg[\alpha_{h+2i-1}\big(x_{h+2i-1}\frac{\partial}{\partial x_{h+2i-1}}+ x_{h+2i}\frac{\partial}{\partial x_{h+2i}}\big) \\
 &+& \alpha_{h+2i}\big(x_{h+2i-1}\frac{\partial}{\partial x_{h+2i}} - x_{h+2i}\frac{\partial}{\partial x_{h+2i-1}}\big)\bigg]
+\sum_{i=h+2e+1}^n \alpha_i\frac{\partial}{\partial x_i}.
\end{eqnarray*}
If $X_w = y_1\frac{\partial}{\partial y_1}$ in some new coordinate system, then in particular
$X_w(p) = 0$, therefore $\alpha_i = 0 \quad \forall i \geq h +2e+1$.
Moreover, $X_w$ has only one non-zero eigenvalue, which is equal to $1$, and all the other eigenvalues 
(counting with multiplicities) are 0. On the other hand, $\alpha_1,\hdots, \alpha_h $ 
and $\alpha_{h+2i-1} \pm \sqrt{-1}\alpha_{h+2i}$  are eigenvalues of $X_w$. From there it is obvious that
there is an index $j \leq  h$ such that $\alpha_j = 1$, and all the other $\alpha_i$ are 0.
In other words, we have $w=v_j$.

ii) The proof of ii) is absolutely similar to the proof of i).

Remark that there are no conditions on $w_{h+2e+1}, \hdots, w_n$ except that they form together with 
$w_1, \hdots, w_{h+2e}$ a basis of $\bbR^n$.

iii) As for the converse statement, assume that the basis $w_1, \hdots, w_n$ of $\bbR^n$ satisfies the above
conditions i) and ii).

Notice that the change $(v_{h+1},v_{h+2}) \mapsto (v_{h+1},-v_{h+2})$ can be achieved by the permutation 
$(x_{h+1},x_{h+2}) \mapsto (x_{h+2},x_{h+1})$ of the coordinates $x_{h+1}$ and $x_{h+2}$. So, without 
loss of generality, we can assume now that $w_i = v_i$ for all $i= 1, \hdots, h + 2e$. We can write
\begin{equation}
w_{h+2e+i}= \sum_{j=1}^{h+2e} a_{ij}v_j + \sum_{j=1}^{n-h-2e} b_{ij}v_{h+2e+j}
\end{equation} 
for each $i = 1, \hdots, n-h-2e$, where $(b_{ij})$ is an invertible matrix.

Putting $\widetilde{w}_{h+2e+i}= \sum_{j=1}^{n-h-2e} c_{ij}w_{h+2e+j}$ where $(c_{ij})$ is the inverse matrix
of $(b_{ij})$, we have
\begin{equation}
\widetilde{w}_{h+2e+i}= \sum_{j=1}^{h+2e} \widetilde{a}_{ij}v_j +v_{h+2e+i}
\end{equation}
for some $\widetilde{a}_{ij}$, and $w_{h+2e+i} = \sum_{j=1}^{n-h-2e} b_{ij}\widetilde{w}_{h+2e+j}$.

The vector fields 
\begin{equation}
X_{\widetilde{w}_{h+2e+i}} = \frac{\partial}{ \partial x_{h+2e+i}} + \hdots
\end{equation}
are regular vector fields in a neighborhood of $p$, which commute with each other.

Define the new functions $y_1, \hdots, y_{h+2e}$ in the neighborhood of $p$ as follows:
\begin{equation}
y_i(q) = x_i(\varphi_{X_{\widetilde{w}_{h+2e+1}}}^{-x_{h+2e+1}(q)} \circ \varphi_{X_{\widetilde{w}_{h+2e+2}}}^{-x_{h+2e+2}(q)}  \circ \hdots \circ \varphi_{\widetilde{w}_{w_n}}^{-x_n(q)} (q)).
\end{equation}

In other words, move $q$ by the flows of $X_{\widetilde{w}_{h+2e+i}}, \hdots, X_{\widetilde{w}_n}$ to a point $q'$ on the subspace 
$\{x_{h+2e+1} = \hdots = x_n = 0\}$, and then put $y_i(q) = x_i(q')$.

Then in the new coordinate system $(y_1, \hdots,y_{h+2e}, x_{h+2e+1}, \hdots,x_n)$ the vector fields 
$X_{w_1}, \hdots, X_{w_{h+2e}}$ still have the same expression as before:
\begin{equation}
\begin{cases}
X_{w_i}  = y_i\frac{\partial}{\partial y_i} \quad (\forall i = 1, \hdots, h) \\
X_{w_{h+2i-1}}= y_{h+2i-1}\frac{\partial }{\partial y_{h+2i-1}} 
                  +  y_{h+2i}\frac{\partial }{\partial y_{h+2i}}\\
X_{w_{h+2i}}= y_{h+2i-1}\frac{\partial }{\partial y_{h+2i}} 
               -  y_{h+2i}\frac{\partial }{\partial y_{h+2i-1}} \quad \forall \quad i = 1,\hdots, e.
\end{cases}
\end{equation}
Moreover, in this new coordinate system $(y_1, \hdots,y_{h+2e}, x_{h+2e+1}, \hdots,x_n)$ the 
vector fields $X_{\widetilde{w}_{h+2e+i}}$ are rectified, i.e. we have 
$$X_{\widetilde{w}_{h+2e+i}} = \frac{\partial}{\partial x_{h+2e+i}} \quad \forall i = 1, \hdots, n-h-2e.$$
In this coordinate system, the vector fields $X_{w_{h+2e+i}}$ are also constant:
$$X_{w_{h+2e+i}} = \sum_{j=1}^{n-h-2e} b_{ij}\frac{\partial}{ \partial x_{h+2e+i}}.$$
In order to write $X_{w_{h+2e+i}} = \frac{\partial}{ \partial y_{h+2e+i}}$, it remains to make
the following linear change of the coordinates $(x_{h+2e+1}, \hdots,x_n)$:
$$y_{h+2e+i} = \sum_{j=1}^{n-h-2e}c_{ij}x_{h+2e+i}.$$ 
The new coordinate system $(y_1, \hdots,y_n)$ is now a canonical coordinate system associated 
to the basis $(w_1, \hdots,w_n)$.
\end{proof}
\begin{remark}
 The fact that the last vectors (from $w_{h+2e+1}$ to $w_n$) in an adapted basis can be arbitrary 
(provided that they form together with $w_1, \hdots, w_{h+2e}$ a basis of $\bbR^n$)
is very important in the global picture, because it allows us to glue different local canonical 
pieces together in a flexible way.
\end{remark}

As a simple corollary of the local normal form theorem, we have the following preliminary description
of the set of all singular points of a nondegenerate action:
\begin{cor} 
Denote by 
\begin{equation} 
\cS = \{x \in M^n \ |\ \rank x < n\}
\end{equation} 
the set of singular points of a nondegenerate action $\rho: \bbR^n \times M^n \to M^n$. Then we have:

i) $\cS$ is a stratified manifold, where the smooth strata are 
\begin{equation} 
\cS_{h,e} = \{x \in M^n \ |\ \text{HE-invariant of } x \text{ is } (h,e)\}
\end{equation} 
given by the HE-invariant.

ii) $\dim \cS_{h,e} = n -h-2e$ if $\cS_{h,e} \not= \emptyset$.

iii) If $\cS \not= \emptyset$ then $\dim \cS  = n-1$ or $\dim \cS  = n-2$.
When there are hyperbolic singularities (points with $h>0$) then $\dim \cS  = n-1$, and 
when there are only elbolic singularities ($h=0$ for every point) then  $\dim \cS  = n-2$. 
\end{cor}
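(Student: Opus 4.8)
The whole statement follows by working inside one canonical coordinate chart and reading the rank directly off the explicit generators of Theorem \ref{thm:NormalForm}. The plan is: fix an arbitrary singular point $p$ with HE-invariant $(h,e)$ and corank $m = h+2e$, choose a canonical coordinate system $(x_1,\dots,x_n)$ on a neighborhood $U$ of $p$ with an adapted basis, so that the generators $Y_1,\dots,Y_n$ take the displayed form throughout $U$ (and not only at $p$). Since the $Y_i$ differ from the $X_i$ by an invertible linear change of the acting $\bbR^n$, for every $q \in U$ we have $\rank q = \dim \mathrm{Span}_\bbR(Y_1(q),\dots,Y_n(q))$, which I will compute block by block. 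The one point that requires care — and which I expect to be the main obstacle — is to verify that the HE-invariant of a point $q$ near $p$ can itself be read off this single normal form, i.e. that the hyperbolic and elbolic blocks do not get mixed when one passes to the reduced transversal action at $q$.

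First I would evaluate the generators at $q = (x_1,\dots,x_n)$. A hyperbolic generator $Y_i = x_i\partial_{x_i}$ ($1 \le i \le h$) vanishes at $q$ iff $x_i = 0$ and otherwise points along $\partial_{x_i}$; an elbolic pair $(Y_{h+2j-1},Y_{h+2j})$ vanishes iff $x_{h+2j-1} = x_{h+2j} = 0$ and, when this pair of coordinates is nonzero, the two generators are independent (the relevant $2\times2$ matrix has determinant $x_{h+2j-1}^2 + x_{h+2j}^2 > 0$) and span the corresponding coordinate $2$-plane; the regular generators $Y_k = \partial_{x_k}$ ($k>m$) are everywhere independent. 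As the three groups occupy disjoint coordinate directions, writing $h'(q) = \#\{i\le h : x_i = 0\}$ and $e'(q) = \#\{j \le e : (x_{h+2j-1},x_{h+2j}) = (0,0)\}$, one gets $\rank q = n - h'(q) - 2e'(q)$, so the corank of $q$ is $h'(q)+2e'(q)$. Moreover the generators that vanish at $q$ have linear parts there that are block-diagonal of pure hyperbolic, respectively elbolic, type, so by Theorem \ref{thm:NormalForm} (applied at $q$) the HE-invariant of $q$ is exactly $(h'(q),e'(q))$; this settles the delicate point flagged above, precisely because the normal form is valid on all of $U$.

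It follows that $\cS_{h',e'} \cap U$ consists of the $q \in U$ at which exactly $h'$ of $x_1,\dots,x_h$ and exactly $e'$ of the coordinate pairs vanish. Fixing which indices vanish cuts out a coordinate submanifold of dimension $n - h' - 2e'$, the remaining inequalities being open conditions; near any point of $\cS_{h',e'}$ only one such piece occurs, so $\cS_{h',e'}$ is a smooth manifold of dimension $n - h' - 2e'$. This gives (ii) and the manifold part of (i). For the frontier condition in (i), the same coordinate picture shows that the closure of a piece is obtained by letting further coordinates vanish, which can only increase $h'$ or $e'$ and hence strictly lowers the dimension; thus the frontier of each stratum is a union of strata of strictly smaller dimension.

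Finally, (iii). A point is singular iff $h'(q) + 2e'(q) \ge 1$, so every stratum has dimension $\le n-1$. If some singular point has $h > 0$, then inside its chart I set one hyperbolic coordinate to zero and keep all the others nonzero, obtaining a point of invariant $(1,0)$; hence $\cS_{1,0} \neq \emptyset$ realises the maximal dimension $n-1$ and $\dim \cS = n-1$. If instead every singular point is elbolic, so $h = 0$ everywhere, the minimal positive corank is $2$, attained on $\cS_{0,1}$ (set a single elbolic pair to zero, all else nonzero), and no stratum can beat it; therefore $\dim \cS = n-2$.
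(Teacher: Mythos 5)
Your proposal is correct and is exactly the argument the paper intends: the corollary is stated as ``a simple corollary of the local normal form theorem,'' and the paper's proof is literally ``The proof is straightforward,'' i.e.\ read the rank and HE-invariant of nearby points off the normal form of Theorem \ref{thm:NormalForm}, which is what you do. Your careful verification that the HE-invariant of a nearby point $q$ is $(h'(q),e'(q))$ (via the isotropy algebra at $q$ and the block-diagonal linear parts) fills in the one detail the paper leaves implicit, and it is done correctly.
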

\begin{proof}
The proof is straightforward.
\end{proof}

\begin{defn} \label{def:AssociatedVector}
 1) If $\cO_p$ is a singular orbit of corank 1 of a nondegenerate action $\rho: \bbR^n \times M^n \to M^n$,
i.e. the HE-invariant of $\cO_p$ is $(1,0)$, 
then the unique vector $v \in \bbR^n$ such that the corresponding generator $X_v$ of $\rho$ can be written as 
 $X_v = x\frac{\partial}{ \partial x}$ near each point of $\cO_p$ is called the {\bf associated vector} of $\cO_p$.

2) If $\cO_p$ is a singular orbit of HE-invariant $(0,1)$ (i.e. corank 2 transversally elbolic) then the couple of vectors 
$(v_1, \pm v_2)$ in $\bbR^n$, where $v_2$ is determined only up to a sign, such that $X_{v_1}$ and $X_{v_2}$ can be locally written as 
$$\begin{cases}
   X_{v_1} = x\frac{\partial}{ \partial x} + y\frac{\partial}{ \partial y}\\
 X_{v_2} = x\frac{\partial}{ \partial y} - y\frac{\partial}{ \partial x}
\end{cases}$$
is called the {\bf associated vector couple} of $\cO_p$.

\end{defn}

\subsection{Local automorphism groups and the reflection principle}

\begin{thm}[Local automorphism groups] \label{thm:LocalAutomorphism}
 Let $p$ be a nondegenerate singular point of HE-invariant $(h,e)$ and 
rank $r$ of an action $\rho: \bbR^n \times M^n \to M^n$ $(n= h + 2e + r)$. Then the group 
of germs of local isomorphisms (= local diffeomorphisms which preserve the action) which fix the point $p$ 
is isomorphic to  $\bbT^e \times \bbR^{e+h} \times (\bbZ_2)^h$. The part $\bbT^e \times \bbR^{e+h}$
of this group comes from the action $\rho$ itself (internal automorphisms given by the action of the isotropy 
group of $\rho$ at $p$).
\end{thm}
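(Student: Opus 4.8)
The plan is to fix a canonical coordinate system as in Theorem \ref{thm:NormalForm}, so that the generators $Y_i$ take the stated explicit form with $m=h+2e$ and $r=n-m$ the rank, and to read ``local isomorphism fixing $p$'' as a germ of a diffeomorphism $\phi$ with $\phi(p)=p$ and $\phi_{\ast}Y_i=Y_i$ for all $i$ (equivalently $\phi\circ\rho(v,\cdot)=\rho(v,\cdot)\circ\phi$). First I would peel off the $r$ regular directions. Since $\phi$ commutes with the flows of $Y_k=\partial/\partial x_k$ for $k>m$, which are translations, each component $\phi_l$ with $l\le m$ is independent of $x_{m+1},\dots,x_n$, while $\phi_k=x_k+g_k(x_1,\dots,x_m)$ for $k>m$. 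Commuting in addition with the hyperbolic and elbolic flows forces each $g_k$ to be invariant under all the scalings $x_i\mapsto e^tx_i$ and the scalings--rotations $z_j\mapsto e^{(\alpha+i\beta)}z_j$ (writing $z_j=x_{h+2j-1}+ix_{h+2j}$); by smoothness such an invariant germ is constant, and since $g_k(0)=0$ we get $g_k\equiv0$. Hence $\phi(x)=(\psi(x'),x_{m+1},\dots,x_n)$ with $x'=(x_1,\dots,x_m)$, and conversely any $\psi$ extended by the identity is a local isomorphism; so the problem reduces to classifying germs $\psi\colon(\bbR^m,0)\to(\bbR^m,0)$ commuting with the reduced transversal action on $\bbR^m=\bbR^h\times\bbC^e$.

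Next I would determine the linear part. Differentiating $\psi_{\ast}Y_i=Y_i$ at $0$ shows $L:=d\psi_0$ commutes with every linear vector field $Y_i$, hence lies in the centralizer in $GL(m,\bbR)$ of the Cartan subalgebra they span. This centralizer is $T=(\bbR^{\ast})^h\times(\bbC^{\ast})^e$, acting by independent real rescalings on the hyperbolic axes and complex rescalings on the elbolic planes. Writing $L=R_\eta\circ L_0$, where $R_\eta\in(\bbZ_2)^h$ records the signs on the hyperbolic axes and $L_0\in T_0:=(\bbR_{>0})^h\times(\bbC^{\ast})^e$, the germ $\chi:=L^{-1}\circ\psi$ is again an automorphism with $d\chi_0=\mathrm{id}$.

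The heart of the argument is to prove $\chi=\mathrm{id}$. The open dense set $U=(\bbR^{\ast})^h\times(\bbC^{\ast})^e$ splits into the $2^h$ orbits $U_\epsilon$ of the local group $G=T_0$, indexed by the sign patterns $\epsilon\in\{\pm\}^h$ of the hyperbolic coordinates, and $G$ acts simply transitively on each $U_\epsilon$. Because $d\chi_0=\mathrm{id}$, for small $x\in U_\epsilon$ we have $\chi(x)=x+o(|x|)$, so $\chi(U_\epsilon)\subset U_\epsilon$; a $G$-equivariant self-map of the torsor $U_\epsilon$ is translation by a unique $g_\epsilon\in G$, i.e.\ the restriction of the linear map $x\mapsto g_\epsilon\cdot x$, so $d\chi\equiv g_\epsilon$ on $U_\epsilon$. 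Letting $x\to0$ within $U_\epsilon$ and using continuity of $d\chi$ gives $g_\epsilon=d\chi_0=\mathrm{id}$, whence $\chi=\mathrm{id}$ on each $U_\epsilon$, on the dense set $U$, and finally everywhere by continuity. Thus $\psi=L\in T$, so every local isomorphism of the reduced action is the germ of a linear map in $T$. I expect this gluing across the singular strata to be the main obstacle: the closure of a top orbit $U_\epsilon$ is only an orthant, not a full neighborhood, so one cannot conclude on a single orbit by density, and it is precisely smoothness (continuity of the differential at the fixed point), rather than mere continuity of $\phi$, that forces the orbitwise-constant translation elements $g_\epsilon$ to vanish.

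Finally I would identify the group. The reflections $R_i\colon x_i\mapsto-x_i$ commute with $T_0$ and with each other, so $T=T_0\times(\bbZ_2)^h$, and $T_0=(\bbR_{>0})^h\times(\bbC^{\ast})^e\cong\bbR^h\times(\bbR\times S^1)^e\cong\bbR^{e+h}\times\bbT^e$. To see that $T_0$ is exactly the internal part, I compute the isotropy group of $\rho$ at $p$: from the normal form it is $\{v:\alpha_{m+1}=\dots=\alpha_n=0\}$, and the time-$1$ flows $\rho(v,\cdot)$ realize precisely the positive rescalings $x_i\mapsto e^{\alpha_i}x_i$ and the complex rescalings $z_j\mapsto e^{(\alpha+i\beta)}z_j$, i.e.\ all of $T_0$, with kernel the lattice $2\pi\bbZ^e$ accounting for the $\bbT^e$ factor. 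Hence the internal automorphisms form $T_0\cong\bbT^e\times\bbR^{e+h}$, while the remaining $(\bbZ_2)^h$ consists of the hyperbolic reflections underlying the reflection principle, giving the asserted isomorphism with $\bbT^e\times\bbR^{e+h}\times(\bbZ_2)^h$.
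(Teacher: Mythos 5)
Your proposal is correct and follows essentially the same route as the paper's proof: identify the internal automorphisms coming from the isotropy at $p$ and the reflections $(\bbZ_2)^h$, then prove completeness by observing that on each open corner an action-preserving germ must coincide with a single element of the acting group (the torsor argument), and that smoothness at $p$ forces these corner-wise elements to agree with the differential at $p$. The only cosmetic differences are your preliminary reduction to the transversal action and your normalization via $d\chi_0=\mathrm{id}$ and the centralizer $(\bbR^{\ast})^h\times(\bbC^{\ast})^e$, where the paper instead composes with reflections and an internal map so as to fix a point of the positive corner; your continuity-of-the-differential step is precisely the rigorous form of the paper's terse claim that a nontrivial $\rho(v,\cdot)$ on another corner would contradict differentiability of $\varphi$ at $p$.
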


\begin{proof}
Using the local normal form theorem, it is clear that for any $w \in Z_\rho(p)$, where
\begin{equation} 
Z_\rho(p) = \{ v \in \bbR^n \ |\ \rho(v,p) = p\}
\end{equation} 
denotes the isotropy group of $\rho$ at $p$, the map $\rho(w,.)$ fixes the point $p$ and
preserves the action, and (the germ of) this map is identity if and only if $w$ belongs 
to the isotropy group
\begin{equation}
Z_\rho(\cU) = \{ v \in \bbR^n \ |\ \rho(v,.) = Id_{\cU}\}
 \end{equation} 
 of $\rho$ in a neighborhood $\cU$ of $p$.
 Thus we have a natural inclusion of 
\begin{equation}
Z_\rho(p)/Z_\rho(\cU)\cong \bbT^e \times \bbR^{e+h} 
 \end{equation} 
into the group of germs of local automorphisms which fix $p$.

Let $(x_1, \hdots,x_n)$ be a canonical coordinate system at $p$
with respect to $\rho$. Then for each $i = 1, \hdots, h$, the involution 
\begin{equation}
\sigma_i : (x_1, \hdots,x_i, \hdots,x_n) \mapsto (x_1, \hdots,-x_i, \hdots,x_n)  
\end{equation} 
is also a local automorphism of the action and $\sigma_i(p) = p$.

The involutions $\sigma_i$ commute with each other and generate an Abelian group isomorphic 
to $(\bbZ_2)^h$. The elements of this group do not come from $Z_\rho(p)/Z_\rho(\cU)$,
and things commute, so together we get a group isomorphic to $\bbT^e \times \bbR^{e+h}\times (\bbZ_2)^h$
of germs of automorphisms.

It remains to show that any (germ of) local automorphism which fixes $p$ is an element of this group.
Indeed, let $\varphi : (\cU,p) \to (\cU,p)$ be a local diffeomorphism which fixes $p$ and
preserves the action. The corner 
\begin{equation}
\cU_+ = \{(x_1, \hdots,x_n) \in \cU \ |  \ x_1 > 0, \hdots, x_h>0\}
\end{equation} 
is a local regular orbit of $\rho$. If $\varphi $ does not preserve this corner, i.e. it sends this corner
to another corner, say for example $\{(x_1, \hdots,x_n) \in \cU \ |\ x_1 < 0, x_2 < 0, x_3 > 0, \hdots, x_h>0\}$,
then $\sigma_1\circ \sigma_2 \circ   \varphi $ preserves the positive corner. To prove that $\varphi $ belongs to the 
above group is equivalent to prove that $\varphi $ composed with some involutions 
$\sigma_i \quad (i = 1, \hdots, h)$ belongs to the above group. 
So without loss of generality we can assume that $\varphi $ preserves the positive corner.

Let $z \in \cU_+$ be a point in the positive corner near $p$. Then $\varphi(z) \in \cU_+$, which implies the existence 
of an element $w \in \bbR^n$ such that $\rho(w,z) = \varphi(z)$. 

Put $\widehat{\varphi} = \rho(-w,.)\circ \varphi $. Then $\widehat{\varphi}(z) = z$. Since $\widehat{\varphi}$ is an automorphism,
it implies that $\widehat{\varphi}$ is identity on the whole corner $\cU_+$. ($\forall y \in \cU_+$ we can write $y =  \rho(v,z)$
and hence $\widehat{\varphi}(y) = \widehat{\varphi}(\rho(v,z)) = \rho(v,\widehat{\varphi}(z))= \rho(v,z) = y$).
Now, for any element $z'$ in any other corner of $\cU$, we will also have $\varphi(z') =z'$, because if not we would have 
$\widehat{\varphi} = \rho(v,.)$ is a linear map different from identity in that corner, which would imply that $\varphi $
is \emph{not} differentiable at $p$.

Thus $\widehat{\varphi}$ is identity, not only in the corner $\cU_+$, but in a neighborhood of $p$, and we can write 
$\varphi = \rho(w,.)$ in a neighborhood of $p$. Remark that $w \in Z_\rho(p)$, otherwise $\varphi $ would not fix $p$.
\end{proof}

The finite automorphism group $(\bbZ_2)^k$ in the above theorem acts not only locally in the neighborhood 
of a singular point $p$ of HE-invariant $(h,e)$, but also in the neighborhood of a smooth closed manifold 
of dimension $n-h-2e$ which contains $p$. More precisely, we have the following {\bf reflection principle}, 
which is somewhat similar to the Schwartz reflection principle in complex analysis:

\begin{thm}[Reflection principle] \label{thm:Reflection}
a) Let $p$ be a point of HE-invariant $(1,0)$ of a nondegenerate $\bbR^n$-action $\rho$ 
on a manifold $M^n$ without boundary. 
Denote by $v \in \bbR^n$ the associated vector of $p$ (i.e. of the orbit $\cO_p$) as in 
Definition \ref{def:AssociatedVector}. Put
\begin{equation} 
\cN_v = \{q \in M^n \ |\ X_v(q) = 0 \text{ and } X_v \text{ can be written as  } 
x_1\frac{\partial}{\partial x_1} \text{ near }  q\}.
\end{equation} 
Then $\cN_v$ is a smooth embedded hypersurface of dimension $n-1$ of $M^n$ 
(which is not necessarily connected), and there is a unique non-trivial involution 
$\sigma_v : \cU(\cN_v) \to \cU(\cN_v)$ from a neighborhood of $\cN_v$ to itself which preserves the 
action $\rho$ and which is identity on $\cN_v$.

b) If the HE-invariant of $p$ is $(h,0)$ with $h>1$, then
we can write 
\begin{equation} 
p \in \cN_{v_1,\hdots,v_h} = \cN_{v_1}\cap \hdots \cap \cN_{v_h}
\end{equation} 
where $\cN_{v_i}$ are defined as in a), $(v_1,\hdots,v_h)$ is a free family of vectors in $\bbR^n$,
the intersection $\cN_{v_1}\cap \hdots \cap \cN_{v_h}$ is transversal and $\cN_{v_1,\hdots,v_h}$
is a closed smooth submanifold of codimension $h$ in $M$. The involutions $\sigma_{v_1},\hdots, \sigma_{v_h} $
generate a group of automorphisms of $(\cU(\cN_{v_1,\hdots,v_h}), \rho)$ isomorphic to $(\bbZ_2)^h$.
\end{thm}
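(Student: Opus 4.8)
The plan is to build the involution $\sigma_v$ locally from the normal form and then show the local pieces are canonical enough to glue along $\cN_v$. First I would establish that $\cN_v$ is a smooth embedded hypersurface. By the local normal form theorem (Theorem \ref{thm:NormalForm}) and the description of adapted bases (Theorem \ref{thm:CanonicalCoordinate}), at any point $q$ with $X_v(q)=0$ and $X_v$ of the form $x_1\frac{\partial}{\partial x_1}$, the set where $X_v$ vanishes and has this hyperbolic form is precisely $\{x_1 = 0\}$ in canonical coordinates; part (i) of Theorem \ref{thm:CanonicalCoordinate} guarantees that the vector $v$ and hence the defining hyperplane are intrinsic, so these local slices $\{x_1=0\}$ patch into a globally well-defined smooth hypersurface $\cN_v$ of dimension $n-1$. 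I would note that $\cN_v$ need not be connected, and that the corank-$1$ condition forces the local picture to be exactly one hyperbolic component plus a regular transverse $\bbR^{n-1}$.

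Next I would define the candidate involution. In canonical coordinates near a point of $\cN_v$, set $\sigma_v : (x_1,x_2,\hdots,x_n)\mapsto(-x_1,x_2,\hdots,x_n)$; this is exactly the generator $\sigma_1$ appearing in the proof of Theorem \ref{thm:LocalAutomorphism}. It visibly preserves $X_v = x_1\frac{\partial}{\partial x_1}$ and each regular generator $\frac{\partial}{\partial x_k}$, hence preserves $\rho$, is an involution, and restricts to the identity on $\{x_1=0\}=\cN_v$. The content is that this local definition is independent of the chosen canonical coordinates, so the local involutions agree on overlaps and assemble into a single $\sigma_v$ on a neighborhood $\cU(\cN_v)$.

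The main obstacle, as I see it, is precisely this well-definedness and gluing step. Two canonical coordinate systems $(x_i)$ and $(y_i)$ near a point of $\cN_v$ differ by a local automorphism composed with a coordinate change of the type classified in Theorem \ref{thm:LocalAutomorphism}, whose fixing-$p$ part is $\bbT^e\times\bbR^{e+h}\times(\bbZ_2)^h$; here $e=0$ and $h=1$, so the relevant reflection factor is a single $\bbZ_2=\{\mathrm{id},\sigma_1\}$. I would argue that the flip $x_1\mapsto -x_1$ is characterized intrinsically as the unique nontrivial element of the local automorphism group that (a) fixes $\cN_v$ pointwise and (b) is not an internal automorphism coming from $Z_\rho(p)$ (the internal ones act trivially on the quotient by the identity component and cannot swap the two local corners $\{x_1>0\}$ and $\{x_1<0\}$). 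Since both the internal $\bbR$-part and the reflection commute, and the internal part fixes $\cN_v$, the reflection is forced to coincide on overlaps up to an internal automorphism that is the identity on $\cN_v$ — and such an internal automorphism must be trivial because it fixes a point of $\cN_v$ while $Z_\rho(p)/Z_\rho(\cU)$ acts freely transversally. This pins down $\sigma_v$ uniquely and yields the gluing, giving existence and uniqueness simultaneously.

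For part (b), with HE-invariant $(h,0)$ and $h>1$, I would apply part (a) to each associated vector $v_1,\hdots,v_h$ separately. The local normal form writes $X_{v_i}=x_i\frac{\partial}{\partial x_i}$, so $\cN_{v_i}=\{x_i=0\}$ locally, the $h$ hypersurfaces meet transversally in the codimension-$h$ submanifold $\{x_1=\hdots=x_h=0\}$, and the family $(v_1,\hdots,v_h)$ is free by Theorem \ref{thm:CanonicalCoordinate}(i). The commuting involutions $\sigma_{v_i}:(x_i)\mapsto(-x_i)$ are exactly the generators $\sigma_i$ from the proof of Theorem \ref{thm:LocalAutomorphism}, so they generate a group isomorphic to $(\bbZ_2)^h$ on $\cU(\cN_{v_1,\hdots,v_h})$, completing the argument.
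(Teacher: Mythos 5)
Your overall route is exactly the paper's own (its proof is literally a one-line citation of Theorems \ref{thm:NormalForm}, \ref{thm:CanonicalCoordinate} and \ref{thm:LocalAutomorphism}), and your construction of $\cN_v$, the local flip, and part (b) are fine. However, the justification of the one step that actually needs an argument --- that the locally defined flips are canonical and glue --- rests on two false assertions. Near a point $p$ of $\cN_v$, \emph{every} germ in the automorphism group of Theorem \ref{thm:LocalAutomorphism} fixes $\cN_v$ pointwise: the internal germs are the flows $\rho(tv,\cdot)$, which read $(x_1,x')\mapsto(e^t x_1,x')$ in canonical coordinates and hence are the identity on $\cN_v=\{x_1=0\}$, precisely because $X_v$ vanishes identically on $\cN_v$. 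Consequently your intrinsic characterization --- ``the unique nontrivial element that (a) fixes $\cN_v$ pointwise and (b) is not internal'' --- does not pick out a unique element: the whole one-parameter family $\sigma_1\circ\rho(tv,\cdot):(x_1,x')\mapsto(-e^t x_1,x')$, $t\in\bbR$, satisfies (a) and (b). For the same reason, the claim you use to kill the residual ambiguity --- ``an internal automorphism that is the identity on $\cN_v$ must be trivial because it fixes a point of $\cN_v$'' --- is false: $\rho(tv,\cdot)$ fixes all of $\cN_v$ for every $t$, yet is nontrivial whenever $t\neq 0$.

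The missing ingredient is involutivity (equivalently, a derivative computation), and with it the step closes in one line. If $\sigma$ and $\sigma'$ are two candidate reflections near a point of $\cN_v$ (say, the flips of two canonical coordinate systems), both swap the two local sides of $\cN_v$ while internal germs preserve them, so by Theorem \ref{thm:LocalAutomorphism} we have $\sigma=\rho(tv,\cdot)\circ\sigma'$ for some $t\in\bbR$. Since $\sigma'$ preserves $\rho$ it commutes with $\rho(tv,\cdot)$, hence $\mathrm{id}=\sigma^2=\rho(2tv,\cdot)\circ(\sigma')^2=\rho(2tv,\cdot)$; as $\rho(2tv,\cdot)$ scales $x_1$ by $e^{2t}$, this forces $t=0$, i.e. $\sigma=\sigma'$. (Alternatively: both flips have differential $-\mathrm{id}$ on the unstable line of $X_v$ and $+\mathrm{id}$ on $T\cN_v$ at points of $\cN_v$, whereas $d\rho(tv,\cdot)$ is $\mathrm{diag}(e^t,1,\dots,1)$ there.) This simultaneously gives coordinate-independence, the gluing along overlaps, and the uniqueness of $\sigma_v$ among involutions; with this repair the rest of your argument, including part (b), goes through.
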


\begin{proof}
It follows easily from Theorem \ref{thm:NormalForm}, Theorem \ref{thm:CanonicalCoordinate} and 
Theorem \ref{thm:LocalAutomorphism}.
\end{proof}

\subsection{Nondegenerate singular orbits}

Consider an orbit $\cO_z = \{ \rho(t,z) \ |\ \ t \in \bbR^n \}$ though a point $z \in M^n$
of a given $\bbR^n$ action $\rho$. Since $\cO_z$ is a quotient of $\bbR^n$, 
it is diffeomorphic to $\bbR^k \times \bbT^l$ for some
nonnegative integers $k,l \in \bbZ_+$. The couple $(k,l)$ will be called the {\bf RT-invariant}
of $z$ or of the orbit $\cO_z$. The sum $k+l$ is the dimension of $\cO_z$.

\begin{defn} \label{defn:HERT-invariant}
The {\bf HERT-invariant} of an orbit $\cO_q$ or a singular point $q$ on it is
the quadruple $(h,e,r,t)$, where $h$ is the number of transversal hyperbolic components, $e$ is the
number of transversal elbolic components, and $\bbR^r \times \bbT^t$ is the diffeomorphism type of the orbit.
\end{defn}
An orbit is compact if and only if $r = 0$, in which case it is a torus of dimension $t$.

\begin{prop}[Semi-continuity of the T-invariant] \label{prop:T-semicontinuity}
If $\cK \cong \bbR^{r(\cK)} \times \bbT^{t(\cK)}$ and $\cH \cong \bbR^{r(\cH)} \times \bbT^{t(\cH)}$ 
 are two different orbits of $(M^n,\rho)$ such that $\cK \subset \bar \cH$, then $t(\cK) \leq t(\cH)$. 
\end{prop}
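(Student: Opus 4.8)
The plan is to translate the statement into the language of isotropy groups and then compare their lattice parts. For a point $z$ the isotropy group $Z_\rho(z)=\{v\in\bbR^n\mid\rho(v,z)=z\}$ is a closed subgroup of $\bbR^n$, constant along the orbit $\cO_z$ (since $\bbR^n$ is abelian), and $\cO_z\cong\bbR^n/Z_\rho(z)$. Writing $Z_\rho(z)=V\oplus\Lambda$ with $V$ its identity component and $\Lambda\cong\bbZ^{t}$ a complementary lattice, one reads off $\cO_z\cong\bbR^{r}\times\bbT^{t}$, so $t(\cO_z)=\mathrm{rank}\,\Lambda$. Thus, writing $Z_\cK,Z_\cH$ for the (orbit-constant) isotropy groups of $\cK,\cH$, with identity components $V_\cK,V_\cH$ and lattice parts $\Lambda_\cK,\Lambda_\cH$, the goal is $\mathrm{rank}\,\Lambda_\cK\le\mathrm{rank}\,\Lambda_\cH$. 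First I would record the easy direction coming from continuity of $\rho$: if $z_m\in\cH$ with $z_m\to p\in\cK$, then for any fixed $v\in Z_\cH=Z_\rho(z_m)$ one has $\rho(v,p)=\lim\rho(v,z_m)=p$, so $Z_\cH\subseteq Z_\cK$; passing to identity components gives $V_\cH\subseteq V_\cK$.

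The heart of the matter is the converse assertion that every direction of $Z_\cK$ is already ``seen'' by $\cH$ modulo $V_\cK$. Precisely, I claim that for each $w\in Z_\cK$ there is $u\in V_\cK$ with $2(w-u)\in Z_\cH$. To prove this I would use the local automorphism theorem (Theorem \ref{thm:LocalAutomorphism}) at $p$: since $\rho(w,p)=p$, the germ of $\rho(w,\cdot)$ at $p$ is a local automorphism fixing $p$, hence lies in $\bbT^{e}\times\bbR^{e+h}\times(\bbZ_2)^{h}$, where $(h,e)$ is the HE-invariant of $p$. The connected factor $\bbT^{e}\times\bbR^{e+h}$ is realized exactly by the flows of the transversal generators, i.e. by germs of $\rho(u,\cdot)$ with $u\in V_\cK$; so after subtracting such a $u$, the germ of $\rho(w-u,\cdot)$ is a pure product of reflections $\sigma\in(\bbZ_2)^h$. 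Squaring kills it: the germ of $\rho(2(w-u),\cdot)=\sigma^2$ is the identity, so $2(w-u)$ lies in the local kernel $Z_\rho(\cU)$. Finally, any element acting trivially near $p$ fixes the points $z\in\cH$ that accumulate on $p$, whence $Z_\rho(\cU)\subseteq Z_\cH$ and $2(w-u)\in Z_\cH$. I expect this step to be the main subtlety: the reflection (twisting) part of the germ is precisely what obstructs a naive inclusion $\Lambda_\cK\subseteq Z_\cH$, and the harmless factor $2$ is what absorbs it — in the untwisted case one could take the factor to be $1$.

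To conclude I would project by the quotient map $q:\bbR^n\to\bbR^n/V_\cK$. Since $\Lambda_\cK$ meets $V_\cK=\ker q$ only in $0$, the restriction $q|_{\Lambda_\cK}$ is injective, so $q(Z_\cK)=q(\Lambda_\cK)$ has rank $t(\cK)$. The claim gives $2\,q(w)=q\big(2(w-u)\big)\in q(Z_\cH)$ for every $w\in Z_\cK$ (using $q(u)=0$), hence $q(Z_\cH)\supseteq 2\,q(Z_\cK)$, a group of rank $t(\cK)$, and therefore $\mathrm{rank}\,q(Z_\cH)\ge t(\cK)$. On the other hand $V_\cH\subseteq V_\cK=\ker q$ forces $q(Z_\cH)=q(\Lambda_\cH)$, whose rank is at most $\mathrm{rank}\,\Lambda_\cH=t(\cH)$. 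Combining the two inequalities yields $t(\cK)\le t(\cH)$, as desired. The only ingredients used are the local normal form (Theorem \ref{thm:NormalForm}), the local automorphism group (Theorem \ref{thm:LocalAutomorphism}), the elementary structure theory of closed subgroups of $\bbR^n$, and continuity of $\rho$, so there is no circularity with the toric-degree results of the later sections.
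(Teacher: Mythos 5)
Your proof is correct and follows essentially the same strategy as the paper's: identify $t$ with the rank of the lattice part of the (orbit-wise constant) isotropy group, use the $(\bbZ_2)^h$ structure of local automorphisms (Theorem \ref{thm:LocalAutomorphism}) together with the doubling trick $w \mapsto 2w$ to kill the reflection/twisting component, and then compare ranks modulo $I_\rho(\cK)=V_\cK$ using the continuity inclusion $V_\cH\subseteq V_\cK$. Your germ-level derivation of the key displacement (producing $u\in V_\cK$ with $2(w-u)\in Z_\rho(\cU)\subseteq Z_\cH$) is a cleaner, point-independent rendering of the paper's corresponding step, in which one writes $\rho(2w,q)=\rho(\theta_q,q)$ for $q\in\cH$ near $p$ with $\theta_q$ close to $I_\rho(\cK)$.
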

\begin{proof}
 Remark that 
\begin{equation} 
t(\cK) = \rank_\bbZ (Z_\rho(\cK)/I_\rho(\cK) )
\end{equation} 
where 
\begin{equation} 
Z_\rho(\cK) = \{w \in \bbR^n \ |\ \rho(w,.)|_\cK = Id_\cK\}
\end{equation} 
 is the isotropy group of $\rho$ on $\cK$, and
\begin{equation} 
I_\rho(\cK) = \{w \in \bbR^n \ |\ X_w = 0 \text{ on } \cK\}
\end{equation} 
 is the isotropy group of the infinitesimal action on $\cK$.

In order to show $t(\cK) \leq t(\cH)$, it is enough to show that 
$2Z_\rho(\cK)/I_\rho(\cK) $ is a subgroup of a quotient group of $Z_\rho(\cH)/I_\rho(\cH)$. 

Let $p \in \cK, q \in \cH$ near $p$, $w \in Z_\rho(\cK), w \neq 0$.

Since $\rho(w,p) = p$, we have that $\rho(w,q)$ is close to $q$ and belongs to $\cH$. 
To avoid possible ``twistings'' due to the $(\bbZ_2)^h$ symmetry group as in Theorem \ref{thm:LocalAutomorphism},
look at $\rho(2w,q)$ instead of  $\rho(w,q)$. In any case
$\rho(2w,q)$ lies in the same ``local orbit'' as $q$, and there is an element $\theta_q \in \bbR^n$ close
to $I_\rho(\cK)$ such that 
$$\rho(2w,q)= \rho(\theta_q,q),$$
which implies that $2w - \theta_q \in Z_\rho(\cH)$, or 
$$2w - \theta_q \mod I_\rho(\cK) \in Z_\rho(\cH) \mod I_\rho(\cK).$$
This is true for all $k \in Z_\rho(\cK)$, so we have
$$2Z_\rho(\cK)/I_\rho(\cK) \subset  Z_\rho(\cH)/(I_\rho(\cK) \cap Z_\rho(\cH))$$ 
Notice that $I_\rho(\cK) \supset I_\rho(\cH)$ by continuity, so 
$Z_\rho(\cH)/(I_\rho(\cK) \cap Z_\rho(\cH))$ is a quotient group of $Z_\rho(\cH)/I_\rho(\cH)$.
\end{proof}

We have the following linear model for a tubular neighborhood of a compact orbit of 
HERT-invariant $(h,e,0,t)$:

\begin{itemize}
\item The orbit is
\begin{equation}
\{0\} \times \{0\} \times \bbT^t / (\bbZ_2)^k,
\end{equation}
which lies  in 
\begin{equation}
B^{h} \times B^{2e} \times \bbT^t / (\bbZ_2)^k,
\end{equation}
(where $B^h$ is a ball of dimension $h$), with coordinates $(x_1, \hdots,x_{h+2e})$ 
on $\bbR^{h} \times \bbR^{2e}$ and  $(z_1, \hdots,z_t) \mod 2\pi$ 
on $\bbT^t$, and $k$ is some nonnegative integer such that $k\leq  \min(h,t)$.
\item The (infinitesimal) action of $\bbR^n$   is generated by the vector fields 
\begin{equation}
\begin{cases}
Y_i = x_i\frac{\partial }{\partial x_i} \quad \forall \quad i = 1, \hdots, h \\
Y_{h+2j-1} = x_{h+2j-1}\frac{\partial }{\partial x_{h+2j-1}} +  x_{h+2j}\frac{\partial }{\partial x_{h+2j}}  \\
Y_{h+2j} = x_{h+2j-1}\frac{\partial }{\partial x_{h+2j}} -  x_{h+2j}\frac{\partial }{\partial x_{h+2j-1}} \quad \forall \quad j = 1, \hdots, e \\
Y_{h+2e+i} = \frac{\partial }{\partial z_i} \quad \forall \quad i = 1, \hdots, t
\end{cases}
\end{equation}
like in the local normal form theorem.
  \item The Abelian group $(\bbZ_2)^k$ acts on $B^h \times B^{2e} \times \bbT^t $ freely,
 component-wise, and by isomorphisms of the action, so that the quotient is still a manifold with
 an induced action of $\bbR^n$ on it. The action of $(\bbZ_2)^k$ on $B^{h}$ is 
by an injection from $(\bbZ_2)^k$ to the involution group $(\bbZ_2)^h$ generated by the
reflections $\sigma_i: (x_1,\hdots,x_i,\hdots, x_h) \mapsto (x_1,\hdots, -x_i,\hdots, x_h)$, its
action on $B^{2e}$ is trivial, and its action on $\bbT^t$ is via an injection of $(\bbZ_2)^k$
into the group of translations on $\bbT^t$.
\end{itemize}

\begin{thm}[Semi-local  normal form for compact orbits] \label{thm:semi-localform}
Any compact orbit of a nondegenerate action
$\rho: \bbR^n \times M^n \to M^n$ can be linearized, i.e. there is a tubular neighborhood of it
which is, together with the action $\rho$, isomorphic to a linear model described above.
\end{thm}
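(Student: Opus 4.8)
The plan is to build the linearizing diffeomorphism by patching together the local canonical coordinates guaranteed by Theorem \ref{thm:NormalForm}, using the compactness of the orbit and the action itself to make the local pieces fit coherently along the torus direction. Let $\cO$ be the compact orbit with HERT-invariant $(h,e,0,t)$, so $\cO \cong \bbT^t/(\bbZ_2)^k$ for the appropriate $k$. First I would pass to the normal covering: lift $\cO$ to a genuine torus $\bbT^t$ by pulling back along the finite cover that kills the $(\bbZ_2)^k$-twisting, prove the statement there, and then descend, checking at the end that the $(\bbZ_2)^k$-action on the linear model is exactly the free component-wise action described before the theorem. This reduces the problem to the untwisted case $\cO \cong \bbT^t$, where the isotropy group $Z_\rho(\cO)$ is a lattice $\bbZ^t$ (modulo $I_\rho(\cO)$) and the action of $\bbR^n$ restricted to $\cO$ is transitive with this discrete stabilizer.

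Next I would set up the normal data. By the local normal form theorem, each point $q \in \cO$ has a canonical coordinate neighborhood in which the generators take the standard hyperbolic/elbolic/regular form, and an adapted basis $(v_1,\dots,v_n)$ of $\bbR^n$ at $q$. By Theorem \ref{thm:CanonicalCoordinate}, the hyperbolic vectors $v_1,\dots,v_h$ and the elbolic pairs are determined (up to permutation and the sign flip) independently of $q\in\cO$, since they are intrinsic eigendata of the transversal action; only the regular directions are free, and those I would fix once and for all to be a basis complementary to $Z_\rho(\cO)\otimes\bbR$. The key construction is then a tubular-neighborhood map $\Phi: B^h\times B^{2e}\times \bbT^t \to \cU(\cO)$ defined by choosing a basepoint $q_0\in\cO$ with its canonical transverse coordinates $(x_1,\dots,x_{h+2e})$, and transporting the transverse slice around $\cO$ by the flows of the regular generators $Y_{h+2e+1}=\partial/\partial z_1,\dots,Y_{h+2e+t}=\partial/\partial z_t$. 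Concretely, $\Phi(x,z)=\rho\big(\sum_i z_i w_i,\, s(x)\big)$ where $s(x)$ is the transverse slice at $q_0$ and $(w_1,\dots,w_t)$ is a basis of a complement chosen so that $Z_\rho(\cO)/I_\rho(\cO)$ is the standard lattice $2\pi\bbZ^t$; this is exactly the move used in the proof of Theorem \ref{thm:CanonicalCoordinate} to rectify regular directions.

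I expect the main obstacle to be showing that $\Phi$ is \emph{well-defined and globally injective} on the torus factor, i.e. that the transverse slice carried once around each cycle of $\bbT^t$ returns to itself and not to a reflected copy. This is precisely where the $(\bbZ_2)^h$ local symmetry of Theorem \ref{thm:LocalAutomorphism} could intervene: a priori, transporting the slice by a generator of $Z_\rho(\cO)$ might return it composed with one of the reflections $\sigma_i$, which is exactly the twisting phenomenon the factor $(\bbZ_2)^k$ encodes. In the untwisted covering this holonomy is trivial by construction, and I would verify this by the same local argument as in Theorem \ref{thm:LocalAutomorphism}: any automorphism fixing the slice and preserving the positive corner and the action must be given by an element of the isotropy group, so the return map along a loop that bounds trivially in the cover is an internal automorphism $\rho(w,\cdot)$ with $w\in Z_\rho(\cO)$, hence can be absorbed. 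Once well-definedness is established, that $\Phi$ is a diffeomorphism onto a tubular neighborhood carrying the action to the linear model follows because it intertwines the generators with the standard ones by construction; smoothness near the singular locus $\{x=0\}$ is inherited from the local normal form, and the descent to $\cO\cong\bbT^t/(\bbZ_2)^k$ identifies the deck group with the asserted free component-wise $(\bbZ_2)^k$-action, completing the proof.
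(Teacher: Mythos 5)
Your proposal is correct and follows essentially the same route as the paper's (sketched) proof: a transverse slice $N$ at a point of the compact orbit, transported around the orbit by the flows of generators $X_{w_1},\hdots,X_{w_t}$ representing a lattice basis of $Z_\rho(\cO)/I_\rho(\cO)$ so as to trivialize a tubular neighborhood as $N\times\bbT^t$, with a normal $(\bbZ_2)^k$-covering to handle the twisted case. The only differences are organizational --- the paper passes to the covering at the end rather than at the beginning, and leaves the holonomy/well-definedness verification (which you spell out via Theorem \ref{thm:LocalAutomorphism}) implicit --- so there is nothing to correct.
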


\begin{proof}(Sketch). Let $q \in \cO_q$ be a point on a compact orbit of HERT-invariant $(h,e,0,t)$.
It follows from the local normal form theorem that there exists a local submanifold $N$ transverse
to $\cO_q$ at $q$, $N \cap \cO_q = \{q\}$, such that $N$ is tangent to the vector field $X_v$ for any
\begin{equation}
v \in I_\rho(p):= \{v \in \bbR^n \ |\ X_v(p) = 0\}.
\end{equation}

Similarly to the proof of Proposition \ref{prop:T-semicontinuity}, we can find $t$ linearly independent vectors
$w_1, \hdots, w_t \in \bbR^n$ such that $X_{w_1}(p), \hdots, X_{w_t}(p)$ are also linearly independent, and 
$X_{w_1}, \hdots, X_{w_t}$ generate a locally free $\bbT^e$-action in a neighborhood $\cU(\cO_p)$ of $\cO_p$,
which is free almost everywhere. (The existence of $X_{w_1}, \hdots, X_{w_t}$ can also be seen from Theorem 
\ref{thm:HERT-toricdegree} below). Notice that $\dim N + t = \dim N + \dim \cO_p = n$,
and the action of $\bbT^t$ generated by $X_{w_1}, \hdots, X_{w_t}$ is transversal to $N$.

In the case when this $\bbT^t$-action is free, we can decompose $\cU(\cO_p)$ into a direct product $N \times \bbT^t$
by viewing  $\cU(\cO_p)$ as a trivial principal $\bbT^t$-bundle with base $N$. Then the action $\rho$ also splits
in $\cU(\cO_p)$ into a direct sum of an action on $N$  and an action
by translations on $\bbT^t$. On $N$, we have the local canonical coordinates given by the local 
normal form theorem. On $\bbT^t$, we have periodic
coordinates with respect to which the vector fields $X_{w_1}, \hdots, X_{w_t}$ form a standard basis of constant
vector fields. Putting these coordinates together, we have a linearization of the action on $N \times \bbT^{t}$.
If the $\bbT^t$-action is not free, then we can make it into 
a free action by taking a normal $(\bbZ_2)^k$-covering of $\cU(\cO_p)$ for some $1\leq  k \leq h$, then proceed
as in the free case. The theorem is proved.
\end{proof}

\begin{remark}
The above theorem is analogous to Miranda-Zung's linearization theorem for tubular neighborhoods 
of compact nondegenerate singular orbits of integrable Hamiltonian systems \cite{MirandaZung-NF2004}.
\end{remark}

More generally, for any point $q$ lying in a orbit $\cO_q$ of HERT-invariant $(h,e,r,t)$ which is not necessarily
compact (i.e. the number $r$ may be strictly positive), we still have the following linear model:

\begin{itemize}
\item The intersection of the orbit with a tubular neighborhood is
\begin{equation}
 \{0\} \times \{0\} \times \bbT^t / (\bbZ_2)^k \times B^r,
 \end{equation}
which lies in 
\begin{equation}
(B^{h} \times B^{2e} \times \bbT^t / (\bbZ_2)^k) \times B^r
\end{equation}
 with coordinates $(x_1, \hdots,x_{h+2e})$ on $B^{h} \times B^{2e}$, 
$(z_1, \hdots,z_t) \mod 2\pi$ on $\bbT^t$, and $\zeta_1,\hdots, \zeta_r$ on $B^r$
 and $k$ is some nonnegative integer such that $k\leq  \min(h,t)$.
\item The action of $\bbR^n$   is generated by the vector fields 
\begin{equation}
\begin{cases}
Y_i = x_i\frac{\partial }{\partial x_i} \quad \forall \quad i = 1, \hdots, h, \\
Y_{h+2j-1} = x_{h+2j-1}\frac{\partial }{\partial x_{h+2j-1}} +  x_{h+2j}\frac{\partial }{\partial x_{h+2j}}  \\
Y_{h+2j} = x_{h+2j-1}\frac{\partial }{\partial x_{h+2j}} -  x_{h+2j}\frac{\partial }{\partial x_{h+2j-1}} \quad \forall \quad j = 1, \hdots, e, \\
Y_{h+2e+i} = \frac{\partial }{\partial z_i} \quad \forall \quad i = 1, \hdots, t, \\
Y_{h+2e+t+i} = \frac{\partial }{\partial \zeta_i} \quad \forall \quad i = 1, \hdots, r.
\end{cases}
\end{equation}
  \item The Abelian group $(\bbZ_2)^k$ acts on $\bbR^h \times \bbR^{2e} \times \bbT^t $ freely
in the same way as in the case of a compact orbit.
\end{itemize}

\begin{thm}[Semi-local  normal form] \label{thm:semi-localform2}
Any point $q$ of any HERT-invariant $(h,e,r,t)$ with respect to a nondegenerate action 
$\rho: \bbR^n \times M^n \to M^n$ admits a neighborhood which is isomorphic to a linear
model described above.
\end{thm}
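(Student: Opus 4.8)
The plan is to reduce the general statement to the compact case already established in Theorem \ref{thm:semi-localform}, by splitting off the $r$ non-periodic directions of the orbit. Let $q \in \cO_q$ be a point on an orbit of HERT-invariant $(h,e,r,t)$. First I would separate the generators of $\rho$ near $q$ into three groups: the $h + 2e$ transversal generators which realize the hyperbolic and elbolic model at $q$ (as in Theorem \ref{thm:NormalForm}); $t$ generators $X_{w_1}, \hdots, X_{w_t}$ producing a locally free $\bbT^t$-action in a neighborhood of $\cO_q$, obtained exactly as in the proof of Theorem \ref{thm:semi-localform} (using Proposition \ref{prop:T-semicontinuity}, or alternatively Theorem \ref{thm:HERT-toricdegree}); and $r$ generators $X_{u_1}, \hdots, X_{u_r}$ whose values $X_{u_1}(q), \hdots, X_{u_r}(q)$ are linearly independent and span the non-periodic tangent directions of $\cO_q$ at $q$. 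The vectors $u_i$ complete $(w_1, \hdots, w_t)$ together with the transversal vectors to a basis of $\bbR^n$.

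Second I would rectify the $\bbR^r$ factor. Since $X_{u_1}, \hdots, X_{u_r}$ are regular and pairwise commuting near $q$, the simultaneous flow-box theorem gives local coordinates $\zeta_1, \hdots, \zeta_r$ with $X_{u_i} = \partial / \partial \zeta_i$. Using the commutation relations $[X_{u_i}, X_{w_j}] = 0$ and $[X_{u_i}, Y] = 0$ for the transversal generators $Y$, I would arrange a slice $P = \{\zeta_1 = \hdots = \zeta_r = 0\}$ transverse to the $\bbR^r$-directions and invariant under the remaining generators, so that $X_{w_1}, \hdots, X_{w_t}$ and the transversal generators restrict to a nondegenerate action of $\bbR^{n-r}$ on $P$ whose orbit through $q$ is \emph{compact}, of HERT-invariant $(h,e,0,t)$.

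Third I would apply Theorem \ref{thm:semi-localform} to this restricted action on $P$, obtaining a linearizing chart (after passing to a normal $(\bbZ_2)^k$-covering when the $\bbT^t$-action is not free) that realizes the compact linear model $B^h \times B^{2e} \times \bbT^t / (\bbZ_2)^k$. Finally I would glue this chart to the rectified coordinates $\zeta_1, \hdots, \zeta_r$: because every generator commutes with $X_{u_1}, \hdots, X_{u_r}$, flowing the slice coordinates out along the $\zeta$-flows leaves the hyperbolic, elbolic and periodic generators in their model form while producing $Y_{h+2e+t+i} = \partial / \partial \zeta_i$, which is precisely the asserted linear model. This last gluing step is mechanically the same as the one carried out in part iii) of the proof of Theorem \ref{thm:CanonicalCoordinate}.

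The main obstacle is the second step: producing the slice $P$ that is simultaneously transverse to the free $\bbR^r$-directions and invariant under both the $\bbT^t$-action and the singular transversal flows, so that the reduced action on $P$ is genuinely a nondegenerate $\bbR^{n-r}$-action with unchanged invariants $(h,e,t)$, and checking that the $(\bbZ_2)^k$-covering needed to make the torus action free can be performed compatibly with the rectified $\bbR^r$ factor. Commutativity of all the generators is what makes the direct-product decomposition $\big(B^h \times B^{2e} \times \bbT^t / (\bbZ_2)^k\big) \times B^r$ survive the addition of the free factor, but this compatibility is the point that needs genuine verification rather than a routine check.
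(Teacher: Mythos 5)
Your strategy---reduce to Theorem \ref{thm:semi-localform} by splitting off the $r$ free directions---is genuinely different from the paper's reduction, but the step you yourself flag as the main obstacle is a real gap, and as you describe it the step fails. The simultaneous flow-box theorem is a purely local statement: it rectifies $X_{u_1},\hdots,X_{u_r}$ only on a chart around $q$, so your slice $P=\{\zeta_1=\hdots=\zeta_r=0\}$ is a small $(n-r)$-dimensional disk near $q$. The orbit of $q$ under the sub-action of $\bbR^{n-r}$ generated by the transversal fields and $X_{w_1},\hdots,X_{w_t}$ is the compact torus $T\cong \bbT^t$ sitting inside $\cO_q$, and as soon as $t\geq 1$ this torus is not contained in the flow-box chart: the $\bbT^t$-flow leaves the chart, and nothing in your argument forces it to re-enter $P$ when it comes back around. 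Consequently the restricted action on your local slice has no compact orbit through $q$, and Theorem \ref{thm:semi-localform} cannot be applied to it. (When $t=0$ there is no issue, but then the statement is already Theorem \ref{thm:NormalForm}.) What is actually needed is a slice defined along the whole stripe $T$: for instance the sweep of the local transversal $N$ at $q$ (tangent to the singular directions) by the locally free $\bbT^t$-action generated by the $X_{w_i}$, together with a proof that this sweep closes up into an embedded invariant $(n-r)$-dimensional submanifold---using triviality of the resulting principal $\bbT^t$-bundle over a contractible base in the non-twisted case, and a $(\bbZ_2)^k$-covering in the twisted case. Commutativity of the generators, the only tool you invoke in Step 2, is chart-level information and cannot detect the holonomy around $T$; the group $(\bbZ_2)^k$ in the linear model is exactly the phenomenon such a local argument does not see, and in the twisted case a genuine invariant product slice through $q$ of the naive form does not even exist. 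Filling this in amounts to re-running the proof of Theorem \ref{thm:semi-localform} with the $r$ parameters carried along, which is what the paper means when it calls Theorem \ref{thm:semi-localform2} a ``parametrized version'' of the compact case.

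For contrast, the paper's own argument avoids constructing any slice: instead of shrinking the acting group so that the orbit of $q$ becomes compact, it moves the point. The closure $\bar\cO_q$ contains a compact orbit (Proposition \ref{prop:adjacent-orbits}, when $M^n$ is compact), so for a suitable $v\in\bbR^n$ the point $\rho(v,q)$ lies inside the linear-model tubular neighborhood of that compact orbit furnished by Theorem \ref{thm:semi-localform}; inside that explicit model one reads off that a neighborhood of the stripe through $\rho(v,q)$ has the asserted product form, and $\rho(-v,\cdot)$ transports this model back to $q$. If you wish to keep your slice-based route, the semi-global construction sketched above is the content you must supply; it is not a compatibility check but the heart of the proof.
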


\begin{proof}
Theorem \ref{thm:semi-localform2} is simply a parametrized version of Theorem \ref{thm:semi-localform}. It can
also be seen as a corollary of Theorem \ref{thm:semi-localform}, by assuming that the point $q$ lies in a linear
model of a neighborhood of a compact orbit. (If $q$ is far from compact orbits, we can move it by a 
map $\rho(v,.)$ of the action for some appropriate $v \in \bbR$
to a point $\rho(v,q)$ which is close enough to a compact orbit. A model for a neighborhood of $\rho(v,q)$
will become a model for a neighborhood of $q$ by taking the inverse map $\rho(-v,.)$).
\end{proof}

\begin{remark}
 The difference between the compact case and the noncompact case is that, when $\cO_q$ is a compact orbit,
we have a linear model for a whole tubular neighborhood of it, but when $\cO_q$ is noncompact we have
a linear model only for a neighborhood of a ``stripe'' in $\cO_q$.
\end{remark}

To be more precise, the (minimal required) group $(\bbZ_2)^k$ in Theorem \ref{thm:semi-localform}
and Theorem \ref{thm:semi-localform2}  is naturally isomorphic to the group
\begin{equation} \label{eqn:TwistingGroup}
 G_q = (Z_\rho (q) \cap (Z_\rho \otimes  \bbR))/Z_\rho.
\end{equation}

\begin{defn} \label{defn:TwistingGroup}
The group $G_q$ defined by the above formula is called the {\bf twisting group} of the action $\rho$ at $q$ (or at
the orbit $\cO_q$). The orbit $\cO_q$ is said to be {\bf non-twisted} (and $\rho$ is said to be non-twisted at $q$) 
if $G_q$ is trivial, otherwise it is said to be {\bf twisted}.
\end{defn}

\begin{remark}
 The twisting phenomenon also appears in ``real-world'' integrable Hamiltonian systems coming from physics 
and mechanics, and it was observed, for example, by Fomenko and his collaborators in their study of 
integrable Hamiltonian systems with 2 degrees
of freedom. See, e.g., \cite{BolsinovFomenko-IntegrableBook}.
\end{remark}

\begin{prop}[HERT-invariant of adjacent orbits] \label{prop:adjacent-orbits}
1) If $\cO_p$ is an orbit of HERT-invariant $(e,h,r,t)$ with $r> 0$ and $M^n$ is compact, then there is an
orbit of HERT-invariant $(e,h+1,r-1,t)$ or $(e+1,h,r-1,t-1)$ lying in the closure $\bar \cO_p$ of $\cO_p$. 

2) If $M^n$ is compact, then the closure of any orbit contains a compact orbit, i.e. an orbit with $r =0$.

3) If an orbit $\cO_p$ has HERT-invariant $(e,h,r,t)$ with $e \geq 1$, then there is an orbit 
$\cO_q$ of HERT-invariant $(e-1,h+1,r,t+1)$ such that $\cO_p \subset \bar \cO_q$.

4) If an orbit $\cO_p$ has HERT-invariant $(e,h,r,t)$ with $h \geq 1$, then there is an orbit $\cO_q$
of HERT-invariant $(e,h-1,r+1,t)$ such that $\cO_p \subset \bar \cO_q$.

5) Any orbit lies in the closure of a regular orbit, i.e. an orbit of dimension $n$.
\end{prop}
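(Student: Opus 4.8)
The plan is to read all five statements off the semi-local normal form of Theorem \ref{thm:semi-localform2}, which describes a neighborhood of an orbit $\cO_p$ of HERT-invariant $(h,e,r,t)$ as a model
$$(B^{h} \times B^{2e} \times \bbT^t/(\bbZ_2)^k) \times B^r$$
with the explicit generators $Y_i$. In this model the orbits are indexed by how many of the hyperbolic coordinates $x_i$ $(1\le i\le h)$ vanish and how many of the elbolic blocks $(x_{h+2j-1},x_{h+2j})$ sit at the origin; the $\bbT^t$ and $B^r$ directions are always regular. Two elementary transitions govern everything. First, moving off one hyperbolic wall $\{x_i=0\}$ turns a hyperbolic component into a non-compact ($\bbR$) orbit direction, connecting an orbit of type $(h,e,r,t)$ to a one-dimension-larger orbit of type $(h-1,e,r+1,t)$ in whose closure it lies. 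Second, moving one elbolic block off the origin rectifies it, in polar coordinates, into a radial generator $\rho\,\partial_\rho$ (a non-compact $\bbR$ direction) and an angular generator $\partial_\theta$ (a $2\pi$-periodic, hence $\bbT$, direction); this connects $(h,e,r,t)$ to the two-dimension-larger orbit of type $(h,e-1,r+1,t+1)$ in whose closure it lies. I will call these the \emph{hyperbolic} and \emph{elbolic openings}.

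Parts 3) and 4) are then immediate. For 4), with $h\ge 1$, apply Theorem \ref{thm:semi-localform2} at $p$ and take the orbit $\cO_q$ through a point with $x_1\neq 0$ and all other blocks kept singular; the hyperbolic opening makes $\cO_q$ of type $(h-1,e,r+1,t)$, and letting $x_1\to 0$ along the flow $x_1\mapsto e^s x_1$ shows $\cO_p\subset\overline{\cO_q}$. For 3), with $e\ge1$, take $\cO_q$ through a point with the first elbolic block off the origin; the elbolic opening gives the larger orbit of type $(h,e-1,r+1,t+1)$ and again $\cO_p\subset\overline{\cO_q}$. The twisting group $(\bbZ_2)^k$ is harmless here, since it acts trivially on the elbolic factor and by translations on $\bbT^t$, preserving all four counts.

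Part 1) is the reverse, ``closing'', direction, and is the genuinely harder point, because the smaller orbits of $\overline{\cO_p}$ are invisible in the tubular model centred at $\cO_p$ itself: there $\cO_p$ is the deepest stratum, and every nearby orbit is larger. Since $r>0$ the orbit $\cO_p\cong\bbR^r\times\bbT^t$ is non-compact, so in the compact manifold $M^n$ its closure is strictly larger and the frontier $\partial\cO_p=\overline{\cO_p}\setminus\cO_p$ is a nonempty union of orbits of strictly smaller dimension (that frontier orbits are smaller follows from the stratification of $\cS$ and the local model). I would pick a boundary orbit $\cO'\subset\partial\cO_p$ of \emph{maximal} dimension and apply Theorem \ref{thm:semi-localform2} at a point of $\cO'$; there $\cO_p$ reappears as a larger nearby orbit, necessarily obtained from $\cO'$ by a sequence of hyperbolic and elbolic openings. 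Maximality of $\dim\cO'$ forces exactly one opening: if a hyperbolic opening is available it contributes the minimal dimension gap $1$, giving $\cO'$ of type $(h+1,e,r-1,t)$; if not, the single available opening is elbolic (gap $2$), giving $\cO'$ of type $(h,e+1,r-1,t-1)$. In either case $\cO'\subset\overline{\cO_p}$ is of one of the two asserted types. The hard part will be exactly this: justifying that a maximal-dimensional boundary orbit exists and is reached from $\cO_p$ by a \emph{single} block-opening rather than a deeper degeneration; the semicontinuity control of Proposition \ref{prop:T-semicontinuity} is the prototype for the required estimate.

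Finally, 2) and 5) follow by iteration, using the nesting of closures ($\cO_a\subset\overline{\cO_b}$ and $\cO_b\subset\overline{\cO_c}$ give $\cO_a\subset\overline{\cO_c}$). For 2), each application of 1) produces inside $\overline{\cO_p}$ an orbit with $r$ strictly decreased by $1$ (both openings consume one $\bbR$-direction); after at most $r$ steps we reach an orbit with $r=0$, i.e.\ a compact orbit, still lying in $\overline{\cO_p}$. For 5), each application of 3) or 4) passes to a strictly larger orbit whose closure contains the previous one, and hence contains $\cO_p$, while decreasing $h+e$ by $1$; after $h+e$ steps we arrive at an orbit with $h=e=0$, a regular orbit of dimension $n$ whose closure contains $\cO_p$. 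Note that 3), 4), 5) are purely local and need no compactness, consistent with the hypotheses, whereas 1) and 2) use compactness precisely to produce the limiting boundary point.
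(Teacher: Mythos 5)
Your strategy---reading everything off Theorem~\ref{thm:semi-localform2} via the two elementary ``openings'' (hyperbolic: $h\mapsto h-1$, $r\mapsto r+1$; elbolic: $e\mapsto e-1$, $r\mapsto r+1$, $t\mapsto t+1$)---is exactly the argument the paper has in mind: its printed proof is a one-line deferral to the previous results of the section, and your write-up is the detailed version of that. Parts 2), 4), 5) are correct as you give them. In part 3), however, note that what you prove is not what the statement literally says: in the paper's $(e,h,r,t)$ ordering the statement asserts the larger orbit has invariant $(e-1,h+1,r,t+1)$, whereas your elbolic opening yields $(e-1,h,r+1,t+1)$. Your value is the correct one and the printed one is unrealizable: any orbit $\cO_q$ with $\cO_p\subset\bar\cO_q$ meets every neighborhood of a point $p\in\cO_p$, so by the local model its invariant must be of the form $(e-b,h-a,r+a+b,t+b)$ with $a,b\geq 0$, and $h+1$ transversal hyperbolic components would force $a=-1$. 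So the quadruple printed in 3) is a typo (dimensionally consistent but dynamically impossible), and your proof establishes the corrected statement, which is the one consistent with parts 1) and 4).

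On part 1), the step you flag as ``the hard part'' is easier than you fear, and both the maximality device and Proposition~\ref{prop:T-semicontinuity} can be dropped. Take \emph{any} orbit $\cO'$ in the frontier $\bar\cO_p\setminus\cO_p$ (nonempty because $\cO_p\cong\bbR^r\times\bbT^t$ with $r>0$ is noncompact while $M^n$ is compact) and a point $q'\in\cO'$. In the model of Theorem~\ref{thm:semi-localform2} at $q'$, $\cO_p$ traces out a local orbit determined by a set $A$ of opened hyperbolic coordinates (with signs) and a set $B$ of opened elbolic blocks, $|A|+|B|\geq 1$. Fix one index $i_0\in A\cup B$ and let $y$ be a point with pattern $(A\cup B)\setminus\{i_0\}$, all other data equal. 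Reopening the block $i_0$ by a small amount $\epsilon$ gives points lying on the same local orbit as $\cO_p$ (the set of points with a fixed pattern and fixed signs is a single local orbit, swept out by the flows), and letting $\epsilon\to 0$ shows $y\in\bar\cO_p$; since $\bar\cO_p$ is closed and invariant, the whole orbit $\cO''$ through $y$ lies in $\bar\cO_p$. Closing a hyperbolic block makes $\cO''$ of type $(e,h+1,r-1,t)$, closing an elbolic block makes it of type $(e+1,h,r-1,t-1)$, which is exactly assertion 1). If you prefer to keep your maximal-dimensional frontier orbit, this same one-block-closing observation is what supplies the missing contradiction: a pattern with $|A|+|B|\geq 2$ would produce a frontier orbit of dimension strictly larger than $\dim\cO'$.
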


\begin{proof}
 The proof follows directly from the previous results and arguments of this section.
\end{proof}

\section{The toric degree}
\subsection{Definition and determination of toric degree}
Let $\rho: \bbR^n \times M^n \to M^n$ be a smooth action of $\bbR^n$ on a $n$-dimensional 
manifold $M^n$. As before, we will denote by 
\begin{equation}
Z_{\rho} = \{ g \in \bbR^n : \rho(g,.) = Id_{M^n}\}
\end{equation}
the isotropy group of $\rho$ on $M^n$. Since $\rho$ is locally free almost everywhere due to 
its nondegeneracy,
$Z_{\rho}$ is a discrete subgroup of $\bbR^n$, so we have 
\begin{equation}
Z_{\rho} \cong  \bbZ^k.
\end{equation}

The action $\rho$ of $\bbR^n$ descends to an action of 
\begin{equation}
\bbR^n / Z_{\rho} \cong \bbT^k \times \bbR^{n-k}
\end{equation}
on $M$, which we will also denote by $\rho$:
\begin{equation}
\rho : (\bbR^n/Z_\rho) \times M^n \to M^n
\end{equation}

We will also denote by 
\begin{equation} \label{eqn:rhoT-action}
\rho_\bbT: \bbT^k \times M^n \to M^n
\end{equation}
the subaction of $\rho$ given by the subgroup $\bbT^k \subset \bbT^k \times \bbR^{n-k} \cong \bbR^n/Z_\rho$. 
More precisely, $\rho_\bbT$ is an action of $(Z_{\rho}\otimes \bbR)/Z_{\rho}$ on $M^n$, 
which becomes a $\bbT^k$-action after an isomorphism from $(Z_{\rho}\otimes \bbR)/Z_{\rho}$ to $\bbT^k$.

\begin{defn}
The number $k = \rank_\bbZ Z_{\rho}$ is called the {\bf toric degree} of the action $\rho$.
\end{defn}

Clearly, the toric degree of $\rho$ is also the maximal number such that the action $\rho$
descends to an action of $\bbT^k \times \bbR^{n-k}$ on $M^n$.
It can be viewed as the maximal number $k$ such that $\rho$ contains an effective action of 
$\bbT^k$ as its subaction.

If the toric degree is 0 then we say that the action is {\bf totally hyperbolic}. Totally hyperbolic actions
will be studied in Section \ref{section:hyperbolic}. It seems that there are no obstructions for a closed manifold 
to admit totally hyperbolic actions (see Theorem \ref{thm:hyperbolic_dim2} and Conjecture \ref{conjecture:hyperbolic}).
But starting from $k \geq 1$, there are strong topological obstructions for a $n$-manifold to admit a nondegenerate
$\bbR^n$-action of toric degree $k$. This leads us to the following definition:

\begin{defn} \label{defn:toricrank}
 We say that a  manifold $M^n$ has {\bf toric rank} equal to $k$ if $k$ is the maximal number such that
$M$ admits a nondegenerate $\bbR^n$-action of toric degree $k$.
\end{defn}

For example, as will be seen from Subsection \ref{subsection:n_and_n-1} and 
Subsection \ref{subsection:hyperbolic_existence}, it is easy to show that, the toric rank of
$\bbT^2$ is equal to 2, the toric rank of $\bbS^2, \bbR \bbP^2$ and the Klein bottle is equal
to 1, and the toric rank of any other closed 2-dimensional surface is 0.

If $M^n$ has toric degree $k$, then in particular it must admit an effective action of $\bbT^k$.
This condition is a rather strong topological condition. For example, Fintushel \cite{Fintushel-Circle1977} 
showed (modulo Poincaré's conjecture which is now a theorem) that among
simply-connected 4 manifolds, only the manifolds $\bbS^4, \bbC \bbP^2,-\bbC \bbP^2,\bbS^2 \times \bbS^2$ and their connected sums
admit an effective locally smooth $\bbT^1$-action (and so only these manifolds may have toric degree $\geq 1$). 
This list is the same as the list of simply-connected 4-manifolds admitting an effective $\bbT^2$-action,
according to Orlik and Raymond \cite{OR_Torus1}, \cite{OR_Torus2}. A classification of non-simply-connected
4-manifolds admitting an effective $\bbT^2$-action can be found in Pao \cite{Pao-TorusAction1}.

\begin{remark}
 An interesting invariant closely related to toric rank is the so-called Milnor's rank of a manifold, see, e.g. 
\cite{Camacho-Foliations1985}: it is the maximal nonnegative integer $k$ such that the manifold 
admits a  free infinitesimal $\bbR^k$-action, or in other words, a $k$-tuple
of commuting vector fields which are linearly independent everywhere.
\end{remark}
 
We observe that the toric degree can be read off the HERT-invariant of any point on $M$
with respect to the action. More precisely, we have:
\begin{thm}[Toric degree] \label{thm:HERT-toricdegree}
Let $\rho: \bbR^n \times M^n \to M^n$ be a nondegenerate smooth action of $\bbR^n$ on a $n$-dimensional 
manifold $M^n$ and $p \in M$ be an arbitrary point of $M$. If the HERT-invariant of $p$ with respect to $\rho$
is $(h,e,r,t)$, then the toric degree of $\rho$ on $M$ is equal to $e + t$.
\end{thm}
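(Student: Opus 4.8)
The plan is to prove that the global isotropy group $Z_\rho$ coincides with the local isotropy group $Z_\rho(\cU)$ of a small normal-form neighborhood $\cU$ of $p$, and then to read off $\rank_\bbZ Z_\rho(\cU)$ directly from the semi-local normal form. First I would apply Theorem \ref{thm:semi-localform2} to obtain a neighborhood $\cU$ of $p$ with an adapted basis $(v_1,\hdots,v_n)$ of $\bbR^n$ and coordinates $(x_1,\hdots,x_{h+2e},z_1,\hdots,z_t,\zeta_1,\hdots,\zeta_r)$ in which the generators take the standard form. Writing $v=\sum_i\alpha_i v_i$, the map $\rho(v,\cdot)$ on $\cU$ is then completely explicit: it scales each hyperbolic coordinate by $e^{\alpha_i}$, scales-and-rotates each elbolic pair, and translates each $z_i$ and each $\zeta_i$. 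Requiring $\rho(v,\cdot)=\mathrm{Id}$ on $\cU$ forces $\alpha_i=0$ for the $h$ hyperbolic, the $e$ radial-elbolic, and the $r$ noncompact directions, and $\alpha_i\in 2\pi\bbZ$ for the $e$ rotational-elbolic and the $t$ torus directions. Since these $e+t$ distinguished basis vectors are part of a basis of $\bbR^n$, the group $Z_\rho(\cU)$ is a lattice of rank exactly $e+t$. I would then check that passing to the quotient by the twisting group $(\bbZ_2)^k$ of the linear model does not change this count: the flow of any $X_v$ preserves the sign of every hyperbolic coordinate, so it can never realize a nontrivial reflection, and hence it can descend to the identity on the quotient only by equalling a deck transformation whose $B^h$-component is trivial; by the injectivity $(\bbZ_2)^k\hookrightarrow(\bbZ_2)^h$ in the model, that deck element is then trivial, so $Z_\rho(\cU)$ is the same lattice computed upstairs.

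The inclusion $Z_\rho\subseteq Z_\rho(\cU)$ is immediate and already gives $\rank_\bbZ Z_\rho\le e+t$. The substantive step is the reverse inclusion $Z_\rho(\cU)\subseteq Z_\rho$, i.e. that any $v$ acting as the identity on the \emph{open} set $\cU$ in fact acts as the identity on all of $M^n$. For a fixed such $v$ I would consider $B_v=\{x\in M^n : \rho(v,\cdot)=\mathrm{Id}\text{ in a neighborhood of }x\}$, which is open and contains $\cU$, hence is nonempty. To see that $B_v$ is closed, take $x=\lim_m x_m$ with $x_m\in B_v$ and choose a normal-form chart $\cU_x$ about $x$ via Theorem \ref{thm:semi-localform2}; for large $m$ we have $x_m\in \cU_x$, and in the explicit chart formulas the vanishing of $\rho(v,\cdot)-\mathrm{Id}$ on an open subset of $\cU_x$ forces the very same coefficient conditions on the components of $v$ as above. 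Since those conditions are on the (constant) coefficients of $v$ in the adapted basis at $x$, they hold throughout the connected chart, so $\rho(v,\cdot)=\mathrm{Id}$ on all of $\cU_x$ and in particular near $x$, giving $x\in B_v$. By connectedness of $M^n$ we conclude $B_v=M^n$, that is $v\in Z_\rho$.

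Combining the two inclusions yields $Z_\rho=Z_\rho(\cU)$, so the toric degree $\rank_\bbZ Z_\rho$ equals $e+t$; as a byproduct this shows that $e+t$ is independent of the chosen point $p$, even though $(h,e,r,t)$ itself varies from orbit to orbit. The main obstacle is precisely the closedness of $B_v$, i.e. propagating the property ``acts as the identity'' from an open set to the entire connected manifold. The feature that makes this work is the rigidity of the normal form: it renders $\rho(v,\cdot)$ locally given by explicit real-analytic formulas whose parameters are constant on each chart, so that vanishing on any open set of a chart already forces vanishing on the whole chart, which is exactly what a purely smooth action would not guarantee in general.
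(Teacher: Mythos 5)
Your global strategy---prove $Z_\rho=Z_\rho(\cU)$ for a single normal-form neighborhood and read the rank off the normal form, propagating by an open--closed argument over the connected manifold---is appealing and genuinely different in organization from the paper's proof (which treats regular points by a wall-crossing argument and singular points by a Poincar\'e-map argument). But it has a genuine gap exactly at its crux, namely the claim that ``the vanishing of $\rho(v,\cdot)-\mathrm{Id}$ on an open subset of $\cU_x$ forces the coefficient conditions'' because $\rho(v,\cdot)$ is ``given by explicit real-analytic formulas whose parameters are constant on each chart.'' This is not true as stated. The normal form of Theorem \ref{thm:semi-localform2} gives explicit formulas for the generators $X_v$, hence for the flow $\rho(sv,\cdot)$ \emph{only for as long as the flow curve stays inside the chart}. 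For a fixed $v$ and a chart of fixed small size, the time-one map $\rho(v,\cdot)$ on the chart is not determined by the chart data: the flow can leave $\cU_x$ (for instance through the noncompact $\zeta$- or hyperbolic directions, since the components of $v$ need not be small compared to the chart) and return, and when it returns it need not act as the scaling/rotation/translation map attached to the coefficients of $v$ in the adapted basis at $x$; it can return composed with a reflection or a shift. This exit-and-return behavior is not a pathology one may wave away: it is precisely the mechanism behind the twisting groups and the monodromy studied in this paper, and it is the reason the paper's own proof needs its Step 2 (crossing a corank-one wall, using that $d\rho(w,\cdot)$ equals the identity at a shared boundary point) and its Step 5 (the Poincar\'e map and the twisted/non-twisted dichotomy) instead of a direct reading of chart formulas. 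The same unjustified identification already affects your computation of $Z_\rho(\cU)$: nothing purely local rules out a $v$ with nonzero $\zeta$-component acting as the identity on $\cU$ via a flow that exits and returns. In short, the closedness of $B_v$---which is the entire content of the theorem, everything else in your proposal being a local computation---has not been proved.

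The gap is fixable, but it needs an argument you do not supply. From $x_m\to x$ with $x_m\in B_v$, continuity first gives $\rho(v,x)=x$, i.e.\ $v\in Z_\rho(x)$. One must then show that, modulo $I_\rho(x)=\{u\ :\ X_u(x)=0\}$, the element $v$ lies in the span of the torus directions of the model: this holds because the model lattice generated by the $2\pi w_{h+2e+i}$ has rank $t$ and sits inside the discrete group $Z_\rho(x)/I_\rho(x)$, whose rank is exactly $t$ by the very definition of the orbit type $\bbR^r\times\bbT^t$, so that discrete group is forced into the real span of the model lattice. Only after this step can one shrink the chart (depending on $v$) so that the lifted flow of $X_v$ genuinely stays in the model for time one; then $\rho(v,\cdot)$ equals the explicit formula composed with at most a deck transformation, your sign argument kills the deck transformation, and agreement with the identity on an open set does force the coefficient conditions on all of the shrunken chart, giving $x\in B_v$. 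Alternatively, one can invoke the rigidity contained in the proof of Theorem \ref{thm:LocalAutomorphism}: a local automorphism germ fixing $x$ which is the identity on one local corner is the identity near $x$; combined with the observation that $\rho(v,\cdot)=\mathrm{Id}$ on an open set forces $\rho(v,\cdot)=\mathrm{Id}$ on the whole (open) regular orbit containing it, this also closes the argument. Either way, an additional idea of this kind is indispensable, and it is exactly the point your proposal treats as automatic.
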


\begin{proof}
We will divide the proof of the theorem into several steps.

\underline{Step 1}: \emph{ Let $p \in M$ be a regular point. Then $\text{toric degree}(\rho) \leq t(p)$}.

Indeed, the orbit $\cO_p$ is of the type $\bbT^{t(p)}\times \bbR^{r(p)}$
and can be viewed as an orbit of an action of $\bbT^k\times \bbR^{n-k}$,
where $k = \text{toric degree}(\rho)$. Since $t(p) + r(p) = n$, the isotropy 
group of the action of $\bbT^k\times \bbR^{n-k}$ on $\cO_p$ is a discrete group. 
It follows immediately that $k \leq t(p)$.

\underline{Step 2}: \emph{ If $\cO_1$ and $\cO_2$ are two arbitrary different 
regular orbits then $Z_\rho(\cO_1) = Z_\rho(\cO_2)$, where $Z_\rho(\cO) \subseteq \bbR^n$
denotes the isotropy group of $\rho$ on $\cO$.} 

By connectedness, it is enough to prove the above statement in the case when
$\bar \cO_1 \cap \bar \cO_2 \not= \emptyset$, where $\bar \cO$ denotes the closure of $\cO$.
Notice that if $\cO_1 \not= \cO_2$ and $\bar \cO_1 \cap \bar \cO_2 \not= \emptyset$
then $\bar \cO_1 \cap \bar \cO_2$ must contain a singular point of rank $n-1$, because
the set of singular points of rank $\leq n-2$ in $M$ is of dimension $\leq n-2$ and
can not separate $\cO_1$ from $\cO_2$. So let  $q \in \bar \cO_1 \cap \bar \cO_2$ be
a singular point of rank $n-1$ and corank 1. Then $q$ is automatically a hyperbolic 
singular point, i.e. $h(q) = 1$ and $e(q) = 0$, because $h(q) + 2e(q) = 1$. 

Consider a canonical coordinate system $(x_1,\hdots,x_n)$ in a neighborhood $\cU$ of $q$
in $M^n$:
\begin{equation}
X_{v_1} = x_1 \frac{\partial}{\partial x_1},
X_{v_2} =  \frac{\partial}{\partial x_2}, \hdots, X_{v_n} =  \frac{\partial}{\partial x_n},
\end{equation}
where $(v_1,\hdots, v_n)$ is a basis of $\bbR^n$. Locally $\cO_1$ and $\cO_2$ lie on the two different 
sides of the singular orbit $\cO_q$, so we can assume that 
$\cO_1 \cap \cU = \{(x_1,\hdots, x_n) \in U \ |\ x_1 < 0\}$ and 
$\cO_2 \cap \cU = \{(x_1,\hdots, x_n) \in U \ |\ x_1 > 0\}$. Let $w \in Z_\rho(\cO_1)$. 
Since the map $\rho(w,.)$ is identity on $\cO_1$, the differential of $\rho(w,.)$ at $q$ is also identity. 
It implies that, for any point
$p_{\epsilon} = (\epsilon,0,\hdots,0) \in \cO_2 \cap \cU$, $\rho(w,p_{\epsilon})$ is close enough to 
$p_{\epsilon}$ so that there is an element $\theta_{\epsilon} \in \bbR^n$ close to zero such that 
$\rho(w,p_{\epsilon}) = \rho(\theta_{\epsilon},p_{\epsilon})$. A-priori, $\theta_{\epsilon}$ may depend
on $\epsilon$, but $\theta_{\epsilon} \to 0$ when $\epsilon \to 0+$. Note that the equality 
$\rho(w,p_{\epsilon}) = \rho(\theta_{\epsilon},p_{\epsilon})$ implies that
$w - \theta_{\epsilon} \in Z_\rho (\cO_2)$, so by taking the limit when $\epsilon$ tends to 0, we have
$w = \lim_{\epsilon \to 0} (w - \theta_{\epsilon}) \in Z_\rho (\cO_2)$. Since  $w \in Z_\rho (\cO_2)$
for any $w \in Z_\rho(\cO_1)$, we have $Z_\rho(\cO_1) \subset Z_\rho(\cO_2)$. By symmetry of arguments, 
the inverse inclusion is also true, i.e. we have $Z_\rho(\cO_1) = Z_\rho(\cO_2)$.

\underline{Step 3}: \emph{$Z_\rho = Z_\rho(\cO)$ for any regular orbit $\cO$. In particular, for any regular point $p$, 
 the toric rank of $\rho$ is equal to $t(p)$ (and $e(p) = h(p) = 0$).} 

Indeed, the inclusion $Z_\rho \subset Z_\rho(\cO)$ is true for any orbit (singular or regular). To prove the inverse inclusion,
let $w \in Z_\rho(\cO)$ be an element of the isotropy group of a regular orbit. Then according to Step 2), the isotropy
group of any other regular orbit also contains $w$. It means that $\rho(w,.)$ is identity on the set of regular points of $M^n$.
Since this set is dense in $M^n$, by continuity we have that $\rho(w,.)$ is identity of $M^n$, i.e. $w \in Z_\rho$.

Remark that if $p$ is a regular point then $t(p) = \dim_\bbZ Z_\rho(\cO_p)$, and since $Z_\rho(\cO_p) = Z_\rho$,
we have that $t(p) = \dim Z_\rho$ is the toric degree of $\rho$.

\underline{Step 4}: \emph{If $q \in M^n$ is a singular point then $e(q) + t(q) \geq $ toric degree $(\rho)$.}

Indeed, consider the induced toric action $\rho_\bbT: \bbT^k \times M^n \to M^n$, where $k$ is the toric degree of $\rho$.
If $q \in M$ and the isotropy group $Z_{\rho_{\bbT}} (q)$  of $\rho_{\bbT}$ at $q$ is of rank $s$ (i.e. its connected
component is isomorphic to $\bbT^s$), then $q$ has exactly $s$ elbolic components (because each elbolic component gives
rise to exactly one ``vanishing cycle'', i.e. a $\bbT^1$-subaction having $q$ as a fixed point), i.e. $e(q) = s$. On the other hand,
the action of the quotient group $\bbT^k/Z_{\rho_{\bbT}} (q) \cong \bbT^{k-s}$ on $\cO_q$ is free, so we have that
$t(q) \geq k-s$. Thus $e(q) + t(q) \geq k =$ toric degree $(\rho)$.

\underline{Step 5}: \emph{The converse inequality is also true: $e(q) + t(q) \leq $ toric degree $(\rho)$.}

The main point in Step 5 is to show that, if $w \in Z_\rho(q) \setminus I_\rho(q)$, 
where $I_\rho(q) = \{v \in \bbR^n \ |\ X_v(q) = 0 \} \subset \bbR^n$ 
is the isotropy vector space of the infinitesimal action at $q$
($I_\rho(q)$ is also the connected component of $Z_\rho(q)$ which contains 0),
then there is an element $\theta \in I_\rho(q)$  such that either $w + \theta \in Z_\rho$ (the non-twisted case)
or $2w + \theta \in Z_\rho$ (the twisted case). Indeed, 
if this fact is true, then $\dim_{\bbZ} (Z_\rho \cap I_\rho(q)) = e(q)$ and
$\dim_{\bbZ} (Z_\rho / (Z_\rho \cap I_\rho(q))) \geq t(q)$, because $t(q) = \dim_\bbZ ( Z_\rho(q) / (Z_\rho(q) \cap I_\rho(q)))$
and there is an injective homomorphism from $2Z_\rho(q) / (2Z_\rho(q) \cap I_\rho(q))$ 
into $Z_\rho / (Z_\rho \cap I_\rho(q))$, therefore the  toric degree of $\rho$ $=
\dim_\bbZ Z_\rho = \dim_{\bbZ} (Z_\rho \cap I_\rho(q)) + \dim_{\bbZ} (Z_\rho / (Z_\rho \cap I_\rho(q)) \geq e(q) + t(q)$.

Let us now prove the existence of an element $\theta \in I_\rho(q)$ such as above for any given
element  $w \in Z_\rho(q) \setminus I_\rho(q)$. Denote by
$(x_1,\hdots,x_n)$ a canonical coordinate system in a neighborhood $\cU$ of $q$ 
as given by the local normal form theorem. In particular, 
we have $\cO_q \cap \cU = \{(x_1,\hdots,x_n) \in \cU \ |\ x_1 = \hdots = x_{2e(q)+h(q)} = 0 \}$, and
the coordinate functions $x_{2e(q)+h(q)+1},\hdots, x_n$ are first integrals of the vector field $X_v$
for any $v \in I_\rho(q)$. Denote by $N = \{(x_1,\hdots,x_n) \in \cU \ |\ x_1=\hdots = x_{2e(q)+h(q)} = 0 \}$
the local transversal manifold to $\cO_q$ at $q$. Then $N$ is invariant with respect to the infinitesimal
action of  $I_\rho(q)$. This action of $I_\rho(q)$ 
divides $N$ into a finite number of local regular orbits, which we call the \emph{corners}
of $N$ (the number of corners is $2^{h(q)}$) and a finite number of singular orbits. Choose an arbitrary
vector subspace $W$ complementary to $I_\rho(q)$ in $\bbR^n$: $\bbR^n = I_\rho(q) \oplus W$.
Let $z \in N$ be a regular point close enough to $q$. Since $\rho(w,q) = q$, the point $\rho(w,z)$ is also
close to $N$, so that there is a unique small element $\gamma(z) \in W$ such that 
$\rho(w+ \gamma(z), z) = \rho(\gamma(z), \rho(w,z))$ belongs to $N$. The local map $P_w: z \mapsto 
\rho(w+ \gamma(z), z)$ from $N$ to itself is called the \emph{Poincaré map} on $N$ associated to $w$.
Notice that this Poincaré map is an automorphism of the infinitesimal action of $I_\rho(q)$ on $N$. This
action has $q$ as a nondegenerate fixed point, and according to the results of Section \ref{section:singularities},
either $P_w$ preserves  each corner of $N$ (the non-twisted case), or $(P_w)^2$ will do so (the twisted case).
For simplicity and without loss of generality, let us assume that $P_w$ preserves  each corner of $N$, i.e.
$P_w(z)$ belongs to the same corner as $z$. It means that we can write $\rho(\theta(z), P_w(z)) = z$ for some 
$\theta(z) \in I_\rho(q)$. Recall that $P_w(z) = \rho(w + \gamma(z),z)$, 
so we have $\rho(w+ \gamma(z) + \theta(z), z) = z$, i.e. $w  + \gamma(z) + \theta(z) \in Z_\rho(z)$. Since $z$ is
a regular point, we have $w  + \gamma(z) + \theta(z) \in Z_\rho$. Recall that $\gamma(z)$ tends to 0 when $z$
tends to $q$, so by taking the limit, we find an element $\theta \in I_\rho(q)$  such that $w + \theta \in Z_\rho$.
The theorem is proved.
\end{proof}

\begin{remark}
 The above theorem is similar to and inspired by some results of \cite{Zung-Symplectic1996} on the discrete invariants
of singular points on a nondegenerate singular fiber of an integrable Hamiltonian system. 
\end{remark}

\subsection{Actions of toric degree $n$ and $n-1$} \label{subsection:n_and_n-1}
When the toric degree of $\rho$ is equal to $n$ we get an effective action of $\bbT^n$ on $M^n$.
Each $n$-dimensional orbit of this action is open and  compact in $M^n$ at the
same time, because $\bbT^n$ is compact. Since $M^n$ is connected by our assumptions, a non-empty 
open compact subset of $M$ must be $M$ itself. It implies that the whole $M^n$ is 
just one orbit of the $\bbT^n$-action, and $M^n$ itself is differmorphic to $\bbT^n$.
In other words, we recover the following well-known result:

\begin{thm} \label{thm:Tn}
Up to diffeomorphisms the only connected $n$-dimensional 
manifold admitting an effective $\bbT^n$-action is the torus $\bbT^n$.
\end{thm}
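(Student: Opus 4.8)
The plan is to exploit the numerical coincidence $\dim \bbT^n = \dim M^n = n$, which forces a top-dimensional orbit to be simultaneously open and compact, and then to use connectedness of $M^n$ to conclude that this single orbit already exhausts the whole manifold.

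First I would locate an orbit of dimension exactly $n$. Inside the nondegenerate framework of the paper this is immediate: pick any regular point $p$ (such points are dense, since the action is locally free almost everywhere), so that its HERT-invariant is $(0,0,r,t)$ with $r+t=n$, as established in Step~3 of the proof of Theorem~\ref{thm:HERT-toricdegree}. By that same theorem the toric degree $n$ equals $e(p)+t(p)=t(p)$, whence $t(p)=n$ and $r(p)=0$. Thus the regular orbit $\cO_p$ is diffeomorphic to $\bbT^n$; in particular it is compact and $n$-dimensional. (For a general effective $\bbT^n$-action, not necessarily coming from a nondegenerate $\bbR^n$-action, one would instead invoke the principal orbit theorem: the common stabilizer of the principal orbits fixes a dense open set, hence all of $M^n$ by continuity, so effectiveness makes it trivial and the principal orbits acquire full dimension $n$.)

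Next I would observe that $\cO_p$, being a compact orbit, is an embedded $n$-dimensional submanifold of the $n$-manifold $M^n$; by invariance of domain it is therefore open, while being compact it is also closed in the Hausdorff space $M^n$. A nonempty subset that is both open and closed in the connected space $M^n$ must be all of $M^n$, so $\cO_p = M^n$ and hence $M^n \cong \bbT^n$ as a manifold.

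Finally, to pin down the action itself and close the effectiveness loop, I would write $M^n = \cO_p \cong \bbT^n/\Gamma$, where $\Gamma$ is the (finite) stabilizer of $p$ in $\bbT^n$. Since $\bbT^n$ is abelian, the kernel of the action on this single orbit is exactly $\Gamma$; effectiveness forces $\Gamma = \{0\}$, so the action is the standard translation action of $\bbT^n$ on itself. The only point demanding genuine care is the existence of a full-dimensional orbit — everything afterwards is soft point-set topology. In the nondegenerate setting that existence is handed to us for free by Theorem~\ref{thm:HERT-toricdegree}, so there is no serious obstacle here; the entire content of the argument is the clopen-orbit observation in a connected manifold.
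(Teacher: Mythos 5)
Your proof is correct and follows essentially the same route as the paper: the paper's argument is precisely that an $n$-dimensional orbit of the $\bbT^n$-action is simultaneously open and compact, hence equals $M^n$ by connectedness. The only difference is that you spell out two points the paper leaves implicit --- the existence of a full-dimensional orbit (via Theorem~\ref{thm:HERT-toricdegree}, or the principal orbit theorem in the general setting) and the triviality of the stabilizer forced by effectiveness --- which is a welcome but inessential refinement.
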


Consider now a nondegenerate action $\rho$ of toric degree $n-1$ on a compact
connected manifold $M^n$, an orbit 
$\cO_p$ of this action, and denote by $(h,e,r,t) $ the HERT-invariant of $\cO_p$.

According to Theorem \ref{thm:HERT-toricdegree}, we have $e+t = n-1$. On the 
other hand, the total dimension is $n = h + 2e + r + t$. These two equalities imply that 
$h+e + r= 1$, which means that one of the three numbers $h, e, r$ is equal to 1 and the other two 
numbers are 0. So we have only three possibilities: 

1) $r = 1, h = e = 0, t = n-1$, and
$\cO_p \cong \bbT^{n-1} \times \bbR$ is a regular orbit. The action $\rho_\bbT$
of $\bbT^{n-1}$ on such an orbit is free with the orbit space diffeomorphic to an open
interval.

2) $r = e = 0, h = 1, t = n-1$, and $\cO_p \cong \bbT^{n-1}$ is a compact singular
orbit of codimension 1 which is transversally hyperbolic. The action $\rho_\bbT$
of $\bbT^{n-1}$ on such an orbit is locally free; it is either free (the non-twisted case)
or have the isotropy group equal to $\bbZ_2$ (the twisted case).

3) $e = 1, h = r = 0, t = n-2$, and
$\cO_p \cong \bbT^{n-2}$ is a compact singular
orbit of codimension 2 which is transversally elbolic.

The orbit space $S = M^n/\bbT^{n-1}$ of the action 
\begin{equation}
\rho_\bbT: \bbT^{n-1} \times M^n \to M^n
\end{equation}
is a compact one-dimensional manifold with or without
boundary, on which we have an induced action of $\bbR$. The singular points of this $\bbR$-action
on $M^n/\bbT^{n-1}$ correspond to the singular orbits  of $\rho$. Since the toric degree is $n-1$ and not
$n$ and $M$ is compact, $\rho$ must have at least one singular orbit, and so on the quotient space
$S = M^n/\bbT^{n-1}$ there is at least one singular point.

Topologically, $S$ must be a closed interval or a circle. A singular point in the interior of $S$
corresponds to a  transversally hyperbolic non-twisted singular orbit of of $\rho$ (Case 2 non-twisted), while
a point on the boundary of $S$ (in case $S$ is an interval) must correspond to either a
transversally elbolic orbit (Case 3) or a transversally  hyperbolic twisted orbit (Case 2 twisted). We can
combine all these possibilities together to construct $\bbR^n$ actions of toric degree $n-1$ on $n$-manifolds.

Globally, we have the following 4 cases:
\begin{figure}[htb]
\begin{center}
\includegraphics[width=100mm]{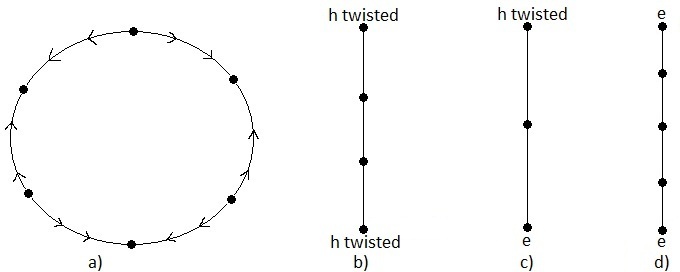}
\caption{The 4 cases of toric degree $n-1$}
\label{fig:n-1}
\end{center}
\end{figure}

\underline{Case a}: \emph{$S$ is a circle, which contains  $m > 0$  hyperbolic points with respect to the induced
$\bbR$-action on it}. 

Notice that, $m$ is necessarily an even number, because the vector field which generates
the hyperbolic $\bbR$-action on $S$ changes direction on adjacent regular intervals, see Figure \ref{fig:n-1}a
for an illustration. The $\bbT^{(n-1)}$-action is free in this case, so $M^n$ is a $\bbT^{n-1}$-principal bundle 
over $S$. Any homogeneous $\bbT^{n-1}$-principal bundle over a circle is trivial, so $M^n$ is diffeomorphic to
$\bbT^n \cong \bbT^{n-1} \times \bbS^1$ in this case.

\underline{Case b}: \emph{$S$ is an interval, and each endpoint of $S$ corresponds to a transversally
elbolic orbit of $\rho$}.

Topologically, in this case, the manifold $M^n$ can be obtained by gluing 2 copies of the ``solid torus'' $D^2 \times \bbT^{n-2}$
together along the boundary. When $n=2$, there is only one way to do it, and $M$ is diffeomorphic to a sphere $S^2$.
When $n \geq 3 $, the gluing can be classified by the homotopy class (up to conjugations) of the two vanishing 
cycles on the common boundary $\bbT^{n-1}$ (the first/ second 
vanishing cycle is the 1-dimensional cycle on the common boundary which becomes trivial on the first/second solid torus). 
When $n=3$, the manifold $M^3$ is either $\bbS^2 \times \bbS^1$
(if the two vanishing cycles are equal up to a sign) or a so called \emph{lens space} in 3-dimensional topology.
  
\underline{Case c}: \emph{$S$ is an interval, one endpoint of $S$ corresponds to a twisted transversally
hyperbolic orbit of $\rho$, and the other endpoint corresponds to a transversally elbolic orbit of $\rho$}.

Due to the twisting, the ambient manifold is non-orientable in this case. But $(M^n,\rho)$
admits a double covering $\widetilde{(M^n,\rho)}$ which belongs to Case b. If $n=2$ then $M^2 = \bbR \bbP^2$ in this case.
 
\underline{Case d}: \emph{$S$ is an interval, and each endpoint of $S$ corresponds to a twisted transversally
hyperbolic orbit of $\rho$}.

Again, in this case, $M$ is non-orientable, but $(M^n,\rho)$ admits a normal $(\bbZ_2)^2$-covering $\widetilde{(M^n,\rho)}$ 
which is orientable and belongs to Case a. If $n=2$ then $M^2$ = Klein bottle in this case.
 
We can classify actions of toric degree $n-1$ on closed manifolds as follows:

View $S$ as a (non-oriented) graph, with singular points (i.e. points which correspond to singular orbits of $\rho$) as vertices.
Mark each vertex of $S$ with the vector or the vector couple of $\bbR^n$ associated to the corresponding orbit
of $\rho$ (in the sense of Definition \ref{def:AssociatedVector}). Then $S$ becomes a marked graph, which is an invariant of $\rho$, and which
will be denoted by $S_{\text{marked}}$. The isotropy group $Z_\rho \subseteq \bbR^n $ (which is isomorphic 
to $\bbR^{n-1}$) is also an invariant of $\rho$. 
Note that $S_{\text{marked}}$ and $Z_\rho$ satisfy the following conditions ($C_i$)-($C_{iv}$):

\begin{itemize}
 \item[$C_i$)] $S$ is homemorphic to a circle or an interval.
 If $S$ is a circle then it has an even positive number of vertices. If $S$
 is an interval then it has at least 2 vertices, which are at the two ends of $S$.
 \item[$C_{ii}$)] Each interior vertex of $S$ is marked with a vector in $\bbR^n$. 
Each end vertex of $S$ may be marked with either a vector or a couple of vectors of the type 
$(v_1, \pm v_2)$ in $\bbR^n$ (the second vector in the couple is only defined up to a sign).
 \item[$C_{iii}$)] $Z_\rho$ is a lattice of rank $n-1$ in $\bbR^n$. 
 \item[$C_{iv}$)] If $v \in \bbR^n$ is the mark at a vertex of $S$, then
\begin{equation}
\bbR.v \oplus (Z_\rho \otimes \bbR) = \bbR^n.
\end{equation}
 If $(v, \pm w)$ is the mark at a vertex of $S$, then we also have
\begin{equation}
\bbR.v \oplus (Z_\rho \otimes \bbR) = \bbR^n
\end{equation}
while $w$ is a primitive element of $Z_\rho$. Moreover, if $v_i$ and $v_{i+1}$ are two consecutive marks
(each of them may belong to a couple, e.g. $(v_i, \pm w_i)$), then they lie on different 
sides of $Z_\rho\otimes \bbR$ in $\bbR^n$.
\end{itemize}

It is clear that $S_{\text{marked}}$ and $Z_\rho$ are invariants of the action $\rho$. In the case when $S$ is a 
circle (case a)), then there is another invariant called the \emph{monodromy} and defined as follows:

Denote by $F_1, \hdots, F_m$ the $(n-1)$-dimensional orbits of $(M^n, \rho)$ in cyclical order (they correspond to
vertices of $S$ in cyclical order). Denote by $\sigma_i$ the reflection associated to $F_i$. Let $z_1$ be an arbitrary 
regular point which projects to a point in the regular orbit lying between $F_m$ and $F_1$.

Put $z_2 = \sigma_1(z_1)$ (which is a point lying on the regular orbit between $F_1$ and $F_2$), 
$z_3 = \sigma_2(z_2),\hdots,z_{m+1} = \sigma_m(z_m)$. Then $z_{m+1}$ lies on the same regular orbit as $z_1$,
and so there is a unique element $\mu \in \bbR^n/Z_\rho$ such that 
\begin{equation}
z_{m+1} = \rho(\mu, z_1).
\end{equation}
This element $\mu$ is called the {\bf monodromy} of the action. Notice that $\mu$ does not depend on the choice of $z_1$
nor on the choice of $F_1$ (i.e. which singular orbit is indexed as the first one), but only on the choice of the orientation
of the cyclic order on $S$: If we change the orientation 
of $S$ then $\mu$ will be changed to $-\mu$ (modulo $Z_\rho$). So a more correct way 
to look at the monodromy is to view it as a homomorphism from $\pi_1(S) \cong \bbZ$ to $\bbR^n/Z_\rho$.
\begin{figure}[htb]
\begin{center}
\includegraphics[width=80mm]{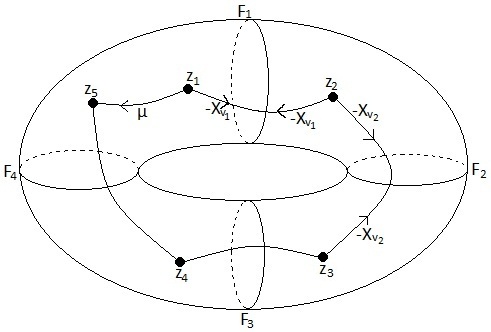}
\caption{Monodromy $\mu$ when $S \cong \bbS^1$.}
\label{fig:monodromyS1}
\end{center}
\end{figure}

\begin{thm}[Classification by marked $S$-graph] \label{thm:class-marked}
1) If $(S_{\text{marked}}, Z)$ is a pair of marked graph and lattice which satisfies the conditions ($C_i$)-($C_{iv}$) above,
then they can be realized as the marked graph and the isotropy group of a nondegenerate action of $\bbR^n$
of toric degree $n-1$ on a compact $n$-manifold. Moreover, if $S$ is a circle then any monodromy 
element $\mu \in \bbR^n/Z$ can also be realized. 

2) a) In the case when $S$ is an interval, then any two such actions having the same $(S_{\text{marked}}, Z)$-invariant are isomorphic. b) In the case when $S$ is a circle, then any two actions having the same ($S_{\text{marked}}, Z, \mu$) are isomorphic.
\end{thm}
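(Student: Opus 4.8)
The plan is to prove both halves by reducing everything to the semi-local normal forms of Section \ref{section:singularities} and assembling local equivariant pieces over the one-dimensional base $S = M^n/\bbT^{n-1}$. For the realization statement (part 1), I would build $(M^n,\rho)$ directly from the data. Set $V = Z\otimes\bbR \cong \bbR^{n-1}$, $T = V/Z \cong \bbT^{n-1}$, and fix a complementary line so that $\bbR^n = \bbR u \oplus V$. Over each closed regular edge of $S$ I place a trivial block $T\times[0,1]$, letting $V$ act by translations on the $T$-factor and a complementary generator act as a nonvanishing vector field along $[0,1]$. The vertices are then handled by the appropriate semi-local model: at an interior hyperbolic vertex with mark $v$ I glue the two adjacent blocks through the local model $T\times(-\varepsilon,\varepsilon)$ in which $X_v = x_1\frac{\partial}{\partial x_1}$; at an elbolic end vertex with mark $(v,\pm w)$ I cap with the solid-torus model $B^2\times\bbT^{n-2}$, collapsing the primitive vanishing circle $\bbR w/\bbZ w \subset T$ (primitivity and $\bbR v \oplus V = \bbR^n$ come from $(C_{iv})$); at a twisted hyperbolic end vertex I cap with the $\bbZ_2$-quotient model $(T\times(-\varepsilon,0])/\bbZ_2$, the $\bbZ_2$ being generated by the reflection $\sigma_v$ of Theorem \ref{thm:Reflection} twisted by a half-period of $Z$. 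Conditions $(C_i)$--$(C_{iv})$ are exactly what make these patchings well-defined and smooth: $(C_{iii})$ makes $T$ a genuine torus, $(C_{iv})$ makes each mark transverse to $V$ so that each hyperbolic or elbolic block fits the translation action on the neighboring regular blocks, and the opposite-sides clause forces the induced $\bbR$-flow on $S$ to alternate direction across vertices, as a hyperbolic one-dimensional flow must.

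When $S$ is a circle the blocks close into a loop; the closing identification may be composed with an arbitrary translation by an element of $\bbR^n/Z$, and tracing through the definition of $\mu$ via the reflections $\sigma_i$ shows that this regluing realizes any prescribed monodromy (this is the local mechanism later formalized by the monodromy results of Section 4). One then checks that the assembled space is a smooth compact $n$-manifold carrying a smooth $\bbR^n$-action with the prescribed $(S_{\text{marked}},Z)$ and $\mu$.

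For the classification statement (part 2), suppose $(M,\rho)$ and $(M',\rho')$ share the invariants. I would construct an equivariant diffeomorphism $\Phi$ by patching local equivariant identifications. The identification of marked graphs gives $S\cong S'$, and $Z_\rho = Z_{\rho'}$. Over each singular orbit the semi-local normal form (Theorem \ref{thm:semi-localform}), together with the equality of associated vectors and vector couples, furnishes a local equivariant diffeomorphism of tubular neighborhoods; over each regular edge the orbit is $T\times\bbR$ with $T$ acting freely and transitively on fibers, so an equivariant identification exists and is unique up to a single translation by $T$. The whole problem is thus to choose these per-edge translation constants compatibly.

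When $S$ is an interval the base is contractible: one fixes the constant at one end cap, whose elbolic or twisted normal form rigidifies it, and propagates uniquely across each vertex, so $\Phi$ exists with no obstruction. When $S$ is a circle, propagating the constants once around the loop yields a single consistency condition, and the accumulated discrepancy in $\bbR^n/Z$ is precisely $\mu - \mu'$; since $\mu = \mu'$ by hypothesis the constants close up and $\Phi$ is global. The hardest step, in both directions, is this circle-case bookkeeping: verifying that the only obstruction to gluing the local equivariant models around the loop is the monodromy, i.e. that the composite of the hyperbolic reflections $\sigma_i$ interacts with the $T$-translation freedom in such a way that every ambiguity coming from the local automorphism group $\bbT^e\times\bbR^{e+h}\times(\bbZ_2)^h$ of Theorem \ref{thm:LocalAutomorphism} is absorbed except the residual class measured by $\mu$. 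This is exactly the computation that both pins down $\mu$ as a complete extra invariant and, run in reverse, shows that every value of $\mu$ is attainable.
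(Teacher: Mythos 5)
Your proposal is correct and follows essentially the same route as the paper: part 1 by surgery, i.e.\ gluing the semi-local linearized models of Theorem \ref{thm:semi-localform} along $S$, and part 2 by assembling an equivariant diffeomorphism piece by piece, which is just a more explicit organization of the paper's recipe of fixing $\Phi$ on one regular orbit and extending by the reflection principle and continuity. Your per-edge translation constants merely make explicit what the paper leaves implicit in its ``similar to 2a)'' remark, namely that in the circle case the unique closing-up obstruction is $\mu-\mu'$.
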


\begin{proof}
1) The proof is by surgery, i.e. gluing of linearized pieces given by Theorem \ref{thm:semi-localform}. There is no 
obstruction to doing so.

2a) If there are 2 different actions $(M_1, \rho_1)$ and $(M_2, \rho_2)$ with the same marked graph $(S_{\text{marked}}, Z)$, 
then one can construct an isomorphism $\Phi$ from $(M_1, \rho_1)$ to $(M_2, \rho_2)$ as follows.

Take $z_1 \in M_1$    and $z_2 \in M_2$ such that $z_1$ and $z_2$ project to the 
same regular point on $S_{\text{marked}}$. Put $\Phi (z_1) =z_2$. Extend $\Phi$ to $\cO_{z_1}$ by the formula 
$$\Phi (\rho_1(\theta,z_1)) = \rho_2(\theta, z_2).$$
Then extend $\Phi$ to rest of $M_1$ by the reflection principle and the continuity principle.

2b) The proof is similar to that of assertion 2a).

\end{proof}

\section{The monodromy}
In the classification of actions of toric degree $n-1$ in Subsection \ref{subsection:n_and_n-1}, 
we have encountered a global 
invariant called the monodromy. It turns out that the monodromy can also be defined for any nondegenerate 
action $\rho: \bbR^n \times M^n \to M^n$ of any toric degree, and is one of the main invariants of the action.

Choose an arbitrary regular point $z_0 \in (M^n, \rho)$, and a loop 
$\gamma : [0,1] \to M^n, \gamma(0) = \gamma(1) = z_0$. By a small perturbation 
which does not change the homotopy class of $\gamma$, we may assume that
$\gamma$ intersects the $(n-1)$ singular orbits of $\rho$ transversally (if at all), 
and does not intersect orbits of dimension $\leq n -2$.

Denote by $p_1, \hdots, p_m$ ($m \geq 0$) the singular points of corank 1 on the loop $\gamma$,
and $\sigma_1, \hdots, \sigma_m$ the associated reflections of the singular hypersurfaces which contain 
$p_1, \hdots, p_m$ respectively as given by Theorem \ref{thm:Reflection}. 

Put $z_1 =\sigma_1(z_0), z_2 =\sigma_2(z_1), \hdots, z_m =\sigma_m(z_{m-1})$. 
(The involution $\sigma_0$ can be extended from a small neighborhood of $p_1$
to $z_0$ in a unique way which preserves $\rho$, and so on). Then $z_m$ lies in 
the same regular orbit as $z_0$, so there is a unique element
$\mu = \mu(\gamma ) \in \bbR^n/Z_\rho$ such that $z_m = \rho(\mu(\gamma), z_0)$.

\begin{thm} \label{thm:defn-monodromy}
With the above notations, we have:

1) $\mu(\gamma )$ depends only on the homotopy class of $\gamma$.

2) The map $\mu : \pi_1(M^n,z_0) \to \bbR^n/Z_\rho$ is a group homomorphism from the 
fundamental group of $M^n$ to $\bbR^n/Z_\rho$.

3) $\mu$ does not depend on the choice of $z_0$, and can be
viewed as a homomorphism from the first homology group 
$H_1(M^n, \bbZ) \cong \pi_1(M^n)/[\pi_1(M^n),\pi_1(M^n)]$
to $\bbR^n/Z_\rho$, which will also be denoted by $\mu$:
\begin{equation}
\mu: H_1(M^n, \bbZ) \to \bbR^n/Z_\rho.
\end{equation}
\end{thm}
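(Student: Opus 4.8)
The plan is to make part 1) the heart of the argument and to deduce 2) and 3) formally from it, using the structural properties of the reflections $\sigma_i$ supplied by Theorem \ref{thm:Reflection} and Theorem \ref{thm:LocalAutomorphism}. I would first record three facts that are used repeatedly. (a) Each $\sigma_i$ is an involution, $\sigma_i^2=\mathrm{id}$. (b) Each $\sigma_i$ commutes with every time map $\rho(\theta,\cdot)$ of the action: in canonical coordinates $\sigma_i$ only flips one hyperbolic coordinate $x_i\mapsto -x_i$, and every generator $Y_j$ of $\rho$ is invariant under this flip, so $\sigma_i\circ\rho(\theta,\cdot)=\rho(\theta,\cdot)\circ\sigma_i$. (c) By Theorem \ref{thm:Reflection}(b), reflections attached to transversally intersecting hypersurfaces $\cN_{v_i},\cN_{v_j}$ commute, $\sigma_i\sigma_j=\sigma_j\sigma_i$. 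I would also note that any loop lying inside a single regular orbit has trivial monodromy: such a loop is $s\mapsto\rho(c(s),z_0)$ with $c(1)\in Z_\rho(\cO_{z_0})=Z_\rho$ (the equality by Step 3 of the proof of Theorem \ref{thm:HERT-toricdegree}), it meets no corank-$1$ orbit, and hence $\mu=0$.

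For homotopy invariance I would take a homotopy $H\colon[0,1]^2\to M^n$ between two admissible representatives and put it in general position with respect to the stratification $\cS=\bigsqcup_{h,e}\cS_{h,e}$. Since $\dim\cS_{h,e}=n-h-2e$, the $2$-dimensional image of $H$ meets the codimension-$1$ stratum $\cS_{1,0}$ in a $1$-manifold, meets the codimension-$2$ strata $\cS_{2,0}$ and $\cS_{0,1}$ in isolated points, and is disjoint from all strata of codimension $\geq 3$. Viewing $H^{-1}(\cS_{1,0})$ as a compact $1$-manifold in the square, with boundary on the two edges carrying the end loops, the function $\mu$ can change only across the finitely many events where the crossing pattern of the sweeping loop is modified, and it suffices to check that each such elementary event preserves $\mu$.

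There are exactly three types of events, and here lies the main obstacle: making the transversality statements precise and verifying, from the local models, that near the codimension-$2$ strata these are the only moves that occur. At a fold/tangency of the loop with a single wall $\cN_{v_i}$ (a critical point of $H^{-1}(\cS_{1,0})$), two consecutive crossings carry the \emph{same} reflection $\sigma_i$, which cancel by (a). At a transverse intersection point of $\cS_{2,0}$, the loop exchanges the order in which it crosses $\cN_{v_i}$ and $\cN_{v_j}$, and by (c) the composite is unchanged. At a point of $\cS_{0,1}$ (an elbolic orbit), the two nearby loops differ by a small loop encircling the elbolic orbit once; that loop meets no corank-$1$ orbit and lies in a single regular orbit, so it contributes $0$. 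Hence $\mu$ is constant along the homotopy, proving 1).

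Given 1), the homomorphism property 2) follows from (b). Writing the target additively, let $\tau_2=\sigma_{m'}\circ\cdots\circ\sigma_1$ be the (uniquely extended) composite of the reflections along $\gamma_2$, so that $\tau_2(z_0)=\rho(\mu(\gamma_2),z_0)$. For the concatenation $\gamma_1\gamma_2$ one first reaches $\rho(\mu(\gamma_1),z_0)$ and then applies $\tau_2$, and using $\sigma_i\circ\rho(\theta,\cdot)=\rho(\theta,\cdot)\circ\sigma_i$ we obtain
\[
\tau_2\bigl(\rho(\mu(\gamma_1),z_0)\bigr)=\rho\bigl(\mu(\gamma_1),\tau_2(z_0)\bigr)=\rho\bigl(\mu(\gamma_1)+\mu(\gamma_2),z_0\bigr),
\]
so $\mu(\gamma_1\gamma_2)=\mu(\gamma_1)+\mu(\gamma_2)$. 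Finally, for 3), changing the basepoint $z_0$ conjugates $\pi_1(M^n)$ by a connecting path; since the target $\bbR^n/Z_\rho$ is abelian this conjugation acts trivially, so $\mu$ is independent of $z_0$. Being a homomorphism into an abelian group, $\mu$ annihilates the commutator subgroup $[\pi_1,\pi_1]$ and therefore descends to a homomorphism on $H_1(M^n,\bbZ)=\pi_1(M^n)^{\mathrm{ab}}$.
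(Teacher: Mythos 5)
Your proposal is correct and takes essentially the same approach as the paper: homotopy invariance is proved by putting the homotopy in general position with respect to the singular strata and checking the same three elementary moves (cancellation of a repeated reflection by involutivity, exchange of two commuting reflections at a corank-2 hyperbolic-hyperbolic point, and no change at a corank-2 elbolic point), after which additivity follows from the commutation of the reflections with $\rho$, and the descent to $H_1(M^n,\bbZ)$ from the target $\bbR^n/Z_\rho$ being abelian. The only cosmetic differences are that the paper treats the elbolic move by observing that the two nearby loops have literally the same reflection sequence (rather than differing by a trivial meridian loop), and justifies basepoint independence by noting that reflections and the action connect all regular points, which amounts to the same commutativity computation underlying your conjugation remark.
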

\begin{proof}
1) Consider a homotopy $\Gamma : [0,1] \times [0,1] \to M^n$ from a loop 
$\gamma_0 = \Gamma(0,.)$ to a loop $\gamma_1 = \Gamma(1,.)$. By a small 
perturbation, we may assume that locally only the following three kinds of situations
can happen when moving from $\gamma_t= \Gamma(t,.)$ to $\gamma_t'=\Gamma(t',.)$ with 
$t'$ close to $t$ (see Figure \ref{fig:homotopy}):
\begin{figure}[htb]
\begin{center}
\includegraphics[width=100mm]{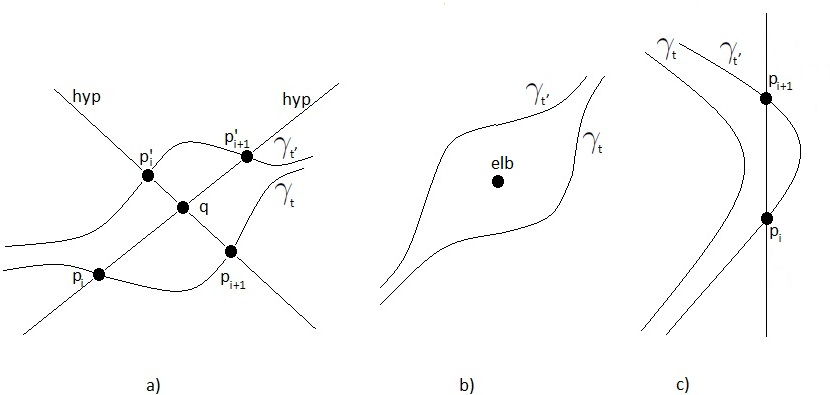}
\caption{Homotopy of $\gamma$.}
\label{fig:homotopy}
\end{center}
\end{figure}

a) $\gamma$ crosses a corank-2 transversally hyperbolic-hyperbolic 
orbit at a point $q$. After this crossing, two consecutive singular points
$p_i, p_{i+1}$ on $\gamma_t$ will be replaced by two other singular points $p_i'$
and $p_{i+1}'$ on $\gamma_{t'}$, with $\sigma_i = \sigma_{i+1}'$ (i.e. 
$\sigma_{p_i} = \sigma_{p_{i+1}'}$) and $ \sigma_{i+1}' = \sigma_i$.
Due to the commutativity of $\sigma_i$ and $\sigma_{i+1}$ as can be seen from 
the point $q$, we have $\sigma_i \circ \sigma_{i+1} = \sigma_i' \circ \sigma_{i+1}'$,
which implies that $\mu(\gamma_t)=\mu(\gamma_t')$ (the other $\sigma_j, j \neq i, i+1$, remain the same).

b) $\gamma$ crosses a singular corank-2 transversally elbolic orbit. In this case
$\gamma_t$ and $\gamma_t'$ gives rise to exactly the same sequence of involutions $\sigma_i$'s, so  $\mu(\gamma_t)=\mu(\gamma_t')$.

c) $\gamma_t$ enters (or exists) a new regular orbit by crossing a transversally hyperbolic
corank-1 orbit. Then 2 new consecutive singular points $p_i, p_{i+1}$ are created (or
disappear), with $\sigma_i = \sigma_{i+1}$. Since $\sigma_i$ is an involution, 
$\sigma_i \circ \sigma_{i+1} = \sigma_i^2 = Id$, we also have $\mu(\gamma_t)=\mu(\gamma_t')$ in this case. 

2) Since $\rho$ commutes with all the involutions $\sigma_i$, it is obvious that
\begin{equation}
\mu (\gamma_1 \circ \gamma_2) = \mu (\gamma_1) +\mu(\gamma_2)
\end{equation}
for any loops $\gamma_1, \gamma_2$ starting at $z_0$.

Thus we obtain a homomorphism $\mu$ from $\pi_1(M^n,z_0)$ to $\bbR^n/Z_\rho$.

3) Since $\bbR^n/Z_\rho$ is commutative, any homomorphism from $\pi_1(M^n)$ to  
 $\bbR^n/Z_\rho$ descends to a homomorphism from its Abelianization 
\begin{equation}
H_1(M^n, \bbZ) \cong \pi_1(M^n)/[\pi_1(M^n),\pi_1(M^n)]
\end{equation}
to $\bbR^n/Z_\rho$. Since we can go from any regular point of $M$ to any other regular point by the 
reflections $\sigma_i$ and the action $\rho$, and $\rho$ commutes with these $\sigma_i$'s,
 it is also clear that $\mu$ does not depend on the choice of $z_0$ in $M^n$.
\end{proof}

\begin{defn}
The homomorphisms $\mu : H_1(M^n, \bbZ) \to \bbR^n/Z_\rho$ and $\mu : \pi_1(M^n) \to \bbR^n/Z_\rho$
(which are both denoted by $\mu$ for simplicity)
given by Theorem \ref{thm:defn-monodromy} are called the {\bf monodromy}
of the action $\rho: \bbR^n \times M^n \to M^n$.
\end{defn}

\begin{remark}
For proper integrable Hamiltonian systems and their associated Lagrangian torus
fibrations, there is also the notion of monodromy, introduced in the regular case 
by Duistermaat \cite{Duistermaat-globalaction-angle1980} and extended to the singular case
by Zung \cite{Zung-Integrable2003}. Base spaces of singular Lagrangian torus
fibrations admit a natural stratified singular affine structure, and the monodromy of 
singular Lagrangian torus is the monodromy of that affine structures \cite{Zung-Integrable2003}. 
Such singular affine structures also play an important role, for example, in the study of mirror symmetry
of Calabi-Yau manifolds, see, e.g. Gross-Siebert \cite{Gross-Siebert} and Kontsevich-Soibelman \cite{Kontsevich-Soibelman}. In our situation here, one can also view the action $\rho: \bbR^n \times M^n \to M^n$
as giving rise to a kind of special singular affine structure on $M^n$, and the monodromy
$\mu : \pi_1(M^n,z_0) \to \bbR^n/Z_\rho$ as a monodromy of this special singular affine structure.
\end{remark}
A simple remark is that the twisting groups are subgroups of the monodromy group, i.e.
the image of $\pi_1(M^n)$ by $\mu$ in $\bbR^n/Z_\rho$:

\begin{thm}[Twistings and monodromy] \label{thm:TwistingMonodromy}
For any $q \in M^n$, we have
\begin{equation}
G_q \subseteq Im(\mu),
\end{equation}
where $G_q = (Z_\rho(q) \cap Z_\rho \otimes \bbR)/Z_\rho$ is the twisting group
of the action $\rho$ at $q$, and $Im(\mu) = \mu(\pi_1(M^n)) \subseteq \bbR^n/Z_\rho$
is the image of $\pi_1(M^n)$ by the monodromy map $\mu$. In particular,
if $M^n$ is simply-connected, then $Im(\mu)$ is trivial, and $\rho$ has no twisting.
\end{thm}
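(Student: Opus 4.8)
The plan is to realize every nontrivial element of the twisting group $G_q$ as the monodromy $\mu(\gamma)$ of an explicit loop $\gamma$, which immediately gives $G_q \subseteq \mathrm{Im}(\mu)$. Since $\mu$ is a group homomorphism (Theorem \ref{thm:defn-monodromy}) and $G_q \cong (\bbZ_2)^k$ is generated by commuting involutions, it suffices to treat a single generator. The concluding assertion about simply-connected $M^n$ is then immediate: $\pi_1(M^n)=0$ forces $\mathrm{Im}(\mu)=\{0\}$, hence every $G_q$ is trivial and $\rho$ has no twisting.

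First I would fix a representative $w \in Z_\rho(q) \cap (Z_\rho \otimes \bbR)$ of a nontrivial class $[w] \in G_q$, so that $2w \in Z_\rho$ but $w \notin Z_\rho$. The heart of the argument is a semi-local identity: along the orbit $\cO_q$ the internal automorphism $\rho(w,\cdot)$ coincides with a product of hyperbolic reflections. I would obtain this from the semi-local normal form (Theorem \ref{thm:semi-localform}), which presents a neighborhood of $\cO_q$ as $(B^h \times B^{2e} \times \bbT^t)/(\bbZ_2)^k$, where the group $(\bbZ_2)^k$ is exactly $G_q$ and acts in the cover by sign changes $x_i \mapsto -x_i$ on a subset of the hyperbolic coordinates together with half-period translations on $\bbT^t$. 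The deck transformation $\delta$ attached to $w$ factors in the cover as $\delta = \sigma \circ \rho(w,\cdot)$, where $\sigma = \sigma_{i_1}\circ \cdots \circ \sigma_{i_l}$ is the product of the reflections (in the sense of Theorem \ref{thm:Reflection}) flipping precisely those hyperbolic coordinates and $\rho(w,\cdot)$ is the corresponding half-period torus translation. Since $\delta$ descends to the identity on $M^n$ and $\sigma^2 = \mathrm{id}$, this yields the crucial equality $\rho(w,\cdot) = \sigma_{i_1}\circ \cdots \circ \sigma_{i_l}$ as germs of automorphisms along $\cO_q$ in $M^n$, which is the precise sense in which the twisting is a product of reflections.

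With this identity in hand I would build the loop. Choosing a regular point $z_0$ close to $\cO_q$, I would take $\gamma$ to be the projection to $M^n$ of a path in the cover joining a lift $\tilde z_0$ to $\delta(\tilde z_0)$; because $\delta$ is a deck transformation this projection is a genuine loop, and the path can be chosen to cross each wall $\cN_{v_{i_1}}, \ldots, \cN_{v_{i_l}}$ exactly once, transversally, and to avoid all orbits of dimension $\le n-2$. Running the monodromy recipe of Theorem \ref{thm:defn-monodromy} along $\gamma$ then produces exactly the reflections $\sigma_{i_1}, \ldots, \sigma_{i_l}$, so that the terminal point of the reflection chain is $\sigma(z_0) = \rho(w,z_0)$ by the semi-local identity; hence $\mu(\gamma) = [w]$, as required.

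The main obstacle is this last matching step: verifying that $\gamma$, read through the reflection-based definition of monodromy, picks up precisely the product $\sigma_{i_1}\circ \cdots \circ \sigma_{i_l}$ and nothing more. This requires care in choosing the lifted path so that it traverses the hyperbolic directions crossing each selected wall once (and no other wall), while the torus translation accounts for the remaining discrepancy between $\tilde z_0$ and $\delta(\tilde z_0)$, and in confirming that no spurious corank-$1$ crossings are introduced. Everything else — the homomorphism property, the identification of $G_q$ with the deck group $(\bbZ_2)^k$, and the simply-connected corollary — is then formal.
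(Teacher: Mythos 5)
Your proposal is correct and takes essentially the same route as the paper: the paper's proof also realizes $[w]$ as the monodromy of an explicit loop attached to the twisted orbit (the flow segment $t \mapsto \rho(2tw,z_0)$ followed by a short return path from $\rho(w,z_0)$ to $z_0$ near $q$) and then asserts, via the semi-local normal form, exactly the verification that your factorization $\rho(w,\cdot)=\sigma_{i_1}\circ\cdots\circ\sigma_{i_l}$ along $\cO_q$ spells out in detail. The differences are presentational only: the paper handles an arbitrary element of $G_q$ directly instead of reducing to generators, and it describes the loop downstairs rather than as the projection of a cover path ending at the deck translate $\delta(\tilde z_0)$, which is the same loop up to homotopy.
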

\begin{proof}
Let $q \in M^n, (w \mod Z_\rho) \in G_q$, and $z_0$ be a regular point close enough to $q$.
Consider the loop $\gamma: [0,1] \to M^n$  defined as follows:
\begin{itemize}
\item $\gamma(t) = \rho(2tw,z_0) \quad \forall 0 \leq  t \leq \frac{1}{2}$
\item $\gamma(t)$ for $\frac{1}{2} \leq  t \leq 1$ is a path from $\rho(w,z_0)$ to $z_0$
in a small neighborhood of $q$.
\end{itemize}
The one verifies, using the definition of the monodromy and the semi-local normal form theorem,
that $\mu([\gamma]) = w \mod Z_\rho$.
\end{proof}
The proof of the above theorem shows that the monodromy map $\mu : H_1(M^n,\bbZ) \to \bbR^n/Z_\rho$
satisfies the following compatibility condition (*) with the isotropy groups:

\vspace{0.3cm}
\begin{quote} (*) \quad \emph{ If $[\gamma] \in H_1(M^n,\bbZ)$  can be represented by a loop of the type 
$\{\rho(tw,p) | t \in [0,1]\}$ where $p \in M^n, w \in Z_\rho(p) \cap Z_\rho\otimes \bbR$, then
$\mu([\gamma])= w \mod Z_\rho$.}
\end{quote}

\vspace{0.3cm}
In particular, If $[\gamma] \in H_1(M^n,\bbZ)$  can be represented by a loop of the type 
$\{\rho(tw,p) | t \in [0,1]\}$ where $w \in Z_\rho$, then $\mu([\gamma])=0$.

Take an arbitrary regular point $z_0 \in M$. Then the map 
\begin{equation}
Z_\rho \to H_1(M^n,\bbZ),
\end{equation}
which associates to $w \in Z_\rho$ the homology class of the loop 
$\{\rho(tw,z_0) | t \in [0,1]\}$ in $H_1(M^n,\bbZ)$, is a homomorphism
which does not depend on the choice of $z_0$. Denote the image of this map
by $Im(Z_\rho)$. Then we can also view the monodromy 
of $(M^n, \rho)$ as a homomorphism, which we will also denote by
\begin{equation}
\mu : H_1(M^n,\bbZ)/Im(Z_\rho) \to \bbR^n/Z_\rho,
\end{equation}
from $H_1(M^n,\bbZ)/Im(Z_\rho)$ to $\bbR^n/Z_\rho$.

According to the structural theorem for finitely generated Abelian groups, we can write
\begin{equation}
H_1(M^n,\bbZ)/Im(Z_\rho) = G_{\text{torsion}}\oplus G_{\text{free}},
\end{equation}
where $G_{\text{torsion}} \subseteq H_1(M^n,\bbZ)/Im(Z_\rho)$ is its torsion part, and
$G_{\text{free}} \cong \bbZ^k$, where $k = \rank_\bbZ \big(H_1(M^n,\bbZ)/Im(Z_\rho)\big)=
 \dim_\bbR \big(\big(H_1(M^n,\bbZ)/Im(Z_\rho)\big)\otimes \bbR\big)$,
is a free part complementary to $G_{\text{torsion}}$.

This decomposition of $H_1(M^n,\bbZ)/Im(Z_\rho)$ gives us a decomposition of $\mu$:
\begin{equation}
\mu = \mu_{\text{torsion}}\oplus \mu_{\text{free}},
\end{equation}
where $\mu_{\text{torsion}} : G_{\text{torsion}} \to \bbR^n/Z_\rho$
is the restriction of $\mu$ to the torsion part $G_{\text{torsion}}$, and
$\mu_{\text{free}}$ is the restriction of $\mu$ to $G_{\text{free}}$.

Notice that $\mu_{\text{torsion}}$ is not arbitrary, but must satisfy the above 
compatibility condition (*) with the twisting groups. On the other hand, $\mu_{\text{free}}$
can be arbitrary. More precisely, we have the following theorem:

\begin{thm}[Changing of monodromy] \label{thm:trans-monodromy}
With the above notations, assume that $\mu_{\text{free}}' : G_{\text{free}} \to \bbR^n/Z_\rho$
is another arbitrary homomorphism from $G_{\text{free}}$ to $\bbR^n/Z_\rho$. Put 
\begin{equation}
 \mu' = \mu_{\text{torsion}}\oplus \mu_{\text{free}}':  H_1(M^n,\bbZ)/Im(Z_\rho) \to \bbR^n/Z_\rho.
\end{equation}
Then there exists another nondegenerate action $\rho': \bbR^n \times M^n \to M^n$, 
which has the same orbits as $\rho$ and the same isotropy group at each point of $M^n$ 
as $\rho$, but whose monodromy is $\mu'$.
\end{thm}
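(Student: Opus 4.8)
The plan is to change the monodromy one free generator at a time, by passing to an abelian covering, shifting the deck transformation by a symmetry of the action, and then descending. Since $G_{\text{free}}\cong \bbZ^k$ is free of finite rank, it suffices by induction to treat the case in which $\mu_{\text{free}}'$ differs from $\mu_{\text{free}}$ on a single basis element. So fix loops $\gamma_1,\hdots,\gamma_k$ whose classes form a basis of $G_{\text{free}}$, and assume $\mu_{\text{free}}'(\gamma_1)=\mu_{\text{free}}(\gamma_1)+c$ for some $c\in\bbR^n/Z_\rho$, while $\mu_{\text{free}}'(\gamma_j)=\mu_{\text{free}}(\gamma_j)$ for $j\geq 2$. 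The homomorphism $H_1(M^n,\bbZ)\to G_{\text{free}}\to\bbZ$ dual to $\gamma_1$ kills the torsion part and the subgroup $Im(Z_\rho)$, so it is represented by a smooth map $f:M^n\to\bbR/\bbZ$ with $\langle f_*,[\gamma_1]\rangle=1$, with $\langle f_*,[\gamma_j]\rangle=0$ for $j\geq 2$, and with $\langle f_*,[\{\rho(tw,z_0)\}]\rangle=0$ for every $w\in Z_\rho$.

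First I would pull back the covering $\bbR\to\bbR/\bbZ$ along $f$ to obtain the infinite cyclic covering $\pi:\widetilde M\to M^n$, with deck group generated by a single transformation $\delta$, and lift $\rho$ to a nondegenerate action $\widetilde\rho$ on $\widetilde M$ (nondegeneracy is a local condition, hence inherited). Because $f_*$ vanishes on $Im(Z_\rho)$, for $w\in Z_\rho$ the path $t\mapsto\widetilde\rho(tw,\tilde z_0)$ lifts a loop with trivial $f_*$-period, so $\widetilde\rho(w,\cdot)$ is the deck transformation fixing $\tilde z_0$, i.e. $\widetilde\rho(w,\cdot)=\mathrm{id}_{\widetilde M}$; thus $Z_{\widetilde\rho}=Z_\rho$ and, choosing any lift $\hat c\in\bbR^n$ of $c$, the automorphism $\widetilde\rho(\hat c,\cdot)$ of $(\widetilde M,\widetilde\rho)$ is independent of the lift. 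Now set $\delta'=\delta\circ\widetilde\rho(\hat c,\cdot)$. Since $\widetilde\rho(\hat c,\cdot)$ commutes with every deck transformation and with the full flow of $\widetilde\rho$, the map $\delta'$ is again a free properly discontinuous automorphism commuting with $\widetilde\rho$, and $M'=\widetilde M/\langle\delta'\rangle$ inherits a nondegenerate action $\rho'$ with exactly the same local models as $\rho$. The reflections and flow used to compute the monodromy on $\widetilde M$ are unchanged, so closing up $\gamma_1$ now inserts the extra translation $\hat c$: one checks $\mu'(\gamma_1)=\mu(\gamma_1)+c$, whereas $\gamma_j$ ($j\geq 2$), the torsion classes, and the classes in $Im(Z_\rho)$ lift to honest loops in $\widetilde M$ and so keep their old monodromy. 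In particular $\rho'$ has the same orbits, the same isotropy groups $Z_\rho$ and $Z_\rho(q)$ at every point, and the compatibility condition (*) with the twisting groups is preserved, precisely because only the free part is altered.

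The hard part is to identify $M'$ with $M^n$ itself, since the theorem demands the \emph{same} manifold. For this I would join $\delta$ to $\delta'$ through the smooth family $\delta^{(s)}=\delta\circ\widetilde\rho(s\hat c,\cdot)$, $s\in[0,1]$, each member of which generates a free properly discontinuous $\bbZ$-action (freeness and properness persist along the family since the $\delta^{(s)}$ differ from $\delta$ only by the orbit-preserving symmetries $\widetilde\rho(s\hat c,\cdot)$). The diagonal $\bbZ$-action $s\mapsto\delta^{(s)}$ on $\widetilde M\times[0,1]$ is then free and proper, its quotient $Q$ is a smooth manifold, and $Q\to[0,1]$ is a submersion whose fibre over $s$ is $\widetilde M/\langle\delta^{(s)}\rangle$, equal to $M^n$ over $0$ and to $M'$ over $1$. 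An Ehresmann/isotopy-extension argument, integrating a lift of $\partial_s$ chosen tangent to the $\widetilde\rho$-orbit direction $\hat c$, trivialises this submersion and yields a diffeomorphism $\Psi:M^n\to M'$ carrying $\rho$-orbits to $\rho'$-orbits; transporting $\rho'$ back by $\Psi$ produces the required action on $M^n$. I expect the only genuine subtlety to be completeness of the trivialising flow when $M^n$ is noncompact, which is handled by cutting the lifted $\partial_s$ off outside a fundamental domain, or by controlling the isotopy near singular orbits via the semi-local normal form (Theorem \ref{thm:semi-localform}).

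For the general $\mu_{\text{free}}'$ one iterates this construction over the basis $\gamma_1,\hdots,\gamma_k$, each step preserving the orbits, the isotropy data, and the torsion monodromy while adjusting one free period; the composition realises the full prescribed change $\mu_{\text{free}}'-\mu_{\text{free}}$. I would also remark that the whole procedure is nothing but a cut-and-reglue surgery along the hypersurface $f^{-1}(\mathrm{pt})$, reglued by the symmetry $\rho(\hat c,\cdot)$ of the action, which is the viewpoint most consistent with the surgery proofs of Theorem \ref{thm:class-marked}; the covering formulation is merely the most convenient way to see that the regluing does not change the diffeomorphism type of $M^n$.
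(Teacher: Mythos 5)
Your mechanism is the same one the paper uses: pass to a covering on which $\rho$ lifts with the same isotropy lattice $Z_\rho$ and trivial monodromy, then recompose the deck transformations with translations $\widetilde{\rho}(\hat c,\cdot)$ of the lifted action. The paper does this in one step, on the covering $\widehat{M^n}$ with $\pi_1(\widehat{M^n})=Im(Z_\rho)$ and deck group $\pi_1(M^n)/Im(Z_\rho)$, while you do it one free generator at a time through infinite cyclic covers, and you attempt to prove (rather than assert, as the paper essentially does) that the re-glued quotient is diffeomorphic to $M^n$. Those variations are fine.

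The genuine gap is your parenthetical justification that freeness and properness of $\delta^{(s)}=\delta\circ\widetilde{\rho}(s\hat c,\cdot)$ ``persist since the $\delta^{(s)}$ differ from $\delta$ only by the orbit-preserving symmetries.'' That is not a valid principle: composing a free, properly discontinuous transformation with a commuting diffeomorphism can destroy both properties, and in this construction freeness is exactly the delicate point. Indeed $(\delta')^m(z)=z$ is equivalent to $\widetilde{\rho}(m\hat c,z)=\delta^{-m}(z)$, which forces $q=\pi(z)$ to satisfy $m\hat c\in Z_\rho(q)$ \emph{and} the isotropy loop $\lambda_{m\hat c}=\{\rho(tm\hat c,q)\ :\ t\in[0,1]\}$ to have $f_*$-winding number $-m\neq 0$. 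Since $Z_\rho(q)$ is much larger than $Z_\rho$ at singular points (it contains the whole subspace $I_\rho(q)$ plus a lattice), nothing in your argument excludes this. What does exclude it is the structural fact from Step 5 of the proof of Theorem \ref{thm:HERT-toricdegree} (the same fact behind condition (*)): for every $q$ and every $w\in Z_\rho(q)$ one has $2w\in Z_\rho+I_\rho(q)$, hence $2[\lambda_w]\in Im(Z_\rho)$ in $H_1(M^n,\bbZ)$, hence $\langle f_*,[\lambda_w]\rangle=0$ because $f_*$ is $\bbZ$-valued and vanishes on $Im(Z_\rho)$. This yields freeness of every $\delta^{(s)}$, and the same computation is what actually proves your unproved claim that $\rho'$ has the same isotropy group as $\rho$ at each point (your stated reason, ``because only the free part is altered,'' is not an argument). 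Two smaller issues: properness needs its own verification, since freeness does not imply it for $\bbZ$-actions; and the Ehresmann trivialization of $Q\to[0,1]$ requires that submersion to be proper, i.e.\ compactness of $M^n$ — for noncompact $M^n$ you need a complete connection or an equivariant-section argument in the spirit of Section 6. To be fair, the paper's own proof hides these same difficulties behind ``one verifies easily.''
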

Before proving the above theorem, let us notice that we can kill the 
monodromy by an appropriate covering of $(M^n, \rho)$. More precisely, we have:

\begin{thm} \label{thm:lifting-monodromy}
Denote by $\widetilde{M^n}$ the covering of $M^n$ corresponding to the kernel $\ker \mu$
of the monodromy homomorphism $\mu: \pi_1(M^n) \to \bbR^n/Z_\rho$, i.e. $\pi_1(\widetilde{M^n}) = \ker \mu$. 
Denote by $\widetilde{\rho}$ the lifting of $\rho$ on $\widetilde{M^n}$. Then we have:

1) $(\widetilde{M^n},\widetilde{\rho})$ has the same toric degree as $(M^n,\rho)$, and
the same isotropy group: $Z_{\widetilde{\rho}} = Z_\rho$.

2) The monodromy of $(\widetilde{M^n},\widetilde{\rho})$ is trivial.
\end{thm}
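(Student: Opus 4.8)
The plan is to construct $\widetilde\rho$ explicitly and then treat the two assertions in turn, with the identification $Z_{\widetilde\rho}=Z_\rho$ of assertion 1 feeding into the computation of the lifted monodromy in assertion 2. Let $p:\widetilde{M^n}\to M^n$ be the covering associated to the normal subgroup $\ker\mu$ of $\pi_1(M^n)$, so that $\widetilde{M^n}$ is connected, $p$ is a normal covering, and $p_*\pi_1(\widetilde{M^n})=\ker\mu$. Since $p$ is a local diffeomorphism, the commuting generators $X_1,\dots,X_n$ of $\rho$ lift to commuting vector fields on $\widetilde{M^n}$, and completeness lifts as well, because a flow line of $X_i$ through $p(\widetilde z)$ lifts uniquely through $\widetilde z$ for all time by the path-lifting property. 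This produces $\widetilde\rho$, which intertwines with $\rho$ via $p(\widetilde\rho(g,\widetilde z))=\rho(g,p(\widetilde z))$ and which is nondegenerate, nondegeneracy being a purely local condition preserved by the local isomorphism $p$.

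For assertion 1 I would prove the two inclusions separately; the toric degree statement is then immediate since toric degree $=\rank_\bbZ Z_\rho$. The inclusion $Z_{\widetilde\rho}\subseteq Z_\rho$ follows at once from the intertwining relation and surjectivity of $p$: if $\widetilde\rho(g,\cdot)=Id$ then $\rho(g,\cdot)=Id$. For the reverse, take $g\in Z_\rho$ and a regular point $z_0$ with a lift $\widetilde z_0$. Then $\widetilde\rho(g,\cdot)$ is a deck transformation, and it fixes $\widetilde z_0$ if and only if the loop $\ell_g:s\mapsto\rho(sg,z_0)$ lifts to a loop, i.e. if and only if $[\ell_g]\in p_*\pi_1(\widetilde{M^n})=\ker\mu$. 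But the compatibility condition (*) recorded after Theorem \ref{thm:TwistingMonodromy} gives $\mu([\ell_g])=g\bmod Z_\rho=0$, so $[\ell_g]\in\ker\mu$; a deck transformation of a connected covering fixing one point is the identity, whence $\widetilde\rho(g,\cdot)=Id$ and $g\in Z_{\widetilde\rho}$.

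For assertion 2 the key point is the naturality of the monodromy under $p$, namely $\widetilde\mu=\mu\circ p_*$ as homomorphisms into $\bbR^n/Z_\rho=\bbR^n/Z_{\widetilde\rho}$. Because $p$ is a local isomorphism of actions, it carries each corank-$1$ singular hypersurface $\widetilde\cN_v$ of $\widetilde\rho$ onto a component of $\cN_v$ with the same associated vector $v$ (Definition \ref{def:AssociatedVector}), and by the uniqueness clause of Theorem \ref{thm:Reflection} it conjugates the reflections: $p\circ\widetilde\sigma_v=\sigma_v\circ p$ near $\widetilde\cN_v$. Hence, given a loop $\widetilde\gamma$ in $\widetilde{M^n}$ based at $\widetilde z_0$ over $z_0$ and meeting the hypersurfaces transversally, the reflected sequence $\widetilde z_i=\widetilde\sigma_i(\widetilde z_{i-1})$ projects to the sequence $z_i=\sigma_i(z_{i-1})$ used to compute $\mu$ along $\gamma=p\circ\widetilde\gamma$. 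Projecting the defining relation $\widetilde z_m=\widetilde\rho(\widetilde\mu([\widetilde\gamma]),\widetilde z_0)$ through $p$, comparing with $z_m=\rho(\mu([\gamma]),z_0)$, and using that the orbit map at the regular point $z_0$ is injective modulo $Z_\rho(\cO_{z_0})=Z_\rho$ (Theorem \ref{thm:HERT-toricdegree}), I obtain $\widetilde\mu([\widetilde\gamma])=\mu([\gamma])=\mu(p_*[\widetilde\gamma])$. Since $p_*[\widetilde\gamma]\in\ker\mu$ for every loop, this forces $\widetilde\mu\equiv 0$.

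The main obstacle is assertion 2, and specifically the bookkeeping behind the naturality identity $\widetilde\mu=\mu\circ p_*$: one must verify that singular hypersurfaces, associated vectors, and reflection involutions all lift compatibly, that a transverse representative of $\widetilde\gamma$ projects to a transverse representative of $\gamma$ so that the two reflection sequences correspond termwise, and that the final comparison is taken modulo the correct isotropy group. All of this is local and rests on Theorem \ref{thm:Reflection} together with the identification $Z_{\widetilde\rho}=Z_\rho$ from assertion 1; once the conjugation relation $p\circ\widetilde\sigma_v=\sigma_v\circ p$ is established, the remainder is a routine projection argument.
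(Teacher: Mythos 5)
Your proposal is correct. Note, however, that the paper's own ``proof'' of Theorem \ref{thm:lifting-monodromy} consists of the single sentence ``The proof is straightforward,'' so there is no published argument to compare against; what you have written is a legitimate and complete filling-in of the details the authors omit. Your route is the natural one: lift the generating vector fields to get $\widetilde\rho$; obtain $Z_{\widetilde\rho}\subseteq Z_\rho$ from the intertwining relation; obtain the reverse inclusion by observing that for $g\in Z_\rho$ the map $\widetilde\rho(g,\cdot)$ is a deck transformation whose fixed-point property is governed by whether the orbit loop $s\mapsto\rho(sg,z_0)$ lies in $\ker\mu$ --- which it does, either by the compatibility condition (*) following Theorem \ref{thm:TwistingMonodromy} or, even more directly, because that loop stays inside the regular orbit and so crosses no corank-$1$ hypersurface, giving $\mu=0$ straight from the definition; and finally deduce triviality of $\widetilde\mu$ from the naturality identity $\widetilde\mu=\mu\circ p_*$, which rests on the uniqueness clause of the reflection principle (Theorem \ref{thm:Reflection}) to get $p\circ\widetilde\sigma_v=\sigma_v\circ p$, and on the fact that the isotropy of a regular orbit is exactly $Z_\rho$ (Step 3 of the proof of Theorem \ref{thm:HERT-toricdegree}) to make the final comparison well defined modulo the correct lattice. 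All of these ingredients are available in the paper at the point where the theorem is stated, so the proof stands as written.
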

\begin{proof}
The proof is straightforward.
\end{proof}

\underline{Proof of Theorem \ref{thm:trans-monodromy}}

The covering $\widetilde{M^n}$ of $M^n$ in the above theorem is the minimal covering
which trivializes the monodromy of $\widetilde{\rho}$. In order to 
prove  Theorem \ref{thm:trans-monodromy}, we will use a kind of universal covering $\widehat{M^n}$ (among 
all coverings on which the toric action $\rho_\bbT$ associated to $\rho$ can be lifted):
$\widehat{M^n}$  is the covering of $M^n$ such that $\pi_1(\widehat{M^n}) = Im(Z_\rho)$, where 
$Im(Z_\rho) \subset \pi_1(M^n,z_0)$ now denotes the image of 
$Z_\rho \cong \pi_1(\bbT^{\text{toric degree}(\rho)})$ in $\pi_1(M^n,z_0)$ via an isotropy-free orbit of $\rho_\bbT$ on $M^n$.

The action $\rho$ can also be naturally lifted to an action on $\widehat{M^n}$, 
which we will denote by $\widehat \rho$. Similarly to the above theorem, we have $Z_{\widehat \rho}=Z_\rho$ 
and the monodromy of $\widehat \rho$ on $\widehat{M^n}$ is trivial.

Remark that $Im(Z_\rho)$ is a normal subgroup of $\pi_1(M^n,z_0)$. 
The covering $\widehat{M^n} \to M^n$ is a normal covering, the quotient 
group $\pi_1(M^n,z_0)/ Im(Z_\rho)$ acts on $\widehat{M^n}$ freely, and $(M^n, \rho)$ is
isomorphic to the quotient of $(\widehat{M^n}, \widehat\rho)$ by this action.

In order to obtain another action $\rho'$ with monodromy $\mu'$, it suffices to modify
the action of $\pi_1(M^n,z_0)/ Im(Z_\rho)$ on $\widehat{M^n}$, in such a way that 
the action remains free and preserves $\widehat\rho$, the quotient manifold $M'$ is diffeomorphic to $M^n$, but 
the induced $\bbR^n$-action $\rho'$ on $M'$ has monodromy equal to $\mu'$ instead 
of $\mu$. In fact, $\mu'$ and  $\mu$ indicate how to define the new action of $\pi_1(M^n,z_0)/ Im(Z_\rho)$ on $\widehat M^n$:

Let $z \in (\widehat{M^n}, \widehat\rho)$ be a regular point and let a loop $\gamma : [0,1] \to M^n$,
$\gamma(0) = \gamma(1) = proj(z)$, represent an element 
$[\gamma] \in \pi_1(M^n,z_0)/ Im(Z_\rho)$. Denote 
by $\widehat \gamma : [0,1] \to \widehat{M^n}, \widehat \gamma(0) = z$ the lifting 
of $\gamma$ from $M^n$ to $M'$. Put
\begin{equation}
A_{[\gamma]}(z) = \widehat\rho (\mu'([\gamma])-\mu([\gamma]),\gamma(1)).
\end{equation}
One verifies easily that the map 
$A: \pi_1(M^n,z_0)/ Im(Z_\rho) \times \widehat{M^n} \to \widehat{M^n} $, defined 
by the above formula, is a free action of $\pi_1(M^n,z_0)/ Im(Z_\rho)$ on $\widehat{M^n}$ which commutes 
with $\widehat \rho$, the quotient $M' = \widehat{M^n}/A$ is still 
diffeomorphic to $M^n$, and the induced action $\rho' = \widehat \rho/A$ 
on $M'$ can be thought of as having the same orbits and 
isotropy as $\rho$ up to a diffeomorphism, but with the monodromy equal to $\mu'$. \QED

\begin{remark}
a) In Theorem \ref{thm:trans-monodromy}, it is possible to change $\mu_{\text{torsion}}$ also to another
homomorphism $\mu_{\text{torsion}}' : G_{\text{torsion}} \to \bbR^n/Z_\rho$. Then the construction of 
the proof still works, but the new action $\rho'$ will not have the same isotropy groups as $\rho$ at 
twisted singular orbits in general, and even the diffeomorphism type of $M'$ may be different from $M$,
because the new action of $\pi_1(M^n,z_0)/Im(Z_\rho)$ will not be isotopic to the old one.

b) We don't know yet if $G_{\text{torsion}}$ is completely generated by the twisting elements or not in general.
\end{remark}

Another way to look at the monodromy is as follows:

Given a nondegenerate action $\rho: \bbR^n \times M^n \to M^n$, we will look at its 
{\bf 1-skeleton}, which is a graph denoted by $Skelet_1(M^n,\rho)$, and 
defined as follows:
\begin{itemize}
 \item Each vertex of $Skelet_1(M^n,\rho)$ corresponds to exactly one regular orbit of $\rho$.
 \item Each edge of $Skelet_1(M^n,\rho)$  corresponds to one corank-1 singular orbit of $\rho$.
The two ends of the edge are glued to the vertices corresponding to adjacent regular orbits.
(If the singular orbit is twisted, i.e. if it is adjacent to only one regular orbit, then
the two ends of the edge are glued to the same vertex).
\end{itemize}

By lifting, we get a natural homomorphism 
\begin{equation}
l: H_1(Skelet_1(M^n,\rho),\bbZ) \to H_1(M^n,\bbZ)/Im(Z_\rho).
\end{equation}
The map 
\begin{equation}
\widehat \mu: H_1(Skelet_1(M^n,\rho),\bbZ) \to \bbR^n/Z_\rho
\end{equation}
defined by $\widehat \mu = \mu \circ l$ will also called the {\bf monodromy} of $\rho$
(on $Skelet_1(M^n,\rho)$).

Observer that $\widehat \mu$ is not arbitrary, but must satisfy the following commutativity
and compatibility conditions M1 and M2.

M1) If $q \in M^n$ is a singular point of HE-invariant (2,0) and
 $\sigma_1,\sigma_2,\sigma_3,\sigma_4$ denote the edges on $Skelet_1(M^n,\rho)$ 
corresponding to 4 local corank-1 orbits adjacent to $q$ (we may have for example 
 $\sigma_1 = \sigma_3$ in the twisted case), then 
\begin{equation}
\widehat \mu([\sigma_1\sigma_2\sigma_3\sigma_4])=0.
\end{equation}
(See Figure \ref{fig:loop-skeleton1}a).

\begin{figure}[htb]
\begin{center}
\includegraphics[width=90mm]{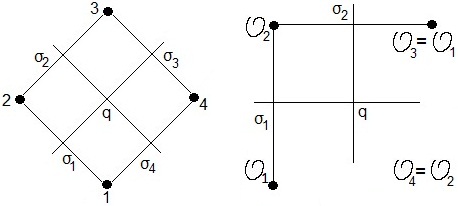}
\caption{Loops in $Skelet_1(M^n,\rho)$ on which $\widehat \mu $ 
must satisfy commutativity and compatibility conditions.}
\label{fig:loop-skeleton1}
\end{center}
\end{figure}

M2) If $q \in M^n$ is a singular point of HE-invariant $(h,e)$ with $h \geq 1$ 
which is twisted, i.e. there is an element $w \in Z_\rho(q)$ such that $w \notin Z_\rho$ but 
$2w \in Z_\rho$, then for any sequence of edges $\sigma_1,\hdots, \sigma_m$ 
corresponding to local corank-1 orbits adjacent to $z$ which forms a loop in $Skelet_1(M^n,\rho)$,
and such that when viewed as reflections with respect to the corresponding corank-1 orbit,
then $\sigma_m \circ \sigma_{m -1} \circ \hdots \circ \sigma_1(z)$ lies in the same local
regular orbit as $\rho(w,z)$, where $z$ is a regular point near $q$, then we also have 
\begin{equation} 
\widehat \mu([\sigma_1\hdots \sigma_m])=w \mod Z_\rho.
 \end{equation}
 (See Figure \ref{fig:loop-skeleton1}b for an illustration).

\section{Totally hyperbolic actions}
\label{section:hyperbolic}

\subsection{Hyperbolic domains and complete fans}

\begin{defn}
A {\bf hyperbolic domain} is an orbit of dimension $n$  of a totally hyperbolic action 
$\rho: \bbR^n \times M^n \to M^n$, i.e. the toric degree of $\rho$ is 0. 
\end{defn}

An equivalent definition is: a hyperbolic domain is an orbit of type  $\bbR^n$, i.e. 
the action $\rho$ is free on it. Remark that, if a nondegenerate action  $\rho: \bbR^n \times M^n \to M^n$
admits an orbit of type $\bbR^n$, then $\rho$ is necessarily totally hyperbolic, according to the toric degree
formula (Theorem \ref{thm:HERT-toricdegree}). Moreover, every orbit of a totally hyperbolic action is of type
$\bbR^k$ for some $0 \leq k \leq n$. In particular, the boundary $\partial \cO$ of a hyperbolic domain
$\cO$ consists of singular orbits of types $\bbR^k$ with $0 \leq k < n$.

\begin{prop}
Let $\cO$ be a hyperbolic domain of a totally hyperbolic action
 $\rho: \bbR^n \times M^n \to M^n$. Assume that the closure $\bar{\cO}$  of $\cO$
 in $M^n$ is compact (this condition is automatic if $M^n$ itself is compact). Then
we have: \\
i) $\bar \cO$ contains a fixed point of the action $\rho$. \\
ii) For each $0 \leq k \leq n$, $\bar \cO$ contains an orbit of dimension $k$ of the action $\rho$. \\
iii) The partition of $\bar \cO$ into the orbits of the action  $\rho$ is a cell decomposition of
$\bar \cO$, where each $k$-dimensional orbit is also a $k$-dimensional cell. 
\end{prop}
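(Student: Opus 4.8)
The plan is to run all three parts off the local normal form, exploiting that total hyperbolicity collapses it to a pure ``orthant'' model. First I would record the consequence of Theorem \ref{thm:HERT-toricdegree}: since the toric degree is $0$, every point has HERT-invariant $(h,0,r,0)$, so every orbit is of type $\mathbb{R}^{r}$ and every singularity is purely hyperbolic, with no elbolic and no toric part. Hence at a point $q$ of corank $k$ the semi-local model of Theorem \ref{thm:semi-localform2} reduces to generators $Y_i=x_i\frac{\partial}{\partial x_i}$ for $i=1,\dots,k$ together with $Y_{k+j}=\frac{\partial}{\partial \zeta_j}$ for $j=1,\dots,n-k$, and the twisting group is trivial (twisting needs $t\ge 1$). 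Thus near $q$ the orbits are exactly the $3^{k}$ sign-pattern pieces $\{\operatorname{sign}(x_i)=\epsilon_i\}$ with $\epsilon_i\in\{-,0,+\}$, a pattern with $\ell$ zeros giving an orbit of dimension $n-\ell$, and $\bar{\mathcal{O}}$ is locally a union of closed orthant-corners. This local dictionary is what all three parts rest on.

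For (i) I would use a \emph{maximal corank} argument. Among all points of the compact set $\bar{\mathcal{O}}$, choose $q$ whose corank $k$ is maximal, and suppose $k<n$. Then $\mathcal{O}_q\cong\mathbb{R}^{\,n-k}$ is noncompact, while $\mathcal{O}_q\subseteq\bar{\mathcal{O}}$ forces $\overline{\mathcal{O}_q}\subseteq\bar{\mathcal{O}}$ to be compact, so $\mathcal{O}_q$ has a boundary point $q'\in\overline{\mathcal{O}_q}\setminus\mathcal{O}_q$. Reading the normal form at $q'$, the piece of $\mathcal{O}_q$ accumulating at $q'$ has a sign pattern with exactly $k$ zeros, strictly fewer than the all-zero pattern of $\mathcal{O}_{q'}$ (otherwise $\mathcal{O}_q=\mathcal{O}_{q'}$), whence $\operatorname{corank}(q')>k$, contradicting maximality. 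Therefore $k=n$ and $q$ is a fixed point. This is the ``$M$ not necessarily compact'' analogue of Proposition \ref{prop:adjacent-orbits}(1)--(2), with $\bar{\mathcal{O}}$ supplying the compactness.

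For (ii), fix a fixed point $q_0\in\bar{\mathcal{O}}$ from (i); it has corank $n$, so its normal form has coordinates $(x_1,\dots,x_n)$ with $Y_i=x_i\frac{\partial}{\partial x_i}$ and no $\zeta$-directions. The domain $\mathcal{O}$ meets a neighborhood of $q_0$ in at least one full orthant, which after the sign changes $x_i\mapsto -x_i$ (each preserving the normal form) we may take to be $C=\{x_1>0,\dots,x_n>0\}$. Its closure $\bar C=\{x_i\ge 0\}\subseteq\bar{\mathcal{O}}$ contains, for each $0\le k\le n$, the face $\{x_1>0,\dots,x_k>0,\ x_{k+1}=\dots=x_n=0\}$, a piece of an orbit of dimension exactly $k$; since $\bar{\mathcal{O}}$ is a union of orbits, the whole orbit lies in $\bar{\mathcal{O}}$, giving an orbit of each dimension $0,\dots,n$.

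For (iii) there are three things to check. Each orbit is of type $\mathbb{R}^{k}$, hence an open $k$-cell. Finiteness: fixed points are isolated (in the corank-$n$ chart $q_0$ is the only fixed point, since any nearby point has some $x_i\ne 0$ and thus $Y_i\ne 0$), so by compactness there are finitely many; every orbit has a fixed point in its compact closure by the (i)-argument, and there appears as one of finitely many sign-pattern germs, so the number of orbits is at most (number of fixed points)$\,\times 3^{n}$, hence finite. The real content, and the step I expect to be the main obstacle, is producing for each $k$-orbit $\mathcal{P}$ a characteristic map $\Phi:D^{k}\to\bar{\mathcal{O}}$ with $\Phi|_{\operatorname{int}D^k}$ a homeomorphism onto $\mathcal{P}$ and $\Phi(\partial D^k)$ contained in lower orbits, i.e. showing $\bar{\mathcal{P}}$ is a \emph{closed} cell. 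I would build $\Phi$ from the hyperbolic flows: parametrizing $\mathcal{P}\cong\mathbb{R}^{k}$ by the $k$ flow-times, each generator $x\frac{\partial}{\partial x}$ rescales a coordinate by $e^{t}$, so $t\to -\infty$ drives that coordinate onto a boundary face, while $t\to +\infty$, by compactness of $\bar{\mathcal{P}}$, also limits onto $\partial\mathcal{P}$; compactifying each $\mathbb{R}$-factor to $[-1,1]$ and arguing by induction on $k$ that these limits exist and land in genuine lower orbits yields the extension. The danger to rule out is that $\mathcal{P}$ might accumulate on its boundary in a non-well-defined way, and here total hyperbolicity is decisive: with $e=0$ there is no rotational component, the flows are monotone scalings, so no spiralling occurs and the boundary is approached monotonically, face by face. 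With characteristic maps in hand, closure-finiteness and the weak topology are automatic for a finite complex, so the orbit partition is a cell decomposition with each $k$-orbit a $k$-cell.
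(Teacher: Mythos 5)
Your parts (i) and (ii) are correct and are essentially the paper's own arguments: (i) is the lowest-rank/maximal-corank point argument, using compactness of $\bar\cO$ to force a noncompact minimal orbit to have a strictly more singular point in its closure, and (ii) is the paper's construction of points $q_k=(x_1(q),\hdots,x_k(q),0,\hdots,0)$ (equivalently, the faces of an orthant) in a normal-form chart at a fixed point. Your finiteness count in (iii) (at most $3^n$ orbits accumulating at each of the finitely many fixed points) is also fine, and together with the fact that every orbit is an $\bbR^k$ and the frontier condition visible in the normal form, this already gives everything the paper's own (very terse) proof of (iii) asserts.

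The gap is exactly at the step you flagged as the main obstacle: the characteristic maps. Compactifying each $\bbR$-factor of a parametrization $(t_1,\hdots,t_k)\mapsto\rho(\sum_i t_iw_i,z)$ to $[-1,1]$ does not yield a continuous extension, and the failure has nothing to do with spiralling. By the fan analysis the paper develops later (Lemma \ref{lemma:converge} and Proposition \ref{prop:simpli_cone}), the flow $s\mapsto\rho(-su,z)$ does converge for every direction $u$, but the limit is governed by which cone $C_\cH$ of the associated fan contains $u$: it is a $0$-orbit (vertex) when $u$ lies in the interior of a top-dimensional cone, and a point of a positive-dimensional orbit when $u$ lies on a wall. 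For your map to extend continuously to a corner of the cube $[-1,1]^k$, the entire open orthant of directions swept out near that corner would have to lie inside the closure of a single maximal cone. Already for a triangular hyperbolic domain in dimension $2$ (which exists, e.g., in the $\bbS^2$ action of Theorem \ref{thm:hyperbolic_dim2}) this is impossible: the fan has three maximal cones, each a pointed simplicial cone, while any basis $(w_1,w_2)$ produces four quadrants; a convex pointed cone cannot contain two closed quadrants (their union contains a line), so by pigeonhole some open quadrant meets two maximal cones, and approaching the corresponding corner of the square along rays in the two cones produces two different vertices of the triangle. Hence the joint limit at that corner does not exist and the map does not extend.

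If you want genuine CW characteristic maps, the compactification must follow the fan rather than a coordinate grid: decompose the parametrizing space $\bbR^k$ into the cones $C_\cH$ and the orbit closure into the corresponding cubes $\bar\cD_\cH$, as in the proof of Theorem \ref{thm:hyperbolic-contractible}, and assemble the map cone-by-cone; coordinate-by-coordinate compactification is the wrong shape whenever the number of maximal cones differs from $2^k$ (and, as above, even when it does not exceed it).
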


\begin{proof}
i) Let $p \in \bar \cO$
be a point of lowest rank of the action on $\bar \cO$. The orbit $\cO_p$ through $p$ of the
action lies in $\bar \cO$, and is of the type $\bbR^k$ where $k = \rank p$. If $k > 0$ then
$\cO_p$ is not compact, but $\bar{\cO_p} \subset \bar{\cO}$ is compact, so there exists
a point $q$ on the boundary $\partial \cO_p = \bar{\cO_p} \setminus \cO_p$ of 
$\cO_p$, which necessarily has lower rank than $p$, which is a contradition. Thus the rank of
$p$ is 0, i.e. $p$ is a fixed point of the action.

ii) It follows directly from the local normal form theorem. Indeed, let $p \in \bar \cO$ be a fixed point.
Then there is a local canonical coordinate system $(x_1, \hdots, x_n)$ in a neighborhood $\cU (p)$ of $p$ 
and an adapted basis $\alpha_1, \hdots, \alpha_n$ of $\bbR^n$,
so that  we have 
\begin{equation}
X_{\alpha_1} = x_1\frac{\partial}{\partial x_1}, 
\hdots,X_{\alpha_n} = x_n\frac{\partial}{\partial x_n},
\end{equation}
where $X_{v}$ is the generator of $\rho$ associated to $v \in \bbR^n$. 
Since $p \in \bar \cO$, there is $q \in \cO$ which lies in $\cU (p)$. 
Notice that $x_i(q) \not= 0\ \forall \ i = 1, \hdots, n$, otherwise $q$ would be singular.
For each $0 \leq k \leq n$, the point $q_k = (x_1(q),\hdots, x_k(q), 0,\hdots, 0)$ in the above
coordinate system belongs to $\bar \cO$ and is of rank $k$, 
which implies that the orbit $\cO_{q_k}$ through $q_k$ lies in  $\bar \cO$ and 
is of type $\bbR^k$.

iii) This statement follows directly from the local structure 
of singularities as in ii), and the fact that any orbit of a totally hyperbolic action 
is of type $\bbR^k$ for some $1 \leq k \leq n$.
 \end{proof}

Recall that a {\bf maninold with boundary and corners} is something which near each point looks like a neighborhood 
of 0 in $\{ (y_1, \hdots,y_n) \in \bbR^n \ |\ y_i \geq 0 \ \forall \ i \geq k+1 \}$ for some $k$. 

Below, we will show that the closure $\bar \cO$ of a hyperbolic domain $\cO$ is a manifold
with boundary and corners, and $(\bar \cO, \rho)$ can be classified by the so-called complete fans.

First let us look at the $(n-1)$-dimensional faces (i.e. orbits) of a closed hyperbolic domain $\cO$. 
Let $q \in \bar \cO$ be a point of rank $n-1$, and denote by $\cO_q$ the $(n-1)$-dimensional
orbit of the action $\rho$ through $q$, and by $v_q \in \bbR^n$ the vector associated to $\cO_q$.

\begin{lemma}\label{lemma:limit}
With the above notations, for any point $z \in \cO$ we have that $\phi_{X_v}^t (z) = \rho(-tv_q, z)$ 
tends to a point on $\cO_q$ when $t$ tends to $+\infty$.
\end{lemma}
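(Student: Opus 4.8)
The plan is to reduce the assertion, which concerns an arbitrary $z\in\cO$, to the behaviour of the flow at a single conveniently chosen point near $\cO_q$, and then to transport that behaviour across all of $\cO$ using two facts: that $\rho$ acts transitively on the orbit $\cO$, and that for each fixed $g\in\bbR^n$ the time-$g$ map $\rho(g,\cdot)\colon M^n\to M^n$ is a homeomorphism (indeed a diffeomorphism) of the \emph{whole} manifold, hence continuous at the boundary points of $\cO$. Recall that $X_{v_q}$ is by definition the infinitesimal generator of the one-parameter subgroup $s\mapsto\rho(sv_q,\cdot)$, so $\rho(-tv_q,\cdot)$ is exactly the backward flow of $X_{v_q}$; since $\cO$ is an orbit it is invariant, so the whole trajectory $\{\rho(-tv_q,z):t\ge 0\}$ stays inside $\cO\subseteq\bar\cO$.

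First I would settle the local model. Fix any point $q'\in\cO_q$; since the closure of an orbit is invariant we have $\cO_q\subseteq\bar\cO$, so $q'\in\bar\cO$. By Theorem \ref{thm:NormalForm} and Definition \ref{def:AssociatedVector} there is a canonical coordinate chart $(x_1,\dots,x_n)$ centred at $q'$ in which $X_{v_q}=x_1\frac{\partial}{\partial x_1}$, with $\cO_q\cap\cU=\{x_1=0\}$ and the two adjacent local hyperbolic domains given by $\{x_1>0\}$ and $\{x_1<0\}$. Because $q'\in\bar\cO$, the domain $\cO$ contains at least one of these half-neighbourhoods; choosing a point $z_1$ with $x_2=\dots=x_n=0$ and $x_1=\pm\delta$ ($\delta>0$ small) on that side, the flow reads in the chart as $\rho(-tv_q,z_1)=(e^{-t}x_1(z_1),0,\dots,0)$, which stays in the chart for all $t\ge 0$ and converges to the origin $q'\in\cO_q$ as $t\to+\infty$. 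The sign of which half-space carries $\cO$ is irrelevant: the backward flow contracts $x_1$ to $0$ in either case.

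Then I would transport this to an arbitrary $z\in\cO$. Since $\rho$ is transitive on $\cO$, write $z=\rho(g,z_1)$ for some $g\in\bbR^n$. Using the group law,
\begin{equation}
\rho(-tv_q,z)=\rho(-tv_q,\rho(g,z_1))=\rho(g,\rho(-tv_q,z_1)),
\end{equation}
and letting $t\to+\infty$, continuity of the single map $\rho(g,\cdot)$ on $M^n$ gives $\rho(-tv_q,z)\to\rho(g,q')$. Finally $\rho(g,q')\in\cO_q$, because $\cO_q$ is an orbit and hence invariant under $\rho(g,\cdot)$. This is the desired conclusion, with the limit point depending on $z$ only through $g$.

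The genuinely substantive step is the local one: one must check that the backward flow of $X_{v_q}$ stays inside the normal-form chart and converges to a \emph{single} point of $\cO_q$, which is immediate here only because in canonical coordinates $X_{v_q}$ moves $x_1$ alone and leaves $x_2,\dots,x_n$ fixed. Everything else is formal. It is worth emphasising that this argument needs neither compactness of $\bar\cO$ nor any analysis of $\omega$-limit sets: the transitivity of the action together with the global continuity of each $\rho(g,\cdot)$ does all the work, which is exactly what makes the statement clean.
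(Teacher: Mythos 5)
Your proof is correct and takes essentially the same route as the paper's: the paper likewise produces a single point $z_0\in\cO$ near $q$ whose backward $X_{v_q}$-flow converges to $q$ by the local normal form, and then transports this to an arbitrary $z=\rho(w,z_0)$ via commutativity and continuity of the fixed map $\rho(w,\cdot)$. Your write-up merely spells out the local contraction in coordinates and the invariance of $\cO_q$ a bit more explicitly than the paper does.
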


\begin{proof} In a neighborhood of $q$ in which we have the normal form  $X_{v_q} = x \frac{\partial}{\partial x}$,
it is obvious that there is a point $z_0$ in $\cO$ such that $\rho(-tv_q, z_0)$ tends to $q$ when $t$ tends to $+\infty$.
Let $z$ be an arbitrary point of $\cO$. Then $z = \rho (w,z_0)$ for some $w \in \bbR^n$, and by commutativity
we have $\rho(-tv_q,z) = \rho(w, \rho(-tv_q,z_0))$ tends to $\rho(w,q) \in \cO_q$ when $t$ tends to $+\infty$.
\end{proof}

We can say that the orbit $\cO$ tends to $\cO_q$ in the direction $-v$ of the flow of the action. In particular, It shows
that if $\cO_q$ and $\cO_{q'}$ are two different $(n-1)$-dimensional orbits on the boundary of $\cO$, then the 
associated vectors $v$ and $v'$ in $\bbR^n$ must be different. 

\begin{thm} \label{thm:closureHyperbolic}
 Let $\cO \subset M^n$ be a hyperbolic domain of a totally hyperbolic action $\rho$. Then the closure $\bar \cO$ of $\cO$
 is a manifold with boundary and corners.
\end{thm}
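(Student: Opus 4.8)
The plan is to verify the manifold-with-boundary-and-corners property locally at each $q\in\bar\cO$ and then assemble the charts. Regular points of $\cO$ are interior manifold points, so fix a singular $q$ of corank $k\ge 1$. Since $\rho$ is totally hyperbolic the local normal form (Theorem \ref{thm:NormalForm}) gives canonical coordinates $(x_1,\dots,x_n)$ on a neighborhood $\cU$ of $q$ with $X_{v_i}=x_i\,\partial/\partial x_i$ for $i\le k$ and $X_{v_j}=\partial/\partial x_j$ for $j>k$. The flow preserves the sign of each $x_i$ ($i\le k$), so the top-dimensional local orbits are the $2^k$ open orthants $C_\eps$, $\eps\in\{+,-\}^k$. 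A short limiting argument (a point of $\cU$ lying in $\bar\cO$ is a limit of points of $\cO$ that eventually sit in some $C_\eps$) shows $\bar\cO\cap\cU=\bigcup_{\eps\in E}\bar C_\eps$, where $E$ is the set of orthants actually met by $\cO$. The whole theorem therefore reduces to the assertion that $|E|=1$, i.e. \emph{$\cO$ meets exactly one orthant near every point}: granting this, $\bar\cO\cap\cU$ is (after adjusting signs) $\{x_1\ge 0,\dots,x_k\ge 0\}$ with $x_{k+1},\dots,x_n$ free, which is the standard corner model.

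The crux is proving $|E|=1$, and the difficulty is genuinely global: a priori the single orbit $\cO$ could leave $\cU$, travel around $M^n$, and re-enter through a different orthant, and neither Lemma \ref{lemma:limit} nor the injectivity of associated vectors in the remark following it rules this out on its own (they only force the corresponding facets to be globally identified). I would first settle the case of a fixed point $p$ (corank $k=n$), where the leverage is that the orbit map $\Psi:\bbR^n\to\cO$, $g\mapsto\rho(g,z_0)$, is a diffeomorphism. Writing $g=\sum_i g_iv_i$ in the adapted basis, one computes $x_i(\rho(g,z_0))=e^{g_i}x_i(z_0)$; hence for any orthant $C_\eps$ met by $\cO$ the preimage $\Psi^{-1}(C_\eps\cap\cU)$ contains a full ``deep'' cone $\{g:g_i<-T\ \forall i\}$ for $T$ large, because flowing a point of $\cO\cap C_\eps$ by $\rho(-\sum_i t_iv_i,\cdot)$ with all $t_i\to+\infty$ drives every $|x_i|\to 0$ while staying inside $C_\eps\cap\cU$. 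If $\cO$ met two distinct orthants, their preimages would each contain such a deep cone; but any two cones $\{g_i<-T\}$ and $\{g_i<-T'\}$ intersect, contradicting disjointness of the orthants and injectivity of $\Psi$. So $\cO$ meets exactly one orthant at a fixed point.

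To pass from fixed points to an arbitrary corank-$k$ point $q$ I would use homogeneity together with the existence of fixed points in closures. The number $N(q)$ of orthants met by $\cO$ near $q$ is invariant under the diffeomorphisms $\rho(g,\cdot)$, which preserve $\cO$ and carry orthants to orthants, hence $N$ is constant along the orbit $\cO_q$. The closure $\bar\cO_q$ is compact (it lies in $\bar\cO$) and, by the same lowest-rank argument proving that a compact closed hyperbolic domain contains a fixed point, it contains a fixed point $p$. Near $p$ we have just shown $\bar\cO$ is a single closed orthant $\bar O$, and $\cO_q$ is precisely the corank-$k$ face of $O$ cut out by the $k$ vanishing coordinates; at points of $\cO_q$ close to $p$ the set $\bar\cO$ is visibly that one orthant, so $N\equiv 1$ along $\cO_q$, and in particular $N(q)=1$. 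This gives $|E|=1$ in general.

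Finally I would assemble the charts: at each $q$ the canonical coordinates realize $\bar\cO\cap\cU$ as $\{x_1\ge 0,\dots,x_k\ge 0\}$ with $x_{k+1},\dots,x_n$ free, and these are smoothly compatible because they are restrictions of ambient coordinate systems on $M^n$; together with the interior charts at regular points this exhibits $\bar\cO$ as a manifold with boundary and corners. I expect the fixed-point step — the overlapping-deep-cones argument ruling out global re-entry — to be the main obstacle, since it is the only place where one must control the global behavior of the single orbit $\cO$ rather than argue purely locally; the homogeneity propagation and the final assembly are then routine.
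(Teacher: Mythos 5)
Your reduction of the theorem to the local claim ``$\cO$ meets exactly one orthant near every boundary point'' is correct, and your fixed-point argument is valid and genuinely different from the paper's: pulling the orthants back by the orbit map $\Psi$ and noting that each preimage contains a deep cone $\{g : g_i < -T\ \forall i\}$, so that two of them would have to intersect, is a clean way to exclude re-entry at a fixed point. (One caveat of presentation: the formula $x_i(\rho(g,z_0)) = e^{g_i}x_i(z_0)$ is only valid while the flow stays in the chart $\cU$; but what you actually use --- that the backward flow from a point of $C_\eps\cap\cU$ stays in $C_\eps\cap\cU$ when $\cU$ is a cube --- is sound.)

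The genuine gap is the propagation step. Theorem \ref{thm:closureHyperbolic} carries no compactness hypothesis, yet you justify the existence of a fixed point in $\overline{\cO_q}$ by asserting that ``$\overline{\cO_q}$ is compact (it lies in $\bar\cO$)''; a closed subset of $\bar\cO$ is compact only if $\bar\cO$ is, which is not assumed. Worse, in the non-compact case it is the strategy itself that breaks, not merely its justification: for the totally hyperbolic action on $\bbR^2$ generated by $X_1 = x\,\partial/\partial x$, $X_2 = \partial/\partial y$, the domain $\cO=\{x>0\}$ has non-compact closure and the corank-one orbit $\{x=0\}$ has \emph{no} fixed point in its closure, so there is nothing to propagate from. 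The paper instead argues directly at a corank-$k$ point, by induction on $k$, with no compactness needed: for $k=1$, if $\cO$ approached the orbit $\cO_q$ from both sides, the element $w$ carrying one approaching point to the other would lie in the isotropy group of $q$ by a limiting argument, contradicting the fact that total hyperbolicity forces $Z_\rho(q)=\bbR v$; for $k>1$ one passes to the submanifold $M_v=\{X_v=0\}$ with its reduced totally hyperbolic $\bbR^{n-1}$-action, where $q$ has corank $k-1$. Your deep-cone idea could in principle be run at a corank-$k$ point without fixed points, but the cones there are only $k$-dimensional and sit in affine subspaces that may be parallel and disjoint; to force them to intersect one must first show that the translation $w$ between points of two different orthants can be chosen inside the span of $v_1,\hdots,v_k$ --- and that is exactly the isotropy/limiting argument of the paper, i.e.\ the missing idea.
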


\begin{proof}
 The main point is to show that no overlapping of the boundary $\partial \cO = \bar \cO \backslash \cO$ is possible.
In other words, let $q \in \partial \cO$ be a singular point of corank $k$, $\cU$ be a neighborhood of $q$ in $M$ 
toghether with a canonical coordinate system $(x_1, \hdots, x_n)$ and adapted basis $(w_1,\hdots, w_n) \in \bbR^n$
of the action such that $X_{w_i} = x_i\frac{\partial}{\partial x_i} \quad \forall i \leq k$ and 
$X_{w_i} = \frac{\partial}{\partial x_i} \quad \forall i > k$ in $\cU$.
Assume that the corner $\{(x_1, \hdots, x_n) \in \cU \ |\ x_i > 0 \quad \forall i \leq k \}$ lies in $\cO$.
We have to show that no other corner of $\cU$ lies in $\cO$, i.e. if $(x_1, \hdots, x_n) \in \cU \cap \cO$ then 
$x_i >0 \quad \forall i \leq k$. We will prove it by induction on $k$.

\underline{The case $k=1$}

Assume that $k=1$, i.e. the rank of a point $q \in \partial \cO$ is $n-1$, and that $\cO$
approaches the $(n-1)$-dimensional orbit $\cO_q$ from both side of $\cO_q$. Denote by 
$v \in \bbR^n$ the vector associated to $\cO_q$, as in Definition \ref{def:AssociatedVector}. 
Then $\cU \backslash \cO_q \subset \cO$ by our assumptions, and 
there exist points $z_1, z_2 \in \cU \backslash \cO_q$ such that $x_1(z_1) > 0, x_1(z_2) <0$ and
$\lim_{t \to \infty} \rho(-tv,z_1) = \lim_{t \to \infty} \rho(-tv,z_2) = q$.
Since $z_1, z_2 \in \cO$, there is a unique $w \in \bbR^n$ such that $z_2 = \rho(w, z_1)$. 
   
Note that $w$ is not collinear to $v$, because, if $w = sv$ with $s >0$ for example then
$\rho(w,z_1) \in \cU$ and 
$x_1(\rho(w,z_1)) = x_1(\rho(sv,z_1)) = e^{-s}x_1(z_1) >0$,
hence $\rho(w,z_1) \neq z_2$ (if $w = sv$ with $s <0$ then we use the formula $z_1 = \rho(-w,z_2)$ instead).

By commutativity of the action we have 
$$q = \lim_{t \to \infty}\rho(-tv,z_2) =  \rho(w, \lim_{t \to \infty}\rho(-tv,z_1))  = \rho(w,q),$$
i.e. $w$ belongs to the isotropy group of the action at $q$.

On the other hand, due to the hyperbolicity of the action, the isotropy group at $q$ is $\bbR v$,
so we have a contradiction. So it is impossible for $\cO$ to approach $\cO_q$ from both sides.
   
\underline{The case $k>1$}

We take $z_1, z_2 \in \cO \cap \cU$  such that $x_1(z_1) > 0, x_2(z_1) >0$ and 
$x_1(z_2) < 0, x_2(z_2) <0$.

Denote by $v$ the element of $\bbR^n$ such that $X_v = x_1\frac{\partial}{\partial x_1}$ in $\cU$, 
and by $F$ the $(n-1)$-dimensional orbit in $\bar \cO$
coresponding to $v$, i.e. $X_v$ has the type $x\frac{\partial}{\partial x}$ near every point of $F$.
Then $F$ contains both the point $p_1 = \lim_{t \to \infty}\rho(-tv,z_1)$ and $p_2 = \lim_{t \to \infty}\rho(-tv,z_2)$.
Notice that $x_2(p_1) = x_2(z_1) >0$ and $x_2(p_2) = x_2(z_2) <0$.
On the submanifold $M_v = \{ z \in M \ |\ X_v(z) =0\}$, we have a totally hyperbolic action of $\bbR^{n-1} \cong \bbR^n/\bbR v$
for which $F$ is a hyperbolic domain, and $q$ lies on the boundary of $F$.
The corank of $q$ with respect to the action of $M_v$ is $k-1$, and $F$ has at least 2 different corners at $q$. 
By induction on $k$, we have a contradiction, and so this situation is impossible. 

\end{proof}

\begin{lemma} \label{lemma:converge}
Assume that the closure $\bar \cO$ of a hyperbolic domain $\cO$ is compact.
Let $z \in \cO$ and $w \in \bbR^n$ be arbitrary. Then the curve $\rho(-tw,z)$ 
(i.e. the flow of the action through $z$ in the direction of $-w$) converges to a point in $\bar \cO$
when $t$ tends to $+\infty$.
\end{lemma}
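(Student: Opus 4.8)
The plan is to study the forward trajectory $\gamma(t)=\rho(-tw,z)$ for $t\ge 0$ through its $\omega$-limit set, and to reduce everything to the local picture near a singular orbit via the semi-local normal form. If $w=0$ the trajectory is constant, so assume $w\neq 0$. First I would use that $\cO$ is a free orbit of type $\bbR^n$: fixing $z$ as base point makes $v\mapsto\rho(v,z)$ a diffeomorphism $\bbR^n\to\cO$ under which $\gamma$ becomes the straight line $t\mapsto -tw$. Since $w\neq 0$, this line leaves every compact subset of $\cO$, so the trajectory escapes $\cO$ while staying inside the compact set $\bar{\cO}$ (our hypothesis, together with Theorem \ref{thm:closureHyperbolic}). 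Hence the $\omega$-limit set $L=\omega(\gamma)$ is nonempty, compact, connected, invariant under the flow of $X_w$, and contained in $\partial\cO$, which is a union of orbits of type $\bbR^k$ with $k<n$.

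The key reduction is to show that the lowest-rank points of $L$ are fixed by the flow. Let $m=\min\{\rank y : y\in L\}$ and choose $y\in L$ with $\rank y=m$, with orbit $\cO_y\cong\bbR^m$. The flow of $X_w$ preserves $\cO_y$ and, in the affine identification $\bbR^m\cong\cO_y$, acts as translation by the image $\bar w$ of $w$ in $\bbR^n/I_\rho(y)$. If $\bar w\neq 0$, this translation line escapes to infinity inside the compact set $\bar{\cO_y}\subset\bar{\cO}$; its own $\omega$-limit is then a nonempty subset of $\partial\cO_y$, made of orbits of rank $<m$, and it lies in $L$ because $L$ is closed and invariant. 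This contradicts the minimality of $m$. Therefore $\bar w=0$, i.e. $w\in I_\rho(y)$, so $y$ is a fixed point of the flow of $X_w$.

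Finally I would localize at such a $y$ using the semi-local normal form (Theorem \ref{thm:semi-localform2}). Since the action is totally hyperbolic, the HERT-invariant of $\cO_y$ is $(n-m,0,m,0)$, there is no twisting, and one obtains coordinates $(x_1,\dots,x_{n-m},\zeta_1,\dots,\zeta_m)$ in which $\cO$ is the single corner $\{x_i>0\}$ (single by the no-overlapping statement of Theorem \ref{thm:closureHyperbolic}); moreover $X_w=\sum_{i\le n-m} b_i\,x_i\frac{\partial}{\partial x_i}$ and each regular coordinate $\zeta_j$ is a first integral of $X_w$. Along $\gamma$ one then has $\zeta_j$ constant and $x_i(\gamma(t))=x_i\,e^{-b_i t}$ while the trajectory stays in the model.

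The main obstacle is to establish that all weights satisfy $b_i>0$, i.e. that $y$ is genuinely attracting rather than partially repelling; this is exactly the statement that $w$ lies in the cone spanned by the associated vectors of the facets adjacent to $\cO_y$, and it is the local shadow of the completeness of the fan attached to $\cO$. A direction with some $b_{i_0}\le 0$ would be repelling transverse to $\cO_y$, and I would rule it out by showing it prevents the trajectory from accumulating at $y$ (using Lemma \ref{lemma:limit} applied to the facet $\{x_{i_0}=0\}$ to locate where the trajectory actually tends). Once positivity is granted, the $\zeta_j$ are pinned to their values at $y$ and every $x_i(\gamma(t))\to 0$ monotonically, so the trajectory cannot leave a small model neighborhood once it enters one; combined with $\gamma(t_k)\to y$ this upgrades subsequential convergence to genuine convergence $\gamma(t)\to y$, which proves the lemma.
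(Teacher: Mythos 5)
Your reduction to the statement ``$X_w$ vanishes at some accumulation point'' is correct and is genuinely different from the paper's route. The paper works at an \emph{arbitrary} accumulation point $q$ and shows $w\in\mathrm{span}(v_1,\dots,v_k)$ by a freeness argument: if $w$ had a component transverse to $I_\rho(q)$, the trajectory would cross the local transversal $\{x_{k+1}=\dots=x_n \text{ free},\ x_i>0\}$ at two distinct times, producing a nontrivial element of $\bbR^n$ fixing a point of the free orbit $\cO$, a contradiction. You instead pass to a \emph{lowest-rank} point $y$ of the $\omega$-limit set and use invariance plus rank minimality (if $w\notin I_\rho(y)$, the translation flow on $\cO_y\cong\bbR^m$ escapes to $\partial\cO_y$, whose points have strictly smaller rank by the local normal form, contradicting minimality). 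That argument is sound and arguably cleaner; both routes land at the same intermediate statement.

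The genuine gap is the step you yourself flag as ``the main obstacle'': positivity of all weights $b_i$. Your sentence ``I would rule it out by showing it prevents the trajectory from accumulating at $y$'' restates the goal rather than proving it, and the parenthetical appeal to Lemma \ref{lemma:limit} (which concerns flowing in the direction of a single associated vector $-v_q$ from inside $\cO$) supplies no mechanism. This is precisely where the paper does its remaining work: if $b_{i_0}\le 0$ then inside the chart $\frac{d}{dt}\,x_{i_0}(\gamma(t))=-b_{i_0}x_{i_0}\ge 0$, so $x_{i_0}$ cannot decrease along in-chart flow segments, contradicting visits with $x_{i_0}\to 0$. Note, moreover, that even this monotonicity argument needs one extra observation which neither you nor the paper makes explicit: your trajectory may leave the chart $\cU$ and re-enter it, so monotonicity of $x_{i_0}$ \emph{inside} $\cU$ does not by itself forbid returning with smaller $x_{i_0}$. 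A clean, path-independent way to close this: on the connected corner $\cO\cap\cU$ (single corner by Theorem \ref{thm:closureHyperbolic}), the function $u\mapsto\log x_i(\rho(u,z))$ is affine, equal to $\lambda_i(u)+c_i$ where $(\lambda_i)$ is the basis dual to the adapted basis (because $X_{v_j}x_i=\delta_{ij}x_i$ and $X_{w_j}x_i=0$); hence at every visit time $x_i(\gamma(t_\nu))=C_i e^{-b_i t_\nu}$ regardless of what the trajectory does outside $\cU$, and $\gamma(t_\nu)\to y$ forces $b_i>0$ for all $i$. With positivity in hand, your trapping-plus-subsequence endgame (the $\zeta_j$ frozen, all $x_i$ decaying, so the trajectory stays in the chart and converges to $y$) is fine.
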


\begin{proof}
Since $\bar \cO$ is compact and $\rho(-tw,z) \in \bar \cO$ for all $t \in \bbR$, there exists
a point $q \in \bar \cO$ and a sequence $(t_\nu )_{\nu \in \bbN}$ of positive numbers such that 
$t_\nu \to +\infty$ and $\rho(-t_\nu w, z) \to q$ when $\nu \to +\infty$. We will show that
$\rho(-tw, z) \to q$ when $t \to +\infty$.

If $q \in \cO$ then $w =0$ and the statement is obvious. Indeed, if $q \in \cO$ then the fact that 
$\rho(-t_\nu w,z) \underset{\nu \to +\infty}{\longrightarrow} q \ $ means that there 
exist $w_\nu \in \bbR^n, w_\nu \to 0$ when $\nu  \to \infty$
such that $\rho(-t_\nu w, z) = \rho(w_\nu, q) \quad \text{for all } \nu \in \bbN$.
It implies that:
\begin{multline}
\rho((t_\mu -t_\nu )w, z) = \rho(t_\mu w, \rho(-t_\nu w, z)) \\
= \rho(t_\mu w, \rho(w_\nu, q)) = \rho(t_\mu w+w_\nu  -w_\mu, \rho(w_\mu, q))\\
= \rho(t_\mu w+w_\nu  -w_\mu, \rho(-t_\mu w, z)) = \rho(w_\nu  -w_\mu,z).
\end{multline} 
Since the action on $\cO$ is free, we have:
\begin{equation}
(t_\mu -t_\nu )w = w_\nu -w_\mu.
\end{equation}
But $w_\nu -w_\mu \to 0$ when $\nu, \mu  \to \infty$ on one hand, and $t_\mu -t_\nu$ 
can be arbitrarily large on the other hand, therefore $w=0$.

We can assume now that $w \not= 0$ and $q \notin \cO$, i.e. $q \in \partial \cO$ is a singular point of corank $k \geq 1$. 
Then there is a neighborhood $\cU$ of $q$ in $M$ with a canonical coordinate system $(x_1, \hdots, x_n)$
in $\cU$ and elements $v_1,\hdots,v_n \in \bbR^n$ such that 
\begin{equation}
\cO_q \cap \cU = \{ (x_1, \hdots, x_n) \in \cU \ |\ x_1 = 0, \hdots, x_k = 0\}
\end{equation}
and $v_i$ is associated to the $(n-1)$-face $\{x_i =0\}$, i.e. $X_{v_i} = x_i\frac{\partial}{\partial x_i}
 \text{ near } \cU \cap \{x_i =0\}$. 
 
We can assume that $\cO \cap \cU$ is the corner 
\begin{equation}
\{ (x_1, \hdots, x_n) \in \cU \ |\ x_i > 0 \quad \forall i = 1, \hdots,k\}.
\end{equation}
Observe that $w \in \bbR \langle v_1, \hdots, v_k\rangle$. Indeed, assume that 
$w \notin \bbR \langle v_1, \hdots, v_k\rangle$, and $\nu, \mu \in \bbN$ such that 
$|t_\nu  - t_\mu |$ is large and $\rho(-t_\nu w, z), \rho(-t_\mu w, z) \in \cU \cap \cO$
very close to $q$.

Then the curve $\rho(-tw,z)$ for $t$ near $t_\nu $ cuts the $k$-dimensional set 
\begin{equation}
\{ (x_1, \hdots, x_k, 0, \hdots, 0) \in \cU \ |\ x_i > 0, \quad \forall i \leq k\} \subset \cO
\end{equation}
transversally at a point $z_1$, i.e. $z_1 = \rho(-s_1w,z)$ for some $s_1$ near $t_\nu $, and similarly 
\begin{equation}
z_2 = \rho(-s_2w,z) \in \{ (x_1, \hdots, x_k, 0, \hdots, 0) \in \cU \ |\ x_i > 0, 
\quad \forall i \leq k\} \subset \cO
\end{equation}
for some $s_2$ near $t_\mu $. Then $z_2 = \rho((s_2-s_1)w, z_1)$
and $s_2-s_1 \not= 0$.

On the other hand, since $z_1 = \{ (x_1^1, \hdots, x_k^1, 0, \hdots, 0) \in \cU \cap \cO \}$ and 
$z_2 = \{ (x_1^2, \hdots, x_k^2, 0, \hdots, 0) \in \cU \cap \cO \}$, we have $\displaystyle z_1 = \rho(\sum_{i=1}^k\log(\frac{x_i^1}{x_i^2})v_i, z_2)$,
which implies that 
$\displaystyle \rho(\sum_{i=1}^k\log(\frac{x_i^1}{x_i^2})v_i + (s_2-s_1)w, z_1) = z_1$,
which is a contradiction, since $\rho$ is free on $\cO$ and $w$ is linearly independent of $(v_1, \hdots, v_k)$.

Thus we must have $\displaystyle w = \sum_{i=1}^k s_iv_i$ for some $s_i \in \bbR$.

Our next step is to show that $s_i > 0 \quad \forall i = 1, \hdots, k$. Indeed, 
if for example $s_1 \leq 0$, then the vector field $-X_w$ is neutral or expulsive in the coordinate 
$x_1$ in $\cU$, and hence its flow starting from a point can never get arbitrarily close to $q$:
in fact if the flow (in the positive time direction) passes through at a point in $\cU$  but 
outside $V$, where  
\begin{equation}
V = \{ (x_1, \hdots, x_n) \in \cU, |x_1| \leq \varepsilon \},
\end{equation}
then it can never enter $V$ because $|x_1|$ can't decrease. The rest of the proof is straightforward. 
\end{proof}

Let us now fix a point $z_0 \in \cO$. For each orbit $\cH$ in $\bar \cO$ of any dimension, denote by
\begin{equation}
C_{\cH} = \{ w \in \bbR^n \ |\ \lim_{t \to +\infty}\rho(-tw,z_0) \in \cH \} 
\end{equation}
the set of all elements $w \in \bbR$ such that the flow of the action through $z_0$ in the direction $-w$ tends to a point in $\cH$.

\begin{prop} \label{prop:simpli_cone} Assume that $\bar \cO$ is compact, with the above notations, we have: \\
1) $C_\cH$ does not depend on the choice of $z_0\in \cO$. \\
2) $C_\cO = \{0\}$ and $C_{F_i} = \bbR_{>0}.v_i$ for each $(n-1)$-dimensional orbit $F_i \subset \bar \cO$, 
where $v_i \in \bbR^n$ is the vector associated to $F_i$ with respect to the action $\rho$.\\
3) If $w \in C_\cH$ then $X_w = 0$ on $\cH$.\\
4) $\bar C_\cH$ is a simplicial cone in $\bbR^n$ (i.e. a convex cone with a simplicial base) and $\dim C_\cH + \dim \cH = n$. \\
5) The family ($C_\cH; \cH$ is an orbit in $\bar \cO$)
is a partition of $\bbR^n$. \\
6) $C_\cK \subset  \bar C_\cH$ if and only if $\cH \subset \bar \cK$
and in that case $C_\cK$ is a face of $\bar C_\cH$. \\
 
\end{prop}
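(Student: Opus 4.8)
The plan is to lean on three ingredients throughout: the action is free on $\cO$ and commutes with itself, every direction-flow converges by Lemma \ref{lemma:converge}, and near any orbit we have the hyperbolic local normal form of Theorem \ref{thm:NormalForm} in which $\cO$ occupies a single corner (Theorem \ref{thm:closureHyperbolic}). Parts 1) and 3) are the ``soft'' dynamical statements and I would do them first. For 1), write $z_0' = \rho(u,z_0)$ with a unique $u\in\bbR^n$ (freeness on $\cO$); then by commutativity $\rho(-tw,z_0') = \rho(u,\rho(-tw,z_0))$, and since $\rho(u,\cdot)$ is a homeomorphism I can pass to the limit, while $\rho(u,\cdot)$ maps $\cH$ into $\cH$ (orbit invariance), so membership in $C_\cH$ is unchanged. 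For 3), the key elementary observation is that if the flow line of the complete field $-X_w$ through $z_0$ converges to $\ell$ as $t\to+\infty$, then $\ell$ is an equilibrium: for every $s$, $\rho(-sw,\ell)=\lim_t\rho(-(s+t)w,z_0)=\ell$, so $X_w(\ell)=0$; by $\rho$-invariance of $X_w$ this forces $X_w\equiv 0$ on the whole orbit $\cH$. In particular $C_\cH\subseteq I_\rho(\cH):=\{v:X_v=0 \text{ on }\cH\}$.

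Next I would settle 2), 4), 5) through the local normal form. Fix $\ell\in\cH$ of corank $k$ (so $\dim\cH=n-k$, and $e=0$ everywhere since the toric degree is $0$), and choose canonical coordinates with $X_{w_i}=x_i\frac{\partial}{\partial x_i}$ for $i\le k$, $X_{w_i}=\frac{\partial}{\partial x_i}$ for $i>k$, with $\cH\cap\cU=\{x_1=\dots=x_k=0\}$ and, by Theorem \ref{thm:closureHyperbolic}, $\cO\cap\cU=\{x_i>0,\ i\le k\}$. Here the $w_i$ ($i\le k$) are exactly the associated vectors $v_{F_i}$ of the $(n-1)$-faces $F_i\supset\cH$, and they are part of a basis, hence linearly independent. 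For $w=\sum_{i\le k}s_iw_i$ the flow is $x_i(t)=e^{-s_it}x_i(z_0)$, which converges to $\cH$ exactly when all $s_i>0$; combining this with $C_\cH\subseteq\bbR\langle w_1,\dots,w_k\rangle$ from 3) and a monotonicity argument (if some $s_j\le 0$ then $x_j$ is non-decreasing along the flow and cannot tend to $0$) yields $C_\cH=\{\sum_{i\le k}s_iw_i:s_i>0\}$. Thus $\bar C_\cH=\mathrm{cone}(w_1,\dots,w_k)$ is a genuine simplicial cone and $\dim C_\cH+\dim\cH=k+(n-k)=n$, proving 4); the cases $k=0$ and $k=1$ give $C_\cO=\{0\}$ and $C_{F_i}=\bbR_{>0}v_i$ (using Lemma \ref{lemma:limit}), which is 2). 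For 5), Lemma \ref{lemma:converge} gives $\bigcup_\cH C_\cH=\bbR^n$, uniqueness of the limit together with the orbit partition of $\bar\cO$ gives disjointness, and 4) gives nonemptiness, so the family is a partition.

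Finally I would prove the face correspondence 6). Working at $\ell\in\cH$, the orbits $\cK\subseteq\bar\cO$ with $\cH\subseteq\bar\cK$ are precisely the local faces $\cK_S=\{x_i=0,\ i\in S\}$ for $S\subseteq\{1,\dots,k\}$, whose defining $(n-1)$-faces are $\{F_i:i\in S\}$; hence by 4), $C_{\cK_S}$ is the relative interior of $\mathrm{cone}(w_i:i\in S)$, which is exactly a face of $\bar C_\cH$. This gives $\cH\subseteq\bar\cK\Rightarrow C_\cK$ a face of $\bar C_\cH$, in particular $C_\cK\subseteq\bar C_\cH$. For the converse, given $C_\cK\subseteq\bar C_\cH$, pick $w\in C_\cK$ and write $w=\sum_{i\le k}\lambda_iw_i$ with $\lambda_i\ge 0$; using $z_0$-independence from 1) I run the flow from a base point in $\cO\cap\cU$, where it stays confined (the $x_i$ are bounded and monotone) and converges to $\cK_S$ with $S=\{i:\lambda_i>0\}\subseteq\{1,\dots,k\}$, forcing $\cK=\cK_S$ and $\cH\subseteq\bar\cK$. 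I expect this last identification to be the main obstacle: the cones $C_\cK$, $C_\cH$ are defined by long-time limits, so one must rule out the trajectory wandering before converging. The resolution is exactly to invoke $z_0$-independence to place the whole trajectory inside one normal-form chart around $\ell$, where coordinate monotonicity keeps it confined and the limiting face can be read off directly.
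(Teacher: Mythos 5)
Your proposal is correct and follows essentially the same route as the paper's own proof: commutativity for 1) and 3), the hyperbolic local normal form (with $\bar\cO$ occupying a single corner) to derive the explicit formula $C_\cH=\{\sum_{i}s_i v_i : s_i>0\}$ for 2) and 4), Lemma \ref{lemma:converge} plus uniqueness of limits for 5), and the formula for 6). The only difference is that you spell out part 6) in detail (the face correspondence and the disjointness argument identifying $\cK$ with $\cK_S$), where the paper simply remarks that it follows directly from the formula for $C_\cH$; your elaboration is consistent with what that remark leaves implicit.
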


\begin{proof}
 1) Let $z_1 = \rho(\theta,z_0)$ be another point of $\cO$. Then by commutativity we have 
\begin{equation}
\lim_{t \to +\infty}\rho(-tw,z_1) =  \rho(\theta,\lim_{t \to +\infty}\rho(-tw,z_0)),
\end{equation}
and therefore $\underset{t \to +\infty}{\lim}\rho(-tw,z_1) \in \cH$ if and only if $\underset{t \to +\infty}{\lim}\rho(-tw,z_0) \in \cH$.

2) It follows directly from Lemma \ref{lemma:limit}.

3) If $\underset{t \to +\infty}{\lim}\rho(-tw,z_0) = p \in \cH$ then by commutativity we have
\begin{equation}
\rho(sw,p) = \lim_{t \to +\infty}\rho((-t+s)w,z_0) = p \quad \forall s \in \bbR,
\end{equation}
which implies that $X_w(p) = 0$. For any other point $q = \rho(\theta, p) \in \cH$ we will also have 
$X_w(q) = \rho(\theta,.)_*X_w(p) = 0$ by commutativity.

4) Consider a canonical coordinate system $(x_1, \hdots, x_n)$ in a neighborhood $\cU$ of a point 
$p \in \cH$ of HE-invariant $(h,0)$, i.e. $\dim \cH = n -h$.

We can assume 
\begin{equation}
\cO \cap \cU = \{(x_1, \hdots, x_n) \in \cU \ |\ x_1>0, \hdots, x_h >0 \}.
\end{equation}
Denote by 
\begin{equation}
F_i = \{(x_1, \hdots, x_n) \in \cU \cap \cO \ |\ x_i = 0 \} \quad (i = 1, \hdots, h)
\end{equation}
the $h$ facets of $\cO$ adjacent to $\cH$, and by $v_1, \hdots, v_h \in \bbR^n$ 
their associated vectors in $\bbR^n$, 
i.e. $X_{v_i} = x_i\frac{\partial}{\partial x_i}$ near $F_i$.

If $w = \sum_{i=1}^h\alpha_iv_i$ with $\alpha_i >0 \quad \forall i = 1, \hdots, h$ then
\begin{multline}
 \rho(-tw, (\varepsilon_1, \hdots, \varepsilon_h, 0,\hdots,0)) = (e^{-t}\varepsilon_1, \hdots, e^{-t}\varepsilon_h, 0,\hdots,0)\\
\underset{t \to +\infty}{\longrightarrow} p = ( 0,\hdots,0) \in \cU,
\end{multline}
 and so $w \in C_\cH$ by definition. 

Conversely, if $w \in C_\cH$ then $w$ must be of the type $\sum_{i=1}^h\alpha_iv_i$ with $\alpha_i >0$.
Indeed, if $w \notin Span_{\bbR}(v_1, \hdots, v_h)$ then $X_w(p) \not= 0$ which contradicts 3), so
we must have $w \in Span_{\bbR}(v_1, \hdots, v_h)$. If $w = \sum_{i=1}^h\alpha_iv_i$ with $\alpha_i \geq 0$
for some $i$, then $x_i(\rho(-tw,z_0))$ does not decrease to 0 when $t \to +\infty$, and hence
$\rho(-tw,z_0)$ cannot tend to 0 when $t \to +\infty$ either. Thus, in order for $\rho(-tw,z_0)$ to tend to a point 
in $\cH$ when $t \to +\infty$, we must have $w = \sum_{i=1}^h\alpha_iv_i$ with $\alpha_1 >0, \hdots, \alpha_h >0$.

In conclusion, we have the following formula:
\begin{equation}
 C_\cH = \{ \sum_{i=1}^h\alpha_iv_i \ |\ \alpha_1 >0, \hdots, \alpha_h >0 \},
\end{equation}
and $\bar C_\cH = \{ \sum_{i=1}^h\alpha_iv_i \ |\ \alpha_1 \geq 0, \hdots, \alpha_h \geq 0 \}$
 is a $h$-dimensional simplicial cone, because $\alpha_1, \hdots, \alpha_h $ are linearly independent.

5) The fact that $\underset{\cH}{\bigcup} C_\cH = \bbR^n$ follows from Lemma \ref{lemma:converge}.
The fact that $C_\cH \cap C_\cK = \emptyset$ when $\cH \not= \cK$ (and therefore $\cH \cap \cK = \emptyset $)
is obvious from the definition. 

6) It follows directly from the above formula for $C_\cH$.
\end{proof}

In the literature, a partition 
$$\bbR^n = \bigsqcup_\cH C_\cH$$
together with a family of vectors $v_i$, with the properties as listed in the above proposition,
is called a {\bf complete (simplicial) fan} (over $\bbR$), see, e.g. \cite{Ewald-Combinatorial1996}, \cite{Ishida-toric}.
More precisely, we have the following definition:
\begin{defn} \label{def:fan}
A {\bf complete fan} in $\bbR^n$ is a set of data $(C_\cH, v_i)$ (here $\cH$ and $i$ are indices) such that:

i) $(C_\cH)$ is a finite partition of $\bbR^n$, i.e. $\bbR^n$ is the disjoint union of this family  $(C_\cH)$.

ii) Each $\bar  C_\cK$ (i.e. the closure of  $C_\cK$) is a convex simplicial cone in $\bbR^n$ 
and $\bar  C_\cK \backslash C_\cK$ is the boundary of the cone $\bar  C_\cK$.

iii) If $\bar  C_\cK \backslash C_\cK \not= \emptyset$ (i.e. $C_\cK \not= \{0\}$) then each face of 
$\bar C_\cK$ is again an element of the family $(C_\cH)$.

iv) Each 1-dimensional $C_{\cK_i}$ contains exactly one element $v_i : C_{\cK_i} = \bbR_{>0}.v_i$.
In particular, the number of $v_i$'s is equal to the number of 1-dimensional components (half-lines) in the partition $(C_\cH)$.
\end{defn}

Figure \ref{fig:thefan} is an illustration of the construction of the associated fan for a hyperbolic domain.
\begin{figure}[htb]
\begin{center}
\includegraphics[width=90mm]{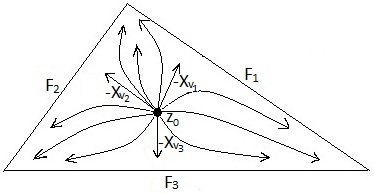}
\caption{The fan at $T_{z_0}M^n \cong \bbR^n$.}
\label{fig:thefan}
\end{center}
\end{figure}

Proposition \ref{prop:simpli_cone} tells us that to each compact closed hyperbolic domain $\bar \cO$ there is
a naturally associated complete fan of $\bbR^n$, which is an invariant of the action. The following theorem shows that, 
conversely, any complete fan can be realized, and is the full invariant of the action on a compact closed hyperbolic domain.

\begin{thm}[Classification of hyperbolic domains by fans] \label{thm:classificationByfan}
 1) Let $(C_\cH, v_i)$ be a complete fan of $\bbR^n$. Then there exists a totally hyperbolic action  
$\rho: \bbR^n \times M^n \to M^n$ on a compact closed manifold $M^n$ (without boundary)
with a hyperbolic domain $\cO$ such that the associated fan to $(\bar \cO, \rho)$ is $(C_\cH, v_i)$.

2) If there are two compact closed hyperbolic domains $(\bar \cO_1, \rho_1)$ and $(\bar \cO_2, \rho_2)$ of two
actions $\rho_1$ and $\rho_2$, which have the same associated complete fan $(C_\cH, v_i)$, then there is 
a diffeomorphism from $\bar \cO_1$ to $\bar \cO_2$ which intertwines $\rho_1$ and $\rho_2$.
\end{thm}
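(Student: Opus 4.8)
The plan is to base both assertions on a single device: a \emph{fan-adapted atlas} in which $\rho$ wears the local normal form of Theorem~\ref{thm:NormalForm} and whose chart transitions depend only on the fan, not on the particular domain. Fix a base point $z_0\in\cO$. Since the fan is complete and simplicial (Definition~\ref{def:fan}, Proposition~\ref{prop:simpli_cone}), each maximal cone $\sigma$ is spanned by an $\bbR$-basis $(v_1^\sigma,\dots,v_n^\sigma)$ drawn from the rays $v_i$. Writing an arbitrary point of $\cO$ uniquely as $\rho(\sum_j s_j v_j^\sigma,z_0)$, I set $\psi_\sigma=(e^{s_1},\dots,e^{s_n})$; this maps $\cO$ diffeomorphically onto $(0,\infty)^n$ with $X_{v_j^\sigma}=x_j\,\partial/\partial x_j$, and by Lemma~\ref{lemma:limit} and Proposition~\ref{prop:simpli_cone} it extends continuously over the union $U_\sigma$ of all faces whose cones are faces of $\bar C_\sigma$, carrying $U_\sigma$ homeomorphically onto $[0,\infty)^n$. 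As $\sigma$ runs over the maximal cones these charts cover $\bar\cO$. The decisive computation is the transition $\psi_{\sigma'}\circ\psi_\sigma^{-1}$: the change of basis from $(v^\sigma)$ to $(v^{\sigma'})$ gives $x'_j=\prod_i x_i^{A_{ji}}$ with \emph{real} exponents $A_{ji}$, but on $U_\sigma\cap U_{\sigma'}$ the only coordinates that vanish are those attached to the rays of the common face $\bar C_\sigma\cap\bar C_{\sigma'}$, and each such coordinate occurs with exponent exactly $1$ while the remaining $x_i$ stay bounded away from $0$. Hence every transition is a smooth diffeomorphism of manifolds with corners, and it is determined by the fan alone.

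\emph{Assertion 2).} Given $(\bar\cO_1,\rho_1)$ and $(\bar\cO_2,\rho_2)$ with the same fan, pick base points $z_0^1,z_0^2$ and form the two atlases $\{\psi_\sigma^1\}$ and $\{\psi_\sigma^2\}$. On each chart put $\Phi:=(\psi_\sigma^2)^{-1}\circ\psi_\sigma^1$. Because the transition maps coincide for the two domains, these local maps agree on overlaps and glue to a single diffeomorphism $\Phi:\bar\cO_1\to\bar\cO_2$; in every chart $\rho_1$ and $\rho_2$ are in the same normal form, so $\Phi$ intertwines them, and on the open orbit $\Phi$ is the tautological equivariant map $\rho_1(w,z_0^1)\mapsto\rho_2(w,z_0^2)$. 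This proves uniqueness.

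\emph{Assertion 1).} Running the construction in reverse, I define $\bar\cO:=\bigl(\bigsqcup_\sigma[0,\infty)^n\bigr)/\!\sim$, glued by the fan-determined transition maps above; the vector fields $x_j\,\partial/\partial x_j$ are compatible under these gluings and assemble into a totally hyperbolic $\bbR^n$-action (it has an orbit of type $\bbR^n$, hence toric degree $0$ by Theorem~\ref{thm:HERT-toricdegree}) whose associated fan is the prescribed one. To obtain a closed boundaryless manifold I glue copies along the facets: with $F_1,\dots,F_N$ the facets (one per ray) and $\sigma_{v_i}$ the reflection of Theorem~\ref{thm:Reflection}, set $M^n:=\bigl(\bar\cO\times(\bbZ_2)^N\bigr)/\!\sim$, where $(x,g)\sim(x,g+\mathbf e_i)$ whenever $x\in F_i$. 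Since $(\bbZ_2)^N$ is finite and $\bar\cO$ is compact, $M^n$ is compact; around a corank-$k$ face lying on facets $F_{i_1},\dots,F_{i_k}$ the local identifications produce exactly the $(\bbZ_2)^k$-pattern of $2^k$ orthants of the linear model of Theorem~\ref{thm:semi-localform}, so $M^n$ is a closed manifold carrying an extension of $\rho$ with $\bar\cO$ as a hyperbolic domain.

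I expect two points to demand the most care. First, in Assertion~1) one must verify that the abstract $\bar\cO$ is Hausdorff and compact: the charts are the \emph{noncompact} orthants $[0,\infty)^n$, so compactness is not formal and must be extracted from completeness of the fan by the convergence argument underlying Lemma~\ref{lemma:converge} (every flow direction lies in some cone, and the corresponding flow converges). Second, smoothness of $\Phi$ and of the transitions up to the corners is the genuine technical content, and it is precisely the exponent-$1$ observation of the first paragraph that rescues it, since the real exponents only ever act on coordinates bounded away from $0$. Once these are secured, equivariance, bijectivity, and the identification of the associated fan are all routine.
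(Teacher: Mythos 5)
Your proposal is correct and follows essentially the same route as the paper: existence by gluing orthant-shaped local models prescribed by the fan and then doubling along the facets via $(\bbZ_2)^N$ (exactly the paper's reflection construction with one copy of $\bar\cO$ per element of $(\bbZ_2)^m$, $m$ the number of facets), and uniqueness via the base-point equivariant map $\rho_1(w,z_0^1)\mapsto\rho_2(w,z_0^2)$ extended over the closure, which is precisely the paper's map $\Phi$. The only real difference is presentational: your fan-adapted atlas with monomial transitions (exponent $1$ on the vanishing coordinates, all other factors bounded away from $0$) makes explicit the smoothness-up-to-corners verification that the paper compresses into ``extend by continuity'' and ``it is easy to verify''.
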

\begin{proof}
 1) We can use the following gluing method to construct $(\bar \cO, \rho)$:

 For each component $C_\cH$ of the fan, denote by $v_1, \hdots, v_h$ the vectors of the fan which lie on the edges
of $\bar C_\cH$ (the number $h$ is equal to the dimension of $C_\cH$). Complete $v_1, \hdots, v_h$ in a arbitrary way 
to obtain a basis $(v_1, \hdots, v_h, w_{h+1}, \hdots, w_n)$ of $\bbR^n$. Put
\begin{equation}
\cD_\cH \cong \{(x_1, \hdots, x_n) \in \bbR^n \ |\ x_1 \geq 0, \hdots, x_h\geq 0\}
\end{equation}
and denote by $\rho_\cH$ the $\bbR^n$-action on $\cD_\cH$ such that the corresponding generators
 $X_{v_1}, \hdots, X_{v_h}, X_{w_{h+1}}, \hdots, X_{w_n}$ are 
\begin{equation}
 X_{v_1} = x_1\frac{\partial}{\partial x_1}, \hdots, X_{v_h} = x_h\frac{\partial}{\partial x_h},
X_{w_{h+1}}= \frac{\partial}{\partial x_{h+1}}, \hdots, X_{w_n} = \frac{\partial}{\partial x_n}.
\end{equation}
Fix an arbitrary point $z_\cH$ in the interior of $\cD_\cH$, we get a ``local model'' $(\cD_\cH, \rho_\cH, z_\cH) $
of our construction. For the moment, these models are disjoint, i.e. $\cD_\cH \cap \cD_\cK = \emptyset$ if $\cH \not= \cK$.

We will now glue all these local models together, by the following equivalence relationship $\sim$:

i) $\rho_\cH (\theta, z_\cH) \sim \rho_\cK (\theta, z_\cK)\quad \forall \theta \in \bbR^n, \quad \forall \cH, \cK$

ii) If $y_t \to y_\infty$ in $\cD_\cH$, $y_t' \to y_\infty'$ in $\cD_\cK$ and $y_t \sim y_t' \quad \forall t$, 
then $y_\infty \sim y_\infty'$ (continuity principle).

Put 
\begin{equation}
\bar \cO = (\bigsqcup_\cH \cD_\cH)/\sim.
\end{equation}
Obviously, the actions $(\rho_\cH)$ are compatible and induce an action $\rho$ of $\bbR^n$ on $\bar \cO$. It is
easy to verify that $(\bar \cO, \rho)$ has the required properties: $\bar \cO$ is a manifold with boundary and corners, $\rho$ is totally
hyperbolic, and the complete fan associated to $(\bar \cO, \rho)$ is nothing but our fan $(C_\cH, v_i)$.

In order to construct $(M, \rho)$ without boundary which contains $(\bar \cO, \rho)$, we can use the reflection principle
(Theorem \ref{thm:Reflection}). Indeed, one can glue together $2^m$ copies $\bar \cO_\alpha$ 
of $\bar \cO$, indexed by the elements of the group
$(\bbZ_2)^m$, where $m$ is the number of facets of $\bar \cO$ (i.e. the numbers of $v_i$'s), by the following rule:

Glue the facet number $i$ of $\bar \cO_\alpha$ to the facet number $i$ of $\bar \cO_\beta$  (by the identity map) 
if and only if $\alpha - \beta = (0, \hdots, 1, 0, \hdots, 0)$ is the $i$-th generator of $(\bbZ_2)^m$.

The result is a compact manifold $M^n$ without boundary, on which $(\bbZ_2)^m$ acts by involutions, 
such that $\cO$ is a fundamental domain:
\begin{equation}
M^n/ (\bbZ_2)^m \cong \bar \cO.
\end{equation}
Then we can pull back the $\bbR^n$ action from $\bar \cO$ to $M^n$ via the projection map $M^n \to \bar \cO$
in order to get a totally hyperbolic action on $M$ which has $\bar \cO$ as a closed hyperbolic domain.

2) Take any two points $z_1 \in \cO_1$ and $z_2 \in \cO_2$. Define 
\begin{equation}
\Phi (z_1) =z_2, \Phi(\rho_1(\theta,z_1)) = \rho_2(\theta,z_2)
\end{equation}
for all $\theta \in \bbR^n$, and then extend $\Phi$ to the boundary of $\bar \cO$ 
by continuity. The fact that $(\bar \cO_1, \rho_1)$ and $(\bar \cO_2, \rho_2)$ have the same associated 
complete fan ensures that the constructed map $\Phi : \bar \cO_1 \to \bar \cO_2$ is a diffeomorphism, which sends 
$\rho_1$ to $\rho_2$. 
 \end{proof}
\noindent {\it Remark.} It was pointed out to us by a referee that the manifold $M^n$ in the above proof is in fact 
the so called {\it real moment-angle manifold} associated to the underlying simplicial complex of the fan, see Section 6.6 
of  \cite{BP_Torus2002}. The adjective ``real'' here means that we have an action of $(\bbZ/2\bbZ)^m$ (instead of $\Pi^m$).

\begin{thm} \label{thm:hyperbolic-contractible}
If $\bar \cO$ is a compact closed hyperbolic domain of a totally hyperbolic action $\rho$
then $\bar \cO$ is contractible.
\end{thm}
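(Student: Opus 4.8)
The plan is to exploit the fact that the open hyperbolic domain $\cO$ is an orbit of type $\bbR^n$, hence diffeomorphic to $\bbR^n$ via $w \mapsto \rho(w,z_0)$ for a fixed $z_0 \in \cO$, and in particular contractible. The whole difficulty is therefore to reduce $\bar\cO$ to $\cO$: first I would homotope the compact manifold-with-corners $\bar\cO$ into its open dense interior $\cO$, and then contract $\cO \cong \bbR^n$ to a point inside itself. The naive temptation is to use the flow $\rho(-sw_0,\cdot)$ of a generic generator: by Lemma \ref{lemma:converge} this flow does contract the interior $\cO$ onto the fixed point $p_0$ whose associated maximal cone $\bar C_{p_0}$ (Proposition \ref{prop:simpli_cone}) contains $w_0$. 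However, this flow preserves every orbit, hence preserves $\partial\bar\cO$, and on a boundary face $\cH$ with $p_0 \notin \bar\cH$ it converges to a different fixed point; the resulting limit map is discontinuous across $\partial\bar\cO$, so the action's own flow cannot contract $\bar\cO$ by itself. This is why a transverse, inward-pointing motion is needed first.

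Step A (pushing the boundary inside). Since $\bar\cO$ is a compact manifold with boundary and corners (Theorem \ref{thm:closureHyperbolic}), I would construct a smooth vector field $Y$ on $\bar\cO$ that is strictly inward-pointing along every face. In each local canonical chart given by the normal form, where $\cO$ is the corner $\{x_1>0,\dots,x_k>0\}$, the field $\partial/\partial x_1 + \dots + \partial/\partial x_k$ points strictly into $\bar\cO$; patching these local fields by a partition of unity yields a global $Y$, inwardness being preserved because at each point the inward-pointing tangent vectors form a convex cone. The forward flow $\Psi_\tau$ of $Y$ stays in $\bar\cO$ and carries each boundary point immediately into the open interior $\cO$ (which equals the set of regular points of $\bar\cO$); by compactness of $\bar\cO$ there is a uniform $\tau_0>0$ with $\Psi_{\tau_0}(\bar\cO) \subset \cO$. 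Then $F(z,s) = \Psi_{s\tau_0}(z)$ is a homotopy with $F_0 = \mathrm{id}_{\bar\cO}$ and $F_1(\bar\cO) \subset \cO$.

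Step B (contracting inside $\cO$) and conclusion. Identify $\cO$ with $\bbR^n$ by $z_0$ as above. The straight-line homotopy $G(w,s) = (1-s)w$ contracts $\bbR^n$ to the origin while staying inside $\cO \subset \bar\cO$; transported back it gives a homotopy inside $\bar\cO$ from the inclusion of $\cO$ to the constant map at $z_0$. Concatenating $F$ with $G$ applied to $F_1$ produces a homotopy in $\bar\cO$ from $\mathrm{id}_{\bar\cO}$ to the constant map at $z_0$, proving that $\bar\cO$ is contractible. The only genuinely technical point, and the step I expect to be the main obstacle, is Step A: the existence of the globally inward-pointing field and the verification that its flow embeds a neighbourhood of $\partial\bar\cO$ into $\cO$ in finite uniform time. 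Both rely essentially on the corner structure furnished by the local normal form and on the compactness of $\bar\cO$; once they are in place, Step B is routine.
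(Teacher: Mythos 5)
Your proof is correct, but it follows a genuinely different route from the paper's. The paper stays entirely inside the dynamics of the action: fixing $z_0 \in \cO$, it uses Lemma \ref{lemma:converge} and the cones $C_\cH$ of Proposition \ref{prop:simpli_cone} to partition $\bar \cO$ into compact cells
$\cD_\cH$ (each diffeomorphic to a cube of dimension equal to the corank of $\cH$), obtaining a ``nonlinear compact fan'' centered at $z_0$, and then contracts $\bar\cO$ to $z_0$ cell by cell, starting with the top-dimensional cells. You instead treat $\bar\cO$ abstractly as a compact manifold with boundary and corners (Theorem \ref{thm:closureHyperbolic}): a strictly inward-pointing vector field, patched by a partition of unity (valid since strictly inward vectors form a convex cone at each point), flows $\bar\cO$ into its interior $\cO$, which is a free orbit and hence diffeomorphic to $\bbR^n$, where the straight-line homotopy finishes the job. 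Both arguments are sound; yours is arguably more elementary and more general, since the inward-flow step is a standard collar-type argument showing that \emph{any} compact manifold with corners is homotopy equivalent to its interior, after which you only need $\cO \cong \bbR^n$ -- the fan plays no role. What the paper's approach buys in exchange is extra structure: the cell decomposition $\{\cD_\cH\}$ of $\bar\cO$, dual to the associated complete fan, which makes the polytope-like combinatorics of $\bar\cO$ explicit and is reused elsewhere (e.g.\ in the classification by fans, Theorem \ref{thm:classificationByfan}, and in the construction of cross sections, Proposition \ref{prop:section}). One cosmetic point: your appeal to compactness for a uniform $\tau_0$ is unnecessary -- once the flow is forward-complete, boundary points enter $\cO$ instantly and interior points never return to the boundary, so any fixed $\tau_0>0$ works; compactness is only needed for forward-completeness itself. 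Your preliminary observation that the action's own flow cannot contract $\bar\cO$ (it preserves orbits, hence the boundary) is accurate and correctly motivates the transverse inward motion.
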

\begin{proof}
$\bar \cO$ can also be partioned into a \emph{nolinear compact fan} similar to its associated
complete fan as follows:

Fix a point $z_0 \in \bar \cO$, and for each orbit $\cH \subset \bar \cO$  put
\begin{equation}
\cD_\cH = \{ z \in \bar \cO \ |\ z = \rho(-w,z_0) \text{ or } z = \lim_{t \to -\infty} \rho(tw,z_0)
\text{ for some } w \in C_\cH\}.
\end{equation}
Clearly, each $\bar \cD_\cH$ is diffeomorphic to a $h$-dimensional cube, where $h$ = corank $\cH$,
and we can contract $\bar \cO$ to $z_0$ by contracting it ``cell by cell'' (each $\cD_\cH$ is a cell):
First kill the highest-dimensional cells, then kill the next-to-highest dimensional cells, and so on, see Figure \ref{fig:contracting}
for an illustration.
\end{proof}
\begin{figure}[htb]
\begin{center}
\includegraphics[width=100mm]{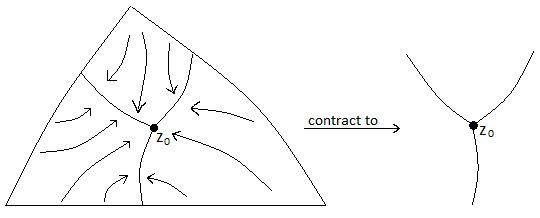}
\caption{Contracting $\bar \cO$ to a point.}
\label{fig:contracting} 
\end{center}
\end{figure}

\subsection{Polytope or not polytope?}

Recall that, a simple convex polytope is a convex polytope such that each vertex is simplicial,
i.e. has exactly $n$ adjacent edges, where $n$ is the dimension. 

A simplicial polytope is a polytope such that every facet is a simplex. If $P \subseteq \bbR^n$
is a simple convex polytope, then the dual polytope 
\begin{equation}
P^* = \{ x \in (\bbR^n)^* \ |\ \langle x,y \rangle \leq 1 \quad \forall y \in P\}
\end{equation}
is a convex simplicial polytope and vice versa.

The results of the previous subsection show that any compact closed hyperbolic domain $\bar \cO$
looks like a convex simple polytope: $\bar \cO$ has boundary and corners, each vertex of $\bar \cO$
is simplicial, and moreover $\bar \cO$ is contractible, and the same is true for each face of $\bar \cO$.
It is easy to see that $\bar \cO$ can be diffeomorphically embedded into $\bbR^n$ in a non-linear way 
(i.e. with non-linear boundary). So we may say that $\bar \cO$ is a ``curved polytope''.

The question now is: Is $\bar \cO$ diffeomorphic to a convex polytope in $\bbR^n$?

Surprisingly, the answer is not always Yes, though it is obviously Yes in dimension 2:

\begin{thm} \label{thm:diff-to-polytope}
Any compact closed hyperbolic domain $\bar \cO$ of dimension $n \leq 3$ is diffeomorphic to a convex
simple polytope. If $n \geq 4$ then there exists a compact closed hyperbolic domain $\bar \cO$ of dimension $n$
which is not diffeomorphic to a polytope. 
\end{thm}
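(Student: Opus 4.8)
The plan is to reduce both statements to a purely combinatorial question about the associated complete fan. To a compact closed hyperbolic domain $\bar\cO$ in $\bbR^n$ I attach the simplicial complex $K_\cO$ whose vertices are the rays $\bbR_{>0}v_i$ of the fan $(C_\cH,v_i)$ and whose faces are the cones $\bar C_\cH$; intersecting with the unit sphere shows $K_\cO$ is a geometric triangulation of $S^{n-1}$, and by Proposition \ref{prop:simpli_cone} it is exactly the dual of the face poset of $\bar\cO$ (rays $\leftrightarrow$ facets, maximal cones $\leftrightarrow$ vertices). The first observation is that the diffeomorphism type of $\bar\cO$, as a manifold with corners, depends only on the combinatorial type of $K_\cO$: in the gluing construction of Theorem \ref{thm:classificationByfan} the local pieces $\cD_\cH$ together with their $\bbR^n$-actions are all linearly isomorphic, so only the incidence pattern of the cones governs how they are glued. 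Since a convex simple polytope is itself a compact closed hyperbolic domain whose fan is its normal fan (one may realise it as the closure of a free orbit of the logarithmic $\bbR^n$-action on the positive real part of the associated projective toric variety), Theorem \ref{thm:classificationByfan} then yields the key equivalence: $\bar\cO$ is diffeomorphic to a convex simple polytope if and only if $K_\cO$ is \emph{polytopal}, i.e. isomorphic to the boundary complex of a simplicial convex polytope.

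Granting this reduction, the case $n\le 3$ is classical. For $n\le 2$ the complex $K_\cO$ is a pair of points or a triangulated circle, whose dual is an interval or a polygon, both convex polytopes. For $n=3$, $K_\cO$ is a simplicial $2$-sphere, and by Steinitz's theorem every triangulation of $S^2$ is the boundary complex of a simplicial $3$-polytope; hence $K_\cO$ is polytopal and $\bar\cO$ is diffeomorphic to a convex simple polytope.

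For $n\ge 4$ the idea is to exhibit a complete simplicial fan whose underlying sphere is not polytopal. I would first treat $n=4$ by invoking the existence of a \emph{star-shaped} (equivalently, fan-realisable) simplicial $3$-sphere $K$ that is not polytopal, such as the Br\"uckner or Barnette sphere realised as a complete fan in $\bbR^4$ (see \cite{Ewald-Combinatorial1996}). Feeding this fan into part 1 of Theorem \ref{thm:classificationByfan} produces a compact closed hyperbolic domain $\bar\cO$ of dimension $4$ with $K_\cO\cong K$; by the reduction it is not diffeomorphic to a polytope. To pass to arbitrary $n>4$, I would take the product of this fan with the normal fan of an $(n-4)$-cube, which is again a complete simplicial fan and hence realisable; its dual is combinatorially $\bar\cO\times[0,1]^{\,n-4}$. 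Were the product diffeomorphic to a convex polytope $R$, then the face $\bar\cO\times\{v\}$ over a vertex $v$ of the cube would be a face of $R$, hence itself a convex polytope of the combinatorial type of $\bar\cO$, contradicting the $4$-dimensional case.

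The main obstacle is the geometric input in the $4$-dimensional case: one needs a non-polytopal simplicial sphere that can actually be \emph{realised by a complete fan}, i.e. that is star-shaped, because the classification in Theorem \ref{thm:classificationByfan} takes an honest geometric fan as input and not merely an abstract simplicial sphere. Non-polytopality alone is easy to arrange, but the star-shapedness of a chosen non-polytopal $3$-sphere is the delicate point; everything else (the combinatorial reduction above, Steinitz's theorem, and the product argument) is routine once this example is in hand.
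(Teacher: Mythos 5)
Your proposal is, at its core, the same proof as the paper's: the case $n\le 3$ is reduced to Steinitz's theorem, and the case $n=4$ uses exactly the paper's example --- Barnette's sphere, realized as a complete fan in $\bbR^4$ via \cite{Ewald-Combinatorial1996}, fed into Theorem \ref{thm:classificationByfan}, with the contradiction obtained by dualizing the putative simple polytope. You do go beyond the paper in one respect: the paper writes out the counterexample only in dimension $4$, while your product with the normal fan of a cube, plus the fact that a face of a convex simple polytope is again one, honestly settles every $n\ge 4$. To make that step airtight, identify the domain of the product fan with $\bar\cO\times[0,1]^{\,n-4}$ by applying part 2 of Theorem \ref{thm:classificationByfan} to the product action on a product manifold (this gives equality of fans, not merely combinatorial equivalence, so no extra input is needed there).

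The place where your write-up is too quick is the ``first observation'' that the diffeomorphism type of $\bar\cO$ as a manifold with corners depends only on the combinatorial type of $K_{\cO}$. The justification offered --- the local pieces are linearly isomorphic, so ``only the incidence pattern of the cones governs how they are glued'' --- is not a valid principle: the gluing maps in Theorem \ref{thm:classificationByfan} are flows of the action and depend on the actual vectors $v_i$, and in general gluing standard pieces in a fixed incidence pattern does not determine smooth type (gluing two standard disks along different diffeomorphisms of the boundary sphere produces exotic spheres). Moreover, this invariance is genuinely needed in your $n\le 3$ argument: a complete simplicial fan in $\bbR^3$ need not be the normal fan of any convex polytope (non-polytopal complete fans exist already in dimension $3$), so Steinitz hands you a polytope whose fan is only combinatorially, not geometrically, equivalent to that of $\bar\cO$, and part 2 of Theorem \ref{thm:classificationByfan} cannot be invoked. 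The invariance statement is true, and provable by induction over the face lattice with collars and isotopy extension (as in the proof that combinatorially equivalent simple polytopes are diffeomorphic as manifolds with corners); the paper is equally terse on this point, since its $n=3$ case is also stated as a bare consequence of Steinitz. But calling this reduction ``routine'' and singling out star-shapedness of the Barnette sphere as the only delicate input misplaces where the work lies: the $n\ge 4$ half uses only the easy direction (a diffeomorphism of manifolds with corners preserves face posets), and there your argument, like the paper's, is sound.
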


\begin{proof}
The case $n=2$ is obvious. The case $n=3$ is a consequence of the classical Steinitz theorem. 
When $n=4$ of higher, there are counterexamples:
The first known counterexample comes from the so-called Barnette's sphere \cite{Barnette-Diagram1970}, which
was pointed out by Ishida, Fukukawa and Masuda in \cite{Ishida-toric} for a similar problem. 
The idea of our proof here is also taken from \cite{Ishida-toric}.

The Barnette's sphere is a simplicial complex whose ambient space is a 3-dimensional sphere $S^3$,
but which cannot be realized as the boundary of a convex simplicial polyhedron in $\bbR^3$ for some reasons of 
combinatorial nature.

It is known \cite{Ewald-Combinatorial1996} that Barnette's sphere can be realized as the base of a complete fan in $\bbR^4$,
which we will call the Barnette fan. Take the hyperbolic domain $\cO$ given by this Barnette fan. Then $\bar \cO$ cannot be
diffeomorphic to a complex simple 4-dimensional polytope, because if there is such a polytope, 
then the boundary of the simplicial polytope dual to it will be a realization of the Barnette's sphere, which is a contradiction. 
\end{proof}
   
When $n=2$, one can construct a convex polytope diffeomorphic to $\bar \cO$ by the following cutting method:
\begin{figure}[htb]
\begin{center}
\includegraphics[width=50mm]{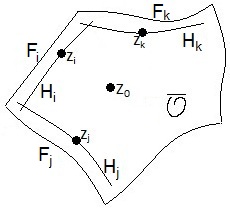}
\caption{Trimming $\bar \cO$ into a polytope.}
\end{center}
\end{figure}

For each face $F_i$ of $\bar \cO$ denote by $z_i$ a point in $\cO$ 
``very near'' $F_i$, and by 
\begin{equation}
H_i = \{ \rho(w,z_i) \ |\ w \in \bbR^2, \langle w, v_i\rangle = 0  \}
\end{equation}
the ``hyperplane'' in $\cO$ of the action through $z_i$ which is ``orthogonal''
to $v_i$. Here $\langle.,.\rangle$ denotes a standard scalar product in $\bbR^2$, and $v_i \in \bbR^2$ is
the vector associated to $F_i$.

Then $H_i$ is ``nearly parallel'' to $F_i$ in the sense that the points $\rho(w,z_i)$ remain close
to $F_i$ when $w$ is not too large. $H_i$ cuts $\cO$ into 2 pieces which we will denote by $\cO_{i+}$, $\cO_{i-}$,
where $\cO_{i+}$ is the piece which contains the chosen point $z_0 \in \cO$. Put 
\begin{equation}
\bar \cO_+ = \bigcap_i \cO_{i+}.
\end{equation} 
It is then not difficult to verify that $\bar \cO$ is diffeomorphic to $\bar \cO_+$,
and on the other hand, $\bar \cO_+$ is a convex polygone with respect to the affine structure
on $\cO$ given by the $\bbR^2$-action $\rho$.

This cutting method probably still works in dimension $n=3$, but clearly it fails in dimension $n\geq 4$
because there are counterexamples like the Barnette's sphere.

\subsection{Existence  of totally hyperbolic actions} \label{subsection:hyperbolic_existence}
In the case of dimension 2, the existence of a totally hyperbolic actions on any closed
2-manifold was known to Camacho \cite{Camacho-MorseSmaleAction1973}, who used the term ``Morse-Smale 
$\bbR^2$-flows on a 2-manifold'' for what we call a nondegenerate action. We have here
a sightly improved result, wich includes the non-orientable case and has
the minimal number of hyperbolic domains.

\begin{thm} \label{thm:hyperbolic_dim2}
1) On $\bbS^2$ there exists a totally hyperbolic $\bbR^2$ action, which has
exactly 8 hyperbolic domains. The number 8 is also the minimal number possible:  
any totally hyperbolic action of $\bbR^2$ on $\bbS^2$ must have at least 8 hyperbolic domains.

2) For any $g \geq  1$, on a closed orientable surface of genus $g$ there 
exists a totally hyperbolic action of $\bbR^2$ which has exactly 4 hyperbolic domains. 
The number 4 is also minimal possible.

3) Any non-orientable closed surface also admits a totally hyperbolic action with 4 
hyperbolic domains, and the number 4 is also minimal possible.
\end{thm}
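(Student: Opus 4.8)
The plan is to reduce the whole statement to the combinatorics of the cell decomposition of $M^2$ whose cells are the orbits of $\rho$. Since $\rho$ is totally hyperbolic, every fixed point has HE-invariant $(2,0)$ (the equality toric degree $=e+t$ of Theorem \ref{thm:HERT-toricdegree} forces $e=0$), so in canonical coordinates it reads $X_{v_1}=x_1\partial_{x_1}$, $X_{v_2}=x_2\partial_{x_2}$, with exactly four quadrants and four incident edges; thus every vertex has degree $4$. Writing $V,E,F$ for the numbers of fixed points, corank-$1$ orbits and hyperbolic domains, incidence counting gives $2E=4V$, whence $\chi(M^2)=V-E+F=F-V$. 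Moreover each domain $\bar\cO$ carries a complete fan in $\bbR^2$ (Proposition \ref{prop:simpli_cone}, Definition \ref{def:fan}), and a complete fan in $\bbR^2$ has at least three rays, so each face has at least three sides; since by Theorem \ref{thm:closureHyperbolic} every edge borders two distinct faces, $2E=\sum_i k_i\ge 3F$, i.e. $4V\ge 3F$. For $M^2=\bbS^2$ we have $V=F-2$, and $4(F-2)\ge 3F$ forces $F\ge 8$; the octahedral decomposition ($V=6,E=12,F=8$) shows the bound is attained.

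For the bound $F\ge 4$, which is what is needed when $g\ge1$ or $M^2$ is non-orientable (there the Euler estimate is vacuous), I would argue locally at a fixed point. Such a point $p$ exists because $M^2$ is compact, so the closure of any hyperbolic domain contains a fixed point. In canonical coordinates at $p$ the four open quadrants consist of regular points, so each lies in a single hyperbolic domain; and by Theorem \ref{thm:closureHyperbolic} (no overlapping of the boundary of a hyperbolic domain) each domain meets the neighbourhood of $p$ in at most one quadrant. Hence the four quadrants at $p$ belong to four pairwise distinct domains, giving $F\ge 4$. This is sharp in every case below.

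For existence I would prescribe a decorated cell structure and then assemble the action from the local and semi-local models. For $\bbS^2$, start from a complete fan in $\bbR^2$ with three rays; the reflection construction of Theorem \ref{thm:classificationByfan} glues $2^3=8$ copies of the triangular domain into a closed surface with $V=6,E=12,F=8$, i.e. $\bbS^2$ with exactly eight domains. For the torus take $X_1=\sin(2\pi\theta)\,\partial_\theta$ and $X_2=\sin(2\pi\phi)\,\partial_\phi$ on $\bbT^2=(\bbR/\bbZ)^2$: these commute, are nondegenerate and totally hyperbolic, with four fixed points, eight edges and four square domains. For orientable genus $g\ge2$ I would realize a degree-$4$ graph with four $2$-cells on $\Sigma_g$ (so $V=2g+2,\ E=4g+4,\ F=4$) together with an assignment of associated vectors making each face a complete fan and each vertex carry a basis $(v_1,v_2)$ with opposite edges sharing a vector (Theorem \ref{thm:CanonicalCoordinate}); proceeding by induction on $g$, one adds a handle by a local surgery introducing two fixed points and four edges while keeping $F=4$, and then reconstructs $\rho$ by gluing the models of Theorem \ref{thm:semi-localform} along the reflections of Theorem \ref{thm:Reflection}. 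The non-orientable cases I would obtain either by the same gluing method directly, or by equipping an orientable double cover with eight domains with a free involution that is an automorphism of $\rho$ (for instance the antipodal symmetry of the octahedron, whose quotient is $\bbR\bbP^2$ with four domains) and passing to the quotient.

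The routine part is the lower bounds, which are pure Euler-characteristic and fan combinatorics once Theorem \ref{thm:closureHyperbolic} is invoked. The main obstacle is the construction realizing \emph{exactly} the minimal number of domains for every orientable genus and every non-orientable type: unlike the $2^m$-fold reflection construction (which realizes many surfaces but never with four domains), the minimal models require gluing four polygonal domains to one another, and the difficulty is to produce, for each topological type, a consistent system of associated vectors satisfying simultaneously the completeness condition on each face and the basis and sign conditions at each vertex, without creating a forbidden overlapping. I expect the cleanest route is the inductive handle-attachment, checking at each step that the new local fans remain complete and that no compatibility obstruction of the kind recorded after Theorem \ref{thm:TwistingMonodromy} arises.
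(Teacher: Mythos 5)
Your lower bounds are correct, and for $\bbS^2$ you argue differently from the paper. The paper's minimality argument for $\bbS^2$ counts 2-gones produced by cutting loops, whereas you combine the Euler characteristic with fan combinatorics: every fixed point of a totally hyperbolic action on a surface has HE-invariant $(2,0)$, hence degree $4$, so $2E=4V$; every domain has at least $3$ facets because a complete fan in $\bbR^2$ needs at least $3$ rays (Proposition \ref{prop:simpli_cone}), and no domain touches an edge from both sides (Theorem \ref{thm:closureHyperbolic}), so $2E\geq 3F$; then $\chi(\bbS^2)=2$ forces $F\geq 8$. This is a clean and valid alternative. Your $F\geq 4$ argument at a fixed point (four quadrants, each domain occupying at most one corner by the non-overlapping theorem) is exactly the paper's. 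The existence constructions for $\bbS^2$ (the $(\bbZ_2)^3$ reflection gluing of $8$ triangles) and for $\bbT^2$ (your explicit commuting fields $\sin(2\pi\theta)\,\partial_\theta$, $\sin(2\pi\phi)\,\partial_\phi$, in place of the paper's four-square reflection picture) are also fine.

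The genuine gap is in existence for orientable genus $g\geq 2$ and for non-orientable surfaces other than $\bbR\bbP^2$. There you propose an inductive handle-attachment surgery producing a degree-$4$ graph with four faces together with a consistent assignment of associated vectors, but you never carry it out, and the consistency check you postpone is precisely the nontrivial content: as Proposition \ref{prop:3-6domains} shows, not every polyhedral decomposition admits compatible complete fans, so ``add a handle and re-check'' is exactly where a proof could fail and nothing in your sketch guarantees it succeeds. The paper avoids this difficulty entirely by making the construction equivariant under a finite reflection group, so that only \emph{one} fan ever needs to be chosen: for $\Sigma_g$, $g\geq 1$, two commuting involutions cut the surface into four $(2g+2)$-gones; one builds a totally hyperbolic action on a single fundamental domain from an arbitrary complete fan with $2g+2$ rays (possible by Theorem \ref{thm:classificationByfan}) and extends it $(\bbZ_2)^2$-equivariantly by the reflection principle (Theorem \ref{thm:Reflection}), with no compatibility conditions left to verify. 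For the non-orientable case, the paper embeds $\Sigma_g$ ($g\geq 0$) in $\bbR^3$ symmetrically with respect to the three coordinate planes, builds a $(\bbZ_2)^3$-invariant action with $8$ domains by the same equivariant extension, and then quotients by the \emph{free} involution $\sigma_x\circ\sigma_y\circ\sigma_z$; letting $g$ vary, this yields every non-orientable closed surface with exactly $4$ domains. Your octahedron-modulo-antipode remark is the $g=0$ instance of this, but your proposal gives no construction for the remaining non-orientable types, nor for orientable $g \geq 2$.
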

\begin{proof}
\emph{Existence:}

1) Cut $\bbS^2$ into 8 pieces by 3 loops so that each piece is a trigone, as shown 
in Figure \ref{fig:S2Sigma2}a. 
According to Theorem \ref{thm:NormalForm}, any simplest complete fan of $\bbR^2$ (partition of $\bbR^2$
into 3 convex cones, together with an arbitrary choice of 3 vectors on the 3 boundary
directions) will correspond to a hyperbolic $\bbR^2$-action on a trigone (as illustrated in
Figure \ref{fig:S2Sigma2}a). Using reflections as in Theorem \ref{thm:Reflection} and Theorem \ref{thm:classificationByfan}, 
we can pull back this action to an action of $\bbS^2$ via the projection map: $\bbS^2 \to \bbS^2/ (\bbZ_2)^3 = \text{trigone}$.
\begin{figure}[htb]
\begin{center}
\includegraphics[width=100mm]{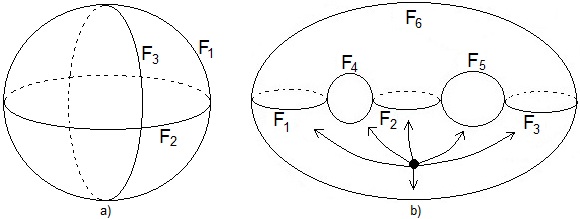}
\caption{Cutting $\bbS^2$ into 8 trigones and cutting $\Sigma_2$ into 4 domains.}
\label{fig:S2Sigma2}
\end{center}
\end{figure}

2) In the case of an orientable surface $\Sigma_g$ of genus $\geq  1$, we just need 2 involutions to cut it 
into four $(2g+2)$-gones as in Figure \ref{fig:S2Sigma2}b. Construct an action on one of these $(2g+2)$-gones, 
and extend it to the whole surface by the reflection principle 
(so that it becomes invariant with respect to the 2 involutions). 

3) Embed  $\Sigma_g$  (where $g\geq 0$) into $\bbR^3$ in such a way that
it is symmetric with respect to the 3 planes $\{x=0\}$, $\{y=0\}$, $\{z=0\}$, and is cut into $8$ polygones
by these planes (each polygone has $g+3$ edges). Like in the case of $\bbS^2$, we can construct a hyperbolic
action on one of these 8 polygones, and extend it to the other polygones by the reflections $\sigma_x$,
$\sigma_y$, $\sigma_z$ (where $\sigma_x$ is the reflection: $(x,y,z) \mapsto (-x,y,z)$
to get an action $\rho$ on $\Sigma_g$. Since $\rho$ is invariant with respect to the free involution
$\sigma = \sigma_x \circ \sigma_y \circ \sigma_z : (x,y,z) \mapsto (-x,-y,-z)$ on $\Sigma_g$, 
it projects to a hyperbolic action on the non-orientable
surface $\Sigma_g / \sigma$, which has 4 hyperbolic domains.

\emph{Minimality:}

1) Each loop on $\bbS^2$ cuts it into 2 disks with smooth boundary.
2 loops $\rightarrow $ there are at least 4 pieces which are 2-gones. Each 2-gone needs 
to be cut in order to obtain polygones (with at least 3 edges), so 
in total we have at least $4\times 2=8$ pieces, i.e. 
8 orbits of dimension 2 of the action.

2) and 3) The number 4 is minimal, due to the non overlapping of boundary of a hyperbolic domain: 
near a fixed point of the action, the 4 corners must belong to 4 different domains. 
More generally, for a totally hyperbolic action on a compact $n$-dimensional manifold without boundary,
we must have at least $2^n$ different hyperbolic domains.
\end{proof}

\noindent {\it Remark.} It was pointed out to us by a referee that the constructions in the above theorem correspond to the so 
called {\it real topological toric manifolds} introduced by Ishida-Fukukawa-Masuda \cite{Ishida-toric} in the case of dimension 2.

In the general case of dimension $n \geq 3$, we have the following conjecture:
\begin{conj} \label{conjecture:hyperbolic}
Any closed smooth $n$-dimensional manifold admits a completely hyperbolic nondegenerate action of $\bbR^n$.
\end{conj}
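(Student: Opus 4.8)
The plan is to reduce Conjecture \ref{conjecture:hyperbolic} to a purely combinatorial–topological realization problem and then attack that problem through triangulations. By the reflection principle (Theorem \ref{thm:Reflection}) together with the fan classification (Theorem \ref{thm:classificationByfan}), a totally hyperbolic action on $M^n$ is the same data as a decomposition of $M^n$ into hyperbolic domains whose closures are contractible ``curved simple polytopes'', glued along the codimension-$1$ singular hypersurfaces $\cN_v$ by the reflections $\sigma_v$. Thus it suffices to equip an arbitrary closed $M^n$ with a family of transversally intersecting embedded hypersurfaces whose complementary regions have contractible closures with simplicial corners, such that the associated local reflections are globally consistent involutions and the local fan data match along every wall.

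First I would fix a smooth triangulation of $M^n$ (which exists for every smooth manifold) and pass to a subdivision designed so that the dual cell structure is simple, i.e. each vertex is surrounded by exactly $2^n$ top-dimensional cells, modelling the $2^n$ orthants meeting at a fixed point in the local normal form of Theorem \ref{thm:NormalForm}. The candidate singular hypersurfaces are the smoothed codimension-$1$ walls separating these cells; near each codimension-$k$ stratum they must be arranged to look like $k$ coordinate hyperplanes in $\bbR^n$, so that transversality and the local $(\bbZ_2)^k$ reflection symmetry of Theorem \ref{thm:LocalAutomorphism} hold. On each closed region I would then choose a complete fan (Definition \ref{def:fan}) and invoke Theorem \ref{thm:classificationByfan}(1) to install a totally hyperbolic $\bbR^n$-action, selecting the associated vectors $v_i$ of the shared facets to agree from both sides so that the reflections glue into global involutions.

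The two remaining points are global consistency and monodromy. For the fans I would propagate a single choice of associated vectors along the wall-adjacency graph; whenever this propagation returns to a region it must close up, and the freedom in the last vectors of an adapted basis (the Remark following Theorem \ref{thm:CanonicalCoordinate}) together with the monodromy-changing Theorem \ref{thm:trans-monodromy} should absorb the resulting discrepancies. The analogue of the ``even number of hyperbolic points'' condition from Case a of Subsection \ref{subsection:n_and_n-1} becomes, in higher dimensions, the requirement that the reflection groupoid generated by the $\sigma_v$ close consistently around every codimension-$2$ stratum, where the two relevant reflections must commute exactly as in case a) of the proof of Theorem \ref{thm:defn-monodromy}.

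The hard part will be precisely this global reflection-compatibility. Proposition \ref{prop:3-6domains} shows that already in dimension $2$ not every decomposition into contractible simple pieces is realizable, so one cannot simply take an arbitrary subdivision: the combinatorial type of the corner structure must be constrained so that the local involutions patch into well-defined involutions of neighborhoods of the entire hypersurfaces $\cN_v$, and so that each piece is genuinely contractible with \emph{simplicial} (not merely polyhedral) corners. I expect the crux to be producing, for an arbitrary smooth $M^n$, a triangulation whose associated corner decomposition satisfies these constraints --- plausibly via the barycentric subdivision and Davis-style reflection-group techniques --- and verifying the contractibility of the resulting pieces, which is exactly where the statement remains open.
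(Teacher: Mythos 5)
This statement is a \emph{conjecture} in the paper, not a theorem: the authors offer no proof, only supporting evidence (the two-dimensional case, Theorem \ref{thm:hyperbolic_dim2}) and a discussion of exactly why the general case is hard. Your proposal cannot be compared against a paper proof because none exists, and, as you yourself concede in your final sentence, your argument is not a proof either --- it is a strategy sketch that stalls precisely at the open point. Your reduction (decompose $M^n$ into curved simple polytopes by transversally intersecting hypersurfaces, then equip each piece with a complete fan whose facet vectors match across walls) is essentially the same reduction the paper states immediately after the conjecture as properties a) and b), with the paper's verdict that a) is easy but b) is ``highly non-trivial.'' So the content of your proposal coincides with the paper's framing of the problem, not with a solution of it.

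Beyond the global concession, two specific steps in your plan would not work as described. First, you propose to ``absorb discrepancies'' in the propagation of facet vectors using Theorem \ref{thm:trans-monodromy}; but that theorem modifies the free part of the monodromy of an action that \emph{already exists} --- it manufactures a new action from an old one via a covering, and gives no help in choosing fan data when no action has yet been constructed. Second, you locate the obstruction in the reflection groupoid closing up around codimension-$2$ strata, but the real obstruction shown by Proposition \ref{prop:3-6domains} is linear-algebraic, not groupoid-theoretic: the associated vectors of the walls of each domain must form a genuine complete fan, i.e., satisfy sign conditions on linear combinations (in the three-domain configuration of Figure \ref{fig:3domain}a the reflections are perfectly consistent, yet no choice of vectors $v_1,\hdots,v_4$ can make all three fans complete simultaneously). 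Any proof of the conjecture must produce a decomposition together with a simultaneous solution of these positivity constraints over the whole manifold, and neither your triangulation/subdivision step nor Davis-style reflection techniques is shown to deliver that; this is exactly where the problem remains open.
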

If Conjecture \ref{conjecture:hyperbolic} is not true, i.e. there are obstructions for a
manifold to admit a totally hyperbolic action, then there might be an obstruction in the torsion 
part of the first homology group, due to the monodromy. For example, we don't even know yet 
if any lens space (which is a rational homology 3-sphere) different from $\bbS^3$ admits a totally 
hyperbolic $\bbR^3$-action or not.

A related interesting question is: given a manifold $M^n$, what is the minimal number 
that a totally hyperbolic $\bbR^n$-action on it must have, and how is this number related to the
other topological invariants of $M^n$?

If the above conjecture is true, it would mean that:

a) Any closed smooth manifold can be decomposed into ``curved polytopes'' by embedded closed hypersurfaces
$F_i$ which intersect transversally.

b) Moreover, one can associate to each hypersurface $F_i$ in a) an element $v_i \in \bbR^n$
such that each ``curved polytope'' corresponds to a complete fan of $\bbR^n$ compatible 
with the $v_i$'s of its faces.

The property a) is easy to achieve, one can cut any manifold into polyhedral pieces by smooth 
surfaces. But the property b) is highly non-trivial, because not any polyhedral decomposition 
by smooth tranversally intersecting hypersurfaces can be realized by totally hyperbolic $\bbR^n$-action,
as the counterexamples in the following proposition show.

\begin{prop} \label{prop:3-6domains}
There does not exist a totally hyperbolic action which contains 3 domains $\cO_1, \cO_2,\cO_3$ 
as in Figure \ref{fig:3domain}a or 6 domains $\cO_1, \hdots,\cO_6$ as in Figure \ref{fig:3domain}b.
\end{prop}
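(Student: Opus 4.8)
The plan is to assume that such an action exists and derive a contradiction purely from the fan/vector dictionary built up in this section, reduced to two dimensions. Recall that to each corank-$1$ orbit (an edge of the decomposition) there is associated a single vector in $\bbR^2$, that this vector is shared by the (one or two) hyperbolic domains adjacent to the edge, and that two distinct edges on the boundary of the same domain carry distinct vectors (the remark after Lemma \ref{lemma:limit}). By Proposition \ref{prop:simpli_cone} and Theorem \ref{thm:classificationByfan}, the associated vectors of the edges of a fixed domain $\cO_i$, listed in the cyclic order in which the edges occur along $\partial\cO_i$, form a complete fan of $\bbR^2$: consecutive edge-vectors are linearly independent and positively span the $2$-dimensional cone attached to the vertex lying between them, and going once around $\partial\cO_i$ these rays sweep out all of $\bbR^2$ exactly once. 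In particular every domain must have at least three edges.

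Two further facts will drive the argument. First, since the toric degree is $0$ we have $e=t=0$ everywhere, so every vertex is a nondegenerate fixed point of HE-invariant $(2,0)$ at which exactly two edge-curves cross transversally, and hence exactly four domains meet there. If $v_1,v_2\in\bbR^2$ are the vectors of the two crossing curves, then by Proposition \ref{prop:simpli_cone}(4) the cone attached to that vertex is $\bbR_{>0}v_1+\bbR_{>0}v_2$ for all four domains simultaneously; thus its angle $\theta(p)\in(0,\pi)$ depends only on the vertex $p$, and completeness of each fan yields the identity $\sum_{p\in\partial\cO_i}\theta(p)=2\pi$ for every domain $\cO_i$, the sum running over the vertices of $\cO_i$. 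Second, because the vector of a curve is constant along the whole curve, the assignment of vectors to edges is rigid: all edges lying on one curve carry one and the same vector, and this links the cones at different vertices of the configuration to one another.

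With these constraints in hand I would read off from Figure \ref{fig:3domain} the incidence data of the two configurations --- which edges are shared by which pairs among $\cO_1,\dots$, the cyclic order of edges around each domain, and the multiplicity with which each vertex is counted --- and then show the resulting system is inconsistent. Concretely, summing $\sum_{p\in\partial\cO_i}\theta(p)=2\pi$ over the listed domains gives $\sum_p m(p)\,\theta(p)=2\pi N$, with $N$ the number of domains and $m(p)$ the number of them incident to $p$; comparing this with the per-domain identities, together with the forced equalities of shared edge-vectors and the requirement that each domain's vectors wind exactly once, is meant to leave no admissible choice of the $\theta(p)\in(0,\pi)$ and of the cone orientations. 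The main obstacle is precisely this last combinatorial step: one must verify that the cyclic order forced on the shared vectors by the winding condition for one domain is incompatible with the order forced by an adjacent domain --- equivalently, that no assignment of directions to the curves of the figure makes all three (respectively six) fans complete at once. I expect this finite but delicate case analysis, rather than any of the structural input, to be where the real work lies.
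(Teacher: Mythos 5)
Your structural setup is correct and is essentially the same dictionary the paper uses: each invariant curve carries one associated vector, opposite branches of a curve at a crossing carry the \emph{same} vector (a consequence of the local normal form and adapted bases), so the cone attached to a vertex --- and hence its angle $\theta(p)\in(0,\pi)$ --- is the same for all four corners there, and completeness of each domain's fan gives $\sum_{p\in\partial\cO_i}\theta(p)=2\pi$. All of this follows from Proposition \ref{prop:simpli_cone} and Theorem \ref{thm:classificationByfan}. The problem is that your proof stops exactly where the proposition begins: you never derive a contradiction from the incidence data of either configuration, and you say yourself that the ``finite but delicate case analysis'' is ``where the real work lies''. Since everything you do establish is already contained in Section 5 of the paper, the missing step is precisely the content of Proposition \ref{prop:3-6domains}, so this is a genuine gap. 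Note also that the global count $\sum_p m(p)\theta(p)=2\pi N$ cannot by itself close the argument: the three (resp.\ six) domains sit inside a larger manifold with further, unconstrained domains, so summing the per-domain identities over the listed domains is only a formal consequence of those identities; the contradiction has to come from the way the shared vectors couple them.

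The step you defer is in fact short, and your own angle identity finishes configuration (a) in three lines. There $\cO_1$ is a trigone with edge vectors $v_1,v_2,v_3$, $\cO_3$ is a trigone with edge vectors $v_1,v_2,v_4$, and $\cO_2$ is a quadrilateral whose edge vectors occur in the cyclic order $v_1,v_3,v_2,v_4$. Writing $\theta_{ij}$ for the angle of the cone $\bbR_{>0}v_i+\bbR_{>0}v_j$, completeness of the three fans gives $\theta_{12}+\theta_{13}+\theta_{23}=2\pi$, $\theta_{12}+\theta_{14}+\theta_{24}=2\pi$, and $\theta_{13}+\theta_{23}+\theta_{24}+\theta_{14}=2\pi$; adding the first two and subtracting the third yields $\theta_{12}=\pi$, contradicting $\theta_{12}<\pi$. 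The paper argues instead with signs: in a complete fan with exactly three rays, each ray lies in the opposite open cone of the other two, so the fans of $\cO_1$ and $\cO_3$ force $v_3=\alpha v_1+\beta v_2$ and $v_4=\gamma v_1+\delta v_2$ with $\alpha,\beta,\gamma,\delta<0$, while the fan of $\cO_2$ forces exactly one of $v_3,v_4$ to be a \emph{positive} combination of $v_1,v_2$ --- a contradiction; the six-domain configuration is treated the same way. Either route is fine, but one of them must actually be carried out for the specific figures; as it stands, your text proves nothing beyond what Section 5 already contains.
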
 
\begin{figure}[htb] 
\begin{center}
\includegraphics[width=120mm]{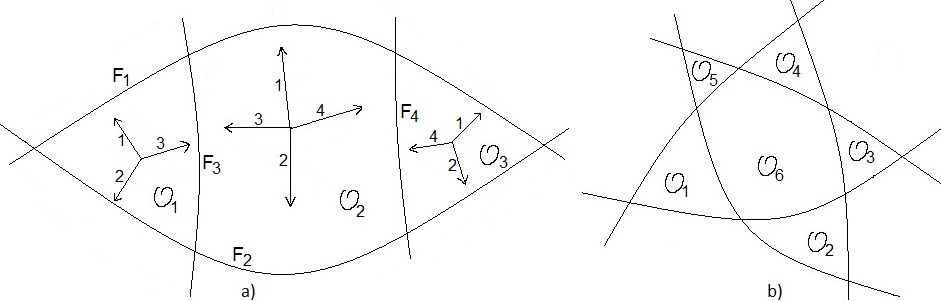}
\caption{Impossible configurations.}
\label{fig:3domain}
\end{center}
\end{figure}

\begin{proof}

Assume that there is a totally hyperbolic action which contains three domain $\cO_1, \cO_2,\cO_3$ as in 
Figure \ref{fig:3domain}a. 
Denote by $v_1,v_2,v_3,v_4$ the vectors associated to the curves $F_1,F_2,F_3,F_4$ respectively.
Since $v_1,v_2,v_3$ form the fan of $\cO_1$, we must have: $v_3 = \alpha v_1 + \beta v_2$ for some $\alpha, \beta  < 0$.
Similarly, looking at the fan of $\cO_3$, we have: $v_4 = \gamma  v_1 + \delta  v_2$ for some $\gamma, \delta < 0$.
But looking at the fan of $\cO_2$, we have that either $v_3$ or $v_4$ is a positive linear combination of $v_1$ and $v_2$.
This is contradiction.

The proof of impossibility of the configuration on Figure \ref{fig:3domain}b is similar.
\end{proof}
An interesting question of combinatorial nature is: what are the necessary and sufficient conditions for a graph on a surface $\Sigma$
to be the singular set of a totally hyperbolic action of $\bbR^2$ on $\Sigma$?
\subsection{Classification of totally hyperbolic actions}

For a nondegenerate totally hyperbolic action $\rho: \bbR^n \times M^n \to M^n$, we have the following
set of invariants:

I1) Smooth invariant hypersurfaces of $M^n$ which intersect transversally and which cut $M^n$
into a finite number of ``curved polytopes'', which are hyperbolic domains of the action.

I2) The family of fans: a fan for each domain. (Two fans of two adjacent domains will share a common
vector, which is the vector associated to the corresponding hypersurface).

I3) The monodromy.

The above set of invariants also completely determines $\rho$ up to isomorphisms:

\begin{thm}[Classification of totally hyperbolic actions] \label{class-hyp-invariants}
Nondegenerate totally hyperbolic actions of $\bbR^n$ on connected $n$-manifolds
(possibly with boundary and corners) are completely determined by their invariants I1, I2, I3 listed above.
In other words, assume that $(M_1^n, \rho_1)$ and $(M_2^n, \rho_2)$ are totally hyperbolic actions,
such that there is a homeomorphism $\varphi : M_1^n \to M_2^n$ which sends hyperbolic domains of $(M_1^n, \rho_1)$
to hyperbolic domains of $(M_2^n, \rho_2)$, such that the monodromy and the associated fans 
are preserved by $\varphi$, then there is a diffeomorphism $\Phi : M_1^n \to M_2^n$ which sends $\rho_1$ to $\rho_2$.
\end{thm}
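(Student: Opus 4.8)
The plan is to construct the intertwining diffeomorphism $\Phi$ by first pinning it down on a single hyperbolic domain via the fan classification theorem, then propagating it across the singular hypersurfaces with the reflection principle, using the hypothesis that $\varphi$ preserves the monodromy to guarantee that this propagation is globally single-valued.

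First I would choose a hyperbolic domain $\cO_1$ of $(M_1^n,\rho_1)$, a regular base point $z_1 \in \cO_1$, and set $z_2 = \varphi(z_1)$, which lies in the corresponding domain $\cO_2 = \varphi(\cO_1)$ of $(M_2^n,\rho_2)$. I define $\Phi$ on the orbit of $z_1$ by $\Phi(\rho_1(\theta,z_1)) = \rho_2(\theta,z_2)$ for all $\theta \in \bbR^n$; since a hyperbolic domain is a free orbit of type $\bbR^n$, this is well defined and intertwines the two actions on $\cO_1$. Because $\varphi$ identifies the fan associated to $\bar \cO_1$ with that associated to $\bar \cO_2$, part (2) of Theorem \ref{thm:classificationByfan} shows that $\Phi$ extends by the continuity principle to a diffeomorphism $\bar \cO_1 \to \bar \cO_2$ intertwining $\rho_1$ and $\rho_2$ on the closed domains, corners included.

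Next I would propagate $\Phi$ across the codimension-one orbits. Consider the graph $Skelet_1(M_1^n,\rho_1)$ whose vertices are the hyperbolic domains and whose edges are the corank-$1$ orbits; since $M_1^n$ is connected this graph is connected, and $\varphi$ carries it isomorphically onto $Skelet_1(M_2^n,\rho_2)$. If $\cO_1'$ is a domain adjacent to $\cO_1$ along a facet $F$ with associated vector $v$, let $\sigma_v^{(1)}$ and $\sigma_v^{(2)}$ be the reflections attached to $F$ and $\varphi(F)$ by Theorem \ref{thm:Reflection}. I extend $\Phi$ near $F$ by imposing $\Phi \circ \sigma_v^{(1)} = \sigma_v^{(2)} \circ \Phi$ and then over all of $\cO_1'$ by the orbit formula. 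The fact that the two adjacent fans share the vector $v$ makes these prescriptions compatible, and the semi-local normal form (Theorem \ref{thm:semi-localform2}) makes the result smooth across $F$ and along the lower strata. Choosing a spanning tree of the graph, I would thus extend $\Phi$ domain by domain along the tree with no consistency obstruction.

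The hard part will be well-definedness along the non-tree edges, i.e. around loops $\gamma$ in $Skelet_1$. Traversing $\gamma$ once through the reflections $\sigma_i$ carries the base orbit back to itself, but translated by exactly the monodromy element: in $M_1^n$ the accumulated discrepancy is $\rho_1(\mu_1(\gamma),\cdot)$ and in $M_2^n$ it is $\rho_2(\mu_2(\gamma),\cdot)$ (note that $Z_{\rho_i}=0$ since the toric degree is $0$, so each $\mu_i$ takes values in $\bbR^n$). Hence the two candidate values of $\Phi$ on the terminal domain differ by the translation $\rho_2(\mu_2(\gamma)-\mu_1(\gamma),\cdot)$. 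Since by hypothesis $\varphi$ preserves the monodromy, $\mu_1(\gamma)=\mu_2(\gamma)$ for every loop $\gamma$, so this discrepancy vanishes and the propagation is consistent. Finally I would verify that the glued map is a global diffeomorphism: surjectivity and injectivity follow from the fact that $\varphi$ matches the domains and their facets one-to-one, while smoothness is local and is supplied by Theorems \ref{thm:Reflection} and \ref{thm:semi-localform2}. This produces the desired $\Phi : M_1^n \to M_2^n$ intertwining $\rho_1$ and $\rho_2$, with the monodromy condition being the single essential ingredient ensuring global coherence.
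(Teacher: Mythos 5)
Your proposal is correct and follows essentially the same route as the paper's proof: fix a base point, intertwine the actions on one closed domain via Theorem \ref{thm:classificationByfan}, extend across corank-one orbits by the reflection principle, and use preservation of the monodromy to ensure the extension is single-valued around loops. You simply spell out (via the spanning tree of $Skelet_1$ and the explicit discrepancy computation) the consistency argument that the paper states more briefly.
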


\begin{proof}
 The action-preserving diffeomorphism $\Phi$ from $M_1^n$ to $M_2^n$ can be constructed as follows:
 
 i) Fix an arbitrary regular point $z_0 \in M_1^n$, and put 
 $$\Phi(z_0) = \varphi (z_0)$$
 
 ii) Extend $\Phi$ to $\cO_{z_0}$ by the formula $\Phi(\rho_1(v,z_0))= \rho_2(v,\Phi(z_0))$
 for any $v \in \bbR^n$, and then extend it to $\bar \cO_{z_0}$ by continuity. The fact that the fans
 associated to $\cO_{z_0}$ and $\cO_{\varphi (z_0)}$ are the same assures that 
$\Phi: \bar \cO_{z_0} \to \bar \cO_{\varphi (z_0)}$ is a diffeomorphism, according to 
Theorem \ref{thm:classificationByfan}.

iii) Extend $\Phi$ to the other domains of $M_1^n$ by the reflection principle. The fact that we have the same
fans on $M_2^n$ as on $M_1^n$, and also the same monodromy, assures that this extension is well-defined and smooth.
Thus we obtain the required smooth isomorphism $\Phi : (M_1^n,\rho_1) \to  (M_2^n,\rho_2)$.
\end{proof}

\section{Reduction by  associated torus action}
\label{section:QuotientSpace}

\subsection{Quotient space and reduced action}
We will denote by 
\begin{equation}
 t(\rho)
\end{equation}
the toric degree of a nondegenerate action $\rho: \bbR^n \times M^n \to M^n$,
and also
\begin{equation}
r(\rho) = n-  t(\rho).
\end{equation}
Recall that $(r(\rho),t(\rho))$ is also the RT-invariant of a regular orbit of $\rho$.

In this section, we will look into the structure of the associated torus action $\rho_\bbT$
of $\rho$, and the quotient space 
\begin{equation}
 Q = M^n/\rho_\bbT 
\end{equation}
of $M^n$ by $\rho_\bbT$. Recall that $\rho_\bbT$
is an action of the group $(Z_\rho \otimes \bbR)/Z_\rho \cong \bbT^{t(\rho)}$.

For each point $z \in M$, we will denote by
\begin{equation}
Z_{\rho_\bbT}(z) = \{\gamma \ |\ \rho_\bbT(\gamma,z) = z\} 
\end{equation}
the isotropy of $\rho_\bbT$ at $z$. In particular, if $z$ is a regular point of $(M,\rho)$,
then $Z_{\rho_\bbT}$ is trivial, according to the proof of Theorem \ref{thm:HERT-toricdegree}.
Denote by $(h,e,r,t)$ the HERT-invariant of $z$.

\begin{prop} \label{prop:describe-sing-Tk}
With the above notations, we have: \\
1) The isotropy group $Z_{\rho_\bbT}(z)$ is of the type
\begin{equation}
Z_{\rho_\bbT}(z) \cong \bbT^{e} \times G_z,
\end{equation}
where $G_z$ is the twisting group of $\rho$ at $z$ given by Definition \ref{defn:TwistingGroup}. \\
2) There is a $\rho_\bbT$-invariant neighborhood $\cU$ of $z$ in $M^n$
such that  $(\cU, \rho_\bbT)$ is isomorphic, 
after an identification of $(Z_\rho \otimes \bbR)/Z_\rho$ with $\bbT^{t(\rho)}$, 
to the following linear model:

i) $\cU \cong D_1^2 \times \hdots \times D_{e}^2\times (\bbT^{t}\times B^{h}/G_z) \times B^r$,
where $D_1^2, \hdots, D_{e}^2$ are 2-dimensional disks, $B^{h}$ and $B^r$ are balls of respective dimensions.

ii) The action of $\bbT^{t(\rho)} = \bbT_1^1 \times \hdots \times \bbT_{e}^1\times \bbT^{t}$ 
on $\widetilde \cU = D_1^2 \times \hdots \times D_{e}^2\times \bbT^{t(s)}\times B^{h} \times B^r$ 
is the direct product of the actions of  $ \bbT_1^1, \hdots, \bbT_{e}^1, \bbT^{t}$ on 
$\widetilde \cU$. Each action is diagonal, i.e. it acts simultaneously on each component of $\widetilde \cU$.

iii) $\bbT_i^1$ acts on $D_i^2$ by the standard rotation, and on the other components trivially.
$\bbT^{t}$ acts on $\bbT^{t}$ (itself) by translations, and on the other  
components trivially.

iv) $G_z$ also acts freely and diagonally on $\widetilde \cU$; its actions on $D_i^2$'s and on $B^r$
are trivial, its action on $\bbT^{t(z)}$ is by translations, and its action on $D^{h}$
is by the inclusion of $G_z$ into $(\bbZ_2)^{h}$ generated by the involutions 
$\sigma_i: (x_1,\hdots,x_i,\hdots,x_{h}) \mapsto (x_1,\hdots,-x_i,\hdots,x_{h}) $
on $D^{h}$. (The action of $\bbT^{t(\rho)}$ on $\widetilde \cU$ commutes with the action of 
$G_z$, so it projects to an action on $\cU$). \\
3) The quotient space $M^n/\rho_\bbT$ of $M^n$ by $\rho_\bbT$ is locally isomorphic to
\begin{equation}
 (D^2_1 / \bbT^1_1) \times \hdots \times (D^2_{e} / \bbT^1_{e}) \times (B^{h} / G_z) \times B^r.
\end{equation}

 \end{prop}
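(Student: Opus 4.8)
The plan is to obtain all three statements by unwinding the semi-local normal form of Theorem \ref{thm:semi-localform2} and then locating the associated torus action inside it. That theorem gives a neighborhood which, before the quotient, is $\widetilde \cU = B^{h} \times B^{2e} \times \bbT^t \times B^r$, with the generators $Y_i$ in the explicit hyperbolic/elbolic/toric/regular form and with $(\bbZ_2)^k$ acting by reflections on $B^h$, by translations on $\bbT^t$, trivially on $B^{2e}$ and $B^r$, where $k = \dim_{\bbZ} G_z$ and $G_z$ is identified with this group via formula (\ref{eqn:TwistingGroup}). The first task is to pin down $\rho_\bbT$ in this picture. Since by Theorem \ref{thm:HERT-toricdegree} the toric degree equals $e+t$, and since (consistent with Step 4 of the proof of that theorem) the periodic directions are exactly the $e$ elbolic rotations $Y_{h+2j}$ and the $t$ toric translations $Y_{h+2e+i}$, one checks that $Z_\rho \otimes \bbR$ is spanned by these, so $\bbT^{t(\rho)} = (Z_\rho \otimes \bbR)/Z_\rho$ splits as $\bbT^e \times \bbT^t$, the $j$-th circle $\bbT^1_j$ rotating the elbolic plane $D_j^2 = \{(x_{h+2j-1},x_{h+2j})\}$ and $\bbT^t$ translating the torus factor.

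For part 2), I would regroup $B^{2e} = D_1^2 \times \cdots \times D_e^2$ and observe that $(\bbZ_2)^k$ couples only the $B^h$ and $\bbT^t$ factors, so the model rewrites as $\cU \cong D_1^2 \times \cdots \times D_e^2 \times (\bbT^t \times B^h/G_z) \times B^r$ with the diagonal, component-wise action described in (iii)--(iv); this is a direct transcription of the normal form. For part 1), I would read off $Z_{\rho_\bbT}(z)$ from this model: each disk center is fixed by its rotation circle, giving the connected factor $\bbT^e$; a translation $\gamma \in \bbT^t$ fixes the class of $z$ in $\bbT^t \times B^h/G_z$ precisely when $\gamma$ equals the $\bbT^t$-translation component of some $g \in G_z$ (whose reflection on $B^h$ fixes the origin), so the contribution of $\bbT^t$ to the isotropy is a finite group isomorphic to $G_z$. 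Hence $Z_{\rho_\bbT}(z) \cong \bbT^e \times G_z$, which agrees with evaluating $(Z_\rho(z) \cap (Z_\rho \otimes \bbR))/Z_\rho$ from Definition \ref{defn:TwistingGroup}: its identity component is the elbolic $\bbT^e$ and its group of components is the finite twisting group $G_z$.

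For part 3), I would quotient the model of part 2) factor by factor: each $\bbT^1_j$ collapses $D_j^2$ to the half-line $D_j^2/\bbT^1_j$, the $\bbT^t$-translation collapses the torus factor to a point while leaving $B^h/G_z$ and $B^r$ untouched, yielding $\cU/\rho_\bbT \cong (D_1^2/\bbT_1^1) \times \cdots \times (D_e^2/\bbT_e^1) \times (B^h/G_z) \times B^r$. The step I expect to require the most care is the bookkeeping of $G_z$ where the $\bbT^t$- and $G_z$-actions are intertwined on $\bbT^t \times B^h$: one must verify that taking the $\bbT^t$-quotient first and then the residual $G_z$-quotient on $B^h$ reproduces the simultaneous quotient, and that the residual action is exactly the inclusion $G_z \hookrightarrow (\bbZ_2)^h$ by reflections, so that no elbolic or regular direction is accidentally folded in. Everything else is routine unwinding of the normal form, so the substance of the argument lies in setting up these identifications cleanly.
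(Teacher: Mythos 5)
Your proposal is correct and takes essentially the same route as the paper: the paper's own proof is a one-line appeal to the semi-local normal form theorem (Theorem \ref{thm:semi-localform2}) combined with the triviality of the $\rho_\bbT$-isotropy at regular points, which is precisely what you unwind in detail. Your extra care in identifying $\rho_\bbT$ inside the model (elbolic rotations plus toric translations spanning $Z_\rho\otimes\bbR$) and in checking that the $\bbT^t$-quotient followed by the residual $G_z$-quotient reproduces the simultaneous quotient simply makes explicit what the paper declares to follow ``immediately.''
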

 
 \begin{proof}
 The proof follows immediately from the semi-local normal form theorem (Theorem \ref{thm:semi-localform2}) 
 and  the fact that the isotropy of $\rho_\bbT$ at regular points of $(M, \rho)$ is trivial.
 \end{proof}
 
We can identify the quotient space $D^2_i / \bbT^1_i$ of each disk 
$D^2_i = \{(x_i,y_i) \in \bbR^2 \ |\ x_i^2 + y_i^2 < \epsilon^2 \}$ by the standard circle action with the
half-closed interval $\{(x_i,0) \in \bbR^2 \ |\ 0 \leq x_i < \epsilon \}$ via the intersection of this interval
with the orbits of the circle action. Then $(D^2_1/  \bbT^1_1) \times \hdots \times (D^2_e/\bbT^1_e)$
can be identified with the positive corner $\{(x_1\hdots,x_e) \in \bbR^e \ |\ 0 \leq x_i < \epsilon \ \forall \ i\}$.

When the twisting group $G_z$ is trivial, then the local model
$$(D^2_1 / \bbT^1_1) \times \hdots \times (D^2_e / \bbT^1_e) \times (B^h / G_z) \times B^r$$
of the quotient space $Q$ is a manifold with boundary and corners. When $G_z \neq 0$, then 
$B^{h(z)} / G_z$  is an orbifold (which can or cannot be viewed as a manifold with boundary and corners,
depending on how $G_z$ acts), and so the local model of the quotient space $Q$
is an orbifold with boundary and corners. 

\begin{example}
a) $h=2, G_z = (\bbZ_2)^2$ acting on $B^2$ with local coordinates $(y_1,y_2)$ by two
involutions $(y_1,y_2) \mapsto (-y_1,y_2)$ and $(y_1,y_2) \mapsto (y_1,-y_2)$.
Then $(B^2/G_z)$ is an orbifold which can also be viewed as a positive corner
$\{(y_1,y_2) \in \bbR^2 \ |\ y_1 \geq 0, y_2 \geq 0\}$.

b) $h=3, G_z = \bbZ_2$ acting on $B^3(y_1,y_2,y_3)$ by the involution $(y_1,y_2,y_3) \mapsto (-y_1,-y_2,-y_3)$.
Then $B^3/G_z$ is an orbifold which cannot be viewed as a manifold with boundary and corners. 
\end{example}
To summarize, we have:

\begin{prop}
The quotient space $Q  = M^n / \rho_\bbT$ is (homeomorphic to) an orbifold with boundary and corners. If
the twisting group $G_z$ is trivial for every point $z \in M^n$, then $Q$ is a manifold with boundary and corners.
\end{prop}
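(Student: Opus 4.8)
The plan is to derive this proposition directly from the local description of the quotient in part 3) of Proposition \ref{prop:describe-sing-Tk}, and then to check that the resulting local charts assemble into a single global structure. First I would fix an arbitrary point $z \in M^n$ with HERT-invariant $(h,e,r,t)$ and invoke Proposition \ref{prop:describe-sing-Tk}, which supplies a $\rho_\bbT$-saturated neighborhood whose image in $Q$ is isomorphic to
$$(D^2_1 / \bbT^1_1) \times \cdots \times (D^2_{e} / \bbT^1_{e}) \times (B^{h} / G_z) \times B^r.$$
In this way the whole question reduces to understanding the three types of factors appearing here separately, and this local analysis is what makes up the bulk of the proof.

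Next I would identify the local type of each factor. For the elbolic factors, the standard circle action on a $2$-disk has quotient $D^2_i/\bbT^1_i$ homeomorphic to a half-open interval $[0,\epsilon)$, with the radial coordinate serving as the chart; hence the product of the $e$ elbolic factors is homeomorphic to the corner $\{(x_1,\ldots,x_e)\ |\ 0 \le x_i < \epsilon\}$, contributing a boundary corner of codimension $e$. The factor $B^r$ is an open ball and contributes interior directions only. Thus, away from the hyperbolic directions, $Q$ is already modelled on a manifold with boundary and corners.

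It then remains to analyze $B^h/G_z$, where by part iv) of Proposition \ref{prop:describe-sing-Tk} the group $G_z$ is embedded in $(\bbZ_2)^h$ and acts linearly on $B^h$ by sign changes of the coordinates. When $G_z$ is trivial this factor is simply the ball $B^h$, so the full local model is $[0,\epsilon)^e \times B^h \times B^r$, a manifold with boundary and corners; since this holds at every $z$, the space $Q$ is globally a manifold with boundary and corners, which is the second assertion. When $G_z$ is nontrivial, $B^h/G_z$ is the quotient of Euclidean space by a finite linear group, which is by definition an orbifold chart, and in general a non-removable one (for instance the quotient $B^3/\bbZ_2$ by the antipodal-type involution exhibited in the example above is a genuine orbifold singularity). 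Hence the local model $[0,\epsilon)^e \times (B^h/G_z) \times B^r$ is an orbifold with boundary and corners, which gives the first assertion pointwise.

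Finally I would assemble these local pieces into a global orbifold-with-corners structure. The transition maps between two overlapping charts are induced by the $\rho_\bbT$-equivariant local isomorphisms of the semi-local normal form (Theorem \ref{thm:semi-localform2}), which descend to the quotient and therefore carry uniformizing charts to uniformizing charts; this is precisely what guarantees that the atlas of local quotients defines a single orbifold structure and that the corner strata, given by the vanishing loci of the $x_i$, are globally well-defined. I expect the main obstacle to be exactly this last bookkeeping step: verifying that the finite-group uniformizers and the corner charts are mutually compatible on overlaps, and that the orbifold structure and the manifold-with-corners structure interact consistently where both kinds of strata meet (for example near a point at which simultaneously $e>0$ and $G_z \neq 0$). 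Once chart compatibility is established, both assertions follow.
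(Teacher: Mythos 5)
Your proposal is correct and follows essentially the same route as the paper: the paper also reduces everything to the local model of Proposition \ref{prop:describe-sing-Tk} (itself a consequence of the semi-local normal form theorem), identifies the elbolic factors $(D^2_1/\bbT^1_1)\times\cdots\times(D^2_e/\bbT^1_e)$ with a positive corner, and then splits into the two cases $G_z$ trivial (manifold chart) versus $G_z$ nontrivial (finite linear quotient, i.e.\ orbifold chart), exactly as you do. The only difference is that you spell out the chart-compatibility bookkeeping on overlaps, which the paper leaves implicit, treating the proposition as a summary of the preceding local discussion.
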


Remark that the action $\rho: \bbR^n \times M^n \to M^n$ naturally projects down to an action of
$\bbR^n / (Z_\rho \otimes \bbR) \cong \bbR^{r(\rho)}$  on the quotient space $Q$, which we will denote by $\rho_\bbR:$
\begin{equation}
 \rho_\bbR: \bbR^{r(\rho)} \times Q \to Q
\end{equation}
after an identification of $\bbR^n / (Z_\rho \otimes \bbR)$ with $\bbR^{r(\rho)}$ and call it the 
{\bf reduced action} of $\rho$. Note that the dimension of $Q$ is also
equal to $r(\rho) = n - t(\rho)$.

\begin{prop} \label{prop:NoTwisting}
If there is no twisting in $(M^n,\rho)$, i.e. all twisting groups arr trivial, 
then $Q$ is a manifold with boundary and corners, and
the induced action $ \rho_\bbR$ on $Q$ is nondegenerate totally hyperbolic.
\end{prop}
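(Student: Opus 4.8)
The plan is to deduce both assertions from the explicit equivariant local model of Proposition~\ref{prop:describe-sing-Tk}, since the statement is local over the quotient map $M^n\to Q$. The first half is essentially already in hand: when every twisting group $G_z$ is trivial, Proposition~\ref{prop:describe-sing-Tk}(3) gives the local model of $Q$ near the image of a point $z$ of HERT-invariant $(h,e,r,t)$ as $(D^2_1/\bbT^1_1)\times\cdots\times(D^2_e/\bbT^1_e)\times B^h\times B^r$, and identifying each $D^2_j/\bbT^1_j$ with a half-closed interval $[0,\epsilon)$ by its radial coordinate $u_j$ produces a chart of the form $[0,\epsilon)^e\times\bbR^h\times\bbR^r$. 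Thus $Q$ is a manifold with boundary and corners of dimension $h+e+r=n-(e+t)=n-t(\rho)$, as it should be. All that remains is to identify the reduced action $\rho_\bbR$ of $\bbR^n/(Z_\rho\otimes\bbR)\cong\bbR^{r(\rho)}$ in these charts.

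Next I would read off $\rho_\bbR$ directly from the semi-local normal form (Theorem~\ref{thm:semi-localform2}). The generators lying in $Z_\rho\otimes\bbR$ are exactly the periodic ones generating $\rho_\bbT$, namely the rotational elbolic fields $Y_{h+2j}$ and the toric fields $Y_{h+2e+i}$; these project to $0$. The remaining generators project as follows. The hyperbolic fields $Y_i=x_i\tfrac{\partial}{\partial x_i}$ and the regular fields $Y_{h+2e+t+i}=\tfrac{\partial}{\partial\zeta_i}$ involve only the $\rho_\bbT$-invariant coordinates $x_i,\zeta_i$, hence descend unchanged. The radial elbolic field $Y_{h+2j-1}=x_{h+2j-1}\tfrac{\partial}{\partial x_{h+2j-1}}+x_{h+2j}\tfrac{\partial}{\partial x_{h+2j}}$ equals $u_j\tfrac{\partial}{\partial u_j}$ in polar coordinates on $D^2_j$, so it descends to $u_j\tfrac{\partial}{\partial u_j}$ on the quotient ray. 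Hence in the chart $(x_1,\dots,x_h,u_1,\dots,u_e,\zeta_1,\dots,\zeta_r)$ the action $\rho_\bbR$ is generated by the fields $x_i\tfrac{\partial}{\partial x_i}$ $(i\le h)$, $u_j\tfrac{\partial}{\partial u_j}$ $(j\le e)$ and $\tfrac{\partial}{\partial\zeta_i}$ $(i\le r)$: precisely the hyperbolic normal form, with $h+e$ hyperbolic components and no elbolic component.

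From this the conclusion follows. Nondegeneracy holds because every singular point of $\rho_\bbR$ wears the transversally hyperbolic model just displayed. Total hyperbolicity I would verify by observing that a regular orbit of $\rho$, of HERT-invariant $(0,0,r(\rho),t(\rho))$, projects to a free orbit of $\rho_\bbR$ of type $\bbR^{r(\rho)}$; since these fill a dense open subset of $Q$ and are non-compact, the isotropy group $Z_{\rho_\bbR}\subset\bbR^{r(\rho)}$ must have rank $0$, so $\rho_\bbR$ has toric degree $0$.

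The one genuinely delicate point, and the main obstacle, is the meaning of ``nondegenerate totally hyperbolic'' here: the elbolic directions of $\rho$ have become \emph{boundary} hyperbolic walls of $Q$ (the coordinate $u_j$ runs only over $[0,\epsilon)$), so the statement must be read in the category of manifolds with boundary and corners, exactly as in the classification of Section~\ref{section:hyperbolic} (Theorem~\ref{class-hyp-invariants}). I would settle this by a reflection/doubling argument: doubling across each elbolic face replaces $u_j\tfrac{\partial}{\partial u_j}$ on $[0,\epsilon)$ by $x\tfrac{\partial}{\partial x}$ on $(-\epsilon,\epsilon)$, turning the local model into the genuine boundaryless hyperbolic normal form of Theorem~\ref{thm:NormalForm}. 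This confirms that the faces coming from elbolic components are honest hyperbolic singularities to which the nondegeneracy notion applies verbatim, completing the proof.
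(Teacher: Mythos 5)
Your proposal is correct and follows essentially the same route as the paper, whose entire proof of Proposition \ref{prop:NoTwisting} is the remark that it is an immediate consequence of the semi-local normal form theorem (Theorem \ref{thm:semi-localform2}, via the local models of Proposition \ref{prop:describe-sing-Tk} with trivial $G_z$). You have simply written out the details that the paper leaves implicit: identifying which generators span $Z_\rho\otimes\bbR$, computing the descended fields $x_i\frac{\partial}{\partial x_i}$, $u_j\frac{\partial}{\partial u_j}$, $\frac{\partial}{\partial\zeta_i}$ in the quotient charts, and handling the boundary faces by doubling, all of which is consistent with the paper's conventions.
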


\begin{proof}
 The proof is also an immediate consequence of the semi-local normal form theorem.
\end{proof}

When there is twisting, $Q$ is only an orbifold, but we still want to say that the action $\rho_\bbR$ is nondegenerate
totally hyperbolic. So we have to generalize the notion of totally hyperbolic actions to orbifolds.

\begin{defn}  \label{defn:hyperbolic_orbifold}
 Let $Q$ be an orbifold which can be modeled as $Q = \widetilde{Q}/G$ where $\widetilde{Q}$ is a manifold with boundary and corners,
and $G$ is a discrete group which acts properly on $\widetilde{Q}$ in such a way that the isotropy group of the action at every point is finite.
Then an action $\rho$ of $\bbR^r$ on $Q$, where $r$ is the dimension of $Q$, 
will be called {\bf totally hyperbolic} if it can be lifted to a nondegenerate totally hyperbolic action $\widetilde{\rho}$
of $\bbR^r$ on $\widetilde{Q}$.
\end{defn}

\begin{thm}[Reduction to totally hyperbolic action] \label{thm:hypAction-quotientSpace}
Let $\rho: \bbR^n \times M^n \to M^n$ be a nondegenerate action of toric degree $t(p)$
on a connected manifold $M^n$, and put $r = r(\rho) = n - t(\rho)$. Then the quotient
space $Q = M^n/\rho_\bbT$ of $M^n$ by the associated  torus action $\rho_\bbT$ is an
orbifold of dimension $r$, and the reduced action $\rho_\bbR$ of $\bbR^n/(Z_\rho \otimes \bbR) \cong \bbR^r$
on $Q$ is totally hyperbolic.
\end{thm}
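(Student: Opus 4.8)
The plan is to split the statement into the dimension-and-orbifold part, which is essentially bookkeeping from results already in hand, and the genuinely substantive claim that the reduced action $\rho_\bbR$ is totally hyperbolic. For the first part I would read off the local picture from Proposition \ref{prop:describe-sing-Tk}(3): a neighborhood of a point $z$ with HERT-invariant $(h,e,r',t)$ (I write $r'$ for the orbit parameter, to avoid clashing with $r=r(\rho)$) projects to the quotient model $(D^2_1/\bbT^1_1)\times\cdots\times(D^2_e/\bbT^1_e)\times(B^{h}/G_z)\times B^{r'}$, which has dimension $e+h+r'$. Combining the toric-degree formula $t(\rho)=e+t$ (Theorem \ref{thm:HERT-toricdegree}) with the total-dimension identity $n=h+2e+r'+t$ gives $e+h+r'=n-(e+t)=n-t(\rho)=r$, so every local model, and hence $Q$, has pure dimension $r$. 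That these local models glue into an orbifold with boundary and corners is precisely the content of the proposition stated just before Definition \ref{defn:hyperbolic_orbifold}, so nothing new is needed there.

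The real work is verifying the hypotheses of Definition \ref{defn:hyperbolic_orbifold}, and the approach is to remove the twisting by passing to a covering. Since every twisting group $G_q$ (Definition \ref{defn:TwistingGroup}) is contained in the monodromy image $\mathrm{Im}(\mu)$ by Theorem \ref{thm:TwistingMonodromy}, and since the twisting elements are all $2$-torsion lying in $(Z_\rho\otimes\bbR)/Z_\rho\cong\bbT^{t(\rho)}$, they generate a \emph{finite} $2$-group inside $\mathrm{Im}(\mu)$. I would therefore choose a covering $\pi\colon\widetilde M\to M$ that trivializes exactly this twisting while keeping the deck group $G$ finite: writing the finitely generated group $\mathrm{Im}(\mu)$ as (torsion)$\oplus$(free part) and taking the sub-covering whose fundamental group is $\mu^{-1}$ of the free part, no twisting loop lifts to a loop, so the lifted action $\widetilde\rho$ has no twisting, and $G$ is the finite torsion group. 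By Theorem \ref{thm:lifting-monodromy} the toric degree and isotropy are unchanged, $Z_{\widetilde\rho}=Z_\rho$, and by Proposition \ref{prop:NoTwisting} the quotient $\widetilde Q=\widetilde M/\widetilde\rho_\bbT$ is a genuine manifold with boundary and corners carrying a nondegenerate totally hyperbolic reduced action $\widetilde\rho_\bbR$ of $\bbR^n/(Z_\rho\otimes\bbR)\cong\bbR^r$.

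It then remains to identify $Q$ with $\widetilde Q/G$ and confirm the orbifold conditions. The deck transformations act on $\widetilde M$ freely and properly discontinuously and commute with $\widetilde\rho$, hence with $\widetilde\rho_\bbT$; thus they descend to a $G$-action on $\widetilde Q$ commuting with $\widetilde\rho_\bbR$, with $\widetilde Q/G=M/\rho_\bbT=Q$ and $\widetilde\rho_\bbR$ descending exactly to $\rho_\bbR$. Because $G$ is finite and $\widetilde Q$ is Hausdorff, the action is automatically proper with finite stabilizers, so Definition \ref{defn:hyperbolic_orbifold} is satisfied and $\rho_\bbR$ is totally hyperbolic. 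I expect the main obstacle to be the covering construction itself: one must choose $\pi$ so that it trivializes the twisting yet has finite deck group (this is where keeping $G$ finite rather than killing the entire monodromy matters, since the full monodromy cover could produce a non-discrete stabilizer in the torus directions), and then check, using the linear model of Proposition \ref{prop:describe-sing-Tk} and the semi-local normal form Theorem \ref{thm:semi-localform2}, that the twisting really does vanish upstairs and that the reduction commutes with both the $G$-action and the torus action. Everything else is formal.
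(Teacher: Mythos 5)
Your proposal is correct, and its skeleton is the same as the paper's: pass to a covering of $(M^n,\rho)$ on which the twisting disappears, apply Proposition \ref{prop:NoTwisting} upstairs, and descend to $Q$ via Definition \ref{defn:hyperbolic_orbifold}. The only real divergence is the choice of covering, and it is worth comparing the two. The paper uses the covering with $\pi_1(\widetilde{M^n})=\ker\mu$ from Theorem \ref{thm:lifting-monodromy}, which kills the \emph{whole} monodromy; its deck group is $\mathrm{Im}(\mu)$, which may be infinite, and the paper leaves implicit the check that this discrete group acts properly with finite isotropy on $\widetilde{Q}$. That check does go through: since $\bbT^{t(\rho)}$ is compact, the projection $\widetilde{M^n}\to\widetilde{Q}$ is proper, so the properly discontinuous deck action on $\widetilde{M^n}$ descends to a proper action on $\widetilde{Q}$ whose stabilizers inject into (finite) fibers of coverings of compact torus orbits. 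So your stated reason for avoiding the full monodromy cover --- a possible ``non-discrete stabilizer in the torus directions'' --- is in fact unfounded, although the caution points at a verification the paper glosses over. Your intermediate covering $\pi_1=\mu^{-1}(F)$, with $F$ the free part of $\mathrm{Im}(\mu)$, buys a \emph{finite} deck group $G$, so the properness and finite-isotropy requirements of Definition \ref{defn:hyperbolic_orbifold} become automatic; and your key step is sound, since the lifted monodromy has image $F$, which is torsion-free, while twisting elements are $2$-torsion, so Theorem \ref{thm:TwistingMonodromy} applied upstairs kills all twisting groups. The costs are two: (i) Theorem \ref{thm:lifting-monodromy} is stated only for the $\ker\mu$-cover, so you must re-derive $Z_{\widetilde\rho}=Z_\rho$ for yours --- easy, because $Im(Z_\rho)\subseteq\ker\mu\subseteq\mu^{-1}(F)$, so the loops generated by $Z_\rho$ still lift to loops (note also that $\mu^{-1}(F)$ is normal in $\pi_1(M^n)$ since it contains $[\pi_1,\pi_1]$, so ``deck group'' makes sense); and (ii) the splitting of $\mathrm{Im}(\mu)$ into a finite torsion part plus a free part needs $\mathrm{Im}(\mu)$ finitely generated, which is automatic for compact $M^n$ but not for the open manifolds the theorem also covers, whereas the paper's full cover requires no such hypothesis. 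In short: both routes are valid; yours is more self-contained on the orbifold side, the paper's is more economical and slightly more general on the topological side.
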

\begin{proof}
It is a combination of Theorem \ref{thm:lifting-monodromy} (Existence of a covering $\widetilde{M^n}, \widetilde{\rho}$ 
of $M^n, \rho$ such that $Z_{\widetilde{\rho}} = Z_\rho$ but the monodromy $\widetilde{\rho}$  is trivial),
Theorem \ref{thm:TwistingMonodromy} (which says that if the monodromy is trivial then there is no twisting), and 
Proposition \ref{prop:NoTwisting} (which says that the assertion of the theorem is true when there is no twisting).
 \end{proof}

Even though $Q = M^n / \rho_\bbT$ is just an orbifold in general, we can still define the monodromy map
$\mu_{\rho_{\bbR}} : H_1(Q,\bbZ) \to \bbR^n/ (Z_\rho \otimes \bbR) \cong \bbR^{n-t(\rho)}$ of the action $\rho_\bbR$
on $Q$, just like the case of actions on manifolds.
\begin{prop}
 We have the following natural commutative diagramme of monodromy maps
\begin{equation}\label{eq:diag-monodromy}
\xymatrix{ 
H_1(M^n,\bbZ) \ar^{\mu_{\rho}}[r]\ar^{proj.}[d]& \bbR^n/Z_\rho \ar^{proj.}[d]\\
H_1(Q,\bbZ) \ar^{\mu_{\rho_{\bbR}}}[r]        &    \bbR^n/ (Z_\rho \otimes \bbR)\\
} 
\end{equation}
where $proj.$ denotes the natural projection maps.
\end{prop}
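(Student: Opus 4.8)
The plan is to verify commutativity directly on a loop representative, using that the projection $p : M^n \to Q = M^n/\rho_\bbT$ intertwines $\rho$ with $\rho_\bbR$ and carries the reflection data defining $\mu_\rho$ onto the reflection data defining $\mu_{\rho_\bbR}$. Write $\bar p : \bbR^n/Z_\rho \to \bbR^n/(Z_\rho\otimes\bbR)$ for the right-hand projection and $p_* : H_1(M^n,\bbZ) \to H_1(Q,\bbZ)$ for the left-hand one; the claim is the identity $\mu_{\rho_\bbR}\circ p_* = \bar p\circ\mu_\rho$. Fix a class $\alpha \in H_1(M^n,\bbZ)$ and, following Theorem \ref{thm:defn-monodromy}, represent it by a loop $\gamma$ based at a regular point $z_0$ which meets the corank-$1$ singular orbits of $\rho$ transversally and avoids all orbits of dimension $\le n-2$. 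First I would arrange, by a further small perturbation not changing $[\gamma]$, that the projected loop $\bar\gamma = p\circ\gamma$ is likewise in general position with respect to the singular strata of the reduced totally hyperbolic action $\rho_\bbR$ on $Q$; this is legitimate because $p$ maps regular orbits of $\rho$ (type $\bbR^{r}\times\bbT^{t(\rho)}$) to regular points of $\rho_\bbR$, and maps the corank-$1$ hyperbolic orbits of $\rho$ (HE-invariant $(1,0)$, image of dimension $r(\rho)-1$) onto the codimension-$1$ singular strata of $\rho_\bbR$.

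The key step is the descent of reflections. Let $p_1,\dots,p_m$ be the corank-$1$ points crossed by $\gamma$, each transversally hyperbolic, with associated reflection $\sigma_i$ as in Theorem \ref{thm:Reflection}. Since $\sigma_i$ is an automorphism of $\rho$, it commutes with the subaction $\rho_\bbT$, and therefore descends to a self-map $\bar\sigma_i$ of a neighborhood of $p(\cN_{v_i})$ in $Q$; as $\bar\sigma_i$ preserves $\rho_\bbR$ and fixes the image stratum, uniqueness in Theorem \ref{thm:Reflection} identifies it with the reflection of $\rho_\bbR$ used in the definition of $\mu_{\rho_\bbR}$. Setting $z_1=\sigma_1(z_0),\dots,z_m=\sigma_m(z_{m-1})$ and $\bar z_i = p(z_i)$, the relations $\bar z_i=\bar\sigma_i(\bar z_{i-1})$ show that $\bar z_0,\dots,\bar z_m$ is exactly the sequence computing $\mu_{\rho_\bbR}(p_*\alpha)$ along $\bar\gamma$, so
\begin{equation}
\bar z_m = \rho_\bbR\big(\mu_{\rho_\bbR}(p_*\alpha),\bar z_0\big).
\end{equation}
On the other hand, by definition $z_m=\rho(\mu_\rho(\alpha),z_0)$, and since $p(\rho(w,\cdot))=\rho_\bbR(\bar p(w),p(\cdot))$ for every $w\in\bbR^n/Z_\rho$, applying $p$ yields
\begin{equation}
\bar z_m = \rho_\bbR\big(\bar p(\mu_\rho(\alpha)),\bar z_0\big).
\end{equation}
Because $z_0$ is regular and $\rho_\bbR$ is totally hyperbolic, $\bar z_0$ lies on a regular orbit of type $\bbR^{r(\rho)}$ on which $\rho_\bbR$ acts freely; comparing the two displays then forces $\mu_{\rho_\bbR}(p_*\alpha)=\bar p(\mu_\rho(\alpha))$, which is the desired commutativity.

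The main obstacle is the twisting/orbifold subtlety: when $\rho$ has twistings, $Q$ is only an orbifold and an individual $\sigma_i$ need not descend to an honest involution of $Q$ but only to its smooth manifold cover $\widetilde Q$ (cf.\ Definition \ref{defn:hyperbolic_orbifold}), on which $\mu_{\rho_\bbR}$ is in fact defined. I would handle this by carrying out the entire computation on $\widetilde Q$ --- equivalently, after Theorem \ref{thm:lifting-monodromy}, on the cover $\widetilde{M^n}$ where the monodromy is trivialized and no twisting occurs --- where the descent of reflections is unproblematic, and then pushing the resulting identity of monodromy values back down; the values live in $\bbR^n/(Z_\rho\otimes\bbR)$, which is unchanged by the covering since $Z_{\widetilde\rho}=Z_\rho$. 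The remaining points, namely the genericity of $\bar\gamma$ and the intertwining identity for $p$, are routine consequences of the semi-local normal form (Theorem \ref{thm:semi-localform2}).
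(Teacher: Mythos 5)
Your core argument is the right one and is, in substance, exactly what the paper's one-line proof (``follows directly from the definition of monodromy'') intends: represent a class by a loop in general position, observe that each reflection $\sigma_i$ commutes with the subaction $\rho_\bbT$ and hence descends to the corresponding reflection of the reduced action (unique by Theorem \ref{thm:Reflection}), use the intertwining relation $p(\rho(w,\cdot))=\rho_\bbR(\bar p(w),p(\cdot))$, and conclude from freeness of $\rho_\bbR$ on a regular orbit of $Q$. In the untwisted case your proof is complete and correct.

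The flaw is in your treatment of the twisted case. Reducing to the covering $\widetilde{M^n}$ of Theorem \ref{thm:lifting-monodromy} cannot work as stated: that covering corresponds to $\ker\mu\subset\pi_1(M^n)$, so a loop $\gamma$ with $\mu_\rho([\gamma])\neq 0$ --- precisely the classes for which the proposition has content --- lifts only to a path, not a loop, in $\widetilde{M^n}$; the ``identity of monodromy values'' you would establish on the cover is just $0=0$, and nothing about such $\gamma$ can be pushed back down (one would instead need a deck-transformation argument, which is a genuinely different proof). The correct repair is simpler and stays downstairs: if $\gamma$ crosses a twisted corank-$1$ hyperbolic orbit, the local complement of that orbit in $M^n$ is \emph{connected}, since the twist identifies the two sides through the torus direction (locally $M^n\cong((B^1\times\bbT^t)/\bbZ_2)\times B^r$ with $\bbZ_2$ acting by reflection on $B^1$ and by a half-period translation on $\bbT^t$); hence each such crossing can be removed by a homotopy of $\gamma$. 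The resulting loop represents the same class, meets only non-twisted corank-$1$ orbits, and its projection avoids $\partial Q$ and the orbifold locus of $Q$, so your untwisted argument applies verbatim. This also exposes the actual content of commutativity at twisted orbits: the detour inserted by the homotopy contributes a twisting element to $\mu_\rho$, which lies in $(Z_\rho\otimes\bbR)/Z_\rho$ and is therefore killed by the right-hand projection $\bar p$, consistently with the fact that the projected loop merely bounces off a boundary stratum of $Q$ and contributes nothing to $\mu_{\rho_\bbR}$.
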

\begin{proof}
The proof follows directly from the definition of monodromy. 
\end{proof}

\subsection{Existence of cross multi-sections for  $M^n \to Q$}
Assume for the moment that $(M^n, \rho)$ has no twistings. In this case, $Q$ is a manifold with boundary and corners,
and one can talk about cross sections of the singular torus fibration $M^n \overset{\bbT^{t(\rho)}}{\longrightarrow} Q = M^n/\rho_\bbT$
over $Q$. We will say that an embeded submanifold with boundary and corners $Q_c \subset M^n$ is a smooth {\bf cross section} of the 
singular fibration $M^n \to Q$ if the projection map $proj. : Q_c \to Q$ is a diffeomorphism. The existence of a cross section
is equivalent to the fact that the \emph{desingularization via blowing up} of $M^n \to Q$ is a trivial principal $\bbT^{t(\rho)}$-bundle.
(The blowing up process here does not change the quotient space of the action $\rho_\bbT$ on $M^n$, but changes every singular orbit 
of $\rho_\bbT$ into a regular orbit, and changes $M^n$ into a manifold with boundary and corners, see Figure \ref{desingularization} 
for an illustration. This blow-up process is a standard one, and it was used for example, by Dufour and Molino \cite{DufourMolino-AA}, in the construction
of action-angle variables near elliptic  singularities of integrable Hamiltonian systems).

\begin{figure}[htb] 
\begin{center}
\includegraphics[width=80mm]{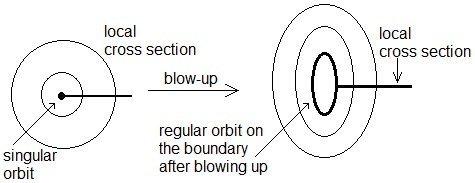}
\caption{Desingularization of $M^n \to Q$ by blowing up.}
\label{desingularization}
\end{center}
\end{figure}

\begin{prop}\label{prop:section}
Assume that $(M^n, \rho)$ has no twistings. Then the singular torus fibration $M^n \to M^n/\rho_\bbT = Q$ admits a smooth cross section $Q_c$.
\end{prop}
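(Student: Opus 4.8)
The plan is to build $Q_c$ directly inside $M^n$ as a submanifold meeting every $\rho_\bbT$-orbit exactly once: first over a single hyperbolic domain of the reduced action by flowing the $\bbR^n$-action, and then propagating it across the singular hypersurfaces by means of the reflection principle (Theorem \ref{thm:Reflection}). By Proposition \ref{prop:NoTwisting} the absence of twisting guarantees that $Q$ is a genuine manifold with boundary and corners and that the reduced action $\rho_\bbR$ is totally hyperbolic, so that $Q$ is tiled by the closures of the hyperbolic domains of $\rho_\bbR$, each of which is contractible by Theorem \ref{thm:hyperbolic-contractible}.

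\emph{Construction over one domain.} First I would fix a regular point $z_0 \in M^n$ and a linear complement $W$ to $Z_\rho\otimes\bbR$ in $\bbR^n$, so that $\bbR^n = (Z_\rho\otimes\bbR)\oplus W$ with $\dim W = r(\rho)$. Over the hyperbolic domain $\cO_0 = \mathrm{proj}(\cO_{z_0})\subset Q$ I set $Q_c := \{\rho(w,z_0)\ |\ w\in W\}$. Since $z_0$ is regular we have $Z_\rho(z_0)=Z_\rho$, hence $\cO_{z_0}\cong \bbR^n/Z_\rho$ and $W$ maps isomorphically onto the $\bbR^{r(\rho)}$-factor; thus $Q_c$ meets each $\rho_\bbT$-fiber over $\cO_0$ exactly once and projects diffeomorphically onto $\cO_0$. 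Lemma \ref{lemma:converge} shows that the flow-lines $t\mapsto \rho(-tw,z_0)$ converge as one approaches $\partial\cO_0$, so $Q_c$ extends continuously to $\bar\cO_0$; comparing with the local model of Proposition \ref{prop:describe-sing-Tk} (where $G_z$ is trivial, so the model is a genuine product) identifies this extension with the standard product section --- the ray $\{y_i=0,\ x_i\ge 0\}$ in each elbolic disk times a constant torus coordinate times $B^h\times B^r$ --- and therefore shows it is smooth up to the boundary and corners.

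\emph{Propagation and gluing.} Next I would extend $Q_c$ across each codimension-one singular hypersurface $\cN_v$ using its involution $\sigma_v$ from Theorem \ref{thm:Reflection}. Because $\sigma_v$ preserves $\rho$, it preserves the subaction $\rho_\bbT$ and therefore covers the identity of $Q$ on a neighborhood of $\mathrm{proj}(\cN_v)$; it maps the germ of $Q_c$ on one side of $\cN_v$ to a germ on the other side which again meets each fiber exactly once. Iterating across the finitely many hyperbolic domains, whose closures cover $Q$, propagates $Q_c$ over all of $Q$.

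\emph{Consistency, the main point.} The only thing to check --- and the main obstacle --- is that this reflection-transport is independent of the chain of domains chosen, i.e. that the locally defined pieces glue into a single-valued smooth section. Exactly as in the homotopy-invariance argument of Theorem \ref{thm:defn-monodromy}, this reduces to a local relation around each codimension-two stratum: at a transversally hyperbolic corank-two point the four adjacent reflections satisfy $(\sigma_1\sigma_2)^2=\mathrm{Id}$ by commutativity, where a hypersurface caps off one has $\sigma_v^2=\mathrm{Id}$, and a transversally elbolic corank-two stratum imposes no condition because the disk direction is filled in. The no-twisting hypothesis is precisely what makes every $\sigma_v$ an honest involution defined on a \emph{full} neighborhood of $\cN_v$, with no $(\bbZ_2)^k$-twisting ambiguity (compare Theorem \ref{thm:LocalAutomorphism} and Definition \ref{defn:TwistingGroup}); hence all these relations hold, the primary (codimension-two) obstruction to gluing vanishes, and the pieces assemble into a globally well-defined smooth embedded cross section $Q_c$ of $M^n\to Q$.
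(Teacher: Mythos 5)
Your proposal breaks down at the step you treat as routine: the assertion that the closure of $Q_c=\{\rho(w,z_0)\ |\ w\in W\}$, for a fixed linear complement $W$ of $Z_\rho\otimes\bbR$, is smooth up to the boundary because it "agrees with the standard product section" in the local models. This is false in general, for two reasons. First, Lemma \ref{lemma:converge} is a statement about totally hyperbolic actions and does not apply to $\rho$ on $M^n$: at a given singular stratum, the unique direction in $W$ that projects to the inward radial direction of that stratum decomposes as (radial associated vector) $+$ (component in $Z_\rho\otimes\bbR$), and if that second component has a nonzero part along the torus directions that survive on the stratum, the flow $\rho(-tw,z_0)$ does not converge at all --- it winds around a subtorus forever, so $\bar Q_c$ meets the fibers over that stratum in whole subtori and is not even a continuous section. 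Second, even when the flow converges, e.g.\ near an elbolic stratum with vector couple $(v_1,\pm v_2)$, $v_2\in Z_\rho$, a direction of the form $v_1+\beta v_2$ with $\beta\neq0$ produces a logarithmic spiral in the local disk $D^2$; its closure is a topological but not $C^1$ submanifold at the stratum. Since the associated vectors of all strata adjacent to a domain cannot in general be absorbed into a single $r(\rho)$-dimensional complement $W$, no choice of $W$ repairs this. This is exactly why the paper's proof has two stages: it first builds a section that is only \emph{continuous}, by flowing along the stratum-adapted associated vectors cone by cone (which still creates corners at cone interfaces), and then converts continuity into smoothness by a bundle-theoretic argument --- a continuous cross section forces the blow-up desingularization $\check{M^n}\to Q$, a principal $\bbT^{t(\rho)}$-bundle, to be trivial, and a trivial smooth principal bundle admits a smooth section, which yields the smooth $Q_c$. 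Your argument contains no substitute for this smoothing mechanism.

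There is a second gap in the gluing step. Homotopy invariance as in Theorem \ref{thm:defn-monodromy} shows that the reflection transport of your section depends only on the homotopy class of the path, not that it is path-independent: around a homotopically nontrivial loop $\gamma$ the transported section returns as the image of the original one under $\rho(\mu([\gamma]),\cdot)$. The hypothesis "no twistings" does \emph{not} make the monodromy trivial --- by Theorem \ref{thm:class-marked}, already in toric degree $n-1$ with $S$ a circle all orbits are untwisted while $\mu$ can be an arbitrary element of $\bbR^n/Z_\rho$ --- so your codimension-two relations do not suffice; when $\mu\neq0$ the reflection-symmetric section you are trying to build simply does not exist, and the actual section must spread the monodromy discrepancy across the domains. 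The paper handles this by proving the statement first in the trivial-monodromy case (where a reflection-compatible family of base points $z_i$ with $\sigma_F(z_i)=z_j$ exists) and then reducing the general case to it via the monodromy-killing covering of Theorem \ref{thm:lifting-monodromy}, again concluding through triviality of the desingularized bundle rather than through a direct transport argument.
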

\begin{proof}(Sketch)
Consider first the case when the monodromy $\mu: \pi_1(M^n) \to \bbR^n/Z_\rho$ of $\rho$ is trivial. It means that we can choose in each 
regular orbit $\cO_i$ of $(M^n, \rho)$ a point $z_i \in \cO_i$, such that the family of point $\{z_i\}$ satisfies the following
symmetry condition: If $F$ is an $(n-1)$-dimensional orbits, $\sigma_F$ is the reflection map associated to $F$, and 
$\cO_i$ and $\cO_j$ are the two regular orbits adjacent to $F$, then $\sigma_F(z_i) = z_j$.
Starting from the points $z_i$'s, we will construct a (continuous, but not smooth in general) cross section $\Sigma$
to the singular fibration $M \to Q$ as follows:

For each $i \in I$ (where $I$ indexes the set of all regular orbits), denote by
\begin{equation}
 C_i = \{w \in \bbR^n \ |\ \exists \lim_{t \to \infty}\rho(tw,z_i)\} 
\end{equation}
and
\begin{equation}
\Sigma_i = \{\rho(w,z_0) \ |\ w \in C_i\}. 
\end{equation}
Similarly to the results of Section \ref{section:hyperbolic}, one can verify that      
$\bar \Sigma_{i}$  is a continuous section of $\bar \cO_{z_i}$ over $(\bar \cO_{z_i}/\rho_\bbT ) \subset Q$.
Moreover, everytime when $\bar \cO_i$ and $\bar \cO_j$ share a $(n-1)$-dimensional orbit, 
then $\bar \Sigma_i$ and $\bar \Sigma_j$ also share a common piece of boundary. It implies that the union
\begin{equation}
\Sigma = \bigcup_{i \in I}\bar \Sigma_i 
\end{equation}
is a continuous cross section of $M^n$ over $Q$. This section is not smooth, but its existence implies the triviality 
of the desingularization by blowing-up $\check{M^n}$ of $M \to Q$ ($\check{M^n} \to Q$ 
is a principal $\bbT^{t(\rho)}$-fibration), which in turn implies the existence of a smooth section.

When the monodromy is non-trivial, the above arguments can be applied to a covering $(\widetilde{M^n}, \widetilde \rho)$ 
of $(M,\rho)$ which trivializes the monodromy. It means that the desingularization $\check{\widetilde{M^n}}$ of
$\widetilde{M^n}$ is a trivial principal $\bbT^{t(\rho)}$-bundle, and we still have a smooth global 
cross section of $M^n$ over $Q$.     
\end{proof}

Consider now the case when $(M^n, \rho)$ has twistings. Then a-priori $Q$ is only an orbifold and we cannot 
have a submanifold $Q_c$ in $M^n$ diffeomorphic to $Q$. In this case, instead of a section, we will look for a  
\emph{multi-section} of $M^n \to Q$: a smooth {\bf multi-section} of $M^n \to Q$ is a smooth embedded 
submanifold with boundary and corners $Q_c$ in $M^n$, together with a finite subgroup $G \subset(Z_\rho \otimes \bbR)/Z_\rho$
such that $Q_c$ is invariant with respect to $G$ (i.e. if $z\in Q_c$ and 
$w \in G$ then $\rho(w,z) \in Q_c$), and $Q_c/G \cong Q$ via the projection.

\begin{prop}
\label{prop:multisection}
Assume that $(M^n,\rho)$ has twistings. Then the singular torus fibration $M^n \to M^n/\rho_\bbT = Q$
admits a smooth multi-section $(Q_c,G)$, where $G \subset(Z_\rho \otimes \bbR)/Z_\rho $ is generated by the 
twisting groups $G_z \quad (z \in M)$ of $(M^n,\rho)$.
\end{prop}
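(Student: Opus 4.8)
The plan is to reduce everything to the untwisted case, which is already settled by Proposition~\ref{prop:section}, by passing to a finite covering of $M^n$ that ``unwinds'' the twisting. Before doing so, I would record that the group $G$ is in fact finite. By Definition~\ref{defn:TwistingGroup} and the local normal form, every element of every twisting group $G_z=(Z_\rho(z)\cap(Z_\rho\otimes\bbR))/Z_\rho$ is represented by a $w$ with $2w\in Z_\rho$, so it is a $2$-torsion element of the torus $(Z_\rho\otimes\bbR)/Z_\rho$. Since the $2$-torsion subgroup of this torus is $\tfrac12 Z_\rho/Z_\rho\cong(\bbZ_2)^{t(\rho)}$, the subgroup $G$ generated by all the $G_z$ is contained in it, hence finite; this is what makes the notion of a multi-section meaningful.

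The heart of the argument is to construct a finite normal covering $\pi:\widehat{M^n}\to M^n$ whose deck transformation group is exactly $G$, acting through the associated torus action (so that each $g\in G$ acts as a deck transformation $\tau_g$ satisfying $\pi\circ\tau_g=\rho_\bbT(g,\cdot)\circ\pi$), and such that the lifted action $\widehat\rho$ is nondegenerate and non-twisted. I would build this as in Theorem~\ref{thm:lifting-monodromy}: let $\widehat Z\subset Z_\rho\otimes\bbR$ be the lattice with $Z_\rho\subset\widehat Z$ and $\widehat Z/Z_\rho=G$, and assemble $\widehat{M^n}$ from the monodromy data so that $\widehat Z$ is realized in the isotropy and the $G$ sheets are permuted by $\widehat\rho_\bbT$. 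The point is that, in the linear model of Proposition~\ref{prop:describe-sing-Tk} near a twisted point, the covering replaces the twisted piece $(\bbT^t\times B^h)/G_z$ by the genuine product $\bbT^t\times B^h$; as $G$ is generated by all the $G_z$, every twist is removed and $\widehat\rho$ is non-twisted. Checking that the deck group is \emph{exactly} $G$ and that no new twisting is created, using the semi-local models of Theorem~\ref{thm:semi-localform2}, is the step I expect to be the main obstacle.

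Once the covering is in place, Proposition~\ref{prop:section} applies to $(\widehat{M^n},\widehat\rho)$ and produces a smooth cross section $\widehat Q_c\subset\widehat{M^n}$ of $\widehat{M^n}\to\widehat Q:=\widehat{M^n}/\widehat\rho_\bbT$. I then set $Q_c=\pi(\widehat Q_c)\subset M^n$ and verify the three defining properties of a multi-section. For embeddedness it suffices that $\pi|_{\widehat Q_c}$ be injective: two points of $\widehat Q_c$ with the same image differ by a deck transformation, which by construction lies in the torus, so the two points lie on a single $\widehat\rho_\bbT$-orbit; but $\widehat Q_c$ meets each such orbit in one point, forcing them to coincide on the regular part, and the claim extends to the singular strata by continuity. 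Thus $Q_c$ is an embedded submanifold with boundary and corners, diffeomorphic to $\widehat Q_c$.

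For $G$-invariance, given $[p]\in Q_c$ with $p\in\widehat Q_c$ and $g\in G$, one has $\rho_\bbT(g,[p])=\pi(\tau_g(p))$, and $\tau_g(p)\in G\cdot\widehat Q_c$, whose $\pi$-image is $Q_c$; hence $\rho_\bbT(g,[p])\in Q_c$. Finally $Q_c/G\cong Q$ via the projection $M^n\to Q$: surjectivity is immediate, and injectivity follows from the section property of $\widehat Q_c$ together with the identification $\widehat Q/G\cong Q$, which holds because $M^n=\widehat{M^n}/G$ and the deck action of $G$ commutes with $\widehat\rho_\bbT$, so that $Q=M^n/\rho_\bbT=(\widehat{M^n}/\widehat\rho_\bbT)/G=\widehat Q/G$. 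Putting these together gives the desired smooth multi-section $(Q_c,G)$, with the understanding that the genuine difficulty is concentrated in the covering construction of the second paragraph.
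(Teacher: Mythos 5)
Your overall plan --- pass to a covering on which the twisting disappears, apply Proposition \ref{prop:section} there, and push the section down --- is exactly the approach the paper takes (its proof is the single sentence that the result follows from Proposition \ref{prop:section} and an appropriate covering of $(M^n,\rho)$), and your finiteness argument for $G$ is correct. The genuine problem is the covering you specify: a finite normal covering ``whose deck transformation group is exactly $G$, acting through the associated torus action'' with $\pi\circ\tau_g=\rho_\bbT(g,\cdot)\circ\pi$ cannot exist when $G\neq\{0\}$. A deck transformation satisfies $\pi\circ\tau=\pi$ by definition, so your condition forces $\rho_\bbT(g,\cdot)=\mathrm{id}_{M^n}$, i.e.\ $g=0$ in $(Z_\rho\otimes\bbR)/Z_\rho$; more generally, a deck transformation projects to the identity of $M^n$ and hence can never cover a nontrivial torus translation. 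This is not a removable slip of wording. In the simplest twisted example --- the Klein bottle $K=\bbR^2/\langle a,b\rangle$ with $a(x,y)=(x+1,-y)$, $b(x,y)=(x,y+1)$, and the $\bbR^2$-action generated by $\partial/\partial x$ and $\sin(2\pi y)\,\partial/\partial y$ (toric degree $1$, $Z_\rho=2\bbZ\times\{0\}$, $G\cong\bbZ_2$) --- the untwisting double cover is the torus $\bbR/2\bbZ\times\bbR/\bbZ$, and its deck involution is $(x,y)\mapsto(x+1,-y)$: a lifted half-period translation \emph{composed with a reflection}. That unavoidable reflection factor is precisely what ``twisting'' means (cf.\ Theorem \ref{thm:TwistingMonodromy}), and no choice of covering removes it from the deck group.

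Because of this, your verifications of embeddedness, $G$-invariance and $Q_c/G\cong Q$, which all invoke ``the deck transformation lies in the torus'', collapse; and they cannot be repaired for an arbitrary section upstairs. In the same example, the section $y\mapsto(2y\ \mathrm{mod}\ 2,\,y)$ of the circle fibration $\bbR/2\bbZ\times\bbR/\bbZ\to\widehat Q\cong\bbR/\bbZ$ projects to a curve in $K$ with a transverse double point (and its image is not invariant under $\rho_\bbT(G)$), whereas the section $y\mapsto(0,y)$ does project to a genuine multi-section. So the real missing content is an \emph{equivariance} requirement on the section: one must choose it so that its deck-translates coincide with its torus-translates --- for instance by rerunning the construction in the proof of Proposition \ref{prop:section} with the base points $z_i$ chosen compatibly with the deck transformations, which commute with the lifted action and act as compositions of lifted translations with the reflections of Theorem \ref{thm:Reflection}. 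Your write-up has no substitute for this step, so as it stands the argument has a genuine gap, concentrated exactly where you located the difficulty but of a different nature than you anticipated: the obstacle is not building a covering with deck group $G$ acting by translations (that object does not exist), but choosing the section on the honest untwisting covering equivariantly.
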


\begin{proof}
It results from Proposition \ref{prop:section} and an appropriate covering of $(M^n,\rho)$.
\end{proof}

\begin{remark}
Multi-sections also appear in many other places in the literature. 
For example, Davis and Januskiewicz in \cite{Davis-Convex1991} used them in their study of quasi-toric manifolds. Zung also used them in \cite{Zung-Symplectic1996}
in the construction of partial action-angle coordinates for singularities of integrable Hamiltonian systems.
\end{remark}

\begin{cor} \label{cor:EquivariantUniqueness}
 Assume that $(M_1^n,\rho_1)$ and $(M_2^n,\rho_2)$ have the same quotient space $M^n_1/\rho_{1 \bbT} = M_2^n/ \rho_{2 \bbT} = Q$,
and moreover they have the same isotropy  at every point of $Q$: $Z_{\rho_1}(q) = Z_{\rho_2}(q)\ \forall\ q \in Q$, where
$Z_{\rho_1}(q)$ means the isotropy group of $\rho_1$ on the $\rho_{1 \bbT}$-orbit corresponding to $q$. Then there is a
diffeomorphism $\Phi: M_1^n \to M_2^n$ which sends $\rho_{1 \bbT}$ to $\rho_{2 \bbT}$.
\end{cor}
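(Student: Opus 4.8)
The plan is to construct $\Phi$ as a $\bbT^k$-equivariant diffeomorphism of the two singular torus fibrations $M_1^n \to Q$ and $M_2^n \to Q$, where $k = n - \dim Q$ is the common toric degree of the two actions (both equal $n-\dim Q$ by Theorem \ref{thm:hypAction-quotientSpace}). I fix once and for all an identification of the two associated tori $(Z_{\rho_1}\otimes \bbR)/Z_{\rho_1} \cong \bbT^k \cong (Z_{\rho_2}\otimes \bbR)/Z_{\rho_2}$, with respect to which the hypothesis $Z_{\rho_1}(q) = Z_{\rho_2}(q)$ is to be read.

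First I would treat the case with no twisting. By Proposition \ref{prop:section} each fibration admits a smooth cross section $Q_{c,i} \subset M_i^n$ with $\mathrm{proj}\colon Q_{c,i} \to Q$ a diffeomorphism. Set $\Phi_0 = (\mathrm{proj}|_{Q_{c,2}})^{-1} \circ \mathrm{proj}|_{Q_{c,1}}\colon Q_{c,1} \to Q_{c,2}$, and extend equivariantly by $\Phi(\rho_{1\bbT}(\gamma,s)) = \rho_{2\bbT}(\gamma,\Phi_0(s))$ for $s \in Q_{c,1}$, $\gamma \in \bbT^k$. Well-definedness is precisely where the isotropy hypothesis enters: if $\rho_{1\bbT}(\gamma,s) = \rho_{1\bbT}(\gamma',s)$ then $\gamma - \gamma' \in Z_{\rho_1}(q(s)) = Z_{\rho_2}(q(s))$, so $\rho_{2\bbT}(\gamma,\Phi_0(s)) = \rho_{2\bbT}(\gamma',\Phi_0(s))$. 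Since every point of $M_1^n$ lies on a $\rho_{1\bbT}$-orbit meeting $Q_{c,1}$, this defines $\Phi$ on all of $M_1^n$; running the construction backwards produces its inverse, so $\Phi$ is a bijection intertwining $\rho_{1\bbT}$ and $\rho_{2\bbT}$.

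The main obstacle is then the smoothness of $\Phi$. Away from singular orbits the torus action is free and $\Phi$ is manifestly smooth; across a singular orbit, where one or more circle subgroups collapse (the elbolic directions), smoothness does not follow from equivariance and a smooth section alone. Here I would invoke the semi-local linear model of Proposition \ref{prop:describe-sing-Tk}: near corresponding points the two fibrations are isomorphic to the \emph{same} standard model $D_1^2 \times \cdots \times D_e^2 \times (\bbT^t \times B^h/G_z) \times B^r$, because the HERT-invariant and the twisting group $G_z$ are determined by the isotropy data, which agrees by hypothesis. In these charts $\Phi$ is the map induced on the standard model by the identity on the $\bbT^k$-factor together with $\Phi_0$ in the quotient directions; choosing the section to be radial in each collapsing disk $D_i^2$ (as in the identification $D_i^2/\bbT^1_i \cong [0,\epsilon)$ of Proposition \ref{prop:describe-sing-Tk}), this induced map extends smoothly across the collapse. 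A chart-by-chart argument, glued by a partition of unity, then promotes local smoothness to global smoothness of $\Phi$.

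Finally, when there is twisting, $Q$ is only an orbifold and cross sections are replaced by the smooth multi-sections $(Q_{c,i},G)$ of Proposition \ref{prop:multisection}. The finite group $G \subset \bbT^k$ generated by the twisting groups is the same for $\rho_1$ and $\rho_2$, since each twisting group is determined by the isotropy data. I would build a $G$-equivariant diffeomorphism $\Phi_0\colon Q_{c,1} \to Q_{c,2}$ covering $\mathrm{id}_Q$ (both present $Q$ as the same $G$-quotient with the same branching, by the local models of Proposition \ref{prop:describe-sing-Tk} with $G_z \neq 0$), extend equivariantly over $\bbT^k$ exactly as above, and descend to the required $\Phi\colon M_1^n \to M_2^n$. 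The smoothness verification is identical, again reading off the common standard model.
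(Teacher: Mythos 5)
Your proposal is essentially the paper's own proof: the paper also sends a (multi-)section of $M_1^n \to Q$ to a (multi-)section of $M_2^n \to Q$ by a diffeomorphism covering $\mathrm{id}_Q$ and extends it equivariantly via the associated torus actions, with the hypothesis $Z_{\rho_1}(q)=Z_{\rho_2}(q)$ guaranteeing well-definedness --- you have simply spelled out the untwisted and twisted cases separately and added the (correct and genuinely needed) observation that smoothness across elbolic orbits requires sections adapted to the linear models of Proposition \ref{prop:describe-sing-Tk}, a point the paper glosses over. One small quibble: no partition of unity is needed or even usable to ``glue'' the local smoothness checks, since smoothness is a local property and $\Phi$ is already globally defined.
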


\begin{proof}
 Simply send a multisection in $M_1^n$ over $Q$ to a multi-section 
in $M_2^n$ over $Q$ by a diffeomorphism which projects to the identity
map on $Q$, and extend this diffeomorphism to the whole $M_1^n$ in the unique equivariant way with respect to the associated
torus actions. The fact that the isotropy groups are the same allows us to do so. Notice that the obtained diffeomorphism $\Phi$
intertwines $\rho_{1 \bbT}$ with $\rho_{2 \bbT}$, but does not intertwine $\rho_{1}$ with $\rho_{2}$ in general.
\end{proof}

\subsection{Going back from $(Q,\rho_\bbR)$ to $(M^n,\rho)$}

In order to recover (or to construct) $(M^n,\rho)$ from its reduction $(Q,\rho_\bbR)$, we need (or can choose)
the following additional data:

1) Isotropy groups. Specify the isotropy group $Z_\rho(q)$ for each $q \in Q$. 

2) Associated vectors (for corank 1 singular orbits) and vector couples (for corank 2 transversally elbolic orbits)
in the sense of Definition \ref{def:AssociatedVector}. Note that these associated vectors and vector couples can be
attached to corank-1 orbits of $\rho_\bbR$ in $Q$. (They are images in $Q$ of  corank-1 hyperbolic 
and corank-2 elbolic orbits of $\rho$ in $M^n$). A corank-1 orbit in $Q$ marked ``hyperbolic'' will be given an 
associated vector, while a corank-1 orbit in $Q$ marked ``elbolic'' will be given a vector couple.

3) Lifting of the monodromy from $H_1(Q,\bbZ)  \to  \bbR^n/(Z_\rho \otimes \bbR) \cong \bbR^{\dim Q} $ to 
$H_1(M^n,\bbZ) / Im(Z_\rho) \to \bbR^n/Z_\rho $, which makes the diagramme (\ref{eq:diag-monodromy}) shown 
in the previous subsection commutative. Remark that, even if $M^n$ is unknown and to be found, 
the isotropy data together with $(Q,\rho_\bbR)$ determines $Z_\rho$ (which is equal to $Z_\rho
(q)$ for any regular
point $q \in Q$) and $H_1(M^n,\bbZ) / Im(Z_\rho)$ completely, so it makes sense 
to talk about this monodromy lifting.

Of course, these data are not arbitrary, but must satisfy a series of obvious  conditions, so that
they can be realized locally, i.e. a sufficiently small neighborhood of any point in $Q$ together with the action 
$\rho_\bbR$ and the above data (restricted to that neighbodhood) can be realized by some local model of 
$(M^n,\rho)$. If it is the case, then we will say that our data of isotropy groups, associated vectors and
vector couples, and monodromy lifting satisfy the {\bf local compatibility conditions}.

\begin{thm}  \label{thm:GoingBack}
Assume that $(Q,\rho_\bbR)$ is equipped with a full set of additional data consisting of the isotropy groups, 
the associated vectors and vector couples, and the monodromy lifting,
which satisfy the local compatibility conditions. Then, up to
isomorphisms, there exists a unique $(M^n,\rho)$ which admits these data and has $(Q,\rho_\bbR)$ 
as its reduction with respect to the associated torus action.
\end{thm}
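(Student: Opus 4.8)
The plan is to reconstruct $(M^n,\rho)$ in two stages — first the associated torus action, then the full $\bbR^n$-action lying above it — and to obtain both existence and uniqueness by the same gluing-and-reflection method already used for Theorem \ref{thm:classificationByfan}, Theorem \ref{thm:class-marked} and Theorem \ref{class-hyp-invariants}, now carried out fibrewise over $Q$.

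For \textbf{existence}, I would first build a $\bbT^{t(\rho)}$-space $(M^n,\rho_\bbT)$ over $Q$ realizing the prescribed isotropy data. Over a neighborhood of each point of $Q$ the local compatibility conditions guarantee a local model of the form given in Proposition \ref{prop:describe-sing-Tk}, and these local torus models are to be glued together; the resulting singular torus fibration $M^n\to Q$ is forced to be unique by Corollary \ref{cor:EquivariantUniqueness}, while existence of the global gluing follows by desingularizing (blowing up) the singular strata of $Q$ so that the fibration becomes a principal $\bbT^{t(\rho)}$-bundle admitting a section (Proposition \ref{prop:section}, Proposition \ref{prop:multisection}) and then regluing. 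Next I would lift the reduced hyperbolic action $\rho_\bbR$ to an $\bbR^{r(\rho)}$-action on $M^n$ commuting with $\rho_\bbT$: over the regular locus this lift is the one determined by the chosen monodromy lifting, its behavior approaching a corank-$1$ hyperbolic orbit is dictated by the associated vector via the reflection principle (Theorem \ref{thm:Reflection}), and its behavior at a transversally elbolic orbit is dictated by the associated vector couple (Definition \ref{def:AssociatedVector}). Combining this lift with $\rho_\bbT$ yields the full $\bbR^n$-action $\rho$. The semi-local normal form (Theorem \ref{thm:semi-localform2}) then shows on each local model that $\rho$ is smooth and nondegenerate of the correct HERT-type, and by construction its reduction is $(Q,\rho_\bbR)$ carrying exactly the prescribed data.

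The cleanest way to organize the \textbf{monodromy} is to first treat the untwisted, trivial-monodromy case — where $Q$ is genuinely a manifold with boundary and corners and the gluing above closes up tautologically — and then recover the general case as in the proof of Theorem \ref{thm:trans-monodromy}: construct $(\widehat M,\widehat\rho)$ on the covering $\widehat M$ with $\pi_1(\widehat M)=Im(Z_\rho)$, and quotient by a free action of $\pi_1(M^n)/Im(Z_\rho)$ twisted according to the prescribed monodromy lifting, so that the resulting quotient realizes exactly the given monodromy fitting into the commutative diagram (\ref{eq:diag-monodromy}).

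For \textbf{uniqueness}, given two realizations $(M_1,\rho_1)$ and $(M_2,\rho_2)$ with the same data, Corollary \ref{cor:EquivariantUniqueness} first produces a diffeomorphism intertwining the two associated torus actions, since their isotropy data agree. I would then upgrade it to an action-preserving diffeomorphism by the standard recipe: choose regular points $z_1\in M_1$, $z_2\in M_2$ lying over the same regular point of $Q$, set $\Phi(z_1)=z_2$, extend over $\cO_{z_1}$ by $\Phi(\rho_1(v,z_1))=\rho_2(v,z_2)$, and propagate to the remaining orbits across hyperbolic faces by the reflection principle and across elbolic faces by the vector couples, closing up at corners by continuity. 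That $\Phi$ is single-valued — that propagating around any loop in $M_1$ returns the same value — is exactly the statement that the two monodromy liftings coincide. \textbf{The main obstacle}, and the only genuinely global point, is precisely this single-valuedness of the reflection-principle extension around nontrivial loops; it is governed by the monodromy, and the equality of the prescribed liftings is what makes both the gluing in existence close up and the two extensions in uniqueness agree. The handling of twisting, where $Q$ is merely an orbifold rather than a manifold, is the fussiest bookkeeping but is reduced to the manifold case by passing to the covering of Proposition \ref{prop:multisection}.
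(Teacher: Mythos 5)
Your proposal is correct and takes essentially the same route as the paper: the paper's own proof is a single sentence invoking ``the same gluing method as used in the proof of some earlier theorems'' (i.e.\ the local-model gluing of Theorem \ref{thm:classificationByfan} and Theorem \ref{thm:class-marked}, plus the reflection-principle extension and Corollary \ref{cor:EquivariantUniqueness}), which is exactly what you carry out in detail. Your write-up is in fact a faithful and more explicit elaboration of that intended argument, correctly identifying the single-valuedness of the reflection-principle extension (controlled by the monodromy lifting) as the only genuinely global point.
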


The proof of the above theorem can be obtained easily by the same gluing method, 
as used  in the proof of some earlier theorems of this paper.

\section{Elbolic actions and toric manifolds}

\begin{defn} \label{defn:elbolic}
A nondegenerate action $\rho: \bbR^n \times M^n \to M^n$ is called { \bf elbolic}, if it does not admit any
hyperbolic singularity, i.e. all singular points have only elbolic components.
\end{defn}

\begin{thm}
Let $\rho: \bbR^n \times M^n \to M^n$ be an elbolic action. Then we have: 

1) $\rho$  has exactly one $n$-dimensional orbit. This orbit is open dense in $M^n$, and is of the
type $\bbT^{m+s} \times \bbR^m $ for some $s,m \geq 0$ such that $2m+s = n$. In particular the toric degree 
$t(\rho) = m+s$ is greater or equal to $n/2$. 

2) The monodromy of $\rho$ is trivial, and the quotient space $Q =M^n/\rho_\bbT$ of $M^n$ by the associated
torus action $\rho_\bbT$ is a contractible manifold with boundary and corners (and which is compact 
if and only if $M^n$ is compact), on which the reduced action $\rho_\bbR$ is 
nondegenerate totally hyperbolic and has only one regular orbit. If moreover $M^n$ is compact without boundary,
then $(Q,\rho_\bbR)$ is a contractible closed hyperbolic domain. 

3) If $(M^n,\rho)$ admits a fixed point, then $s=0$, $n=2m$ is an even number, and the toric degree $t(\rho)$
is equal to half of the dimension of $M^n$. 
\end{thm}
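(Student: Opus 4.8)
The plan is to handle the three parts in order, leaning on the local normal form (Theorem~\ref{thm:NormalForm}), the toric degree formula (Theorem~\ref{thm:HERT-toricdegree}), and the reduction theorems of Sections~\ref{section:hyperbolic} and~\ref{section:QuotientSpace}. For Part 1, the starting observation is that elbolicity forces $h=0$ at every point, so by the corollary describing the singular set $\cS$ each stratum $\cS_{0,e}$ has dimension $n-2e\le n-2$. Thus $\cS$ is a closed subset of codimension $\ge 2$ and $M^n\setminus\cS$ is connected. Each regular orbit is open (it has dimension $n$) and is also closed in $M^n\setminus\cS$, since the frontier of an orbit consists only of orbits of strictly lower rank, which lie in $\cS$. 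A connected space cannot be a disjoint union of more than one nonempty clopen set, so there is exactly one regular orbit $\cO$, and it is dense because $\cS$ has empty interior. By Theorem~\ref{thm:HERT-toricdegree} its HERT-invariant is $(0,0,r,t)$ with $t$ the toric degree and $r+t=n$, so $\cO\cong\bbT^{t}\times\bbR^{r}$; writing $m=r$ and $s=t-r$ then yields $2m+s=n$ and $\cO\cong\bbT^{m+s}\times\bbR^{m}$, so everything reduces to proving the inequality $t\ge r$.

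To obtain $t\ge r$ I would pass to the reduced totally hyperbolic action $\rho_\bbR$ of $\bbR^{r}$ on $Q=M^n/\rho_\bbT$ (Theorem~\ref{thm:hypAction-quotientSpace}); its unique regular orbit is the image of $\cO$, so $Q$ is the closure of a single hyperbolic domain. A fixed point $\bar q$ of $\rho_\bbR$ corresponds, via Proposition~\ref{prop:describe-sing-Tk}, to a point $q\in M^n$ whose reduced corank equals its number $e(q)$ of elbolic components; at such a vertex this corank is $r$, hence $e(q)=r$ and $\cO_q\cong\bbT^{t'}$ is a compact torus. Theorem~\ref{thm:HERT-toricdegree} then gives $t=e(q)+t'=r+t'\ge r$, as wanted. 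The production of this ``deepest'' compact orbit is the main obstacle of Part 1, and it is the one place where a compactness hypothesis enters: when $M^n$, and therefore $Q$, is compact, $Q$ is a compact closed hyperbolic domain and the proposition of Section~\ref{section:hyperbolic} guarantees orbits of every dimension $0\le k\le r$, in particular a fixed point of $\rho_\bbR$.

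For Part 2 the key point is that $h=0$ everywhere means no orbit has corank $1$ (that would force $2e=1$), so the reflections of Theorem~\ref{thm:Reflection} attached to corank-$1$ orbits simply never occur. Consequently, in the monodromy construction of Theorem~\ref{thm:defn-monodromy} a generic loop (which can be isotoped off the codimension-$2$ set $\cS$) crosses no corank-$1$ orbit, so the sequence of involutions is empty and $\mu\equiv 0$. Theorem~\ref{thm:TwistingMonodromy} then gives $G_q\subseteq \mathrm{Im}(\mu)=\{0\}$, so $\rho$ has no twisting; by Proposition~\ref{prop:NoTwisting} and Theorem~\ref{thm:hypAction-quotientSpace} this makes $Q$ a genuine manifold with boundary and corners on which $\rho_\bbR$ is nondegenerate totally hyperbolic. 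The dense regular orbit $\cO$ projects to a dense $r$-dimensional orbit of $\rho_\bbR$, which is therefore the unique hyperbolic domain, so $\rho_\bbR$ has exactly one regular orbit and $Q$ is its closure. Contractibility is then given by Theorem~\ref{thm:hyperbolic-contractible} (the cell-by-cell contraction along the flow applies here), while $Q=M^n/\rho_\bbT$ is compact if and only if $M^n$ is, the acting torus being compact; when $M^n$ is compact without boundary, $Q$ is a contractible compact closed hyperbolic domain.

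Part 3 is immediate once Part 1 is in place. A fixed point $p$ has rank $0$, i.e. corank $n$, and elbolicity gives $h(p)=0$, so the relation $h(p)+2e(p)=n$ forces $n$ to be even, $n=2m$, with $e(p)=m$. The orbit of $p$ is a single point, so its $t$-invariant vanishes, and Theorem~\ref{thm:HERT-toricdegree} gives the toric degree $t=e(p)+t(p)=m=n/2$; comparing with the description $t=m+s$ from Part 1 yields $s=0$. I expect no difficulty here, so the genuine work of the theorem is concentrated in establishing the existence of the dense regular orbit and, above all, the compact ``core'' orbit underlying the bound $t\ge n/2$.
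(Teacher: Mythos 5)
Your proposal is correct for compact $M^n$, and for most of the theorem it runs parallel to the paper: the uniqueness and density of the regular orbit (codimension-$\geq 2$ singular set, clopen regular orbits), the deduction of Part 3, and the whole of Part 2 are essentially the paper's arguments (the paper gets triviality of the monodromy by observing that $H_1(M^n,\bbZ)/Im(Z_\rho)$ itself vanishes, since every loop is homotopic into the regular orbit, whereas you observe that the chain of reflections is empty because there are no corank-one orbits; these are two phrasings of the same codimension-two fact). The genuine divergence is the key inequality $t(\rho)\geq n/2$. The paper stays upstairs in $M^n$: it takes a singular point $p$ of highest corank, asserts that $\cO_p$ is compact (``otherwise points on its boundary would be more singular''), so $\cO_p\cong\bbT^s$ with $e(p)=m$, and reads off the regular orbit type $\bbT^{m+s}\times\bbR^m$ from Theorem \ref{thm:HERT-toricdegree}. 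You instead descend to $Q=M^n/\rho_\bbT$, use compactness of $Q$ and the first proposition of Section \ref{section:hyperbolic} to produce a fixed point of $\rho_\bbR$, and lift it via Proposition \ref{prop:describe-sing-Tk} to a compact orbit $\bbT^{t'}$ with $e(q)=r$. Both arguments manufacture the same ``core'' compact orbit; yours routes through the reduction machinery of Section \ref{section:QuotientSpace}, which imposes an ordering constraint you should make explicit: the fixed-point proposition is applied to $Q$ as a manifold with a totally hyperbolic action, so the Part 2 chain (no corank-one orbits $\Rightarrow$ trivial monodromy $\Rightarrow$ no twisting $\Rightarrow$ $Q$ a manifold, $\rho_\bbR$ totally hyperbolic, via Proposition \ref{prop:NoTwisting} and Theorem \ref{thm:hypAction-quotientSpace}) must be established before the inequality of Part 1. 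There is no circularity, since that chain uses only $h\equiv 0$ and the density of the regular orbit, but as written your Part 1 borrows Part 2 before proving it.

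What your detour buys is an honest accounting of compactness, and your flag that the fixed point is ``the one place where a compactness hypothesis enters'' is accurate in a stronger sense than you may realize: the paper's own argument has the same dependence, only hidden. Its parenthetical justification is vacuous when $\cO_p$ is a closed non-compact subset of a non-compact $M^n$ (then $\bar\cO_p\setminus\cO_p=\emptyset$, so there are no boundary points to be ``more singular''); note that Proposition \ref{prop:adjacent-orbits}, which encodes exactly this adjacency argument, explicitly assumes $M^n$ compact. And indeed Part 1 fails without compactness: on $M^3=\bbR^2\times\bbR$ the action of $\bbR^3$ generated by $x\frac{\partial}{\partial x}+y\frac{\partial}{\partial y}$, $x\frac{\partial}{\partial y}-y\frac{\partial}{\partial x}$, $\frac{\partial}{\partial z}$ is elbolic (every singular point is corank-two transversally elbolic), yet its toric degree is $1<3/2$ and its regular orbit $\bbT^1\times\bbR^2$ is not of the form $\bbT^{m+s}\times\bbR^m$ with $s\geq 0$. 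So your compactness caveat is not a defect of your proof relative to the paper's; it pinpoints a hypothesis that the statement (which explicitly allows non-compact $M^n$) is missing and that the paper's proof of Part 1 uses tacitly. Parts 2 and 3, both in your version and the paper's, survive in the non-compact case.
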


\begin{proof}

1) The singular set $S$ of the action $\rho$ on $M^n$ is of codimension at least 2 in
$M^n$, which implies that the regular set is connected, so it cannot be more than
one $n$-dimensional orbit. Let $p \in M^n$ be a singular point of highest corank of the action. Then the orbit 
$\cO_p$ though $p$ must be compact (otherwise the points on the boundary of this orbit
would be more singular than $p$). If  $\cO_p$ is of the type $\bbT^s$ ($s \geq  0$) then the regular orbit is of the type 
 $\bbT^{m+s} \times \bbR^m$ where $m$ is the number of elbolic components at $p$.

2) The monodromy is trivial because the group $H_1(M^n,\bbZ)/ Im(Z_\rho)$ itself is trivial in this case: any loop
in $M^n$ is homotopic to a loop in the regular orbit. Since the monodromy is trivial, there is no twisting, and
so $Q$ is a manifold with boundary and corners,
according to the results of Section \ref{section:QuotientSpace}. Since $\rho$ has only one regular
orbit, $\rho$ also has only one regular orbit in $Q$. The fact that $Q$ is contractible now follows from Theorem
\ref{thm:hyperbolic-contractible} (which remains true, and with the same proof, also in the non-compact case).

3) If $p$ is a fixed point, then the H,R,T components of its HERT-invariant vanish, and $n=2m$ and $t(\rho) = m$, where
$m$ is the E-component of its HERT-invariant $(m,0,0,0)$.
\end{proof}

The case of elbolic actions with a fixed point, i.e. the last case in the above theorem, 
is of special interest in geometry, because of its connection to the so-called toric manifolds.

Recall that, a toric manifold in the sense of complex geometry
is a complex manifold (which is often equipped with a K\"alerian structure, or equivalently, a compatible symplectic
structure) of complex dimension $m$ together with a holomorphic action 
of the complex torus $(\bbC^*)^m$ which has an open dense orbit.
See e.g. \cite{Audin-Torus2004,Cox-Toric2011} for an introduction to toric manifolds. 
From our point of view, such a toric manifold has real dimension
$n = 2m$, and the action of $(\bbC^*)^m \cong  \bbR^m \times \bbT^m$ is an elbolic 
nondegenerate $\bbR^{2m}$-action. Thus, elbolic 
actions are a natural generalization of complex toric manifolds.
Complex toric manifolds are classfied by their associated fans. So our classification 
of hyperbolic domains (and of the quotient spaces of elbolic actions) are very similar 
to the classification of complex toric manifolds, except that, unlike the complex case,
the vectors of our fans are not required to lie in an integral lattice.

For real manifolds, there are at least 3 different well-studied generalizations of the notion
of toric manifolds, namely:

1) {\bf Quasi-toric manifolds} as defined by Davis and Januskiewicz in \cite{Davis-Convex1991}. 
Orginally these  manifolds were also called \emph{toric}, but later on people adopted the name 
\emph{quasi-toric} to avoid confusion with complex toric manifolds. A quasi-toric manifold is a real
$2m$-dimension manifold $M^{2m}$ with a almost-everywhere-free action of $\bbT^m$ such that:

i) The orbit space $M^{2m}/\bbT^m $ is hemeomorphic to a simple convex polytope $P^m$
and the preimage of each point on a $k$-dimensional face of $P^m$ via projection $M^{2m} \to P^m$
is a $k$-dimensional orbit of the $\bbT^m$-action on $M$. In particular,
vertices of $P$ correspond to fixed point of the action on $M$.

ii) Near each fixed point the action is locally isomorphic (up to automorphisms of $\bbT^m$) 
to a standard action of $\bbT^m$ on $\bbC^m$ given by
\begin{multline}
(\alpha _1, \hdots, \alpha_m).(z_1,\hdots, z_m) = (\alpha_1.z_1, \hdots, \alpha_m.z_m)\\
\cong \{ (\alpha _1, \hdots, \alpha_m) \in \bbC^m \ |\ |\alpha_i| = 1 \quad \forall i\}.
\end{multline}
2) {\bf Torus manifolds} as defined by Hattori and Masuda in \cite{Hattori-fan2003}.
A torus manifold is simply a closed connected orientable smooth manifold $M$
of dimension $2m$ with an effective smooth action of $\bbT^m$ having a fixed point.

3) {\bf Topological toric manifolds} as defined by Ishida, Fukukawa and Masuda \cite{Ishida-toric}. A
topological toric manifold is a closed smooth manifold $M$ of dimension $n = 2m$ 
with an almost-everywhere-free smooth action of $(\bbC^*)^m \cong \bbT^m \times \bbR^m$
which is covered by finitely many invariant open subsets each equivariantly  diffeomorphic to a 
direct sum of complex 1-dimensional linear representation of $\bbT^m \times \bbR^m$.

We observe that Ishida--Fukukawa--Masuda's notion of topological toric manifolds is equivalent to
our notion of manifolds admitting an elbolic action whose toric degree is half the dimension of the
manifolds. The proof of the following proposition is a simple verification that their conditions and
our conditions are the same:

\begin{prop}
 A manifold $M^{2m}$, together with a smooth  action of $(\bbC^*)^{m} \cong \bbR^m \times \bbT^m$,
is a topological toric manifold if and only if the action (which may be viewed as an action of $\bbR^{2m}$) 
is elbolic of toric degree $m$.
\end{prop}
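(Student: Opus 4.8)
The plan is to prove the equivalence by reducing both notions to the same semi-local model and then reading off the correspondence from linear algebra. The bridge is the observation that a complex $1$-dimensional linear representation of $(\bbC^*)^{m} \cong \bbR^m \times \bbT^m$ is nothing but an elbolic component: if the generator $\partial_{s_j}$ of the $j$-th $\bbR$-factor acts on $\bbC$ by multiplication by some $\lambda_j \in \bbC$ and the generator $\partial_{\theta_j}$ of the $j$-th $\bbT$-factor acts by multiplication by $\sqrt{-1}\,a_j$ with $a_j \in \bbZ$, then writing $z = x + \sqrt{-1}\,y$ the ``real part'' $x\frac{\partial}{\partial x} + y\frac{\partial}{\partial y}$ and the ``imaginary part'' $x\frac{\partial}{\partial y} - y\frac{\partial}{\partial x}$ of these actions are exactly the two generators of an elbolic component in Theorem \ref{thm:NormalForm}. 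Thus a direct sum of $m$ such representations on $\bbC^m = \bbR^{2m}$, together with the isotropy lattice coming from the $2\pi$-periodicity of $\bbT^m$, is a candidate elbolic model; conversely, since an elbolic action has no hyperbolic component ($h=0$ everywhere), the semi-local normal form of Theorem \ref{thm:semi-localform2} contains no $B^{h}$-factor and (because the twisting exponent satisfies $k \le \min(h,t) = 0$) no $(\bbZ_2)^k$-quotient, so each chart is literally a product of $2$-dimensional elbolic disks, a torus factor and a ball, i.e. a direct sum of complex $1$-dimensional representations.

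The proof then splits into the two implications, both carried out chart by chart. For ``elbolic of toric degree $m$ $\Rightarrow$ topological toric manifold'', I would first invoke the preceding theorem on elbolic actions: toric degree $m$ on $M^{2m}$ forces the open dense orbit to be $(\bbC^*)^m = \bbT^m \times \bbR^m$, so the action is almost-everywhere free and induces an effective $\bbT^m$-action. Covering $M^{2m}$ (closed, hence compact) by finitely many neighborhoods from Theorem \ref{thm:semi-localform2}, each of which is a sum of complex $1$-dimensional $(\bbT^m \times \bbR^m)$-representations by the previous paragraph, yields exactly the Ishida--Fukukawa--Masuda data. For the converse I would start from the IFM charts, view the $(\bbC^*)^m$-action as an $\bbR^{2m}$-action, and verify nondegeneracy, the elbolic property and the toric degree locally; nondegeneracy and the absence of hyperbolic components come from the key lemma below, while the toric degree equals $\rank_\bbZ Z_\rho = m$ because $Z_\rho$ is precisely the $2\pi$-periodicity lattice of the compact torus $\bbT^m$, in agreement with Theorem \ref{thm:HERT-toricdegree}.

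The heart of the argument, and the step I expect to be the main obstacle, is the linear-algebra lemma that makes the correspondence faithful: effectiveness (almost-everywhere freeness) of the $(\bbC^*)^m$-action is exactly what forces every summand to be genuinely elbolic rather than a pair of hyperbolic components. Encode a direct sum of $m$ complex $1$-dimensional representations by its coefficient map $\Lambda \colon \bbR^{2m} \to \bbC^m$, $g \mapsto (\lambda_1(g), \hdots, \lambda_m(g))$, where $\lambda_k(g) \in \bbC$ is the scalar by which $g$ acts on the $k$-th factor. At a generic point all coordinates are nonzero, so the infinitesimal stabilizer there is $\ker \Lambda$; hence the action is locally free on a dense set if and only if $\Lambda$ is injective, i.e. a real-linear isomorphism. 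Since each $\lambda_k$ is a real-linear map $\bbR^{2m} \to \bbC$ of rank at most $2$, injectivity of $\Lambda$ forces $\rank \lambda_k = 2$ for every $k$, so the image of each $\lambda_k$ is all of $\bbC$ and in particular is not contained in $\bbR$. Consequently the complexified joint eigenspaces on the $k$-th factor form a genuine conjugate pair with non-real eigenvalues, which is precisely the elbolic case and excludes any real eigenline, i.e. any hyperbolic component; at the same time the span of the $2m$ generators is $2m$-dimensional and consists of simultaneously diagonalizable elements, so the Cartan condition in the definition of a nondegenerate linear action holds automatically. Combined with the identification of the $\bbT^m$-periodicity lattice with $Z_\rho$, this lemma closes both implications, and the only remaining bookkeeping is that the finitely many local isomorphisms glue globally, which is immediate since they all intertwine the same $(\bbC^*)^m \cong \bbR^m \times \bbT^m$.
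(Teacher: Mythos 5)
Your reduction of both notions to linear models is the right idea, and your converse direction (IFM $\Rightarrow$ elbolic of toric degree $m$) is essentially complete: the lemma that almost-everywhere freeness forces every character $\lambda_k\colon\bbR^{2m}\to\bbC$ to have rank $2$, so that every summand is genuinely elbolic (conjugate non-real eigenvalue pairs, no real eigenline) and the span of the generators is a Cartan subalgebra, is exactly the content of the ``simple verification'' which the paper invokes without writing out; together with the identification of $Z_\rho$ with the $2\pi$-periodicity lattice (valid in this direction, since almost-everywhere freeness implies effectiveness) it gives nondegeneracy, elbolicity and toric degree $m$.

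The forward direction, however, has a genuine gap. You assert that the neighborhoods of Theorem \ref{thm:semi-localform2} covering $M^{2m}$ ``yield exactly the Ishida--Fukukawa--Masuda data'', but an IFM chart is an \emph{invariant} open set equivariantly diffeomorphic to an \emph{entire} representation space $\bbC^m$, while the semi-local model $D^{2e}\times\bbT^{t}\times B^{r}$ is neither invariant (the $\bbR^r$-directions and the radial elbolic directions flow out of it) nor even homotopy equivalent to $\bbC^m$ when $t\geq 1$ (it has $\pi_1\cong\bbZ^t$, whereas $\bbC^m$ is contractible). Closing this requires three steps you omit: (i) fixed points exist and are finitely many --- by Proposition \ref{prop:adjacent-orbits} every orbit closure in the compact manifold $M^{2m}$ contains a compact orbit, and for an elbolic action of toric degree $m$ a compact orbit has HERT-invariant $(0,e,0,t)$ with $2e+t=2m$ and $e+t=m$, forcing $(e,t)=(m,0)$, i.e. it is a fixed point (isolated by the local normal form); (ii) the normal-form polydisk $D^{2m}$ at a fixed point must be \emph{saturated} by the action, and one must prove the saturation is an equivariant copy of $\bbC^m$: define $F(z)=\rho\left(g,\phi(g^{-1}\cdot z)\right)$ for any $g\in(\bbC^*)^m$ with $g^{-1}\cdot z\in D^{2m}$, check well-definedness (on a polydisk $\left|\left(\exp(s\xi)\cdot u\right)_k\right|=e^{s\,\mathrm{Re}\,\xi_k}\,|u_k|$ is monotone in $s$, so the connecting path stays in $D^{2m}$ and uniqueness of integral curves applies), openness (by equivariance), and injectivity (by freeness of $\rho$ on the open dense orbit); (iii) these finitely many charts cover $M^{2m}$, since every orbit accumulates on a fixed point and hence lies in the saturation of its polydisk. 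A further small hole: ``elbolic of toric degree $m$'' only gives that $Z_\rho$ contains the periodicity lattice with finite index; your claim that they coincide is equivalent to effectiveness of the given $(\bbC^*)^m$-action and must be assumed --- for instance $\lambda\cdot[z:w]=[\lambda^2 z:w]$ on $\bbC\bbP^1$ is elbolic of toric degree $1$ as an $\bbR^2$-action, yet nowhere free as a $\bbC^*$-action, hence not an IFM structure.
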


Thus, we recover topological toric manifolds from our more general considerations of nondegenerate
$\bbR^n$-actions on $n$-manifolds. 

We refer to the paper of Ishida, Fukukawa and Masuda 
\cite{Ishida-toric} and some related recent works  
\cite{ChoiMasudaSuh-Toric2011,Ishida-Complex2011,Yu-Toric2011} 
for a detailed study of topological toric manifolds. Let us just mention here that, according to the results
of  \cite{Ishida-toric}, topological toric manifolds are \emph{the} right generalization of the notion of toric
manifolds to the category of real manifolds; they have very nice homological properties similarly to
toric manifolds (see Section 8 of  \cite{Ishida-toric}), and they are classifified by the so-called 
{\bf complete non-singular topological fans}. 

The complete non-singular topological fan of a topological toric manifold, in the sense
of  \cite{Ishida-toric}, encodes the following data: the complete fan in $\bbR^n$
associated to the reduced totally hyperbolic action $\rho_\bbR$ on the quotient space $Q = M^{2m}/\rho_\bbT$,
and the vector couples associated to corank-2 transversally elbolic orbits (see Definition \ref{def:AssociatedVector}).
These vector couples tell us how to build back $(M^{2m},\rho)$ from $(Q, \rho_\bbR)$.
So one can recover Ishida--Fukukawa--Masuda's classification theorem for topological toric manifolds
from our point of view of general nondegenerate $\bbR^n$-actions on $n$-manfiolds: one can prove this theorem
in the same way as the proof of Theorem \ref{thm:classificationByfan}, by gluing together local pieces equipped with
canonical coordinates and adapted bases. Another very interesting proof, based on the quotient method,
which represents the  topological toric manifold $(M^{2m},\rho)$ as a 
quotient of another global object, is given in \cite{Ishida-toric}. (The quotient method is also discussed in
\cite{Audin-Torus2004,Cox-Toric2011} for the construction of toric manifolds).
 
\section{Actions of toric degree $n-2$}

\subsection{Three-dimensional case}

Consider an action $\rho: \bbR^3 \times M^3 \to M^3$ of toric degree 1.
Let $q \in \cO_q$ be a point in a singular orbit of $\rho$. Denote the
HERT-invariant of $q$ by $(h,e,r,t)$, and by $k=\rank_{\bbZ_2}G_q$ the rank over $\bbZ_2$
of the twisting group $G_q$ of $\rho$ at $q$. According to the results of the previous sections,
we have following constraints on the nonnegative integers $h,e,r,t,k$:
\begin{equation}
h + 2e + r + t = 3,\ \ e+t = 1,\ \ e + h \geq 1,\ \ k \leq \min (h,t). 
\end{equation}
In particular, we must have $k\leq 1$, i.e. the twisting group $G_q$
is either trivial or isomorphic to $\bbZ_2$.

Taking the above constraints into account, we have the following 
full list of possibilities for the singular point $q$, together with their abbreviated names:

I. $(h) \quad h = 1, e=0, r=1, t=1, G_q = \{0\}$

II. $(h_t) \quad h = 1, e=0, r=1, t=1, G_q = \bbZ_2$

III. $(e) \quad h = 0, e=1, r=1, t=0, G_q = \{0\}$

IV. $(h-h) \quad h = 2, e=0, r=0, t=1, G_q = \{0\}$

V. $(h-h_t) \quad h = 2, e=0, r=0, t=1, G_q = \bbZ_2$
acting by the involution $(x_1,x_2)\mapsto (-x_1,x_2)$

VI. $((h-h)_t) \quad h = 2, e=0, r=0, t=1, G_q = \bbZ_2$
acting by the involution $(x_1,x_2)\mapsto (-x_1,-x_2)$

VII. $(e-h) \quad h = 1, e=1, r=0, t=0, G_q = \{0\}$

In the above list, $(h)$ means hyperbolic non-twisted, $(h-h)_t$ means a joint twisting
of a product of 2 hyperbolic components, and so on.

The local structure of the corresponding 2-dimensional quotient space $Q^2=M^3/\rho_{\bbT}$
(together with the traces of singular orbits on $M^3$) is described in Figure \ref{fig:7types}.
\begin{figure}[htb] 
\begin{center}
\includegraphics[width=120mm]{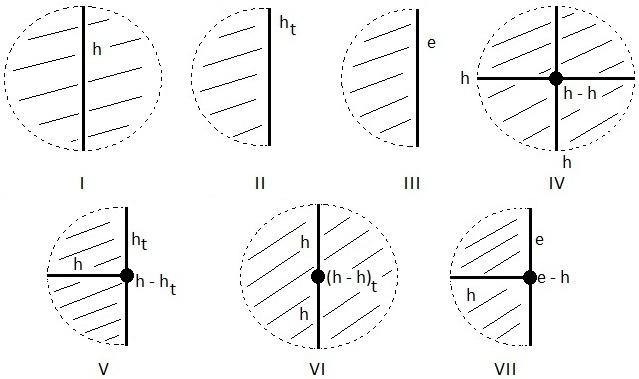}
\caption{The 7 types of singularities of $\bbR^3$-actions of toric degree 1 on 3-manifolds.}
\label{fig:7types}
\end{center}
\end{figure}

Remark that, in Case VI, locally $Q\cong D^2/\bbZ_2$ is homemorphic but not diffeomorphic to a disk.
In the other cases, $Q$ can be viewed locally as either a disk (without boundary) or a half-disk (with boundary)
but it cannot be a corner.

Globally, the quotient space $Q$ can be obtained by gluing copies of the above 
7 kinds of local pieces together, in a way which respects the letters (e.g. an edge marked $e$
will be glued to an edge marked $e$, an edge marked $h_t$ will be glued to an edge marked $h_t$).

Notice, for example, that Case II and Case III in the above list are different but have diffeomorphic quotient spaces. 
To distinguish such situations, we must attach letters to the singularities, which describe the corresponding types
of singularities coming from $(M^3,\rho)$. The quotient space $Q$ together with these letters on its graph of singular
orbits will be called the {\bf typed quotient space} and denoted by $Q_{typed}$.

\begin{thm} \label{thm:n-2a}
 1) Let  $(Q_{typed},\rho_\bbR)$ be the quotient space of $(M^3,\rho)$, where $\rho$ is of toric degree 1
and $M^3$ is a 3-manifold without boundary. Then each singularity of $Q_{typed}$ belongs to one of the seven types I--VII
listed above.

2) Conversely, let $(Q_{typed},\rho_\bbR)$ be a 2-orbifold together with a totally hyperbolic action $\rho_\bbR$ on it, and
together with the letters on the graph of singular orbits, such that the singularities of $Q_{typed}$ belong to the above
list of seven types I--VII. Then there exists $(M^3,\rho)$ of toric degree 1 which admits  $(Q_{typed},\rho_\bbR)$
as its quotient. Moreover, the $\bbT^1$-equivariant diffeomorphism type of $M^3$ is completely determined by $Q_{typed}$.
\end{thm}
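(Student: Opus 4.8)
The plan is to handle the two assertions separately: the first is a finite enumeration, while the second splits into an existence statement, proved by gluing local models, and a uniqueness statement for the $\bbT^1$-equivariant type, proved by equivariant rigidity.

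For part~1) I would first record all numerical constraints on the HERT-invariant $(h,e,r,t)$ of a singular point $q$ together with the twisting rank $k=\rank_{\bbZ_2}G_q$. The dimension of $M^3$ gives $h+2e+r+t=3$; Theorem~\ref{thm:HERT-toricdegree} gives $e+t=1$ because the toric degree equals $1$; singularity of $q$ forces positive corank, i.e.\ $h+2e\geq 1$, equivalently $e+h\geq 1$; and the realization of the twisting group as a subgroup of $(\bbZ_2)^h$ (Definition~\ref{defn:TwistingGroup}, Theorem~\ref{thm:LocalAutomorphism}) gives $k\leq\min(h,t)$. Splitting on $(e,t)\in\{(0,1),(1,0)\}$ leaves only finitely many nonnegative integer solutions, and for each I would list the embeddings $G_q\hookrightarrow(\bbZ_2)^h$ up to the symmetry permuting the $h$ hyperbolic coordinates. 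The solution $h=2,e=0,r=0,t=1$ is the only one carrying a nontrivial $\bbZ_2$ in two inequivalent ways, namely the reflection $(x_1,x_2)\mapsto(-x_1,x_2)$ and the diagonal involution $(x_1,x_2)\mapsto(-x_1,-x_2)$, which produces types IV, V, VI; the remaining solutions yield I, II, III, VII, exhausting the list.

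For existence in part~2) I would reconstruct $(M^3,\rho)$ by the gluing method already used in Theorem~\ref{thm:classificationByfan} and Theorem~\ref{thm:GoingBack}. Through its type, each point of $Q_{typed}$ carries the semi-local model of Theorem~\ref{thm:semi-localform2}: over a regular point the fibre is $\bbT^1$ and the fibration is locally trivial, while over each of the seven singular types the local $\bbT^1$-space is the prescribed linear model, including the twisted quotients $(\bbT^1\times B^h)/G_q$ in types II, V, VI. Since the letters on adjacent strata match by hypothesis, these pieces glue along their overlaps to a $\bbT^1$-space whose orbit space is exactly $Q$, and lifting $\rho_\bbR$ through the fibration supplies the $\bbR^3$-action $\rho$ of toric degree $1$.

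For uniqueness of the $\bbT^1$-equivariant type I would note that the typing determines, at every point $z$, the isotropy $Z_{\rho_\bbT}(z)\cong\bbT^e\times G_z$ of the associated torus action by Proposition~\ref{prop:describe-sing-Tk}; thus any two reconstructions with the same $Q_{typed}$ share both quotient space and $\rho_\bbT$-isotropy at every orbit, and Corollary~\ref{cor:EquivariantUniqueness} produces a diffeomorphism intertwining the two circle actions. I expect the genuine obstacle to lie in the existence step in the twisted cases II, V and especially the orbifold cone point VI, where $Q\cong D^2/\bbZ_2$ is not a manifold with corners: one must verify that gluing the twisted local models yields a \emph{smooth} manifold upstairs. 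This is precisely guaranteed because $G_q$ acts freely, by nontrivial translation on the $\bbT^1$-factor, so each $(\bbT^1\times B^h)/G_q$ is smooth; the cleanest way to organize the global gluing is to pass to the $\bbZ_2$-cover trivializing the twisting, apply the untwisted construction there, and descend, which is the mechanism behind the multi-section of Proposition~\ref{prop:multisection}.
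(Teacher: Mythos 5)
Your proposal is correct and follows essentially the same route as the paper: part 1) is exactly the constraint enumeration ($h+2e+r+t=3$, $e+t=1$, $e+h\geq 1$, $k\leq \min(h,t)$) that the paper carries out just before the theorem, and part 2) combines the gluing/reconstruction mechanism of Theorem \ref{thm:GoingBack} with the equivariant rigidity of Corollary \ref{cor:EquivariantUniqueness}, which is precisely the paper's proof. The only point the paper makes explicit that you leave implicit is that for toric degree $1$ the auxiliary reconstruction data (the rank-one lattice $Z_\rho\cong\bbZ$ in $\bbR^3$ and the compatible isotropy groups) exists and admits no essentially different choices --- exactly the feature that fails for $n\geq 4$, where multiple isotropy choices are possible and the typed quotient no longer determines the manifold.
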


\begin{proof}
 1) It was shown above that the list I--VII is complete in the case of dimension 3, due to dimensional constraints.

2) When the toric degree is 1, assuming that $Z_\rho \cong \bbZ$ is fixed in $\bbR^3$, 
because $Z_\rho$  has only 1 dimension and doesn't allow multiple choices, 
we have existence and uniqueness for the choice of isotropy groups in this case. 
The second part of the theorem now follows from Theorem \ref{thm:GoingBack} and
Corollary \ref{cor:EquivariantUniqueness}.
\end{proof}

\begin{figure}[htb] 
\begin{center}
\includegraphics[width=120mm]{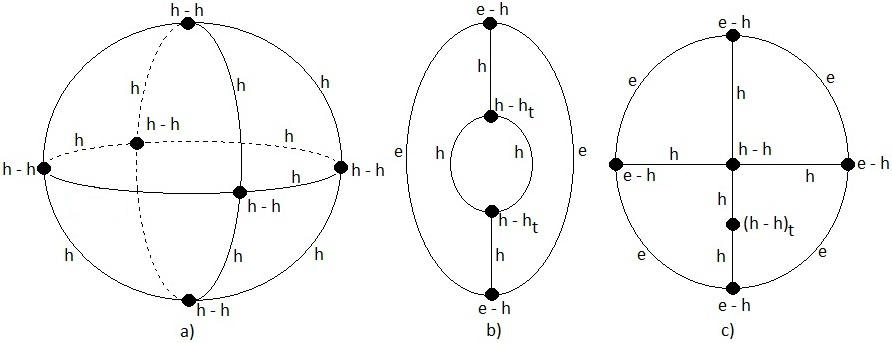}
\caption{Example of $Q^2$ for $n=3, t(\rho) =1$.}
\label{fig:Q2-7kinds}
\end{center}
\end{figure}

\begin{example}
Some examples of realizable $Q_{typed}$ which can be obtained by gluing the above 7 kinds of pieces
are shown in Figure \ref{fig:Q2-7kinds}. Notice that $Q_{typed}$ may be without boundary 
(as in Figure \ref{fig:Q2-7kinds}a) or with boundary 
(Figure \ref{fig:Q2-7kinds}b and \ref{fig:Q2-7kinds}c). 
The boundary components of $Q_{typed}$ corresponds to the orbits of type $e$ (elbolic)
and $h_t$ (hyperbolic twisted). In the interior of $Q_{typed}$, one may have edges of type $h$ (hyperbolic non-twisted)
and singular points of type $h-h$ or $(h-h)_t$. In Figure \ref{fig:Q2-7kinds}c,
$Q_{typed}$ is not a smooth manifold (though it is homeomorphic to a disk). The branched 2-covering of 
Figure \ref{fig:Q2-7kinds}c is shown in Figure \ref{fig:branched-Double} 
($\bbZ_2$ acts by rotating 180° around 0).
It is easy to see that, the 3-manifolds corresponding to the situations a), b) c) in this example are $\bbS^2 \times \bbS^1$, $\bbR \bbP^2 \times \bbS^1$, and $\bbR \bbP^3$ respectively.
\end{example}

\begin{figure}[htb] 
\begin{center}
\includegraphics[width=80mm]{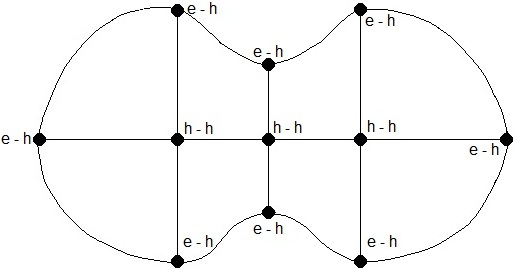}
\caption{Branched double covering of Figure \ref{fig:Q2-7kinds}c.}
\label{fig:branched-Double}
\end{center}
\end{figure}

\begin{remark} Let us mention also that,
 if $M$ admits an action of $\bbR^3$ of toric degree 1, 
then $M^3$ is a graph-manifold in the sense of Waldhausen, which generalizes the notion of Seifert fibrations. 
Actually, any 3-manifold admitting a nontrivial circle action is a graph-manifolds (with some additional
properties), and graph-manifolds form a very special and well-studied class of 3-manifolds 
in topology, see, e.g. \cite{JacoShalen-GraphManifolds}. As was observed by Fomenko
\cite{Fomenko-MorseTheory1986}, graph-manifolds are also precisely
those manifolds which can appear as isoenergy 3-manifolds in an integrable Hamiltonian system
with 2 degrees of freedom.
\end{remark}

\subsection{The case of dimension $n \geq 4$}

When the dimension $n$ is at least 4, the toric degree is $n-2 \geq 2$, we have the following 3 new types
of singularities, in addition to the 7 types listed in the previous subsection:

VIII. $(h_t-h_t) \quad h = 2, e=0, r=0, t=n-2, G_q = \bbZ_2 \times \bbZ_2$ acting separately on the two hyperbolic components.

IX. $(e-h_t) \quad h = 1, e=1, r=0, t=n-3, G_q = \bbZ_2$.

X. $(e-e) \quad h = 0, e= 2, r=0, t=n-4, G_q = \{0\}$.

(See Figure \ref{fig:add3types}).
\begin{figure}[htb] 
\begin{center}
\includegraphics[width=100mm]{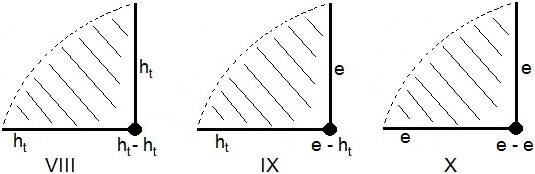}
\caption{The additional 3 possible types of singularities for actions of toric degree $n-2$ when $n\geq 4$.}
\label{fig:add3types}
\end{center}
\end{figure}

\begin{thm} \label{thm:n-2b}
1) Let  $(Q_{typed},\rho_\bbR)$ be the quotient space of $(M^n,\rho)$, where $\rho$ is of toric degree 1
and $M^n$ is a n-manifold without boundary and $n \geq 4$. Then each singularity of $Q_{typed}$ belongs to one of the ten types I--X
listed above.

2) Conversely, let $(Q_{typed},\rho_\bbR)$ be a 2-orbifold together with a totally hyperbolic action $\rho_\bbR$ on it, and
together with the letters on the graph of singular orbits, such that the singularities of $Q_{typed}$ belong to the above
ten types I--X. Then for any $n \geq 4$ there exists $(M^n,\rho)$ of toric degree 
n-2 which admits  $(Q_{typed},\rho_\bbR)$ as its quotient. 
\end{thm}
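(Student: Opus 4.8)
The plan is to handle the two assertions by different methods: the direct statement (1) is a finite combinatorial classification of local types, while the converse (2) is a gluing construction resting on the reduction theory of Section \ref{section:QuotientSpace}.

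For (1), I would begin from the numerical constraints on the HERT-invariant $(h,e,r,t)$ of a singular point $q$ and on the $\bbZ_2$-rank $k=\rank_{\bbZ_2}G_q$ of its twisting group. The dimension count gives $h+2e+r+t=n$, the toric-degree formula (Theorem \ref{thm:HERT-toricdegree}) gives $e+t=n-2$, the semi-local normal form (Theorem \ref{thm:semi-localform2}) gives $k\leq\min(h,t)$, and singularity forces $h+e\geq 1$. Subtracting the first two relations yields the single identity $h+e+r=2$, exactly as in the three-dimensional case of Theorem \ref{thm:n-2a}. I would then list the solutions $(h,e,r)$ with $h+e\geq 1$ and, for each, the admissible twisting groups, which must embed in $(\bbZ_2)^h$ with $\rank_{\bbZ_2}G_q\leq\min(h,t)$; recording the distinct classes of such subgroups up to permutation of the hyperbolic coordinates is what separates types IV, V, VI and VIII.

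The place where the hypothesis $n\geq 4$ enters is precisely this bound. For $(h,e)=(0,2)$ one needs $t=n-4\geq 0$, impossible for $n=3$, which produces the new type X. For $(h,e)=(1,1)$ the bound reads $k\leq\min(1,n-3)$, so a nontrivial $G_q=\bbZ_2$ (type IX) is allowed only when $n\geq 4$. For $(h,e)=(2,0)$ the bound $k\leq\min(2,n-2)$ first admits the full $G_q=\bbZ_2\times\bbZ_2$ (type VIII) at $n\geq 4$. Verifying that the remaining solutions reproduce exactly types I--VII finishes (1); this step is a direct check with no real obstacle.

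For (2), I would apply the reconstruction theorem (Theorem \ref{thm:GoingBack}): once $(Q_{typed},\rho_\bbR)$ is equipped with isotropy groups, associated vectors and vector couples in the sense of Definition \ref{def:AssociatedVector}, and a compatible monodromy lifting, a realizing $(M^n,\rho)$ exists. Since (1) shows every local singularity is one of the ten types and each type is by construction the quotient of an explicit linear model (Theorem \ref{thm:semi-localform2}), the local compatibility conditions can be arranged type by type, and the local pieces are then glued along the regular part by the same surgery used in Theorem \ref{thm:classificationByfan} and Theorem \ref{class-hyp-invariants}. I expect the main obstacle to be the global consistency of this isotropy data, which is genuinely new relative to Theorem \ref{thm:n-2a}: there $Z_\rho\cong\bbZ$ is rigid and forces uniqueness, whereas here $Z_\rho\cong\bbZ^{n-2}$ has rank at least $2$, so at each elbolic or twisted orbit one must choose the vanishing cycles attached to the vector couples, and at the new multi-component singularities VIII, IX and X several such cycles must together span a direct summand of $Z_\rho$---the analogue of the non-singularity condition for a complete fan. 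The heart of the proof is to show these choices can be made coherently around each vertex and then globally, using the freedom in the free part of the monodromy (Theorem \ref{thm:trans-monodromy}) and the room afforded by the large rank of $Z_\rho$; as the statement asks only for existence, a single coherent choice suffices, after which Theorem \ref{thm:GoingBack} produces $(M^n,\rho)$.
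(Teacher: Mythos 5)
Your proposal is correct and follows essentially the same route as the paper: part (1) by the numerical constraints $h+2e+r+t=n$, $e+t=n-2$, $h+e\geq 1$, $k\leq\min(h,t)$ (which is exactly how the paper derives the ten types and why VIII, IX, X first appear at $n\geq 4$), and part (2) by choosing compatible isotropy data and invoking Theorem \ref{thm:GoingBack}. The paper's own proof is in fact terser than yours --- it simply states that choosing compatible isotropy groups is ``a simple exercise'' and notes that, unlike dimension 3, these choices are non-unique so $Q_{typed}$ no longer determines the diffeomorphism type of $M^n$ --- so your more explicit discussion of vanishing cycles spanning direct summands of $Z_\rho$ is a faithful elaboration of the same argument, not a different one.
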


\begin{proof}
The main point of the proof is to show that  one can choose compatible isotropy groups, but it is a simple excercise.
Remark that, unlike the case of dimension 3, when $n \geq 4$, the typed quotient $Q_{typed}$ 
does not determine the diffeomorphism type of
manifold $M$ completely, because there are now multiple choices for the isotropy groups.
\end{proof}

\begin{figure}[htb] 
\begin{center}
\includegraphics[width=50mm]{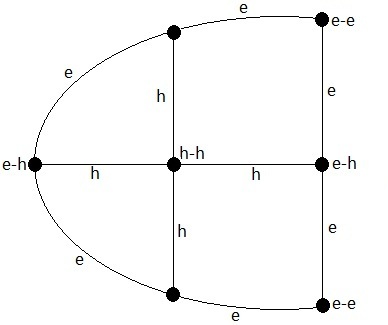}
\caption{Example of $Q = M^4/\bbT^2$.}
\label{fig:exampleM4T2}
\end{center}
\end{figure}

\begin{example}
 An example of the quotient space $Q$, which can't appear for $n=3$ but can appear for $n \geq 4$, 
is shown in  Figure \ref{fig:exampleM4T2}.
\end{example}

\noindent {\bf Acknowledgements.} We would like to thank the referees for many useful remarks on the paper.

\vspace{0.5cm}

\end{document}